\newcommand{\texorpdfstring}[2]{#1}
\numberwithin{equation}{section}
\newcommand{\N}{\mathbb{N}}         
\newcommand{\R}{\mathbb{R}}         
\newcommand{\Z}{\mathbb{Z}}
\newcommand{\Q}{\mathbb{Q}}
\newcommand{\NN}{\mathcal{N}}
\newcommand{\DD}{\mathcal{D}}
\newcommand{\PP}{\mathbb{P}}        
\newcommand{\supp}{\text{supp}}        
\newcommand{\lam}{\lambda}
\newcommand{\e}{\varepsilon}
\newcommand{\wt}{\widetilde}
\newcommand{\1}{\mathbf{1}}
\newcommand{\be}{\begin{equation}}
\newcommand{\ee}{\end{equation}}
\DeclareMathOperator{\hdim}{dim_{\mathsf{H}}}
\DeclareMathOperator{\ubdim}{{\overline{dim}_{\mathsf{B}}}}
\newtheorem{thm}{Theorem}[section]
\newtheorem{lemma}[thm]{Lemma}
\newtheorem{prop}[thm]{Proposition}
\newtheorem{cor}[thm]{Corollary}
\theoremstyle{remark}
\newtheorem{defn}[thm]{Definition}
\newtheorem{conj}[thm]{Conjecture}
\keywords{$\times p$-invariant sets, dynamical rigidity, self-similar measures, Bernoulli convolutions, intersections of Cantor sets}
\subjclass[2010]{Primary: 11K55, 28A80, 37C45, Secondary: 28A78, 28D05, 37A45}
\begin{document}

\title[Furstenberg's intersection conjecture and self-similar measures]{On Furstenberg's intersection conjecture, self-similar measures, and the $L^q$ norms of convolutions}

\author{Pablo Shmerkin}
\address{Department of Mathematics and Statistics, Torcuato Di Tella University, and CONICET, Buenos Aires, Argentina}
\email{pshmerkin@utdt.edu}
\urladdr{http://www.utdt.edu/profesores/pshmerkin}
\thanks{P.S. was partially supported by Projects PICT 2013-1393 and PICT 2014-1480 (ANPCyT) }

\begin{abstract}
We study a class of measures on the real line with a kind of self-similar structure, which we call \emph{dynamically driven self-similar measures}, and contain proper self-similar measures such as Bernoulli convolutions as special cases. Our main result gives an expression for the $L^q$ dimensions of such dynamically driven self-similar measures, under certain conditions. As an application, we settle Furstenberg's long-standing conjecture on the dimension of the intersections of $\times p$ and $\times q$-invariant sets. Among several other applications, we also show that Bernoulli convolutions have an $L^q$ density for all finite $q$, outside of a zero-dimensional set of exceptions.

The proof of the main result is inspired by M. Hochman's approach to the dimensions of self-similar measures and his inverse theorem for entropy. Our method can be seen as an extension of Hochman's theory from entropy to $L^q$ norms, and likewise relies on an inverse theorem for the decay of $L^q$ norms of discrete measures under convolution. This central piece of our approach may be of independent interest, and is an application of well-known methods and results in additive combinatorics: the asymmetric version of the Balog-Szemer\'{e}di-Gowers Theorem due to Tao-Vu, and some constructions of Bourgain.
\end{abstract}

\maketitle

\tableofcontents

\section{Introduction and main results}

\subsection{Transversality of \texorpdfstring{$\times p$, $\times q$}{times p, times q}}

In the 1960s, H. Furstenberg proposed a series of conjectures which, in different ways, aim to capture the heuristic principle that ``expansions in multiplicatively independent bases (such as $2$ and $3$) should have no common structure''. Recall that $p,q\in \N$ are called multiplicatively independent if they are not powers of a common integer or, equivalently, $\log p/\log q$ is irrational. For $p\in\N_{\ge 2}$, let $T_p:[0,1)\to [0,1)$, $x\mapsto p x\bmod 1$ denote multiplication by $p$ on the circle. In \cite{Furstenberg67}, Furstenberg proved a pioneering result of this type: if $p,q\in\N_{\ge 2}$ are multiplicatively independent, then no infinite proper closed subset of $[0,1]$ can be simultaneously invariant under $T_p$ and $T_q$. This gave rise to the famous $\times 2,\times 3$ conjecture, which remains open today: if $\mu$ is a Borel probability measure on the circle invariant under $T_2$ and $T_3$, then $\mu$ is a linear combination of Lebesgue measure and a purely atomic measure.

Furstenberg proposed other conjectures with a more geometric flavor. Let $A,B$ be closed subsets of the circle $[0,1)$ invariant under $T_p, T_q$ respectively, with $p,q$ again multiplicatively independent. Furstenberg conjectured that
\[
\hdim(A+B) = \min(\hdim(A)+\hdim(B),1),
\]
where $\hdim$ stands for Hausdorff dimension, and $A+B=\{a+b:a\in A, b\in B\}$ is the arithmetic sum. This fits into the general heuristic principle mentioned above, since the inequality $\hdim(A+B) \le \min(\hdim(A)+\hdim(B),1)$ always holds, and a strict inequality should only occur if $A$ and $B$ have some shared structure at many scales. This conjecture was proved in \cite{PeresShmerkin09} in the special case that $A,B$ are defined  by restricting the digits in their base $p,q$ expansion to a fixed digit set, and in \cite{HochmanShmerkin12} in the general case. Moreover, in \cite{HochmanShmerkin12} a corresponding result for invariant measures was obtained: if $\mu,\nu$ are Borel probability measures invariant under $\times p,\times q$ respectively, then
\begin{equation} \label{eq:dim-conv-invariant-measures}
\hdim(\mu*\nu) =  \min(\hdim(\mu)+\hdim(\nu),1).
\end{equation}
Here $\hdim$ denotes the lower Hausdorff dimension of a measure, defined as
\begin{equation*} 
\hdim(\eta) = \inf\{\hdim(A):\eta(A)>0\}.
\end{equation*}

We note that this result is trivial if either $\mu$ or $\nu$ have zero entropy (since zero entropy implies zero dimension), but in the positive entropy case it is stronger than the $\times 2,\times 3$ conjecture. We recall the Rudolph-Johnson theorem, asserting that if $\mu,\nu$ are ergodic and invariant under $\times p,\times q$ (with $\log p/\log q$ irrational) and $\mu$ has positive but not full entropy with respect to $\times p$, then $\mu$ and $\nu$ are singular. We showed in \cite{HochmanShmerkin12} that the Rudolph-Johnson Theorem can be obtained as an easy corollary of \eqref{eq:dim-conv-invariant-measures}.

There is an obvious heuristic relationship between the size of the sumset $A+B$ and the size of the fibers $\ell_z=\{ (x,y):x\in A,y\in B, x+y=z\}$. Namely, if the sumset is ``large'' then ``many fibers'' should be small, and conversely. Another conjecture of Furstenberg, and one of the few to be stated explicitly in print \cite[Conjecture 1]{Furstenberg70}, asserts that for sets invariant under $\times 2,\times 3$, \emph{all} fibers should be small:
\begin{conj} \label{conj:Furstenberg}
If $A,B$ are closed subsets of the circle $[0,1)$, invariant under $T_p, T_q$ respectively, with $p$ and $q$ multiplicatively independent, then
\begin{equation} \label{eq:upper-bound-dim-fibers}
\hdim(A\cap B) \le \max(\hdim(A)+\hdim(B)-1,0).
\end{equation}
\end{conj}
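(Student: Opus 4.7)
The strategy is to deduce the intersection bound from the main $L^q$-dimension theorem for dynamically driven self-similar measures announced in the abstract, leveraging multiplicative independence of $p,q$ to match scales. Suppose $\alpha:=\hdim(A) > 0$ and $\beta:=\hdim(B) > 0$ with $\alpha+\beta > 1$ (else the bound is trivial). The first step is to replace $A, B$ by invariant measures on them: the variational principle for topological entropy yields a $T_p$-invariant ergodic $\mu$ on $A$ with $\hdim(\mu)\ge \alpha-\e$, and a $T_q$-invariant ergodic $\nu$ on $B$ with $\hdim(\nu)\ge \beta-\e$. Crucially, $\mu$ and $\nu$ fall into the class of dynamically driven self-similar measures (with the underlying IFSs being inverse branches of $T_p$ and $T_q$), so the main $L^q$-theorem of the paper applies to both.

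Next I would relate $\hdim(A\cap B)$ to $L^q$-norms of convolutions. The standard Marstrand-type reduction shows that bounding $\hdim(A\cap B)$ amounts to bounding the density of $\mu*\wt{\nu}$ near $0$, where $\wt{\nu}(E)=\nu(-E)$. The expected density for ``independent'' measures of dimensions $\alpha,\beta$ is of order $r^{\alpha+\beta-1}$ at scale $r$, but for the specific measures $\mu,\nu$ this bound could in principle fail without further input. The main $L^q$-theorem supplies that input: it controls the $L^q$-norms of discretisations of $\mu$ at scales $p^{-n}$ and of $\nu$ at scales $q^{-m}$. Multiplicative independence of $p,q$ guarantees infinitely many matched pairs $(n,m)$ with $p^{-n}\approx q^{-m}$, at which the two self-similar structures are essentially transverse. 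Combining the $L^q$-bounds for the discretisations at such matched scales via H\"{o}lder should yield the desired $r^{\alpha+\beta-1}$ decay for $\mu*\wt{\nu}$ near $0$, hence the intersection bound.

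The main obstacle, I expect, is the transfer step: from a bound on the $L^q$-norm of a convolution to a bound on its \emph{pointwise} density at the specific point $0$ (which encodes the diagonal $\{x=y\}\subset A\times B$). This is where the $L^q$-inverse theorem announced in the abstract should play its decisive role: if the density at $0$ were abnormally large, then the $L^q$-inverse theorem (proved via the asymmetric Balog-Szemer\'{e}di-Gowers theorem of Tao-Vu and Bourgain-type constructions) would force the discretisations of $\mu$ and $\nu$ to share additive structure at the matched scales. Propagating this structure through $T_p$ and $T_q$ would produce a shared self-similar structure in $\mu$ and $\nu$ across many scales, contradicting the multiplicative independence of $p,q$. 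Turning this outline into a rigorous argument---handling the inevitable exceptional scales, quantifying the propagation of additive structure, and returning from invariant measures back to the original closed sets---is where I anticipate most of the technical work to lie.
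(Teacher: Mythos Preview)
Your proposal has the right overall shape but contains two genuine gaps: one in what object the main theorem is applied to, and one in how pointwise control is extracted.

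First, a minor correction: general $T_p$-invariant ergodic measures are \emph{not} dynamically driven self-similar measures in the paper's sense; only Bernoulli-type measures on $p$-Cantor sets are. The paper handles this by approximating $A,B$ from above by $p^N$- and $q^N$-Cantor sets $A',B'$ of dimension within $\delta$ of $\hdim A,\hdim B$, and working with the uniform self-similar measures $\eta_1,\eta_2$ on them.

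The more serious gap concerns what the main theorem is applied to. You propose to bound the $L^q$ norms of discretisations of $\mu$ and $\nu$ separately and then combine them via H\"older at matched scales $p^{-n}\approx q^{-m}$. This cannot succeed: separate $L^q$ bounds on $\mu$ and $\nu$ yield nothing for $\|\mu*\nu\|_q$ beyond Young's inequality, and multiplicative independence plays no role in such a combination. The paper's key construction (\S1.4 and Lemma~\ref{lem:mu-x-conv-ssm}) is that the \emph{convolution} $\eta_1*S_{e^x}\eta_2$ is itself a dynamically driven self-similar measure, with driving system the irrational rotation $x\mapsto x+\log p\bmod\log q$. The main $L^q$-dimension theorem is applied directly to this family, giving $D(\eta_1*S_t\eta_2,q)=\min(\hdim A'+\hdim B',1)$ for \emph{every} $t$ and every $q\in(1,\infty)$.

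Finally, the passage from $L^q$ control to the pointwise (fiber) bound is far simpler than you suggest and does not reinvoke the inverse theorem. Since $D(\rho,q)\ge s$ for all finite $q$, letting $q\to\infty$ in Lemma~\ref{lem:Lq-dim-to-Frostman-exp} gives the Frostman bound $\rho(B(y,r))\le C_\e r^{s-\e}$ \emph{uniformly in $y$}. Applied to $\rho=\eta_1*S_{-1}\eta_2$ (the push-forward of $\eta_1\times\eta_2$ under $(x,y)\mapsto x-y$), this uniform bound immediately yields $\ubdim(A'\cap(B'+u))\le s-1+\e$ for every $u$ via the elementary covering argument of Lemma~\ref{lem:Frostman-exp-to-small-fiber}. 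The inverse theorem has already done all its work inside the proof of the $L^q$-dimension theorem; your proposed second use of it to rule out anomalous density at the single point $0$ is neither needed nor well-suited, since the inverse theorem addresses global $L^q$ norms rather than pointwise values.
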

In Furstenberg's terminology, the dynamics of $T_p$ and $T_q$ should be \emph{transverse}. Again, this fits into the general heuristics of ``lack of common structure'' since a fiber of larger than expected size can be seen as some shared structure between $A$ and $B$ (and hence between expansions in bases $p$ and $q$). To see why the right-hand side in \eqref{eq:upper-bound-dim-fibers} is the natural bound, one can think of the analogous formula for the dimension of the intersection of transversal linear subspaces, or  Marstrand's intersection theorem asserting that for any Borel set $E\subset \R^2$,
\[
\hdim(E\cap \ell)\le \max(\hdim(E)-1,0)
\]
for \emph{almost all} lines $\ell$, and this fails for any smaller value on the right-hand side (we note that $E=A\times B$ has dimension $\hdim(A)+\hdim(B)$). See for example \cite[Chapter 10]{Mattila95}.

Also in \cite{Furstenberg70}, Furstenberg showed that if
\[
\hdim(A\cap g(B)) \ge c
\]
for some invertible affine map $g:\R\to\R$, then for almost all slopes $a$ there is an affine map $g_a(x)=a x+ b(a)$ such that
\[
\hdim(A\cap g_a(B)) \ge c.
\]
Using this, it is not hard to show that Conjecture \ref{conj:Furstenberg} holds when $\hdim(A)+\hdim(B)\le 1/2$; see \cite[Theorem 7.9]{Hochman14b} for an exposition of the argument. More generally, combining Furstenberg's result with estimates of Wolff \cite{Wolff99} on the dimension of sets that contain, for almost every $v\in S^1$, a subset of a line in direction $v$ with Hausdorff dimension at least $c$, one gets
\[
\hdim(A\cap g(B)) \le \max(\hdim(A)+\hdim(B)-1/2,0).
\]
Note that this is vacuous if $\hdim(A)\ge 1/2$.

We say that $A\subset [0,1)$ is a $p$-Cantor set if it is the set of points whose base $p$-expansion digits lie in some proper set $D\subset \{0,1,\ldots,p-1\}$ with at least $2$ elements. In a different direction, in \cite{FHR14} it was shown that if $A$ and $B$ are a $p$-Cantor set and a $q$-Cantor set respectively, then $A$ cannot be affinely embedded into $B$ if $0<\hdim(A)<\hdim(B)<1$. More precisely, it follows from \cite[Theorem 1.6]{FHR14} that in this case there is some (non-effective) $\delta=\delta(A,B)>0$ such that
\[
\hdim(A\cap g(B)) \le \hdim(A)-\delta
\]
for all $C^1$ diffeomorphisms $g$ of $\R$ (here, and whenever clear from context, we think of $A,B$ as subsets of $[0,1)\subset \R$ rather than the circle). One can deduce the same result for general invariant sets by a standard upper approximation. De-Jun Feng (Private Communication) developed an algorithm that yields effective values of $\delta$ in specific cases, for example if $A$ is the middle-one quarter Cantor set and $B$ is the middle-thirds Cantor set; the computed values are still far from those predicted by Furstenberg's conjecture.

D-J. Feng (Private Communication) also constructed, for any multiplicatively independent $p,q$ and for any $0<s,t<1$ and $\e>0$,  closed $T_p,T_q$-invariant sets $A,B\subset [0,1)$ of dimension $s,t$ respectively, for which
\[
\hdim(A\cap g(B)) \le \max(\hdim(A)+\hdim(B)-1,0)+\e,
\]
for all affine maps $g$. Although this comes close, we note that not a single example of sets $A,B$ (for some multiplicatively independent $p,q$)  for which the conjecture holds, was known, apart from the trivial cases in which one of the sets has dimension $0$ or $1$, and the case in which $\hdim(A)+\hdim(B)\le 1/2$, as explained above.

In this article, we prove the following strong version of Furstenberg's conjecture which, in his terminology, says that the maps $T_p$ and $T_q$ on the circle are \emph{strongly transverse}:
\begin{thm} \label{thm:Furstenberg}
Let $p,q\in\N_{\ge 2}$ be multiplicatively independent. Then for any closed sets $A,B$ of the circle $[0,1)$ invariant under $T_p, T_q$ respectively, and for any invertible affine map $g:\R\to \R$,
\[
\ubdim(A\cap g(B)) \le \max(\hdim(A)+\hdim(B)-1,0).
\]
\end{thm}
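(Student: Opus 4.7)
Let $s = \hdim(A)$ and $t = \hdim(B)$, and assume $s + t > 1$ (otherwise the claim is known by the slicing-based argument recalled above). The plan is to reduce the bound on $\ubdim(A \cap g(B))$ to an $L^q$-type density estimate on a single convolution, and then invoke the paper's main theorem on $L^q$ dimensions of dynamically driven self-similar measures. For any $\e > 0$, I begin by producing invariant measures of nearly full dimension: by the variational principle for topological entropy applied to the subshift on $\{0,\dots,p-1\}^{\N}$ coding $A$, there exists a $T_p$-invariant probability $\mu$ supported on $A$ with $\hdim(\mu) \ge s - \e$, and similarly a $T_q$-invariant $\nu$ on $B$ with $\hdim(\nu) \ge t - \e$. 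After passing to ergodic components and Bernoulli-type approximations if needed, $\mu$ and $\nu$ fit the dynamically driven self-similar framework of this paper, with base scalings $1/p$ and $1/q$.

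The geometric core of the reduction is the following. Write $g(y) = ay + b$ with $a \ne 0$ and let $\wt\nu = (-a)_*\nu$. A point $z \in A \cap g(B)$ corresponds to a pair $(z,y) \in A \times B$ with $z - ay = b$, so each dyadic $\delta$-cell meeting $A \cap g(B)$ contributes to the mass $(\mu \otimes \nu)\{(x,y): |x - ay - b|\le C\delta\} = (\mu * \wt\nu)(B_{C\delta}(b))$. A counting argument exploiting the approximate exact-dimensionality of $\mu$ and $\nu$ (lower local dimensions $\ge s - O(\e)$, $t - O(\e)$ on sets of near-full measure) yields
\[
 N_\delta(A \cap g(B)) \,\lesssim\, \delta^{-(s+t) - O(\e)}\,(\mu * \wt\nu)(B_\delta(b)).
\]
It therefore suffices to establish the pointwise density bound $(\mu * \wt\nu)(B_\delta(b)) \le \delta^{1 - \e}$ for all small $\delta > 0$. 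Note that this is an estimate only at the single point $b$, weaker than a uniform $L^\infty$ bound; under the approximate self-similar symmetry of $\mu * \wt\nu$ at dyadic scales, it will be deduced from a sufficiently strong lower bound on the $L^q$ dimension $D_q(\mu * \wt\nu)$ for some $q > 1$.

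The final step is to apply the main theorem of the paper to deduce $D_q(\mu * \wt\nu) \ge \min(D_q(\mu) + D_q(\wt\nu),\, 1) - \e$ for a suitable $q > 1$. The dynamical driving by the commuting pair $(T_p, T_q)$ supplies the bi-scaled self-similar structure needed, and the multiplicative independence of $p$ and $q$ enters precisely as the non-resonance hypothesis: the scales $p^{-n}$ and $q^{-m}$ are quantitatively incommensurable, ruling out the exceptional ``saturated'' case of the inverse $L^q$ theorem. Since $D_q(\mu) \ge \hdim(\mu) - \e$ and $D_q(\wt\nu) \ge \hdim(\nu) - \e$ for $q$ close to $1$, and $s + t > 1$, we get $D_q(\mu * \wt\nu) \ge 1 - 2\e$. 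Combined with the reduction above this gives $\ubdim(A \cap g(B)) \le s + t - 1 + O(\e)$, and letting $\e \to 0$ concludes the proof. The main obstacle lies exactly in this last step: configuring the inverse $L^q$ theorem so that the multiplicative independence of $p, q$ is the right non-resonance input, and ensuring that the affine pushforward (the factor $-a$ absorbed into $\wt\nu$) preserves the dynamically driven self-similar framework. A secondary, more technical obstacle is upgrading the $L^q$-averaged density information into the pointwise estimate at $b$; this should follow from the scaling symmetry of the self-similar coding, but must be carried out uniformly over all small $\delta$ in order to bound the upper box dimension, rather than merely the Hausdorff dimension.
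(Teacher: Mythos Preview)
Your overall strategy---reduce to a Frostman-type estimate on a convolution and invoke the main $L^q$ theorem---matches the paper's, but two concrete gaps prevent the argument from closing as written.

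\textbf{(1) The measures do not fit the framework, and the counting step needs Ahlfors regularity from below.} The variational principle produces a $T_p$-invariant $\mu$ with $\hdim(\mu)\ge s-\e$, but such a $\mu$ need not be generated by a model $(X,\mathbf{T},\Delta,\lambda)$ with finitely supported $\Delta(x)$ and uniformly bounded number of atoms, so Theorem~\ref{thm:L-q-dim-dyn-ssm} does not apply to it. More seriously, your counting inequality $N_\delta(A\cap g(B))\lesssim \delta^{-(s+t)-O(\e)}(\mu*\wt\nu)(B_\delta(b))$ requires $\mu(B(x,\delta))\gtrsim\delta^{s+O(\e)}$ for \emph{every} $x\in A$ (and similarly for $\nu$), not merely almost everywhere; approximate exact-dimensionality on a set of near-full measure is not enough to count \emph{all} $\delta$-boxes meeting the fiber, which is what upper box dimension demands. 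The paper bypasses both issues simultaneously by embedding $A,B$ in $p^N$- and $q^N$-Cantor sets of dimension $\le s+\delta$, $\le t+\delta$, and taking $\mu,\nu$ to be the \emph{uniform} self-similar measures on those Cantor sets. These are genuinely homogeneous self-similar with open set condition, hence land in the model, and are Ahlfors regular, so Lemma~\ref{lem:Frostman-exp-to-small-fiber} applies directly.

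\textbf{(2) Taking $q$ close to $1$ does not yield the needed pointwise bound.} You argue $D_q(\mu)\ge\hdim(\mu)-\e$ for $q$ near $1$, hence $D_q(\mu*\wt\nu)\ge 1-2\e$. But Lemma~\ref{lem:Lq-dim-to-Frostman-exp} converts this into a Frostman exponent $(1-1/q)(1-2\e)$, which is tiny when $q$ is near $1$; plugging into Lemma~\ref{lem:Frostman-exp-to-small-fiber} gives a useless fiber bound. To get Frostman exponent close to $1$ one must send $q\to\infty$, and for that one needs $D_q(\mu)+D_q(\nu)\ge 1$ for \emph{large} $q$. For a generic high-dimensional invariant measure $D_q$ can drop sharply as $q$ grows; the uniform self-similar measures on Cantor sets have $D_q$ constant in $q$, which is exactly why the paper chooses them. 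Your proposed ``upgrading via self-similar scaling symmetry'' would have to reproduce this, and it is not clear how.

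Finally, the case $s+t\le 1$ is not ``known by the slicing-based argument'': the paper only records the bound for $s+t\le 1/2$ from Furstenberg's work. The Cantor-set reduction handles all cases uniformly.
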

Here $\ubdim$ denotes upper box-counting dimension, which is always at least as large as Hausdorff dimension.

The method we use to establish Theorem \ref{thm:Furstenberg} yields several other new results on classical problems in fractal geometry and dynamics. Before discussing our general approach, we present some of these results.

\subsection{Dimension and densities of Bernoulli convolutions}

Given $\lam\in(0,1)$, let $\nu_\lam$ be the distribution of the random series $\sum_{n=0}^\infty \pm\lam^n$, with the signs chosen independently with equal probabilities. This is the family of \emph{Bernoulli convolutions}, whose study goes back to the 1930s. For $\lam\in (0,1/2)$, it is well known that $\nu_\lam$ is (up to an affine bijection) a constant multiple of Hausdorff measure (of the appropriate dimension) on the central Cantor set constructed by removing a central interval of length $1-2\lam$ from $[0,1]$ and iterating. The properties of $\nu_\lam$ for $\lam\in [1/2,1)$ have been studied for some 80 years but are far from being properly understood. We prove new properties of the densities and dimension of $\nu_\lam$ outside of a small set of parameters.

Perhaps the most significant open problem on Bernoulli convolutions is to determine for which values of $\lam$ the measure $\nu_\lam$ turns out to be absolutely continuous. Erd\H{o}s already in 1939 \cite{Erdos39} showed that if $\lam^{-1}$ is a Pisot number (an algebraic unit $>1$ such that all its algebraic conjugates are $<1$ in modulus), then $\nu_\lam$ is singular. It is still not known if there is any $\lam\in (1/2,1)$ such that $\nu_\lam$ is singular and $\lam^{-1}$ is not Pisot.

In light of this open problem, a fruitful strand of research developed to prove results of the form: $\nu_\lam$ is absolutely continuous, with certain regularity of the density, outside of some ``small'' set. This line was also initiated by Erd\H{o}s \cite{Erdos40}, who proved that for every $k\in\N$ there is $\e_k>0$ such that $\nu_\lam$ has a $k$-times continuously differentiable density for almost all $\lam \in (1-\e_k,1)$. Several decades later, Kahane \cite{Kahane71} noted that Erd\H{o}s' argument yields a stronger statement, namely, that for every $k\in\N$,
\[
\lim_{\e\downarrow 0} \hdim\{\lam\in (1-\e,1): \nu_\lam \text{ does not have a $C^k$ density }\} = 0.
\]
The proof of Erd\H{o}s-Kahane is based on a combinatorial study of the Fourier transform of $\nu_\lam$, and no other proof of the statement is known.

The Erd\H{o}s-Kahane argument only gives non-trivial information very close to $1$. In a landmark paper from 1995, Solomyak \cite{Solomyak95} showed that $\nu_\lam$ is absolutely continuous with an $L^2$ density for almost all $\lam\in (1/2,1)$. A simpler proof was obtained by Peres and Solomyak \cite{PeresSolomyak96}. The $L^2$ part of the result is a by-product of the transversality technique used by Solomyak, and a natural question is whether $L^2$ can be replaced by a better space. In \cite{PeresSchlag00}, Peres and Schlag proved that for any $\e>0$ there is some (explicit) $\delta>0$ such hat $\nu_\lam$ has fractional derivatives of order $\delta$ in $L^2$, for almost all $\lam\in (1/2+\e,1)$. By the Sobolev embedding theorem, in particular this implies that $\nu_\lam$ has a density in $L^q$ for some $q=q(\e)>2$ for almost all $\lam\in (1/2+\e,1)$. Their result still relies on transversality techniques, which cannot go beyond $L^2$ for $\lam$ close to $1/2$.

Besides improving on the smoothness of the density, another natural line to pursue is to make the exceptional set of $\lam$ smaller. In the same article \cite{PeresSchlag00}, Peres and Schlag proved that for every $\e>0$,  there is an explicit $\delta>0$ such that
\[
\hdim\{\lam\in (1/2+\e, 1):\nu_\lam \text{ does not have an $L^2$ density } \} \le 1-\delta.
\]

Much more recently, the author \cite{Shmerkin14} (relying on deep work of Hochman \cite{Hochman14} that will be discussed in some detail below) proved that $\nu_\lam$ is absolutely continuous for all $\lam$ outside of a set of zero Hausdorff dimension. Moreover, in \cite{ShmerkinSolomyak16} it was shown that, again outside of a set of zero Hausdorff dimension of parameters, $\nu_\lam$ has a density in $L^q$ for some $q>1$ that is not explicit and depends on $\lam$.

These three lines of work yield somewhat complementary results: the stronger the information about the densities, the weaker the information about the exceptional set. They also leave open the question of what is the smallest natural function space that contains the density of $\nu_\lam$ for almost all $\lam$. In this article, we  prove:
\begin{thm} \label{thm:abc-cont-BCs}
\begin{enumerate}[label={\upshape(\roman*).}]
\item There exists a set $\mathcal{E}\subset (1/2,1)$ of zero Hausdorff dimension such that if $\lam\in (1/2,1)\setminus \mathcal{E}$, then $\nu_\lam$ has a density in $L^q$ for all finite $q>1$.
\item There exists a set $\mathcal{E}'\subset (1/\sqrt{2},1)$ of zero Hausdorff dimension such that if $\lam\in (1/\sqrt{2},1)\setminus \mathcal{E}'$, then $\nu_\lam$ has a continuous density.
\end{enumerate}
\end{thm}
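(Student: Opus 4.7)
The plan is to deduce both parts from the paper's main theorem, which (under non-overlap hypotheses that are established to fail only on a zero-Hausdorff-dimension set of parameters) identifies the $L^q$ dimension of a dynamically driven self-similar measure as the minimum of an appropriate ``$L^q$ similarity dimension'' and the ambient dimension $1$. The Bernoulli convolution $\nu_\lam$ is the self-similar measure of the homogeneous IFS $\{x\mapsto \lam x-1,\ x\mapsto \lam x+1\}$ with uniform weights; since contractions and weights are all equal, the $L^q$ similarity dimension coincides with the classical similarity dimension $\log 2/\log(1/\lam)$, which exceeds $1$ for every $\lam\in(1/2,1)$.

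For part~(i), fix $q>1$. The main theorem should give a set $\mathcal{E}_q\subset(1/2,1)$ of zero Hausdorff dimension outside of which $L^q$-dimension equals $1$, in the quantitative form
\[
\sum_I \nu_\lam(I)^q \le C_{\lam,q}\,2^{-n(q-1)}, \qquad n\ge 1,
\]
with $I$ running over dyadic intervals of length $2^{-n}$. Letting $f_n$ denote the conditional expectation of $\nu_\lam$ with respect to the dyadic $\sigma$-algebra at scale $2^{-n}$, this reads $\|f_n\|_{L^q}\le C_{\lam,q}^{1/q}$. By weak-$*$ compactness in $L^q$, a subsequence of $(f_n)$ converges weakly to some $f\in L^q$; since $f_n\,dx\to\nu_\lam$ in the distributional sense (martingale convergence), this $f$ is necessarily the density of $\nu_\lam$. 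Setting $\mathcal{E}=\bigcup_{q\in\N}\mathcal{E}_q$ yields a zero-dimensional exceptional set outside of which $\nu_\lam\in L^q$ for every integer $q$, and hence, by H\"older's inequality and the compact support of $\nu_\lam$, for every finite real $q>1$.

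For part~(ii), I would split the defining random series $\sum_{n\ge 0}\pm\lam^n$ into its even- and odd-indexed subsums to obtain the convolution identity
\[
\nu_\lam = \nu_{\lam^2} * (S_\lam)_*\nu_{\lam^2},
\]
where $S_\lam$ denotes dilation by $\lam$. Taking Fourier transforms gives $\wh{\nu_\lam}(\xi)=\wh{\nu_{\lam^2}}(\xi)\wh{\nu_{\lam^2}}(\lam\xi)$. For $\lam\in(1/\sqrt{2},1)$ we have $\lam^2\in(1/2,1)$, so if $\lam^2\notin\mathcal{E}$, part~(i) with $q=2$ gives $\nu_{\lam^2}\in L^2$, hence $\wh{\nu_{\lam^2}}\in L^2$ by Plancherel. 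Cauchy--Schwarz then yields
\[
\int_{\R}|\wh{\nu_\lam}(\xi)|\,d\xi\le \|\wh{\nu_{\lam^2}}\|_2\cdot\bigl\|\wh{\nu_{\lam^2}}(\lam\cdot)\bigr\|_2 = \lam^{-1/2}\|\wh{\nu_{\lam^2}}\|_2^2<\infty,
\]
so $\wh{\nu_\lam}\in L^1$ and $\nu_\lam$ has a continuous density by Fourier inversion. Taking $\mathcal{E}'=\{\lam\in(1/\sqrt{2},1):\lam^2\in\mathcal{E}\}$ works, since $\lam\mapsto\lam^2$ is bi-Lipschitz on this range and so preserves zero Hausdorff dimension.

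The only genuinely difficult step is the invocation of the main theorem itself: all the substance is absorbed into the quantitative $L^q$-sum bound together with the zero-dimensionality of the exceptional parameters, whose proof is the heart of the paper and rests on extending Hochman's inverse-theorem approach from entropy to R\'enyi entropies / $L^q$ norms. Granted that input, parts~(i) and~(ii) reduce, respectively, to weak compactness in $L^q$ and a one-line Cauchy--Schwarz estimate on the Fourier side.
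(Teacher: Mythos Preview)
Your argument for part~(ii) is correct and matches the paper's: the convolution identity $\nu_\lam=\nu_{\lam^2}*S_\lam\nu_{\lam^2}$ together with $\nu_{\lam^2}\in L^2$ gives continuity, whether one phrases it via Cauchy--Schwarz on the Fourier side or as ``$L^2*L^2\subset C_0$''.

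Part~(i), however, has a genuine gap. Theorem~\ref{thm:L-q-dim-dyn-ssm} asserts only that
\[
\lim_{m\to\infty}\ -\frac{\log\sum_{I\in\DD_m}\nu_\lam(I)^q}{(q-1)m}=1,
\]
i.e.\ that for every $\e>0$ one has $\sum_I\nu_\lam(I)^q\le C_{\lam,q,\e}\,2^{-m(q-1)(1-\e)}$. It does \emph{not} give the uniform bound $\sum_I\nu_\lam(I)^q\le C_{\lam,q}\,2^{-m(q-1)}$ that your weak-compactness step requires; with the $\e$ present, $\|f_m\|_q$ may blow up like $2^{\e m}$ and no $L^q$ limit can be extracted. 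More to the point, $D(\mu,q)=1$ (even for all $q<\infty$) does not imply $\mu\in L^q$, nor even absolute continuity: the paper itself remarks after Theorem~\ref{thm:abs-continuity} that an $L^q$ density forces $D(\mu,q)=1$, and this implication does not reverse.

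The paper closes this gap with an additional, indispensable input: the Erd\H{o}s--Kahane theorem, which gives power Fourier decay $|\wh{\nu_{\lam}}(\xi)|\le C(\lam)|\xi|^{-\delta(\lam)}$ off a zero-dimensional set of $\lam$. One then writes $\nu_\lam=\nu_\lam'*\nu_{\lam^k}$, where $\nu_\lam'=*_{k\nmid i}\,S_{\lam^i}\Delta$; Theorem~\ref{thm:L-q-dim-dyn-ssm} (applied to a slightly modified model) yields $D(\nu_\lam',q)=1$ for $k$ large, while $\nu_{\lam^k}$ has power Fourier decay for infinitely many $k$. A separate result (Theorem~\ref{thm:abs-continuity}, relying on \cite[Theorem 4.4]{ShmerkinSolomyak16}) then upgrades ``$L^q$ dimension $1$'' for one convolution factor plus ``Fourier decay'' for the other to an honest $L^q$ density for the convolution. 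Your proposal omits the Fourier-decay ingredient entirely, and without it the conclusion does not follow.
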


The new contribution is part (i); part (ii) then follows by a standard argument. In turn, part (i) follows from a new result about \emph{dimensions} of Bernoulli convolutions, together with a result from \cite{ShmerkinSolomyak16}. To state the dimensional result, we define the following set (which appears already in \cite{Hochman14}).

\begin{defn} \label{def:ssc-BCs}
Let $\mathcal{P}_n$ be the family of all non-zero polynomials of degree at most $n$ and coefficients in $\{-1,0,1\}$. Let
\begin{equation*} 
\mathcal{E} = \left\{ \lam\in (1/2,1):  \frac1n \log\left(\min_{P\in\mathcal{P}_n} |P(\lam)|\right) \to -\infty \right\} .
\end{equation*}
\end{defn}
It is shown in \cite{Hochman14} that $\mathcal{E}$ has zero packing dimension (in particular, zero Hausdorff dimension) and does not contain any algebraic number which is not a root of a polynomial in $\mathcal{P}_n$ for some $n$. In particular, no rational number in $(1/2,1)$ is in $\mathcal{E}$.

\begin{thm} \label{thm:infinity-dim-BCs}
Let $\lam\in (1/2,1)\setminus \mathcal{E}$. Then for every $\e>0$ there is $C=C(\e,\lam)>0$ such that
\[
\nu_\lam(B(x,r)) \le C \, r^{1-\e}  \quad\text{for all } x\in \R, r\in (0,1].
\]
\end{thm}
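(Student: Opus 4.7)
The strategy is to derive the theorem as a consequence of the main $L^q$-dimension result for dynamically driven self-similar measures promised in the abstract and introduction. The Bernoulli convolution $\nu_\lam$ is the self-similar measure for the homogeneous IFS $\{x\mapsto \lam x-1,\; x\mapsto \lam x+1\}$ with uniform weights $(1/2,1/2)$; in particular, it is one of the simplest dynamically driven self-similar measures. The hypothesis $\lam\notin\mathcal{E}$ encodes exactly the form of ``exponential separation along a subsequence'' used by Hochman and adapted by the author: along some $n_k\to\infty$, the minimum distance between two distinct length-$n_k$ compositions of the IFS is at least $c^{n_k}$ for some $c>0$, because any such difference has the form $2P(\lam)$ with $P\in\cP_{n_k}\setminus\{0\}$.

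Applied to $\nu_\lam$, the main theorem should yield that the $L^q$-dimension $D_q(\nu_\lam)$ equals $1$ for every $q\in(1,\infty)$. Concretely, for each such $q$ and each $\e>0$ there is a constant $C_{q,\e}>0$ with
\[
\sum_{I\in\DD_r}\nu_\lam(I)^q \le C_{q,\e}\, r^{(q-1)(1-\e)} \quad\text{for all } r\in (0,1],
\]
where $\DD_r$ is the partition of $\R$ into half-open intervals of length $r$. The upper bound $D_q\le 1$ is automatic because $\nu_\lam$ is supported on an interval; the nontrivial content is the matching lower bound, which is where the main theorem, and the hypothesis $\lam\notin\mathcal{E}$, enter.

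Granted this $L^q$ information for every finite $q$, the passage to the pointwise estimate is a standard bootstrap. Given $\e>0$, choose $q$ large enough that $(q-1)(1-\e/2)/q>1-\e$. For any single $I\in\DD_r$,
\[
\nu_\lam(I)^q \le \sum_{J\in\DD_r}\nu_\lam(J)^q \le C_{q,\e/2}\, r^{(q-1)(1-\e/2)},
\]
so $\nu_\lam(I)\le C_{q,\e/2}^{1/q}\, r^{1-\e}$. Any ball $B(x,r)$ meets at most three intervals of $\DD_r$, giving $\nu_\lam(B(x,r))\le 3 C_{q,\e/2}^{1/q}\, r^{1-\e}$, in the desired form.

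The real obstacle is the first step: invoking the main $L^q$-dimension theorem for the specific measure $\nu_\lam$ and verifying that its hypotheses reduce to $\lam\notin\mathcal{E}$. The subsequent extraction of the $L^\infty$-type pointwise bound from $L^q$ control is elementary but crucially uses the fact that $D_q(\nu_\lam)=1$ holds for \emph{every} finite $q$, not merely for some fixed $q$.
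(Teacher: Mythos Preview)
Your proposal is correct and follows essentially the same route as the paper: apply the main $L^q$-dimension theorem (Theorem~\ref{thm:L-q-dim-dyn-ssm}) to $\nu_\lam$ viewed as a dynamically driven self-similar measure with trivial one-point dynamics, verify that $\lam\notin\mathcal{E}$ is exactly the exponential separation hypothesis, conclude $D(\nu_\lam,q)=1$ for all $q\in(1,\infty)$, and then extract the Frostman exponent via the elementary argument you give (packaged in the paper as Lemma~\ref{lem:Lq-dim-to-Frostman-exp}). The paper records the intermediate step as Theorem~\ref{thm:dim-ssm} for general homogeneous self-similar measures, but the logic for $\nu_\lam$ is identical to yours.
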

It is known (see \cite{FengHu09}) that for any $\lam$, the limit
\[
\lim_{r\downarrow 0} \frac{\log\nu_\lam(B(x,r))}{\log r}
\]
exists and is constant $\nu_\lam$-almost everywhere; this constant value is denoted $\dim(\nu_\lam)$ and equals the Hausdorff, packing and entropy dimensions of $\nu_\lam$. In \cite{Hochman14}, it is proved that if $\lam\in (1/2,1)\setminus\mathcal{E}$, then $\dim(\nu_\lam)=1$. Theorem \ref{thm:infinity-dim-BCs} strengthens this, since it implies in particular that
\[
\liminf_{r\downarrow 0} \frac{\log\nu_\lam(B(x,r))}{\log r}  \ge 1
\]
for \emph{all} (rather than almost all) $x$. On the other hand, for any locally finite measure $\mu$ on the real line it holds that
\[
\limsup_{r\downarrow 0} \frac{\log\mu(B(x,r))}{\log r}  \le 1
\]
for $\mu$ almost all $x$. Nevertheless, for any $\lam \in (1/2,1)$ there are two points $x$ (the boundary points of the support of $\nu_\lam$) for which
\[
\lim_{r\downarrow 0}  \frac{\log\nu_\lam(B(x,r))}{\log r}  = \frac{\log 2}{\log(1/\lam)}>1,
\]
and if $\lam$ is close to $1/2$ there is a positive dimensional set of such points, see \cite[Theorem 1.5]{JSS11}. These remarks indicate that Theorem \ref{thm:infinity-dim-BCs} is optimal in a number of ways.

We obtain similar results for more general self-similar measures, including biased Bernoulli convolutions. We compute the $L^q$ dimension of arbitrary self-similar measures on the real line under Hochman's exponential separation assumption: see Theorems \ref{thm:dim-ssm} and \ref{thm:dim-general-ssm}. We also establish absolute continuity with $L^q$ density for general parametrized families of homogeneous self-similar measures, outside of a codimension $1$ set of possible exceptions in the super-critical region. See Theorem \ref{thm:abs-cont-parametrized-ssm} for details.

Very recently, some striking progress on the dimensions and absolute continuity of Bernoulli convolutions for algebraic parameters was achieved by P. Varj\'{u} \cite{Varju16} and E. Breuillard and P. Varj\'{u} \cite{BreuillardVarju15}. The latter article also uncovers some deep connections between Bernoulli convolutions, the famous Lehmer's conjecture from number theory, and the growth of subgroups of linear groups. This line of work goes in a transversal direction to ours: while they obtain new information for many algebraic (and not only) parameters, which our work is far from being able to replicate, their methods do not seem to be able to give information about Frostman exponents or $L^q$ densities for any $q>1$.

\subsection{\texorpdfstring{$L^q$}{Lq} dimensions, Frostman exponents,  and the size of fibers}

At first sight, Theorems \ref{thm:Furstenberg} and \ref{thm:infinity-dim-BCs} may appear to have little in common. However, we will obtain both as rather direct consequences of a single general result. Our common approach is based on \emph{$L^q$ dimensions}. Let $\mu$ be a Borel probability measure on $[0,1]$. We denote the family of $2^{-m}$-intervals $\{ [j 2^{-m}, (j+1) 2^{-m})\}$, $j\in\Z$ by $\DD_m$. If $q>1$, then
\[
\frac{\log \sum_{I\in\DD_m} \mu(I)^q}{(1-q)m} \in [0,1],
\]
for any $m\in\N$, as can be easily seen from H\"{o}lder's inequality. Here and throughout the article, the logarithms are to base $2$. Moreover, a small value indicates that $\mu$ is nearly concentrated on few intervals in $\DD_m$, while a value close to $1$ implies that $\mu(I), I\in\DD_m$ is a fairly uniform probability vector. Thus, it makes sense to consider the limit as $m\to\infty$ of the left-hand side as a notion of dimension of $\mu$.

\begin{defn}
Let $q\in (1,\infty)$. If $\mu$ is a probability measure on $\R$ with bounded support, then
\[
\tau(\mu,q) = \tau_\mu(q) = \liminf_{m\to\infty} -\frac{\log \sum_{I\in\DD_m} \mu(I)^q}{m}
\]
is the \emph{$L^q$ spectrum} of $\mu$, and
\[
D(\mu,q) =  D_\mu(q) = \frac{\tau_\mu(q)}{q-1}
\]
is the \emph{$L^q$ dimension} of $\mu$.
\end{defn}
It is also possible to define $L^q$ dimensions for other values of $q$, but we will not need to do so here. It is well-known that, for a fixed measure $\mu$, the map $q\mapsto D(\mu,q)$ is continuous and decreasing on $(1,\infty)$. Moreover,
\begin{equation*} 
\hdim\mu \ge \lim_{q\downarrow 1} D(\mu,q).
\end{equation*}
See \cite{FLR02} for proofs of these standard facts.

If $\mu$ is a finite measure on a metric space $X$, we say that $\mu$ has \emph{Frostman exponent} $s$ if $\mu(B(x,r)) \le C\,r^s$ for some $C>0$ and all $x\in X,r>0$. It is easy to see that $L^q$ dimensions for large $q$ provide information about Frostman exponents:
\begin{lemma} \label{lem:Lq-dim-to-Frostman-exp}
Let $\mu$ be a probability measure on a compact interval of $\R$. If $D(\mu,q)> s$ for some $q\in (1,\infty)$, then there is $r_0>0$ such that
\[
\mu(B(x,r)) \le \, r^{(1-1/q)s} \text{ for all } x\in\R, r\in (0,r_0].
\]
\end{lemma}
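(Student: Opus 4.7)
The plan is to bound $\mu(B(x,r))$ by the maximum mass of a dyadic cell at the appropriate scale, then bound that maximum by the $L^q$ sum, and finally unpack the definition of $D(\mu,q)$.

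First, pick $s'$ with $s < s' < D(\mu,q)$, which is possible since $D(\mu,q) > s$ by hypothesis. Then $\tau_\mu(q) > (q-1)s'$, so by the definition of $\tau_\mu$ as a $\liminf$, there exists $m_0$ such that for every $m \ge m_0$,
\[
\sum_{I \in \DD_m} \mu(I)^q \le 2^{-m(q-1)s'}.
\]
Using the elementary inequality $\max_I \mu(I) \le \bigl(\sum_I \mu(I)^q\bigr)^{1/q}$ (i.e., $\|\cdot\|_\infty \le \|\cdot\|_q$ for counting measure), we obtain
\[
\max_{I \in \DD_m} \mu(I) \le 2^{-m(1-1/q)s'}.
\]

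Next, given $r \in (0,1]$, choose the unique integer $m$ with $2^{-m-1} < r \le 2^{-m}$. The ball $B(x,r)$ is an interval of length $2r \le 2^{1-m}$, so it intersects at most three elements of $\DD_m$. Assuming $r$ is small enough that $m \ge m_0$, this yields
\[
\mu(B(x,r)) \le 3 \cdot \max_{I \in \DD_m} \mu(I) \le 3 \cdot 2^{-m(1-1/q)s'} \le 3 \cdot (2r)^{(1-1/q)s'} = C \, r^{(1-1/q)s'},
\]
where $C = 3 \cdot 2^{(1-1/q)s'}$ depends only on $q$ and $s'$.

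Finally, to absorb the constant $C$ and convert the exponent $s'$ into $s$, write
\[
C \, r^{(1-1/q)s'} = \bigl( C \, r^{(1-1/q)(s'-s)} \bigr) \cdot r^{(1-1/q)s}.
\]
Since $s' > s$ and $(1-1/q) > 0$, the factor $C \, r^{(1-1/q)(s'-s)}$ tends to $0$ as $r \downarrow 0$, hence is at most $1$ for all $r \in (0,r_0]$ for some $r_0 > 0$ (chosen also small enough to guarantee $m \ge m_0$ above). This gives the desired bound $\mu(B(x,r)) \le r^{(1-1/q)s}$.

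The argument is essentially bookkeeping; there is no real obstacle. The only minor subtlety is that the statement asks for a clean exponential bound with no multiplicative constant, which forces the trick of working with a slightly larger exponent $s' > s$ in order to absorb the constants $3 \cdot 2^{(1-1/q)s'}$ into a small power of $r$.
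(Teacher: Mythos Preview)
Your proof is correct and follows essentially the same approach as the paper: choose $s'>s$, bound each dyadic mass via $\mu(J)^q\le \sum_I\mu(I)^q\le 2^{-m(q-1)s'}$, cover a ball by $O(1)$ dyadic intervals, and absorb the resulting constant using the gap $s'-s$. The paper's version is more terse, but the ideas are identical.
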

\begin{proof}
If $D(\mu,q)>s$, then there is $s'>s$ such that for all large enough $m$ and each $J\in\DD_m$,
\[
\mu(J)^q \le \sum_{I\in\DD_m} \mu(I)^q \le 2^{-m (q-1) s'}.
\]
Since any ball can be covered by $O(1)$ dyadic intervals of size smaller than the radius, we get that if $r$ is sufficiently small then
\[
\mu(B(x,r)) \le C\, r^{(1-1/q)s'},
\]
where $C$ is independent of $x$ and $r$. This gives the claim.
\end{proof}

Hence, in order to establish Theorem \ref{thm:infinity-dim-BCs} it is enough to show that, under the hypotheses of the theorem, $D(\nu_\lam,q)=1$ for arbitrarily large $q$; and this is what we will do.

Next, we show how Frostman exponents (and therefore, also $L^q$ dimensions) of projected measures give information about the size of fibers. We recall the definition of upper box-counting (or Minkowski) dimension in a totally bounded metric space $(X,d)$. Given $A\subset X$, let $N_\e(A)$ denote the maximal cardinality of an $\e$-separated subset of $A$. The upper box-counting dimension of $A$ is then defined as
\[
\ubdim(A) = \limsup_{\e\downarrow 0} \frac{\log(N_\e(A))}{\log(1/\e)}.
\]
\begin{lemma} \label{lem:Frostman-exp-to-small-fiber}
Let $X$ be a compact metric space, and suppose $\pi:X\to \R$ is a Lipschitz map. Let $\mu$ be a probability measure on $X$ such that $\mu(B(x,r)) \ge r^s$ for all $x\in X$ and all sufficiently small $r$ (independent of $x$). If $\pi\mu$ has Frostman exponent $\alpha$, then there exists $C>0$ such that for all balls $B_\e$ of radius $\e$ in $\R$, any $\e$-separated subset of $\pi^{-1}(B_\e)$ has size at most $C\e^{-(s-\alpha)}$.

In particular, for any $y\in\R$,
\[
\ubdim(\pi^{-1}(y)) \le s-\alpha
\]
\end{lemma}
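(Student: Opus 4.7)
My plan is a straightforward volume-packing argument: bound the number of $\e$-separated points from below through the lower Ahlfors-type bound on $\mu$, and from above through the Frostman exponent of the push-forward $\pi\mu$, using the Lipschitz property of $\pi$ as the bridge.

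Concretely, let $L$ denote a Lipschitz constant for $\pi$, fix a ball $B_\e\subset\R$ of radius $\e$ centered at some $y_0$, and let $\{x_1,\ldots,x_N\}$ be an $\e$-separated subset of $\pi^{-1}(B_\e)$. The first step is to observe that the balls $B(x_i,\e/2)$ in $X$ are pairwise disjoint, so
\[
\sum_{i=1}^N \mu\bigl(B(x_i,\e/2)\bigr) \;=\; \mu\!\left(\bigcup_{i=1}^N B(x_i,\e/2)\right).
\]
For $\e$ small enough the lower bound on $\mu$ gives $\mu(B(x_i,\e/2))\ge (\e/2)^s$, yielding a lower estimate of the form $N(\e/2)^s$.

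For the upper estimate, the Lipschitz property implies $\pi\bigl(B(x_i,\e/2)\bigr)\subset B(\pi(x_i),L\e/2)$, and since $\pi(x_i)\in B_\e$, all these images lie inside the enlarged ball $B':=B(y_0,(1+L/2)\e)$. Hence
\[
\bigcup_{i=1}^N B(x_i,\e/2) \;\subset\; \pi^{-1}(B'),
\]
and the Frostman hypothesis on $\pi\mu$ gives
\[
\mu\bigl(\pi^{-1}(B')\bigr) \;=\; \pi\mu(B') \;\le\; C\,\bigl((1+L/2)\e\bigr)^\alpha.
\]
Chaining these two estimates and absorbing all constants depending only on $L$, $s$, $\alpha$ and $C$ into a single constant yields $N\le C'\e^{-(s-\alpha)}$, which is the first assertion.

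For the box-counting statement, apply the above to $y\in\R$ and $B_\e=B(y,\e)$: any $\e$-separated subset of $\pi^{-1}(y)\subset\pi^{-1}(B_\e)$ has cardinality at most $C'\e^{-(s-\alpha)}$, so $N_\e(\pi^{-1}(y))\le C'\e^{-(s-\alpha)}$ and the definition of upper box dimension gives $\ubdim(\pi^{-1}(y))\le s-\alpha$. There is no genuine obstacle here; the only point that requires a little care is that $\e$ must be small enough for the lower mass bound on $\mu$ to apply uniformly in $x$, which is precisely why the conclusion is stated for sufficiently small $\e$ (equivalently, for $\e\le\e_0$ with an implicit constant depending on $\mu$).
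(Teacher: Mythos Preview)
Your proof is correct and follows essentially the same volume-packing argument as the paper: use disjointness of the balls $B(x_i,\e/2)$ together with the lower mass bound to get $N(\e/2)^s$ from below, and use the Lipschitz property to see that the union projects into an $O(\e)$-ball, then apply the Frostman bound on $\pi\mu$ from above. The paper's proof is more terse (absorbing the Lipschitz constant into the $O(\e)$), but the idea is identical.
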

\begin{proof}
Let $(x_j)_{j=1}^M$ be an $\e$-separated subset of $\pi^{-1}(B_\e)$ with $\e$ small. Then
\[
\mu\left(\bigcup_{j=1}^M B(x_j,\e/2)\right) \ge M (\e/2)^s,
\]
while the set in question projects onto an interval of size at most $O(\e)$. Hence $M=O(\e^{\alpha-s})$, giving the claim.
\end{proof}

\subsection{A class of dynamically-driven self-similar measures}
\label{subsec:DDSSM}

It is easy to see that in order to prove Theorem \ref{thm:Furstenberg}, it is enough to consider the case in which $A$ is a $p$-Cantor set and $B$ is a $q$-Cantor set, that is, $A$ is the set of points whose base $p$-expansion digits lie in some set $D_1\subset\{0,1,\ldots,p-1\}$, and likewise for $B$ and a set $D_2\subset \{0,1,\ldots,q-1\}$. Let $\Delta_i = \frac1{|D_i|} \sum_{d\in D_i} \delta_d$, and let $\eta_1$, $\eta_2$ be the distributions of the random sums $\sum_{i=1}^\infty X_i p^{-i}$, $\sum_{i=1}^\infty Y_i q^{-i}$, respectively, where $X_i$ are i.i.d. random variables with distribution $\Delta_1$, and $Y_i$ are i.i.d. random variables, also independent of the $X_i$, with distribution $\Delta_2$. Finally, set $\mu=\eta_1\times\eta_2$.

It is easy to see that $\mu(B(x,r)) = \Theta(r^{\hdim A+\hdim B})$ for $x\in\supp(\mu)=A\times B$. Our goal is to apply Lemma \ref{lem:Frostman-exp-to-small-fiber} to $\mu$ and, in light of Lemma \ref{lem:Lq-dim-to-Frostman-exp}, we will do this by investigating the $L^q$ dimension of projections of $\mu$.  Up to a smooth change of coordinates in the parametrization, and an affine change of coordinates in the projections, the family of linear projections of $\mu$ in directions with strictly positive slope is given by
\[
\{ \mu_x := \eta_1 * S_{e^x} \eta_2: x\in \R \} ,
\]
where $S_a(x)=ax$ scales by $a$. Note that $\mu_x$ is an infinite convolution of Bernoulli random variables, since $\eta_1,\eta_2$ are. Unlike $\eta_1,\eta_2$, the measures $\mu_x$ are not self-similar because $\eta_1,\eta_2$ are constructed with different contraction ratios. However, it is still possible to express $\mu_x$ in a way that resembles self-similarity, but with the geometry at different scales driven by a dynamical system. Namely, suppose $p<q$ and let $X=[0,\log q)$, $\mathbf{T}:X\to X$, $x\mapsto x+\log p\bmod(\log q)$. Moreover,  for each $x\in X$, let $\Delta(x)$ be the finitely supported measure given by
\[
\Delta(x) = \left\{
\begin{array}{ll}
  \Delta_1* S_{e^x}\Delta_2 & \text{if } x\in [0,\log p) \\
  \Delta_1 & \text{if } x\in [\log p,\log q)
\end{array}
\right..
\]
It is then easy to see that $\mu_x$ is the distribution of the random sum $\sum_{i=1}^\infty Z_i p^{-i}$, where the $Z_i$ are independent and have distribution $\Delta(\mathbf{T}^i x)$. Indeed, let
\[
n'(x)=|\{j\in\{1,\ldots,n\}: \mathbf{T}^j(x)\in [0,\log p)\}|.
\]
Note that
\[
\mathbf{T}^n(x)= x+  n \log p - n'(x) \log q,
\]
so that
\[
e^{\mathbf{T}^n(x)}p^{-n} = e^x q^{-n'(x)}.
\]
Hence the distribution $\mu_{x,n}$ of $\sum_{i=1}^n Z_i p^{-i}$ is equal to the distribution of
\[
\sum_{i=1}^n X_i p^{-i}+  \sum_{i=1}^{n'(x)} e^x Y_i q^{-i},
\]
where $X_i, Y_i$ are independent and have distribution $\Delta_1,\Delta_2$ respectively. This shows that $\mu_{x,n}\to\mu_x$ weakly.

Although in different language, this decomposition of $\mu_x$ can be traced back to Furstenberg  \cite{Furstenberg70}, and was also used more explicitly in \cite{NPS12} to study the $L^2$ dimensions of $\mu_x$.

Based on the above discussion, we introduce the following setup. Let $\mathcal{A}$ be the collection of all probability measures supported on a finite set, i.e.
\[
\mathcal{A} = \left\{ \sum_{i=1}^{N} p_i \delta(t_i) : N\in\N, p_i> 0, \sum_i p_i =1 , t_i\in\R\right\}.
\]
(We denote a delta mass at $t$ either by $\delta_t$ or $\delta(t)$.) We topologize $\mathcal{A}$ in the natural way: it consists of countably many connected components, corresponding to the number of atoms $N$, and for each $N$ it inherits the topology from $\R^{2N}$.

If $\mu$ is a measure on a metric space $X$ and $f:X\to Y$ is a Borel map, then we denote by $f\mu$ the push-forward measure: $f\mu(A)=\mu(f^{-1}A))$. Fix $\lam\in (0,1)$. If $\Delta_i$ is a sequence of measures in $\mathcal{A}$, all supported on a fixed compact interval, then we can form the infinite Bernoulli convolution
\[
\mu = *_{i=0}^\infty S_{\lam^i} \Delta_i.
\]
(Equivalently, $\mu$ is the distribution of the random sum $\sum_{i=0}^\infty \lam^i Z_i$, where the $Z_i$ are independent and have distribution $\Delta_i$.) We are interested in the situation in which the $\Delta_i$ are generated dynamically. Let $(X,\mathbf{T})$ be a dynamical system, and suppose $\Delta:X\to \mathcal{A}$ is a map such that, for some compact interval $I_0$, $\supp(\Delta(x))\subset I_0$ for all $x\in X$. Then we can consider the family of measures
\begin{equation} \label{eq:def-mu-x}
\mu_x = *_{i=0}^\infty S_{\lam^i} \Delta(\mathbf{T}^i x), \quad x\in X.
\end{equation}
These measures enjoy a dynamical version of self-similarity. Write
\begin{equation} \label{eq:def-mu-x-n}
\mu_{x,n} = *_{i=0}^{n-1} S_{\lam^i} \Delta(\mathbf{T}^i x).
\end{equation}
Then, clearly,
\begin{equation} \label{eq:mu-x-self-similarity}
\mu_x = \mu_{x,n} * S_{\lam^n} \mu_{\mathbf{T}^n x}.
\end{equation}

We will call the tuple $\mathcal{X}=(X,\mathbf{T},\Delta,\lam)$ a \emph{model} generating the measures $\mu_x$. We will also refer to the measures $\mu_x$ themselves as \emph{dynamically driven self-similar measures}.

Trivially, Bernoulli convolutions also fall into this setting, with $X$ the one-point space.

\subsection{\texorpdfstring{$L^q$}{Lq} dimensions of dynamically driven self-similar measures}

In order to prove Theorem \ref{thm:Furstenberg} along the lines we have been describing, we need to derive estimates on the $L^q$ dimensions of $\eta_1 * S_{e^x} \eta_2$ for \emph{all} values of $x$. As a matter of fact, by self-similarity, it is enough to deal with all $x$ in some nonempty open set, but it is not enough to gain information for almost all values of $x$. Note that the underlying dynamical system $(X,\mathbf{T})$ is an irrational rotation on the circle (thanks to $p$ and $q$ being multiplicatively independent) while, in the case of Bernoulli convolutions, $(X,\mathbf{T})$ is the trivial one-point system. In the general case of dynamically driven self-similar measures generated by a model $(X,\mathbf{T},\Delta,\lam)$, if one hopes to gain any information for all $x\in X$, it is reasonable to impose strong rigidity and continuity assumptions on the dynamics. The next definition, clearly satisfied by our two main classes of examples, introduces the kind of regularity that will be needed in the abstract setting. Recall that a Borel transformation $T:X\to X$ is called \emph{uniquely ergodic} if there exists exactly one Borel probability measure $\PP$ on $X$ such that $T\PP=\PP$.

\begin{defn} \label{def:pleasant}
We say that a model $(X,\mathbf{T},\Delta,\lam)$ is \emph{pleasant} if $X$ is a compact metric space, $\mathbf{T}$ is a uniquely ergodic transformation on $X$, the measures $\mu_x$ are all non-atomic and supported on some fixed bounded interval, and the map $x\mapsto \mu_{x}$ is continuous (in the weak topology), outside of a null set (with respect to the unique invariant measure).
\end{defn}
In most of our applications, $X$ will equal either the trivial group $\{0\}$ or the circle, and in all applications $X$ will be a torus or the product of a torus and a cyclic group. In all cases, $\mathbf{T}$ will be a translation on $X$. We recall that if $X$ is a compact Abelian group, and $\mathbf{T}(x)=x+y$ is translation by $y\in X$, then $\mathbf{T}$ is uniquely ergodic if and only if the orbit $\{ n y:n\in\N\}$ is dense in $X$. See e.g. \cite[Theorem 4.14]{EinsiedlerWard11}.

We will also need to impose a separation condition, albeit an extremely weak one.
\begin{defn} 
Let $\mathcal{X}=(X,\mathbf{T},\Delta,\lam)$ be a pleasant model with unique invariant measure $\mathbb{P}$. We say that $\mathcal{X}$ has \emph{exponential separation} if for $\mathbb{P}$-almost all $x$ there is $R>0$ such that the following holds for infinitely many $n$: all the atoms of $\mu_{x,n}$ are distinct and $\lam^{R n}$-separated. By the atoms of $\mu_{x,n}$ being distinct we mean that
\[
|\supp(\mu_{x,n})| = \prod_{i=0}^{n-1} |\supp(\Delta(\mathbf{T}^i x))|,
\]
i.e. there are no exact coincidences among the atoms that make up $\supp(\mu_{x,n})$.
\end{defn}
This definition coincides with the notion of (lack of) super-exponential separation introduced in \cite{Hochman14} in the case of self-similar measures (i.e. when $X$ is a one-point set). As will become clear later, if $X$ is infinite, then under very mild non-degeneracy assumptions on the map $\Delta$, exponential separation holds almost automatically.

The following is the main result of the paper, from which more general versions of Theorems \ref{thm:Furstenberg},  \ref{thm:abc-cont-BCs} and \ref{thm:infinity-dim-BCs}, as well as other applications, will follow.

\begin{thm} \label{thm:L-q-dim-dyn-ssm}
Let $(X,\mathbf{T},\Delta,\lam)$ be a pleasant model with exponential separation, and denote the unique invariant measure by $\mathbb{P}$. Assume further that the map $x\mapsto \Delta(x)$ is continuous $\PP$-almost everywhere, and the number of atoms of $\Delta(x)$ is uniformly bounded. Then for all $q\in (1,+\infty)$
\begin{equation} \label{eq:main-thm}
\lim_{m\to\infty} -\frac{\log\sum_{I\in\DD_m} \mu_x(I)^q}{(q-1)m}  = \min\left( \frac{\int_X \log\|\Delta(x)\|_q^q \,d\mathbb{P}(x)}{(q-1)\log\lam},1\right),
\end{equation}
uniformly in $x\in X$. That is, the limit in the definition of $L^q$ dimension of $\mu_x$ exists and equals the constant value on the right-hand side, for all $x\in X$.
\end{thm}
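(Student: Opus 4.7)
The plan is to separately prove $\limsup \le \min(F,1)$ and $\liminf \ge \min(F,1)$ for $Q_m(x) := -(q-1)^{-1} m^{-1} \log \sum_{I \in \DD_m} \mu_x(I)^q$, where $F := \int_X \log\|\Delta(x)\|_q^q \, d\PP(x) / ((q-1)\log\lam)$. For each $m$ let $n=n(m)$ be the integer with $\lam^n \asymp 2^{-m}$. By the self-similarity identity \eqref{eq:mu-x-self-similarity} and the uniform bound on the diameter of $\supp(\mu_{\mathbf{T}^n x})$, convolution with $S_{\lam^n}\mu_{\mathbf{T}^n x}$ only smears mass across a $O(2^{-m})$-neighborhood, so $\sum_I \mu_x(I)^q \asymp \sum_I \mu_{x,n}(I)^q$ at scale $2^{-m}$. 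This reduces everything to understanding the $L^q$-spectrum of the discrete measures $\mu_{x,n}$ at their natural scale $\lam^n$.

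The upper bound on $\limsup Q_m(x)$ is the easy direction. The convexity inequality $(a+b)^q \ge a^q + b^q$ for $q \ge 1$ implies that merging atoms can only inflate the $L^q$-sum, so $\sum_I \mu_{x,n}(I)^q \ge \|\mu_{x,n}\|_q^q$; writing $\mu_{x,n}(\{v\})$ as a sum over tuples $(a_i)$ with $\sum_i \lam^i a_i = v$ and applying the same convexity gives
\[
\|\mu_{x,n}\|_q^q \;\ge\; \prod_{i=0}^{n-1} \|\Delta(\mathbf{T}^i x)\|_q^q .
\]
Unique ergodicity of $\mathbf{T}$, combined with $\PP$-a.e.\ continuity of $x \mapsto \log\|\Delta(x)\|_q^q$ and the uniform boundedness of the number of atoms (which gives a uniform lower bound on $\|\Delta(x)\|_q^q$), ensures that the Birkhoff sum $\frac1n \sum_{i=0}^{n-1} \log\|\Delta(\mathbf{T}^i x)\|_q^q$ converges to $\int \log\|\Delta\|_q^q\,d\PP$ uniformly in $x$. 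This yields $\limsup_m Q_m(x) \le F$ uniformly, while the trivial bound $Q_m(x) \le 1 + o(1)$ from H\"older's inequality (applied to the $O(2^m)$ dyadic intervals intersecting $\supp(\mu_x)$) gives $\limsup_m Q_m(x) \le 1$. Together, $\limsup_m Q_m(x) \le \min(F,1)$ uniformly.

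The matching lower bound $\liminf_m Q_m(x) \ge \min(F,1)$ is the heart of the proof and constitutes the $L^q$-analog of Hochman's entropy-dimension argument. Assume for contradiction that along a subsequence $m_j \to \infty$ one has $Q_{m_j}(x) \le s$ for some $s < \min(F,1)$; translated via the reduction above, this means $\|\mu_{x,n_j}\|_q^q$ is exponentially larger than $\prod_{i=0}^{n_j-1}\|\Delta(\mathbf{T}^i x)\|_q^q$. Decompose $\mu_{x,n_j}$ into its dyadic convolution blocks $\mu_{x,k} * S_{\lam^k}\mu_{\mathbf{T}^k x, n_j-k}$. Since $\|\mu * \nu\|_q^q \in [\|\mu\|_q^q \|\nu\|_q^q,\, \|\mu\|_q^q]$ for probability measures (Young's inequality giving the upper endpoint, and $(a+b)^q \ge a^q + b^q$ giving the lower), the hypothesized failure of decay propagates: by pigeonhole there must be many intermediate scales $k$ at which convolution with the subsequent block fails to decrease the $L^q$-norm by the amount $\|S_{\lam^k}\mu_{\mathbf{T}^k x, n_j-k}\|_q^q$ predicts. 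The central new ingredient---an inverse theorem for the decay of $L^q$-norms under convolution, proved via the asymmetric Balog--Szemer\'edi--Gowers theorem of Tao--Vu together with Bourgain-style additive-combinatorial constructions---then forces, at these scales, one factor to be essentially concentrated near a single atom and the other essentially supported on an approximate arithmetic progression.

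Iterating this structural conclusion across all scales $1 \le k \le n_j$ and averaging along the $\mathbf{T}$-orbit via invariance of $\PP$ produces heavy clustering of the atoms of $\mu_{x,n_j}$ at scales much coarser than $\lam^{Rn_j}$. By choosing $n_j$ from the subsequence furnished by Definition~\ref{def:exponential-separation}, this clustering is incompatible with the genuine $\lam^{Rn_j}$-separation of the atoms, yielding the contradiction and thus the lower bound for $\PP$-almost every $x$. Finally, the $\PP$-a.e.\ continuity of $x \mapsto \mu_x$, together with compactness of $X$ and the uniform convergence provided by unique ergodicity, upgrades this to a uniform-in-$x$ statement for every $x \in X$, matching the upper bound and establishing the claimed limit. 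The main obstacle throughout is the precise formulation and proof of the $L^q$-inverse theorem with the correct quantitative dependence, and the delicate multiscale bookkeeping needed to convert its structural output into a sharp enough contradiction with exponential separation.
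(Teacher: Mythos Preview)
Your upper bound is correct and matches the paper. The lower bound sketch, however, has a genuine gap: the mechanism you describe is not the one that actually works, and it is not clear how to make yours go through.

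The paper does \emph{not} argue by deriving ``heavy clustering of atoms'' and contradicting separation directly. Instead it introduces the subadditive cocycle $\phi_n^q(x)=\|\mu_x^{(m(n))}\|_q^q$ and the limit function $T(q)$ (Proposition~\ref{prop:cocycle}), then uses a one-sided subadditive ergodic theorem for uniquely ergodic systems (Lemma~\ref{lem:uniform-convergence}) to get $\liminf Q_m(x)\ge T(q)/(q-1)$ \emph{uniformly} in $x$. The task becomes proving $T(q)=\min(F(q-1),q-1)$ for each $q$, and by concavity it suffices to do this at points of differentiability of $T$. At such a $q$ with $T(q)<q-1$, one sets $\alpha=T'(q)$ and proves multifractal regularity lemmas (Lemmas~\ref{lem:size-set-A-in-terms-of-f-alpha}--\ref{lem:Lq-sum-over-small-set}, Proposition~\ref{prop:Lq-over-small-set-is-small}) showing that almost all of $\|\mu_x^{(m)}\|_q^q$ comes from $\approx 2^{T^*(\alpha)m}$ intervals of mass $\approx 2^{-\alpha m}$, at every location and scale. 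This multifractal input is what combines with the inverse theorem to yield the flattening result (Theorem~\ref{thm:conv-with-ssm-flattens}): convolving $\mu_x^{(m)}$ with \emph{any} $\nu$ satisfying $\|\nu\|_q^{q'}\le 2^{-\sigma m}$ drops the $L^q$ norm below $2^{-(T(q)+\e)m}$. The inverse theorem alone is not enough; one needs $T^*(\alpha)<1$ (Lemma~\ref{lem:f-alpha-smaller-than-one}) and the branching regularity to rule out the ``almost-uniform at some scales'' alternative.

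With flattening in hand, Proposition~\ref{prop:ssm-scale-Rm-norm} shows that for a $\PP$-typical $x$ (one with $\tau_{\mu_x}(q)=T(q)$), the $L^q$ norm of $\mu_{x,n}$ at the \emph{finer} scale $2^{-Rm}$ is still $\approx 2^{-T(q)m}$: one decomposes $\mu_x^{((R+1)m)}$ via self-similarity, and flattening kills the contribution from $2^{-m}$-cells $I$ where the rescaled restriction $\rho_{x,I}$ has small $L^q$ norm. Only then does exponential separation enter, for infinitely many $n$ giving $\|\mu_{x,n}^{(Rm)}\|_q^q=\prod_i\|\Delta(\mathbf{T}^ix)\|_q^q$ exactly, whence $T(q)=F(q-1)$.

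Your pigeonhole step (``many intermediate $k$ at which convolution fails to decrease the $L^q$ norm enough'') does not by itself yield usable structure: the inverse theorem outputs a dichotomy (full branching for $A$ vs.\ no branching for $B$) at each $2^D$-adic scale, but converting this into a contradiction requires knowing that the regular subset $A$ capturing the $L^q$ norm of $\mu_x$ has nearly constant branching at nearly every scale---precisely what the multifractal lemmas supply and what your sketch omits. Without the cocycle $T(q)$, its differentiability, and the Legendre-transform bookkeeping, the argument does not close.
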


In the above statement, and throughout the paper, the $L^q$ norm of a finitely supported measure $\Delta$ is given by
\[
\|\Delta\|_q^q = \sum_{y\in\supp(\Delta)} \Delta(y)^q.
\]

We underline that the exponential separation assumption has to be checked on a set of full $\mathbb{P}$-measure, and this is often very easy to do. On the other hand, the conclusion of Theorem \ref{thm:L-q-dim-dyn-ssm} holds for \emph{all} $x\in X$.

\subsection{Outline of proof}

We conclude this introduction by presenting an outline of the main steps of the proof of Theorem \ref{thm:L-q-dim-dyn-ssm}. The overall strategy is inspired by the ideas of \cite{Hochman14}. Additional complications are caused by the fact that our model allows measures which are not strictly self-similar; this will be dealt with the help of a cocycle introduced in \cite{NPS12}. The key difference, however, is that Hochman's method is based on entropy, while we need to deal with $L^q$ norms. As we will see, this forces substantial changes in the implementation of the general strategy.

At the heart of \cite{Hochman14} is an inverse theorem for the growth of entropy under convolutions, see \cite[Theorem 2.7]{Hochman14}. We prove an inverse theorem for the decay of $L^q$ norms under convolutions, which may be of independent interest. This theorem is stated in Section \ref{sec:inverse-thm} and proved in Section \ref{sec:inverse-thm-proof}. Here we give a heuristic description. Let $\mu,\nu$ be two probability measures supported on $2^{-m}\Z\cap [0,1]$. By Young's inequality, $\|\mu*\nu\|_q \le \|\mu\|_q$. The question the inverse theorem aims to answer is: what can be said if we are close to an equality? Here, and in the rest of the paper, ``close'' is meant in a very weak sense: up to some small exponential loss.  More concretely, the inverse theorem asserts that if $\|\mu*\nu\|_q \ge 2^{-\e m} \|\mu\|_q$ for some small $\e>0$, then $\mu$ and $\nu$ are forced to have a multi-scale structure of a certain kind. We note that equality in Young's theorem happens if either $\mu$ is the uniform measure on $2^{-m}\Z\cap [0,1]$, or if $\nu$ is a single atom. The inverse theorem asserts that, after restricting $\mu,\nu$ to suitable subsets $A,B$ which are ``large'' and ``regular'' in a certain sense, there is a multi-scale decomposition such that, at each scale, either $\mu|_A$ is ``almost uniform'' or $\nu|_B$ is ``almost discrete''. In spirit  this is not unlike  \cite[Theorem 2.7]{Hochman14}, although the details differ substantially; see Section \ref{sec:inverse-thm} below for further discussion. The two main tools in the proof of the inverse theorem come from additive combinatorics: an asymmetric version of the Balog-Szemer\'{e}di-Gowers Theorem, due to Tao and Vu, and a structure result on sets with ``small'' sumset, due to Bourgain. These results are recalled in Section \ref{sec:inverse-thm-proof}.

We note that the inverse theorem is a statement about arbitrary measures; no self-similarity is involved. Now let us consider a pleasant model $(X,\mathbf{T},\Delta,\lam)$ generating measures $\mu_x,x\in X$. The right-hand side in \eqref{eq:main-thm} is easily seen to be an upper bound for the left-hand side (for all $x$), so the task is to show the reverse inequality. The self-similarity expressed by \eqref{eq:mu-x-self-similarity}, in conjunction with the pleasantness of the model, can be used to show that there is a function $T:(1,\infty)\to [0,1]$, such that  $\tau_{\mu_x}=T$ for  $\PP$-almost all $x$, and $\tau_{\mu_x}\ge T$ for \emph{all} $x\in X$ - see Proposition \ref{prop:cocycle} and Corollary \ref{cor:unif-continuity}. Thus, in order to complete the proof, one needs to show that $T(q)/(q-1)$ equals the right-hand side of \eqref{eq:main-thm}.

We point out that the strategy of studying $L^q$ dimensions via the function $T(q)$ is borrowed from \cite{NPS12}. The innovation of this work consists in being able to calculate $T(q)$ for a wider range of models and, crucially, for all finite $q\ge 1$ (while the method of \cite{NPS12}, based on Marstrand's projection theorem, is restricted to $q\in (1,2]$).

It is known from general considerations that $T(q)$ is concave, so in particular it is continuous and differentiable outside of at most a countable set. The rest of the proof focuses on the study of $T(q)$ for a fixed differentiability point $q$. The ``multifractal structure'' of a measure $\mu$ is known to behave in a regular way for points $q$ of differentiability of the spectrum $\tau_\mu$. Extending some elementary results in this direction to the function $T(q)$, we show that if $\alpha=T'(q)$ exists and $\tau_{\mu_x}(q)=T(q)$ (which we have seen happens for almost all $x$) then, for large enough $m$, ``almost all'' of the contribution to the sum $\sum_{I\in\DD_m}\mu_x(I)^q$ comes from $\approx 2^{T^*(\alpha)m}$ intervals $I$ such that $\mu_x(I)\approx 2^{\alpha m}$; here $T^*$ is the Legendre transform of $T$. Moreover, using the self-similarity of $\mu_x$, we establish also a multi-scale version of this fact, see Proposition \ref{prop:Lq-over-small-set-is-small}.

Let $\mu_x^{(m)}$ supported on $2^{-m}\Z$ be given by
\begin{equation} \label{eq:def-mu-m}
\mu_x^{(m)}(j 2^{-m}) = \mu_x([j2^{-m},(j+1)2^{-m})).
\end{equation}
Then $\mu_x^{(m)}$ is a discretization of $\mu_x$ at scale $2^{-m}$, and $\|\mu_x^{(m)}\|_q^q = \sum_{I\in\DD_m} \mu_x(I)^q \lesssim 2^{-m T(q)}$.
The inverse theorem, together with the study of the multifractal structure of $\mu_x$, is used to show that either $T(q)=q-1$ (in which case we are done) or, otherwise, the following holds: if $\rho$ is an arbitrary measure supported on $2^{-m}\Z\cap [0,1]$ such that $\|\rho\|_q \le 2^{-\sigma m}$, then
\begin{equation} \label{eq:outline-conv-flattens-mu-x}
\|\rho*\mu_x^{(m)}\|_q^q \le 2^{-\e m} 2^{-T(q)m}  \quad\text{for all }x\in X,
\end{equation}
where $\e=\e(\sigma,q)>0$. That is, convolving $\mu_x$ with $\rho$ results in an exponential flattening of the $L^q$ norm (a priori this is not necessarily true for all $x$, since $\|\mu_x^{(m)}\|_q^q$ can be far smaller than $2^{-T(q)m}$ for some $x$, but all that is needed later is an exponential gain over $2^{-T(q)m}$). The heuristic reason for this is the following: suppose the opposite is true. The inverse theorem then asserts that there is a regular subset $A$ of $\supp(\mu_x^{(m)})$ which captures much of the $L^q$ norm. By the inverse theorem, and since $\rho$ is assumed to have exponentially small $L^q$ norm, $A$ must have almost full growth (or branching) on a positive density set of  scales in a multi-scale decomposition. But $A$ itself does not have full growth (this follows from the assumption $T(q)<q-1$, which rules out $\mu_x^{(m)}$ having too small $L^q$ norm). So there must also be a positive density set of scales on which $A$ has smaller than average growth. The regularity of the multifractal spectrum discussed above rules this out, since it forces $A$ to have an almost constant growth on almost all scales.

The conclusion of the proof of Theorem \ref{thm:L-q-dim-dyn-ssm} from \eqref{eq:outline-conv-flattens-mu-x} goes along the same lines of \cite{Hochman14}. By the exponential separation assumption, there is $x\in X$ such that $\tau_{\mu_x}(q)=T(q)$ and, for some $R=R(x)\in \N$,
\[
\frac{\log \|\mu_{x,n}^{(R m)}\|_q^q}{(q-1)n\log(1/\lam)}  = \frac{\|\mu_{x,n}\|_q^q}{(q-1)n\log(1/\lam)} = \frac{\sum_{i=0}^{n-1} \log\|\Delta(\mathbf{T}^i x)\|_q^q}{(q-1)n\log(1/\lam)},
\]
where $m=m(n)$ is chosen so that $2^{-m}\sim \lam^n$. Under our running assumption that $T(q)<q-1$, the ergodic theorem for uniquely ergodic systems implies that the right-hand side above tends to the right-hand side of \eqref{eq:main-thm} as $n\to\infty$. Hence, it remains to show that
\begin{equation} \label{eq:outline-Lq-norm-scale-Rm}
\lim_{n\to\infty}\frac{\log \|\mu_{x,n}^{(R m)}\|_q^q}{n\log(1/\lam)} = T(q).
\end{equation}
In other words, we need to show that the $L^q$ norm of $\mu_{x,n}$ at scale $2^{-m}\approx \lam^n$ (which is easily seen to be comparable to the $L^q$ norm of $\mu_x$ at scale $2^{-m}$) nearly exhausts the $L^q$ norm of $\mu_{x,n}$ at the much finer scale $2^{-R m}$ which, in turn, equals the full $L^q$ norm of $\mu_{x,n}$, by the exponential separation assumption.

To show \eqref{eq:outline-Lq-norm-scale-Rm}, we recall that $\mu_x = \mu_{x,n}* S_{\lam^n}\mu_{\mathbf{T}^n x}$, and use this to decompose
\[
\mu_{x}^{((R+1) m)} =  \sum_{I\in\DD_m} \mu(I)  \wt{\rho}_I * S_{\lam^n}\mu_{\mathbf{T}^n x},
\]
where $\wt{\rho}_I$ is the normalized restriction of $\mu_{x,n}$ to $I$. Since the supports of $\wt{\rho}_I * S_{\lam^n}\mu_{\mathbf{T}^n x}$ have bounded overlap, it is not hard to deduce that
\[
\|\mu_{x}^{((R+1) m)}\|_q^q \approx  \sum_{I\in\DD_m} \mu_x(I)^q \| \rho_I * \mu_{\mathbf{T}^n x}^{(R m)}\|_q^q,
\]
where $\rho_I  = S_{\lam^{-n}}\wt{\rho}_I$. This is the point where we apply \eqref{eq:outline-conv-flattens-mu-x}, to conclude that if on the right-hand side above we only add over those $I$ such that $\|\rho_I\|_q \ge 2^{-\sigma q}$, where $\sigma>0$ is arbitrary, then, provided $n$ is large enough depending on $\sigma$, we still capture almost all of the left-hand side. This follows since \eqref{eq:outline-conv-flattens-mu-x} can be shown to imply that the contribution of the remaining $I$ is exponentially smaller than the left-hand side. A similar calculation, now with $\mu_{x,n}^{((R+1)m)}$ in place of $\mu_{x}^{((R+1) m)}$ in the left-hand side, then shows that \eqref{eq:outline-Lq-norm-scale-Rm} holds, finishing the proof.

We point out that, simultaneously and independently of this work, Meng Wu \cite{Wu16} obtained an elegant alternative proof of Theorem \ref{thm:Furstenberg}. Wu's proof is purely ergodic-theoretical and completely different from ours. His methods do not seem  to yield any analogs of Theorem \ref{thm:L-q-dim-dyn-ssm} and, in particular, are unable to reproduce our results on the dimensions and densities of Bernoulli convolutions. Nevertheless, some of our concrete applications (besides Furstenberg's conjecture) also follow from Wu's approach: this is the case for Corollaries \ref{cor:dim-intersection-ssm} and \ref{cor:dim-intersection-sss-lines}.

\subsection{Organization of the paper and summary of applications}

We outline the organization of the rest of the paper. Sections \ref{sec:inverse-thm}--\ref{sec:proof-of-main-thm} are devoted to the proof of Theorem \ref{thm:L-q-dim-dyn-ssm}, while the remaining Sections \ref{sec:dim-ssm-and-applications}--\ref{sec:abs-cont} contain the applications of Theorem \ref{thm:L-q-dim-dyn-ssm}. More precisely:
\begin{itemize}
 \item In Section  \ref{sec:inverse-thm} we state and discuss the inverse theorem for the $L^q$ norms of convolutions of discrete measures. The inverse Theorem is proved in Section \ref{sec:inverse-thm-proof}.
 \item Section \ref{sec:properties-dyn-ssm} develops some properties of dynamically driven self-similar measures. In Section \ref{sec:proof-of-main-thm}, these are combined with the inverse theorem to conclude the proof of Theorem \ref{thm:L-q-dim-dyn-ssm}.
 \item In Section \ref{sec:dim-ssm-and-applications} we apply Theorem \ref{thm:L-q-dim-dyn-ssm} to study $L^q$ dimensions and Frostman exponents of self-similar measures on the line. In particular, we prove Theorem \ref{thm:dim-ssm}, which generalizes Theorem \ref{thm:infinity-dim-BCs} to homogeneous self-similar measures on $\R$, and Theorem \ref{thm:dim-general-ssm}, which extends this to arbitrary self-similar measures on the line (not necessarily homogeneous).
 \item In Section \ref{sec:convolutions-and-Furst-conj}, we conclude the proof of Theorem \ref{thm:Furstenberg}. We also study the $L^q$ dimensions of convolutions of self-similar measures (Theorems \ref{thm:dim-conv-ssm} and \ref{thm:dim-conv-many-ssm}), and deduce a variant of Furstenberg's conjecture for self-similar sets, Corollary \ref{cor:dim-intersection-ssm}.
 \item Section \ref{sec:planar-sss} contains further applications of Theorem \ref{thm:L-q-dim-dyn-ssm} to projections and sections of planar self-similar sets and measures. In particular, we prove an upper bound for the dimensions of arbitrary linear sections of some self-similar sets on the plane, see Corollary \ref{cor:dim-intersection-sss-lines}.
 \item Finally, in Section \ref{sec:abs-cont} we turn our focus to the densities of the measures studied in the previous sections. We present a general result in the framework of dynamically defined measures, Theorem \ref{thm:abs-continuity}, and deduce Theorem \ref{thm:abc-cont-BCs}, as well as several other applications, as corollaries.
\end{itemize}

\subsection{Notation}

We use Landau's $O(\cdot)$ and related notation: if $X,Y$ are two positive quantities, then $Y=O(X)$ means that $Y\le C X$ for some constant $C>0$, while $Y=\Omega(X)$ means that $X=O(Y)$, and $Y=\Theta(X)$ that $Y=O(X)$ and $X=O(Y)$. If the constant $C$ is allowed to depend on some parameters, these are often denoted by subscripts. For example, $Y=O_q(X)$ means that $Y\le C(q) X$, where $C(q)$ is a function depending on the parameter $q$.

The following table summarizes some of the notational conventions to be used throughout the paper.

\begin{tabular}{lll}
\hline
$\N$ & & Natural numbers $\{1,2,\ldots\}$ \tabularnewline
$B(x,r)$ & & Open ball of center $x$, radius $r$. \tabularnewline
$\hdim$ & & Hausdorff dimension \tabularnewline
$\ubdim$ & & Upper box-counting dimension \tabularnewline
$[n]$ & & $\{0,1,\ldots,n-1\}$ \tabularnewline
$\delta,\e,\eta,\kappa,\sigma$ & & Small positive numbers \tabularnewline
$\mu,\nu,\eta,\rho$ & & Measures (always positive and finite, often discrete) \tabularnewline
$\mu^{(m)}$ & & Discretization of $\mu$ at scale $2^{-m}$ \tabularnewline
$\|\cdot\|_q$ & & Discrete $L^q$ norm \tabularnewline
$q'$ & & Dual exponent to $q$ \tabularnewline
$\delta_t,\delta(t)$ & & Delta mass at $t$ \tabularnewline
$\mathcal{A}$ & & Space of finitely supported measures \tabularnewline
$\Delta_i, \wt{\Delta}$  & & Elements of $\mathcal{A}$ \tabularnewline
$\Delta(x)$ & &  $\mathcal{A}$-valued functions \tabularnewline
$(X,\mathbf{T},\Delta,\lam)$ &  & A model generating DDSSMs \tabularnewline
$\mu_x$ & & The DDSSM corresponding to $x\in X$ \tabularnewline
$\mu_{x,n}$ & & Discrete approximations to $\mu_x$ \tabularnewline
$S_\lam$ & & Map that scales by $\lam$ \tabularnewline
$\tau(\mu,q)$ or $\tau_\mu(q)$ & & $L^q$ spectrum  \tabularnewline
$D(\mu,q)$ or $D_\mu(q)$ & & $L^q$ dimension \tabularnewline
$\mathcal{E},\mathcal{E}_i$ & & Small exceptional sets \tabularnewline
$\mathcal{D}_s$ & & Dyadic intervals of length $2^{-s}$ \tabularnewline
$\mathcal{D}_s(A)$ & & Elements of $\mathcal{D}_s$ hitting $A$ \tabularnewline
$\mathcal{N}_s(A)$ or $\mathcal{N}(A,s)$ & & $|\mathcal{D}_s(A)|$ \tabularnewline
$D$ & & $2^D$=base for tree representation of sets \tabularnewline
$\ell$ & & Height of tree representing a set \tabularnewline
$\mathcal{S},\mathcal{S}',\mathcal{S}_i$ & & Subsets of $[\ell]$ (representing sets of scales) \tabularnewline
$R_s, R'_s, R''_s$ & & Branching numbers of trees representing regular sets \tabularnewline
$T(q)$ & & The function from Proposition \ref{prop:cocycle} \tabularnewline
$f,g,h$ & & Maps $\R\to\R$, often affine \tabularnewline
$(f_i)_{i\in\mathcal{I}}$ && Iterated function system of similarities \tabularnewline
\hline
\end{tabular}

\bigskip

\textbf{Acknowledgments}. I am grateful to Mike Hochman and Izabella {\L}aba for inspiring discussion related to the themes in this paper, and to Julien Barral and Eino Rossi for a careful reading and for suggesting numerous small corrections. I also thank the anonymous referees for helpful comments.

\section{An inverse theorem for the decay of \texorpdfstring{$L^q$}{Lq} norms under convolution}
\label{sec:inverse-thm}

Let $\mu,\nu$ be probability measures on $\R$ (or the circle $\R/\Z$). For any reasonable notion of smoothness, the convolution $\mu*\nu$ is at least as smooth as $\nu$. A natural question is then: if $\mu*\nu$ is not ``much smoother'' than $\mu$, can we deduce any information about the measures $\mu$ and $\nu$? Of course, this depends on the notion of smoothness under consideration, and on the precise meaning of ``much smoother''.

We are interested in general, possibly fractal, measures, and their discrete approximations. A general method for defining notions of dimension (or smoothness) of a measure is to discretize it at a certain scale $\e$, measure smoothness at that scale in some standard way (for example, by means of entropy or $L^q$ norms) and then study the growth/decay of this quantity as $\e\downarrow 0$. Indeed, we have seen that $L^q$ dimensions are defined precisely in this way, and there is a parallel notion for entropy.

Let $\mu$ be a probability measure on $\R/\Z$. Its normalized level $m$ entropy is
\[
H_m(\mu) = \frac{1}{m}\sum_{I\in\mathcal{D}_m} -\mu(I)\log(\mu(I)),
\]
with the usual convention $0\log 0=0$.  In \cite[Theorem 2.7]{Hochman14}, Hochman showed that if
\[
H_m(\nu*\mu) \le H_m(\mu)+\e,
\]
where $\e>0$ is small, then $\nu$ and $\mu$ have a certain structure which, very roughly, is of this form: the set of dyadic scales $0\le s<m$ can be split into three sets $\mathcal{A}\cup \mathcal{B}\cup \mathcal{C}$. At scales in $\mathcal{A}$, the measure $\nu$ looks ``roughly atomic'', at scales in $\mathcal{B}$ the measure $\mu$ looks ``roughly uniform'', and the set $\mathcal{C}$ is small. This theorem was motivated in part by its applications to the dimension theory of self-similar measures, as discussed above.

Our goal is to develop a corresponding theory for $L^q$ norms. Given $m\in\N$, we will say that $\mu$ is a $2^{-m}$-measure if $\mu$ is a probability measure supported on $2^{-m}\Z\cap [0,1)$ (and we sometimes identify $[0,1)$ with the circle). Recall from \eqref{eq:def-mu-m} that if $\mu$ is a probability measure on $[0,1)$, we denote by $\mu^{(m)}$ the associated $2^{-m}$-measure, that is, $\mu^{(m)}(j 2^{-m}) = \mu([j 2^{-m},(j+1)2^{-m}))$. We also recall that, given a purely atomic measure $\mu$, we define the $L^q$ norms
\[
\|\mu\|_q = \left(\sum \mu(y)^q\right)^{1/q},
\]
and $\|\mu\|_\infty= \max_y \mu(y)$.

From now on, the convolutions are always assumed to take place on the circle unless otherwise indicated; however, all results immediately transfer to the real line, using the fact that the map $(x,y)\mapsto x+y$ is two-to-one on the circle so, for example, if $\mu,\nu$ are $2^{-m}$-measures, then the $L^q$ norms of $\mu*\nu$ as convolutions on the circle or the real line are comparable up to a multiplicative constant.

By Young's inequality (which in this context is a direct consequence the convexity of $t\mapsto t^q$), we know that $\|\mu*\nu\|_q\le \|\mu\|_q$, for any $q\ge 1$. We aim to understand under what circumstances $\|\mu*\nu\|_q \approx \|\mu\|_q$, where the closeness is in a weak, exponential sense.  More precisely, we are interested in what structural properties of the measures $\mu,\nu$ ensure an exponential flattening of the $L^q$ norm of the form
\begin{equation} \label{eq:intro-L2-norm-flattens}
\|(\mu*\nu)^{(m)}\|_q \le 2^{-\e m} \|\mu^{(m)}\|_q.
\end{equation}
The Balog-Szemer\'{e}di-Gowers Theorem (particularly, its asymmetric formulation, see Theorem \ref{thm:BSG} below) can be seen as providing a partial answer in a special case, i.e. when $\mu^{(m)},\nu^{(m)}$ are indicator functions.

While we are not aware of any general results in this direction, we note that a special case has received considerable attention: if $A\subset 2^{-m}\Z$, then $\|\mathbf{1}_A*\mathbf{1}_A\|_2^2$ is nothing but the additive energy of $A$ (see \eqref{eq:def-additive-energy} below), and estimates of the form
\[
\|\mathbf{1}_A*\mathbf{1}_A\|_2^2 \le |A|^{3-\e}
\]
arise repeatedly in dynamics, combinatorics and analysis: see e.g. \cite{DyatlovZahl16, ALL17} for some recent examples. In particular, S. Dyatlov and J. Zahl \cite[Theorem 6]{DyatlovZahl16} showed that if $\mu$ is an Ahlfors-regular measure, that is, if there are $C,s>0$ such that
\[
C^{-1} r^s \le \mu(B(x,r)) \le C r^s \quad\text{for all }x\in\supp(\mu), r\in (0,1],
\]
then
\[
\|(\mu*\mu)^{(m)}\| \le 2^{-\e m}\|\mu^{(m)}\|_2,
\]
where $\e>0$ depends only on the parameters $C,s$. Their proof does not appear to readily extend to the convolution of two different measures, or beyond the Ahlfors-regular case. Outside of the Euclidean setting, the $L^2$ norm of self-convolutions has been studied in many groups as part of the Bourgain-Gamburd expansion machine developed to prove that Cayley graphs are expanders, see e.g. \cite{BourgainGamburd08}.

Here we go in a different direction, by investigating general geometric conditions on the measures $\mu,\nu$ that ensure flattening in the sense of \eqref{eq:intro-L2-norm-flattens}. We make the trivial observation that if $\nu=\delta_{k 2^{-m}}$ or $\mu=\lam=$Lebesgue measure on $\R/\Z$, then $\|(\mu*\nu)^{(m)}\|_q = \|\mu^{(m)}\|_q$. Furthermore, if $\nu=2^{-\e m}\delta_x + (1- 2^{\e m})\lambda$ and $\mu$ is an arbitrary measure, then we still have $\|(\mu*\nu)^{(m)}\|_q \ge 2^{-\e m}\|\mu^{(m)}\|_q$. This shows that a subset of measure $2^{-\e m}$ is able to prevent smoothening in the sense of \eqref{eq:intro-L2-norm-flattens}, so that (unlike the case of entropy) in order to guarantee exponential smoothing we need to impose conditions on the structure of the measures inside sets of exponentially small measure.

There are also less trivial situations in which $\|\mu*\mu\|_q\approx \|\mu\|_q$. Let $D\gg 1$ be a large integer, fix $\ell\gg D$, and for  given subset $\mathcal{S}$ of $\{0,\ldots,\ell-1\}$ define $A$ as the set of all $x\in 2^{-\ell D}\Z\cap [0,1)$, such that the $s$-th digit in the $2^{-D}$-base expansion of $x$ is $0$ for all $s\in\mathcal{S}$ (and is arbitrary otherwise). Then it is not hard to check that $\|\mathbf{1}_A*\mathbf{1}_A\|_q\approx \|\mathbf{1}_A\|_1\|\mathbf{1}_A\|_q$. In more combinatorial terms, $A$ looks like an arithmetic progression at all scales. In similar ways one constructs probability measures $\mu,\nu$ supported on sets of widely different sizes, such that $\|\mu*\nu\|_q\approx \|\mu\|_q$.

Our inverse theorem asserts that if \eqref{eq:intro-L2-norm-flattens} fails to hold then one can find subsets $A\subset \supp(\mu)$ and $B\subset\supp(\nu)$, such that $A$ captures a ``large'' proportion of the $L^q$ norm of $\mu$ and $B$ a ``large'' proportion of the mass of $\nu$, and moreover $\mu|_A,\nu|_B$ are fairly regular (for example, they are constant up to a factor of $2$). The main conclusion, however, is that $A$ and $B$ have a structure resembling the example above, and also the conclusion of Hochman's inverse theorem for entropy: if $D$ is a large enough integer, then for each $s$, either $B$ has no branching between scales $2^{sD}$ and $2^{(s+1)D}$ (in other words, once the first $s$ digits in the $2^D$-adic expansion of $y\in B$ are fixed, the next digit is uniquely determined), or $A$ has nearly full branching between scales $2^{sD}$ and $2^{(s+1)D}$ (whatever the first $s$ digits of $x\in A$ in the $2^D$-adic expansion, the next digit can take almost any value).

Before stating the theorem, we summarize our notation for dyadic intervals to be used throughout the paper (some of it was introduced before):
\begin{itemize}
\item $\DD_s$ is the family of dyadic intervals $[j 2^{-s},(j+1) 2^{-s})$. We also refer to elements of $\DD_s$ as $2^{-s}$-intervals.
\item Given a set $A\subset\R$ or $\R/\Z$, we write $\DD_s(A)$ for the family of $2^{-s}$-intervals that hit $A$. We also write $\mathcal{N}(A,s)$ or $\NN_s(A)$ for $|\DD_s(A)|$, i.e. the number of $2^{-s}$ intervals that hit $A$.
\item Given $x\in\R$ or $\R/\Z$, we write $\DD_s(x)$ for the only $2^{-s}$-interval that contains $x$.
\item We write $a I$ for the interval of the same center as $I$ and length $a$ times the length of $I$.
\end{itemize}
We also write $[\ell]=\{0,1,\ldots,\ell-1\}$.

\begin{thm} \label{thm:inverse-thm}
Given $q\in (1,\infty)$, $\delta>0$ and $D_0\in\N$, there are $\e>0$, $D\ge D_0$, such that the following holds for all large enough $\ell$.

Let $m=\ell D$, and let $\mu,\nu$ be $2^{-m}$-measures such that
\[
\|\mu*\nu\|_q \ge 2^{-\e m}\|\mu\|_q.
\]
After translating the measures $\mu,\nu$ by appropriate numbers of the form $k 2^{-m}$, there exist sets $A\subset \supp(\mu), B\subset \supp(\nu)$, such that:
\begin{enumerate}
\item[\rm{(A-i)}] $\|\mu|_A\|_q \ge 2^{-\delta m}\|\mu\|_q$, where $\mu|_A$ denotes the (non-normalized) restriction of $\mu$ to $A$.
\item[\rm{(A-ii)}] $\mu(y)\le 2\mu(x)$ for all $x,y\in A$.
\item[\rm{(A-iii)}] There is a sequence $R'_s$, $s\in [\ell]$, such that $\NN_{(s+1)D}(A\cap I)=R'_s$ for all $I\in \DD_{s D}(A)$.
\item[\rm{(A-iv)}] $x\in \frac12 \DD_{s D}(x)$ for every $x\in A, s\in [\ell]$.

\item[\rm{(B-i)}] $\nu(B)\ge 2^{-\delta m}$.
\item[\rm{(B-ii)}]$\nu(y)\le 2\nu(x)$ for all $x,y\in B$.
\item[\rm{(B-iii)}] There is a sequence $R''_s$, $s\in [\ell]$, such that $\NN_{(s+1)D}(B\cap I)=R''_s$ for all $I\in\DD_{s D}(B)$.
\item[\rm{(B-iv)}] $y\in \frac12 \DD_{s D}(y)$ for every $y\in B, s\in [\ell]$.
\end{enumerate}

Moreover,
\begin{enumerate}
\item[\rm{(v)}] For each $s$, either $R''_s=1$, or
\begin{equation} \label{eq:scales-with-full-branching}
 R'_s \ge 2^{(1-\delta)D}.
 \end{equation}
\item[\rm{(vi)}] Let $\mathcal{S}$ be the set of $s$ such that \eqref{eq:scales-with-full-branching} holds. Then
\[
 \log(\|\nu\|_q^{-q'})   - \delta m \le D |\mathcal{S}| \le  \log(\|\mu\|_q^{-q'})+\delta m.
\]
\end{enumerate}
\end{thm}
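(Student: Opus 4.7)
The plan is to combine three ingredients: a pigeonhole-based extraction of level sets where $\mu,\nu$ are approximately uniform; a passage from the $L^q$-concentration hypothesis to an asymmetric additive-energy statement amenable to the Tao--Vu form of the Balog--Szemer\'edi--Gowers theorem; and a multi-scale structure theorem in the spirit of Bourgain that converts small-sumset information into the branching dichotomy of items (v) and (vi).

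First, I would fix auxiliary parameters $\delta_1 \ll \delta_0 \ll \delta$ and peel off level sets by dyadic pigeonholing on the values of $\mu,\nu$, absorbing the $O(\log m)$ losses into $\delta_1 m$. This produces $A\subset \supp(\mu)$, $B\subset \supp(\nu)$ on which $\mu,\nu$ are constant up to a factor of $2$, with $\|\mu|_A\|_q \ge 2^{-\delta_1 m}\|\mu\|_q$ and $\nu(B)\ge 2^{-\delta_1 m}$, giving (A-i,ii) and (B-i,ii). The hypothesis $\|\mu*\nu\|_q \ge 2^{-\e m}\|\mu\|_q$ then transfers, after another pigeonhole on the level sets of $\mu*\nu$, to a lower bound of the form
\[
\sum_{z\in Z} r_{A,B}(z)^q \;\ge\; 2^{-\delta_0 m}\, |A|\, |B|^{q},
\]
where $r_{A,B}(z)=|\{(a,b)\in A\times B:a+b=z\}|$ and $Z$ is a level set on which $r_{A,B}$ is constant up to a factor of $2$. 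A dyadic slicing combined with H\"older's inequality then converts this bound into a lower bound on an \emph{asymmetric} additive energy between $A$ and $B$, in which the exponent $q$ is replaced by $2$ at the cost of a further polynomial-in-$m$ loss absorbed into $\delta_0 m$.

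Next, I would apply the asymmetric Balog--Szemer\'edi--Gowers theorem of Tao--Vu to extract refinements $A'\subset A$, $B'\subset B$ with $|A'|\ge 2^{-O(\delta_0 m)}|A|$, $|B'|\ge 2^{-O(\delta_0 m)}|B|$ and $|A'+B'|\le 2^{O(\delta_0 m)}\max(|A'|,|B'|)$. At this point I would invoke the Bourgain-type structure theorem for subsets of $2^{-m}\Z$ with small doubling: such a set can be placed, up to a further small loss, inside a product-like set of the form $\sum_{s\in[\ell]} Q_s$ with $Q_s\subset 2^{-(s+1)D}\Z\cap[0,2^{-sD})$. This representation immediately yields constant branching numbers $R'_s,R''_s$ at every scale $sD$, establishing (A-iii) and (B-iii). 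The dichotomy (v) is intrinsic to Bourgain's decomposition: each scale $s$ is either essentially trivial for $B'$ (giving $R''_s=1$) or nearly full for $A'$ (forcing \eqref{eq:scales-with-full-branching}) in order to honour the overall doubling bound.

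For (vi) I would use that $|A|\approx \|\mu\|_q^{-q'}$ and $|B|\approx \|\nu\|_q^{-q'}$ up to factors $2^{O(\delta m)}$, since $\mu|_A,\nu|_B$ are essentially uniform and capture nearly all of their $L^q$ mass. The branching identities $|A|=\prod_s R'_s$ and $|B|=\prod_s R''_s$, together with $R'_s\ge 2^{(1-\delta)D}$ on $\mathcal{S}$ and $R''_s=1$ off $\mathcal{S}$, then yield the two-sided bound on $D|\mathcal{S}|$. The centering property (iv) is arranged by treating the shift $k 2^{-m}$ as a free parameter: inside the tree representation, one can align the center of each block $Q_s$ so that every surviving point lands in the middle half of every enclosing $\DD_{sD}$-cell, restricting to those points if necessary and absorbing the loss into the $\delta$-budget. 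The main obstacle I anticipate is the reduction of the general-$q$ concentration bound to a clean asymmetric additive-energy estimate usable by BSG (for $q\neq 2$ this requires a nontrivial dyadic decomposition of the representation function $r_{A,B}$), together with guaranteeing that the multi-scale pigeonhole delivers \emph{exactly} constant branching numbers at each scale rather than only approximately constant ones --- this will require successive pigeonholing inside the $2^D$-adic tree with a careful accounting to ensure all accumulated losses fit into the single $\delta m$ error term.
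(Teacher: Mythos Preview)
Your outline has the right three ingredients, but there is a genuine gap in how you link the structures of $A$ and $B$ after the Balog--Szemer\'edi--Gowers step.

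The asymmetric Tao--Vu BSG theorem does \emph{not} directly give refinements $A'\subset A$, $B'\subset B$ with $|A'+B'|\le 2^{O(\delta_0 m)}\max(|A'|,|B'|)$. What it produces is an auxiliary set $H$ with small doubling, together with a set $X$, such that $A$ sits densely in $X+H$ and $B$ sits densely in $H$. Bourgain's structure theorem is a statement about a \emph{single} set with small doubling; the paper applies it to $H$, obtaining a regular subset $H'$ with the full/no-branching dichotomy at each scale. The dichotomy (v) for $A$ and $B$ is then \emph{not} intrinsic to Bourgain's decomposition, as you claim --- it is deduced by a separate argument: since $A+H'\subset X+H+H$ has size $\lesssim |A|$, a regularized $A$ must have nearly full branching at every scale where $H'$ does (via an elementary lemma bounding $|A+H'|$ from below in terms of the branching of $A$ at scales where $H'$ is flat); and since $B$ sits essentially inside $H$, a regularized $B$ can be collapsed to have no branching at scales where $H'$ has none. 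The common reference $H'$ is what synchronizes the scales of $A$ and $B$; without it there is no mechanism forcing the scales where $R''_s>1$ to coincide with scales where $R'_s$ is nearly $2^D$. Your plan to apply Bourgain directly to $A'$ and $B'$ would yield two unrelated scale decompositions.

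Two smaller points. First, the constant branching numbers in (A-iii) and (B-iii) are not delivered by Bourgain's theorem; they are obtained by a separate tree-regularization (at cost $(2D)^{-\ell}$) applied \emph{after} intersecting with $X+H$ and $H$ respectively --- your proposal to get \emph{exactly} constant $R'_s,R''_s$ via the ``product-like'' form $\sum_s Q_s$ does not follow from Bourgain's output, which only controls $H$. Second, for (vi) you write $|B|\approx\|\nu\|_q^{-q'}$, but only one inequality holds: we know $\nu(B)\ge 2^{-\delta m}$, not that $\|\nu|_B\|_q$ captures $\|\nu\|_q$. The left inequality in (vi) uses $\nu(x)\gtrsim 2^{-\delta m}|B|^{-1}$ on $B$ together with $|B|\le 2^{D|\mathcal{S}|}$ (from $R''_s=1$ off $\mathcal{S}$), and the right inequality uses that $\mu(x)\gtrsim 2^{-O(\e)m}\|\mu\|_q^{q'}$ on $A$ (which comes from the pigeonholing in the first step) together with $|A|\ge 2^{(1-\delta)D|\mathcal{S}|}$.
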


Here, and throughout the paper, $q'=q/(q-1)$ denotes the dual exponent. We make some remarks on the statement.
\begin{enumerate}[label={\upshape\alph*)}]
\item The initial translation of the measures, as well as their convolution, take place  on the circle. However, by decomposing the measures into finitely many pieces it is easy to deduce the same statement with both the translation and the convolution taking place on the real line.
\item The translation is only needed for (A-iv) and (B-iv), which are technical claims that we include in the theorem as they are often useful in applications.
\item The main claim in the theorem is part (v). Obtaining sets $A,B$ satisfying (A-i)--(A-iv) and (B-i)--(B-iv) is not hard, and (vi) is a straightforward calculation using (v).
\item The theorem fails for $q=1$ and $q=\infty$. In the first case there is an equality $\|\mu*\nu\|_1=\|\mu\|_1$ for any $2^{-m}$-measures, and in the second case there is always an equality $\|\1_A*\1_{-A}\|_\infty= \|\1_A\|_1$. On the other hand, the case of arbitrary $1<q<\infty$ is easily reduced to the case $q=2$: see Lemma \ref{lem:q-to-2} below.
\end{enumerate}

We emphasize that the proof of Theorem \ref{thm:inverse-thm} (including the proofs of the results it relies on) is elementary, in particular avoiding any use of the Fourier transform or quantitative probabilistic estimates such as the Berry-Esseen Theorem, which is crucial in the approach of \cite{Hochman14}. The value of $\e$ is effective in principle, although it is certainly very poor; the worst loss occurs in the application of the asymmetric Balog-Szemer\'{e}di-Gowers Theorem (Theorem \ref{thm:BSG} below).

\section{Proof of the inverse theorem}
\label{sec:inverse-thm-proof}

\subsection{Preliminaries}

In this section we prove Theorem \ref{thm:inverse-thm}. We begin by describing the two main tools involved in the proof: a version of the Balog-Szemer\'{e}di-Gowers Theorem that is effective even when the sets have very different sizes, due to Tao and Vu, and the additive part of Bourgain's discretized sum-product theorem. We begin with the latter.

We say that $A\subset [0,1]$ or $\R/\Z$ is a $2^{-m}$-set if each element of $A$ is an integer multiple of $2^{-m}$. For a finite set $A\subset\R$, we define its doubling constant as $\sigma[A]=|A+A|/|A|$. We will call a $2^{-m}$ set $A$ such that $\sigma[A]\le 2^{\delta m}$ an \textbf{$(m,\delta)$-small doubling set}.

The structure of sets $A$ such that $\sigma[A]\le K$ (where $K$ is independent of $|A|$) is characterized by Freiman's Theorem (see e.g. \cite[Theorem 5.32]{TaoVu10}): such sets can be densely embedded in a generalized arithmetic progression. However Freiman's Theorem gives no information when the doubling constant grows exponentially with the size of the set. The following structural property of sets with small exponential doubling is proved by Bourgain \cite{Bourgain10}. Although it is not explicitly stated in \cite{Bourgain10}, this theorem emerges from the constructions in Sections 2 and 3, in particular see \cite[Equations (3.15), (3.20), (3.21), (3.22)]{Bourgain10}.

\begin{thm} \label{thm:Bourgain}
 Given a large $T\in\N$,   the following holds for sufficiently large $m_1\in \N$ (depending on $T$).

 Let $m=m_1 T$, and suppose $H$ is a $(m,2^{-2T-1})$-small doubling set. Then $H$ contains a subset $H'$ such that the following holds:
 \begin{enumerate}[label={\upshape(\roman*).}]
  \item $|H'|\ge 2^{-(2  \log T/\sqrt{T} )m} |H|$.
  \item There are a set $\mathcal{S}\subset \{0, \ldots,m_1-1\}$ and integers $R_s, n_s$, $s\in\mathcal{S}$, with $n_s\in [s T, (s+1)T)$, such that:
  \begin{enumerate}
   \item If $s\notin\mathcal{S}$, then $\mathcal{N}(H'\cap I,(s+1)T)= 1$ for each $I\in\mathcal{D}_{sT}(H')$.
   \item If $s\in\mathcal{S}$, then $\mathcal{N}(H'\cap I,n_s)=R_s$ for each $I\in\mathcal{D}_{sT}(H')$, and $\mathcal{N}(H'\cap J,(s+1)T)= 1$ for each $J\in\mathcal{D}_{n_s}(H')$.
   \item $2^{(1-T^{-1/2})(n_s-sT)} < R_s \le 2^{n_s - sT}$ for all $s\in\mathcal{S}$.
  \end{enumerate}
 \end{enumerate}
 In particular, $|H'|=\prod_{s\in\mathcal{S}} R_s$.
\end{thm}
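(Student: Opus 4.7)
The proof plan follows Bourgain's multi-scale analysis from \cite{Bourgain10}: extract the stated tree structure from the small-doubling hypothesis by combining three ingredients---Plünnecke--Ruzsa sumset inequalities, successive pigeonholing across dyadic scales at jump $T$, and a sum-product dichotomy that rules out ``intermediate'' branching at too many scales. Throughout, I work with $\sigma := |H+H|/|H| \le 2^{\delta m}$ where $\delta = 2^{-2T-1}$.

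First I would regularize the tree of $H$ at scales $\{sT\}_{s=0}^{m_1}$. For each $s$ and each $I\in \mathcal{D}_{sT}$, the quantity $\mathcal{N}(H\cap I,(s+1)T)$ lies in $\{1,\ldots,2^T\}$, taking at most $O(T)$ dyadically distinct values. By a top-down pigeonhole (at each level keeping the class of intervals with the most populated branching number), I obtain $H_0\subset H$ with $|H_0|\ge T^{-O(m_1)}|H| = 2^{-O(\log T /T)m}|H|$ on which every node at level $sT$ has a constant branching $b_s$. A second pigeonhole inside each $T$-block produces an integer $n_s\in [sT,(s+1)T)$ and constants $R_s,R'_s$ such that $\mathcal{N}(H_0\cap I,n_s)=R_s$ and $\mathcal{N}(H_0\cap J,(s+1)T)=R'_s$ for every $I\in \mathcal{D}_{sT}(H_0)$, $J\in \mathcal{D}_{n_s}(H_0)$; the maximal $n_s$ with $R'_s=1$ can be selected with only a further polynomial-in-$T$ loss.

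The key step is the sum-product dichotomy. I would show that at most $O(m_1/\sqrt{T})$ scales $s$ have ``intermediate'' branching $R_s$ in the range $2^{T^{1/2}} < R_s < 2^{(1-T^{-1/2})(n_s-sT)}$. The idea is that each such scale endows $H_0$ with a Cantor-like structure at resolution $T$ which, by a discretized Plünnecke--Ruzsa / Balog--Szemerédi--Gowers argument at scale $T$, contributes a multiplicative factor $\gtrsim 2^{T^{1/2}}$ to $|H_0+H_0|/|H_0|$; if there were too many such scales, the total doubling would exceed $2^{\delta m}$, contradicting the small-doubling hypothesis. Removing the $T$-blocks at those scales (at a cost of a $2^{-O(\log T/\sqrt{T})m}$ factor in cardinality, by repeating the first pigeonhole inside the offending blocks until $R_s$ collapses to $1$) yields $H'\subset H_0$ in which every scale is either \emph{trivial} ($R_s=1$ throughout, so $\mathcal{N}(H'\cap I,(s+1)T)=1$) or \emph{big} ($R_s\ge 2^{(1-T^{-1/2})(n_s-sT)}$). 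Letting $\mathcal{S}$ be the set of big scales, items (ii)(a)--(c) are met. The bound $|H'|\ge 2^{-(2\log T/\sqrt{T})m}|H|$ in (i) follows by adding up the two sources of loss, and the identity $|H'|=\prod_{s\in\mathcal{S}}R_s$ is immediate from the regularized balanced tree (exactly $R_s$ children at every big level, a unique child elsewhere).

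The main obstacle is the sum-product dichotomy: quantifying how much extra doubling is produced by a single intermediate-branching scale of width $T$, and showing that these excesses compose multiplicatively across scales. This is the combinatorial heart of Bourgain's constructions in \cite[Sections 2--3]{Bourgain10}, relying on an iteration inside each dyadic $T$-block driven by the discretized sum-product theorem. The other components (pigeonholes to equalize branching, merging of trivial scales, tracking the total multiplicative loss) are bookkeeping, with the constraint being that the aggregate loss $2^{-O(\log T/\sqrt{T})m}$ indeed match the bound claimed in (i)---which is why the constant ``$2$'' appears in the exponent there.
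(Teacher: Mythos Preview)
The paper does not prove this theorem; it quotes the result from Bourgain \cite{Bourgain10} and remarks only that the proof ``is ingenious but elementary, only relying on the Pl\"unnecke--Ruzsa inequalities.'' So the comparison is between your outline and Bourgain's actual argument as the paper characterizes it.

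Your regularization step (top-down pigeonholing at scales $sT$ to equalize branching, at cost $2^{-O(\log T/T)m}$) is essentially what Bourgain does in \cite[Section~2]{Bourgain10}, and the bookkeeping you describe for collapsing bad scales is also in the right spirit.

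The problem is your ``sum-product dichotomy'' step. You invoke the discretized sum-product theorem (and Balog--Szemer\'edi--Gowers) to argue that each intermediate-branching scale forces a multiplicative $2^{T^{1/2}}$ growth in the doubling. This is circular: Theorem~\ref{thm:Bourgain} is precisely the additive structural input that Bourgain uses to \emph{prove} the discretized sum-product theorem in \cite{Bourgain10}, not a consequence of it. There is no product operation anywhere in the statement or its proof. The actual mechanism in \cite[Section~3]{Bourgain10} is purely additive: Pl\"unnecke--Ruzsa gives $|kH|\le \sigma^k|H|$ for all $k$, and one uses this for a suitable large $k$ (growing with $T$) together with a direct covering/counting argument at each $T$-block to show that a scale with intermediate branching $R_s$ forces $|kH|/|H|$ to pick up a definite factor. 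The composition across scales comes from the tree structure of the iterated sumset, not from a sum-product iteration inside each block. Your outline correctly identifies this as ``the main obstacle,'' but the tool you propose to resolve it is the wrong one.
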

Thus, the theorem says that a set with small exponential doubling contains a fairly dense subset which has no branching between the scales $2^{-Ts}$ and $2^{-(T+1)s}$ for $s\notin\mathcal{S}$ and between the scales $2^{-n_s}$ and $2^{-(T+1)s}$ for $s\in\mathcal{S}$; and has ``uniform and nearly full branching'' between the scales $2^{-Ts}$ and $2^{n_s}$, $s\in\mathcal{S}$.

We remark that the proof of Theorem \ref{thm:Bourgain} is ingenious but elementary, only relying on the Pl\"{u}nnecke-Ruzsa inequalities, for which a short elementary proof was recently found by Petridis \cite{Petridis11}.

Another crucial ingredient in the proof of Theorem \ref{thm:inverse-thm} is the following version of the celebrated Balog-Szemer\'{e}di-Gowers Theorem, due to Tao and Vu \cite{TaoVu10}, which allows the sets to have widely different sizes. Recall that the additive energy between two finite sets $A,B$ in a common ambient group is
\begin{equation} \label{eq:def-additive-energy}
E(A,B) = |\{ (a_1,a_2,b_1,b_2)\in A^2\times B^2: a_1+b_1=a_2+b_2 \}| = \|\1_A*\1_B\|_2^2.
\end{equation}
\begin{thm}[Asymmetric Balog-Szemer\'{e}di-Gowers] \label{thm:BSG}
Given $\kappa>0$, there is $\tau>0$ such that the following holds for $m\in\N$ large enough. Let $A,B\subset[0,1]$ or $\R/\Z$ be $2^{-m}$-sets such that
\[
E(A,B) \ge 2^{-\tau m} |A| |B|^2  =2^{-\tau m} \|\mathbf{1}_B\|_1^2 \|\mathbf{1}_A\|_2^2.
\]
Then there are a $(m,\kappa)$-small doubling set $H$ and a $2^{-m}$-set $X$ such that:
\begin{enumerate}[label={\upshape(\roman*).}]
 \item $|A\cap (X+H)| \ge 2^{-\kappa m} |A|\ge  2^{-2\kappa m}|X||H|$,
 \item $|B\cap H| \ge 2^{-\kappa m} |B|$.
\end{enumerate}
\end{thm}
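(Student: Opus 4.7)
My plan is to follow the standard route through the graph-theoretic formulation of Balog--Szemer\'{e}di--Gowers, being careful to track the asymmetry in the sizes of $A$ and $B$ throughout. First I would rewrite the energy hypothesis as $\sum_{t} r(t)^2 \ge 2^{-\tau m}|A||B|^2$, where $r(t) = |\{(a,b)\in A\times B : a+b = t\}|$. A dyadic pigeonhole on the values of $r(t)$ produces a single popularity level $r\approx K^{-1}|B|$ (with $K\le 2^{O(\tau)m}$) and a set of popular sums $T$, together with a large set of ``good'' pairs $G\subset A\times B$ all of whose sums lie in $T$ and satisfy $|G|\gtrsim 2^{-O(\tau)m}|A||B|$. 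This is where the asymmetry first intervenes: the correct popularity threshold is measured in units of $|B|$, not $|A|^{1/2}|B|^{1/2}$ as in the symmetric case.

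Next I would invoke the graph-theoretic core of BSG. Viewing $G$ as a bipartite graph on $A\cup B$, I would apply the standard neighborhood lemma (a second-moment argument on common neighborhoods) to find $A_0\subset A$ with $|A_0|\ge 2^{-O(\tau)m}|A|$ and $B_0\subset B$ with $|B_0|\ge 2^{-O(\tau)m}|B|$, such that for ``most'' pairs $(a,a')\in A_0\times A_0$ the difference $a-a'$ has many representations of the form $b'-b$ with $b,b'\in B_0$ (and symmetrically for pairs in $B_0$). From this one deduces, via the Balog--Szemer\'{e}di--Gowers lemma in its graph form, a refined set $B'\subset B_0$ with $|B'|\ge 2^{-O(\tau)m}|B|$ satisfying $|B'-B'|\le 2^{O(\tau)m}|B'|$.

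Having produced a set $B'$ of small additive doubling, I would set $H = (B'-B')\cap 2^{-m}\Z$; the Pl\"{u}nnecke--Ruzsa inequalities (in the sharp form of Petridis) yield $|H+H|\le 2^{O(\tau)m}|H|$, so that $H$ is $(m,\kappa)$-small doubling provided $\tau$ is chosen sufficiently small relative to $\kappa$. For conclusion (ii) one notes that $B'\subset H + \{b_0\}$ for any fixed $b_0\in B'$, so that $|B\cap H|\ge |B'|\ge 2^{-\kappa m}|B|$ after a translation. For conclusion (i), the argument uses Ruzsa's covering lemma: a subset $A_1\subset A_0$ of size $\ge 2^{-O(\tau)m}|A|$ has many common differences with $B'$, hence $A_1$ is covered by $X+H$ for some set $X$ of size $|X|\le 2^{O(\tau)m}|A|/|H|$, which gives the density bound $|A\cap (X+H)|\ge 2^{-2\kappa m}|X||H|$.

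The main technical obstacle is the asymmetric bookkeeping in the graph step: one must arrange the iterated paths in the Balog--Szemer\'{e}di graph so that the ``large'' set $A$ plays the role of the source of popular pairs while the ``small'' set $B$ provides the difference structure. Done naively (treating both sets symmetrically), one loses a factor of $|A|/|B|$ that would be fatal when $|A|\gg |B|$. Tao--Vu's formulation circumvents this by normalizing the neighborhood counts by $|B|$ rather than $|A|$ throughout; getting $\tau$ small enough (polynomial in $\kappa$) so that every application of Pl\"{u}nnecke--Ruzsa and every popularity threshold can absorb into the final $\kappa$ is essentially the only quantitative work, and it is exactly the place where the bound $\tau = \tau(\kappa)$ becomes extremely poor.
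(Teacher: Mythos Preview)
The paper does not actually prove this theorem: it simply invokes \cite[Theorem 3.5]{TaoVu10} as a black box, choosing parameters $L=2^m$, $\alpha=2^{-\tau m}/2$, $\e=\kappa/4$ and checking that the loss $\Omega_\kappa(\alpha^{O_\kappa(1)}L^{-\kappa/4})$ can be made at least $2^{-\kappa m}$ by taking $\tau$ small enough in terms of $\kappa$. Your proposal, by contrast, sketches an \emph{ab initio} proof of the asymmetric BSG theorem itself. So the comparison is: black-box citation versus a genuine (if sketched) argument.

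Your outline is broadly correct and follows the standard route (dyadic pigeonhole on popularity, graph BSG to get $B'$ with small doubling, then Pl\"unnecke--Ruzsa and Ruzsa covering). Two points deserve care. First, taking $H=B'-B'$ and then translating is fine, but note $|H|$ may exceed $|B'|$ by a factor $2^{O(\tau)m}$; this is harmless for the stated conclusions but should be tracked. Second, and more substantively, your covering step for $A$ says ``$A_1$ has many common differences with $B'$, hence $A_1$ is covered by $X+H$''. Ruzsa's covering lemma needs a \emph{sumset} bound $|A_1+B'|\le K|B'|$ (giving $A_1\subset X+B'-B'$ with $|X|\le K$), not merely many common differences. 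You do get such a bound from the asymmetric graph argument, but it is exactly the step you flag as delicate: one must show that the refined $A_1$ and $B'$ satisfy $|A_1+B'|\le 2^{O(\tau)m}|A_1|$, which is the content of the asymmetric normalization you allude to at the end. With that in place your argument goes through; without it the covering step is not yet justified.
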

\begin{proof}
This follows from \cite[Theorem 2.35]{TaoVu10}. Indeed, take $L=2^m$, $\alpha=2^{-\tau m}/2$, $\e=\kappa/4$. Then by making $\tau>0$ small enough in terms of $\kappa$, we can ensure that
\[
\Omega_\kappa \left(\alpha^{O_\kappa(1)} L^{-\kappa/4}\right)\ge \Omega_\kappa(2^{-(\kappa/2)m}) \ge 2^{-\kappa m}
\]
if $\delta$ is small enough and $m$ large enough in terms of $\kappa$.
\end{proof}
Thus, the theorem asserts that a big part of $B$ is contained in a set with small doubling $H$, and a big part of $A$ is densely contained in a union of (nearly) disjoint translates of $H$ (with $X$ being the set of translations). In particular, $H$ cannot be much smaller than $B$ (but it can be much larger), and $X$ has size approximately $|A|/|H|$. The proof of Theorem \ref{thm:BSG} is also elementary, although it is rather lengthy.

\subsection{Overview}

We give a rough sketch of the proof of the inverse theorem. Our goal is to apply the asymmetric Balog-Szemer\'{e}di-Gowers Theorem, Theorem \ref{thm:BSG}.  In \S\ref{subsec:analytical-lemmas} we present two lemmas involving $L^q$ norms. Recall that our assumption is that $\|\mu*\nu\|_q\ge 2^{-\e m}\|\mu\|_q$.  In Lemma  \ref{lem:non-flattening-to-subset} we extract two sets $A,B$, which already satisfy properties (A-i), (A-ii), (B-i), (B-ii), and such that similar bounds hold for their indicator functions. Lemma \ref{lem:q-to-2} (a simple application of H\"{o}lder's inequality) shows that one can pass from the $L^q$ norm to the $L^2$ norm, enabling the application of Theorem \ref{thm:BSG}.

In \S\ref{subsec:combinatorial-lemmas}, we present some  combinatorial regularization lemmas, inspired in \cite{Bourgain10}. Theorem \ref{thm:BSG} produces a set $H$ of small exponential doubling such that $B+H$ is not much larger than $H$ and $A+H$ is not much larger than $A$. Together with the information on  the structure of $H$ provided by Theorem \ref{thm:Bourgain} (or, rather, the version given by Corollary \ref{cor:Bourgain} below), and with the lemmas in \S\ref{subsec:combinatorial-lemmas}, this allows us to deduce the remaining properties of $A$ and $B$ (after passing to suitable dense subsets).

Finally, (vi) is a straightforward consequence of the previous claims.

A point of notation: throughout this section, $\ell$ and $m$ will denote sufficiently large integers (given any other relevant data); any inequalities involving them are understood to hold if they are larger than a constant that is allowed to depend on any other parameters involved.

\subsection{Analytical lemmas}
\label{subsec:analytical-lemmas}

We begin with a lemma, based on Young's inequality and dyadic pigeonholing, that enables the use of the Balog-Szemer\'{e}di-Gowers Theorem. It is an $L^q$ asymmetric version of (the proof of) \cite[Proposition 2]{BourgainGamburd08}.
\begin{lemma} \label{lem:non-flattening-to-subset}
Given $\e>0$ and $q\in (1,\infty)$, the following holds for large enough $m\in\N$. Suppose $\mu,\nu$ are $2^m$-measures satisfying $\|\mu*\nu\|_q \ge 2^{-\e m}\|\mu\|_q$. Then there exist $j, j'\le 2\e q' m$ such that, setting
\begin{align*}
A &= \{ x: 2^{-j-1}\|\mu\|_q^{q'} < \mu(x) \le 2^{-j}\|\mu\|_q^{q'}\},\\
B &= \{ y: 2^{-j'-1}2^{-m} < \nu(y) \le 2^{-j'}2^{-m}\},
\end{align*}
the following holds:
\begin{enumerate}[label={\upshape(\roman*).}]
\item $\|\mathbf{1}_A*\mathbf{1}_B\|_q \ge 2^{-2\e m}\|\mathbf{1}_A\|_q\|\mathbf{1}_B\|_1$,
\item $\|\mu|_A\|_q \ge 2^{-2\e m} \|\mu\|_q$,
\item $\|\nu|_B\|_1 =\nu(B) \ge 2^{-2\e m}$.
\end{enumerate}
\end{lemma}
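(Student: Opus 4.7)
The plan is a standard dyadic-pigeonhole argument: partition the atoms of $\mu$ and $\nu$ by dyadic mass, discard the tails, and extract a single pair of level sets on which the convolution is near-optimal. Set $c = \|\mu\|_q^{q'}$, and define
\[
A_j = \{x : 2^{-j-1}c < \mu(x) \le 2^{-j}c\}, \qquad B_{j'} = \{y : 2^{-j'-1}2^{-m} < \nu(y) \le 2^{-j'}2^{-m}\}.
\]
H\"older applied to $1 = \sum \mu(x)$ on a set of size at most $2^m$ gives $\|\mu\|_q \ge 2^{-m/q'}$, i.e.\ $c \ge 2^{-m}$, so both $j$ and $j'$ range essentially over $[-m,\infty)$. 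The key feature is that on each level set the measure is constant up to a factor of $2$, which is what lets us pass to indicator functions at the end.

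First I would truncate at $J := \lfloor 2\e q' m\rfloor$ and show the tails are negligible. From $|A_j| \le 2^{j+1}c^{-1}$ (which follows from $\mu(A_j) \le 1$) and $\mu(x)^q \le 2^{-jq}c^q$ on $A_j$, together with $c^q = \|\mu\|_q^{qq'}$ and $(q-1)q' = q$, one gets $\|\mu|_{A_j}\|_q^q \le 2\cdot 2^{-j(q-1)}\|\mu\|_q^q$; a geometric sum yields $\|\mu - \mu'\|_q \le C_q 2^{-J/q'}\|\mu\|_q = C_q 2^{-2\e m}\|\mu\|_q$, where $\mu' = \mu|_{\bigcup_{j\le J} A_j}$. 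For the $L^1$ tail of $\nu$ the cruder bound $\nu(B_{j'}) \le 2^{-j'}$ (from $|B_{j'}|\le 2^m$ and $\nu(y)\le 2^{-j'-m}$) gives $\|\nu - \nu'\|_1 \le 2^{-J}\le 2^{-2\e m}$. Applying Young's inequality to both error terms,
\[
\|\mu'*\nu'\|_q \ge \|\mu*\nu\|_q - \|\mu-\mu'\|_q - \|\mu\|_q\|\nu-\nu'\|_1 \ge \tfrac12\cdot 2^{-\e m}\|\mu\|_q
\]
for $m$ large.

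The triangle inequality and the fact that there are only $O(m^2)$ pairs $(j,j')$ with $-m \le j, j' \le J$ now produce, by pigeonhole, a pair (both indices $\le 2\e q' m$) with $\|\mu|_{A_j}*\nu|_{B_{j'}}\|_q \ge 2^{-3\e m/2}\|\mu\|_q$. Setting $A = A_j$, $B = B_{j'}$, properties (ii) and (iii) are immediate from Young: $\|\mu|_A\|_q\|\nu|_B\|_1 \ge \|\mu|_A * \nu|_B\|_q \ge 2^{-3\e m/2}\|\mu\|_q$, and since $\|\mu|_A\|_q \le \|\mu\|_q$ and $\|\nu|_B\|_1 \le 1$, each factor must individually be at least $2^{-3\e m/2}$ times its trivial maximum.

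Finally, for (i), the dyadic pinch on $A$ and $B$ gives pointwise
\[
\tfrac14 \cdot 2^{-j-j'-m}c \cdot \mathbf{1}_A * \mathbf{1}_B \le \mu|_A * \nu|_B \le 2^{-j-j'-m}c \cdot \mathbf{1}_A * \mathbf{1}_B,
\]
together with analogous pinches $\|\mu|_A\|_q \asymp 2^{-j}c\,\|\mathbf{1}_A\|_q$ and $\|\nu|_B\|_1 \asymp 2^{-j'-m}\|\mathbf{1}_B\|_1$. Dividing, the factor $2^{-j-j'-m}c$ cancels and one obtains
\[
\frac{\|\mathbf{1}_A*\mathbf{1}_B\|_q}{\|\mathbf{1}_A\|_q\|\mathbf{1}_B\|_1} \ge \tfrac14 \cdot \frac{\|\mu|_A*\nu|_B\|_q}{\|\mu|_A\|_q\|\nu|_B\|_1} \ge \tfrac14 \cdot 2^{-3\e m/2} \ge 2^{-2\e m},
\]
where the middle inequality uses $\|\mu|_A\|_q\|\nu|_B\|_1 \le \|\mu\|_q$ and the convolution lower bound from the previous step. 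There is no genuine obstacle; the only place calibration matters is the choice $J = 2\e q' m$, where the factor $q'$ is forced precisely so that the $L^q$-tail of $\mu$ and the $L^1$-tail of $\nu$ both come out at scale $2^{-2\e m}$, safely below the hypothesized gain $2^{-\e m}$ on the convolution.
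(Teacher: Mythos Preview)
Your argument is correct and follows essentially the same dyadic-pigeonhole strategy as the paper: truncate the level sets $A_j,B_{j'}$ at height $J\approx 2\e q' m$, discard the tails via Young, pigeonhole over the $O(m^2)$ remaining pairs, and then pass to indicators using the $2$-adic pinch on each level set. The only cosmetic differences are that the paper bounds the $\mu$-tail in one stroke via $\|\mu|_E\|_q^q\le (\max_{E}\mu)^{q-1}\cdot\mu(E)$ rather than a geometric sum over $j$, and organizes the final indicator computation as a chain of inequalities rather than a ratio; the content is the same.
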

\begin{proof}

We use the notation $X\gtrsim Y$ to mean $X \ge C^{-1}m^{-C} Y$, where $C>0$ depends on $q$ only. For $j\in\Z$, let
\begin{align*}
A_j &= \{ x: 2^{-j-1}\|\mu\|_q^{q'} < \mu(x) \le 2^{-j}\|\mu\|_q^{q'}\},\\
B_j &= \{ y: 2^{-j-1}2^{-m} < \nu(y) \le 2^{-j}2^{-m}\}.
\end{align*}
Firstly, note that $A_j=\varnothing, B_j=\varnothing$ if $j\le -(m+1)$ since, by H\"{o}lder's inequality,
\[
1= \sum_x \mu(x) \le \|\mu\|_q 2^{m/q'} \,\Longrightarrow \, \|\mu\|_q^{q'} \ge 2^{-m}.
\]
Write $\ell =\lceil 2\e q' m\rceil$, and let $E=\cup_{j\ge\ell} A_j, F=\cup_{j\ge\ell} B_j$. Note that
\begin{align*}
\|\mu|_E\|_q^q &\le \left(\max_{x\in E}\mu(x)^{q-1}\right) \sum_{x\in E}\mu(x) \le 2^{-\ell(q-1)}\|\mu\|_q^q,\\
\|\nu|_F\|_1 &= \nu(F) \le 2^m 2^{-\ell-m} = 2^{-\ell}.
\end{align*}
 By Young's inequality,
\[
\max(\|\mu|_E *\nu\|_q,\|\mu*\nu|_F\|_q) \le 2^{-\ell/q'} \|\mu\|_q  \le 2^{-\e m} \|\mu*\nu\|_q.
\]
It follows from the bilinearity of convolution and the triangle inequality that, if $m\gg_\e 1$,
\[
\sum_{- m\le j,j'<\ell}   \|\mu|_{A_j}*\nu|_{B_{j'}}\|_q  \ge \frac12 \|\mu*\nu\|_q.
\]
Pigeonholing and applying Young's inequality once again, we can pick $j,j'<\ell$ such that, setting $A=A_j, B=B_{j'}$, we have
\[
\|\mu|_A\|_q \|\nu|_B\|_1 \ge  \|\mu|_A *\nu|_B\|_q \gtrsim  \|\mu*\nu\|_q \ge 2^{-\e m}\|\mu\|_q 
\]
From here it follows that $\|\nu|_B\|_1 \gtrsim 2^{-\e m}$ and $\|\mu|_A\|_q \gtrsim 2^{-\e m}\|\mu\|_q$. Note that $2^{j'+m}\gtrsim |B|$. We conclude that
\begin{align*}
\|\1_A*\1_B\|_q &\gtrsim (2^{j}\|\mu\|_q^{-q'} 2^{j'+m}) \|\mu|_A*\nu|_B\|_q \\
&\gtrsim  (2^{j}\|\mu\|_q^{-q'} 2^{j'+m})  2^{-\e m}\|\mu|_A\|_q \\
&\gtrsim 2^{-\e m} \|\1_A\|_q \|\1_B\|_1.
\end{align*}
\end{proof}

The following simple consequence of H\"{o}lder's inequality will allow us to apply the Balog-Szmer\'{e}di-Gowers also in the context of $L^q$ norms, $q\in (1,+\infty)$.
\begin{lemma} \label{lem:q-to-2}
Let $A,B$ be two $2^{-m}$-sets and let $q\in (1,\infty)$. If $\|\1_A *\1_B\|_q\ge 2^{-\kappa m}|A|^{1/q}|B|$, then
\[
\|\1_A *\1_B\|_2^2 \ge 2^{-(\max(q,q'))\kappa m} |A||B|^2.
\]
\end{lemma}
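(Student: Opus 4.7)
The plan is to reduce everything to the log-convexity of $L^p$ norms applied to the convolution $f := \mathbf{1}_A * \mathbf{1}_B$. The two free pieces of information about $f$ are $\|f\|_1 = |A||B|$ (trivial) and $\|f\|_\infty \le |B|$ (since $f(z)$ counts pairs $(a,b)\in A\times B$ with $a+b=z$, and $a$ determines $b$; in particular we could also use $\|f\|_\infty\le |A|$, but only the bound by $|B|$ produces the correct exponent $|B|^2$ on the right hand side). Together with the hypothesis $\|f\|_q \ge 2^{-\kappa m}|A|^{1/q}|B|$, these will be enough to extract a lower bound on $\|f\|_2$ by interpolation, splitting according to whether $q\ge 2$ or $q<2$.

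First, when $q\ge 2$ I would interpolate $q$ between the exponents $2$ and $\infty$: since $\tfrac{1}{q}=\tfrac{1-\theta}{2}$ with $\theta=1-2/q$, log-convexity gives $\|f\|_q \le \|f\|_2^{2/q}\|f\|_\infty^{1-2/q}$. Solving for $\|f\|_2^2$ yields $\|f\|_2^2 \ge \|f\|_q^q/\|f\|_\infty^{q-2}$; plugging in the hypothesis and $\|f\|_\infty\le|B|$ then gives $\|f\|_2^2\ge 2^{-q\kappa m}|A||B|^2$, which is the desired inequality with $\max(q,q')=q$.

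Second, when $1<q<2$ I would interpolate $q$ between $1$ and $2$: from $\tfrac{1}{q}=(1-\theta)+\theta/2$ one gets $\theta=2/q'$, so $\|f\|_q\le \|f\|_1^{2/q-1}\|f\|_2^{2/q'}$. Solving for $\|f\|_2^2$ (i.e.\ raising to the $q'$-th power) gives $\|f\|_2^2\ge \|f\|_q^{q'}/\|f\|_1^{(2-q)/(q-1)}$. Using the hypothesis on $\|f\|_q$ and the identity $\|f\|_1=|A||B|$, a short calculation with exponents shows that the powers of $|A|$ and $|B|$ combine to exactly $|A|\cdot|B|^2$, producing $\|f\|_2^2\ge 2^{-q'\kappa m}|A||B|^2$, which matches $\max(q,q')=q'$ in this range.

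There is no real obstacle here; the only thing to be careful about is which trivial bound on $\|f\|_\infty$ or $\|f\|_1$ is used in each regime, since the asymmetry between $A$ and $B$ on the right-hand side $|A||B|^2$ is what forces the choice $\|f\|_\infty\le |B|$ (rather than $|A|$) in the case $q\ge 2$. Combining both regimes gives the stated bound uniformly with the constant $\max(q,q')$ in the exponent.
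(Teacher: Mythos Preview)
Your proof is correct and essentially identical to the paper's: both split into the cases $q\ge 2$ and $1<q<2$, and in each case use the same interpolation inequality (the paper phrases it as H\"older's inequality applied to $f^q=f^{2-q}f^{2(q-1)}$ for $q<2$ and $f^q\le f^2\|f\|_\infty^{q-2}$ for $q>2$, which are exactly your log-convexity bounds) together with $\|f\|_1=|A||B|$ and $\|f\|_\infty\le|B|$. The only cosmetic difference is that you name the argument ``log-convexity of $L^p$ norms'' and explain the choice $\|f\|_\infty\le|B|$ versus $|A|$, which the paper leaves implicit.
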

\begin{proof}
Consider first the case $q\in (1,2)$. Applying H\"{o}lder's inequality in the form
\[
\sum_x f(x)^q = \sum_x f(x)^{2-q} f(x)^{2(q-1)} \le \left(\sum_x f(x)\right)^{2-q} \left(\sum_x f(x)^2\right)^{q-1}
\]
to $f=\1_A*\1_B$ yields
\[
\|\1_A*\1_B\|_q^q \le  |A|^{2-q} |B|^{2-q} \|\1_A*\1_B\|_2^{2(q-1)}.
\]
Hence, using the assumption,
\[
\|\1_A*\1_B\|_2^{2(q-1)}  \ge 2^{-q\kappa m}\frac{|A| |B|^q}{|A|^{2-q} |B|^{2-q}} = 2^{-q\kappa m}|A|^{q-1} |B|^{2(q-1)},
\]
which gives the claim when $q\in (1,2)$.

Suppose now $q\in (2,+\infty)$. Then
\[
2^{-q\kappa m} |A||B|^q \le \|\1_A*\1_B\|_q^q  \le \|\1_A*\1_B\|_2^2 \|\1_A*\1_B\|_\infty^{q-2} \le \|\1_A*\1_B\|_2^2 |B|^{q-2},
\]
and this completes the proof.
\end{proof}

\subsection{Combinatorial lemmas}
\label{subsec:combinatorial-lemmas}

In this section we establish several elementary combinatorial lemmas. In both the statement and the proof of Theorem \ref{thm:inverse-thm} an important r\^{o}le is played by sets with a ``regular tree structure''. We begin by formalizing this concept. Recall that $[\ell]=\{0,1,\ldots,\ell-1\}$.

\begin{defn}
Let $D,\ell\in\N$ and set $m=\ell D$. Given a sequence $(R_s)_{s\in [\ell]}$ taking values in $[1,2^D]$, we say that a $2^{-m}$-set $A$ is \textbf{$(D,\ell,R)$-uniform} if $\NN(A\cap I,(s+1)D)=R_s$ for each $s\in[\ell]$ and $I\in\DD_{s\ell}(A)$.

Further, we say that $A$ is \textbf{$(D,\ell)$-uniform} if there is a sequence $R$ such that $A$ is $(D,\ell,R)$-uniform.
\end{defn}

Given an arbitrary $2^{-m}$-set $A$ and $D|m$, one may associate to it the tree whose vertices of level $s$ are the $2^{-s D}$-intervals intersecting $A$. Then $A$ is $(D,\ell)$-uniform if and only if the associated tree is spherically symmetric, i.e. the number of offspring of a vertex is constant over all vertices at the same distance to the root (but may vary between vertices of different levels). We will often informally refer to the tree description of sets, for example by speaking of branching at certain levels.

In our first lemma we show that any $2^{-m}$ set contains a fairly large uniform subset. This fact goes back at least to \cite{Bourgain10}; we provide details for completeness.
\begin{lemma} \label{lem:regular-subset}
Let $D,\ell\in \N$, and let $A$ be a $2^{-m}$-set, where $m=\ell D$. Then there exists a $(D,\ell)$-uniform subset $A'\subset A$  such that
\[
|A'|\ge (2D)^{-\ell}|A| = 2^{(-\log(2D)/D) m}|A|.
\]
\end{lemma}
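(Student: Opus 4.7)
The plan is to process the $\ell$ levels of the dyadic tree of $A$ in a \emph{bottom-up} pass, making branching uniform at one level at a time while inheriting any uniform structure already secured at finer levels. I would build a decreasing chain of subsets
\[
A = A_\ell \supset A_{\ell-1} \supset \cdots \supset A_0 =: A',
\]
maintaining the invariant that $A_s$ has uniform branching at every level $s' \in \{s, s+1, \ldots, \ell-1\}$: there exist numbers $R_{s'}$ such that $\NN(A_s \cap I, (s'+1)D) = R_{s'}$ for every $I \in \DD_{s'D}(A_s)$.

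To pass from $A_{s+1}$ to $A_s$, I would dyadically pigeonhole the branching numbers $b_I := \NN(A_{s+1} \cap I, (s+1)D) \in \{1, \ldots, 2^D\}$ over $I \in \DD_{sD}(A_{s+1})$, using the $D$ ranges $[2^k, 2^{k+1}]$ for $k = 0, \ldots, D-1$. There exists some $R_s = 2^k$ such that the intervals $I$ with $b_I \in [R_s, 2R_s]$ together carry at least $\tfrac{1}{D}|A_{s+1}|$ of the mass. For each such $I$, I would retain entirely $R_s$ out of the $b_I$ sub-intervals in $\DD_{(s+1)D}$ meeting $A_{s+1} \cap I$, e.g.\ the $R_s$ of largest intersection. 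Because $A_{s+1}$ already has constant branching $R_{s'}$ at every finer scale $s' > s$, the mass satisfies $|A_{s+1} \cap I| = b_I \prod_{s'>s} R_{s'}$, so keeping $R_s$ of $b_I \in [R_s, 2R_s]$ sub-intervals retains at least $\tfrac{1}{2}|A_{s+1} \cap I|$. Combined with the $1/D$ from the pigeonhole, each stage costs at most a factor $1/(2D)$; the trivial stage $s = 0$ (where $\DD_0$ contains a single interval) loses nothing, and iterating over the remaining $\ell-1$ stages yields $|A'| \ge (2D)^{-(\ell-1)}|A| \ge (2D)^{-\ell}|A|$.

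The only structural point requiring care, and the reason for the bottom-up order, is verifying that trimming at level $s$ does not disturb the uniformity already established at finer scales $s' > s$. Since we remove only entire $2^{-(s+1)D}$-subintervals and every retained subinterval keeps all of its descendants in $A_{s+1}$ intact, the constant branching $R_{s'}$ at each finer level is automatically inherited by $A_s$. A top-down pass would lack this feature: pruning at a coarse level based on branching data below could destroy uniformity previously imposed at higher levels. I expect this inheritance observation to be the only real subtlety; the rest reduces to the routine dyadic counting argument outlined above.
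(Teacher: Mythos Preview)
Your proposal is correct and follows essentially the same bottom-up construction as the paper: starting from $A^{(\ell)}=A$ and, at each step, dyadically pigeonholing the branching numbers at level $s$ to select a class carrying a $1/D$ fraction of the mass, then trimming each surviving $2^{-sD}$-interval to exactly $R_s$ children at cost $\le 1/2$, with the bottom-up order ensuring that the already-uniform finer structure is preserved. Your observation that the $s=0$ stage is trivial (yielding $(2D)^{-(\ell-1)}$ rather than $(2D)^{-\ell}$) is a harmless sharpening the paper does not bother to record.
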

\begin{proof}
The construction is similar to that in \cite[Section 2]{Bourgain10}. We begin from the bottom of the tree, setting $A^{(\ell)}:=A$. Once $A^{(s+1)}$ is constructed, we let
\[
A^{(s,j)} =\bigcup \{ A^{(s+1)}\cap J : J\in\DD_{sD}(A^{(s+1)}), \NN(J\cap A^{(s+1)},(s+1)D) \in [2^j+1,2^{j+1}] \}.
\]
Since $j$ takes at most $D$ values, we can pick $j=j_s$ such that $|A^{(s,j)}|\ge |A^{(s+1)}|/D$. By removing at most half of the intervals in $A^{(s+1)}$ from each interval $J$ making up $A^{(s,j)}$, we obtain a set $A^{(s)}$ such that $|A^{(s)}|\ge |A^{(s+1)}|/(2D)$ and $\NN(J\cap A^{(s)},(s+1)D)=2^j$ for all $J\in \DD_{sD}(A^{(s)})$. We see inductively that $\NN(J\cap A^{(s)}, (s'+1) D)$ is constant over all $J\in\DD_{s' D}(A^{(s)})$, for all $s'=s,s+1,\ldots,\ell-1$.

The lemma follows by taking $A'=A^{(0)}$.
\end{proof}

The next simple lemma (which is also implicit in \cite{Bourgain10}) asserts that, given a $(D,\ell,R)$-uniform set, it is possible to reduce some of the numbers $R_s$ to $1$ without decreasing the size of the set too much.
\begin{lemma} \label{lem:collapsing}
Given $D,\ell\in\N$, the following holds. Suppose $A$ is $(D,\ell,R)$-uniform. Then, if $\mathcal{S}\subset[\ell]$ is any set, there exists a subset $A'\subset A$ which is $(D,\ell,R')$ uniform, where $R'_s=1$ for $s\in\mathcal{S}$ and $R'_s=R_s$ for $s\in [\ell]\setminus \mathcal{S}$, and
\[
|A'| \ge  \left(\prod_{s\in\mathcal{S}} \frac{1}{R_s}\right)|A| \ge 2^{-|\mathcal{S}|D}|A|.
\]
\end{lemma}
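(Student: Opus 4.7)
The plan is a top-down greedy pruning of the spherically symmetric tree associated to $A$. Working through scales $s = 0, 1, \ldots, \ell - 1$ in order, I would build a decreasing chain $A = A^{(0)} \supset A^{(1)} \supset \cdots \supset A^{(\ell)} =: A'$ as follows. If $s \notin \mathcal{S}$, set $A^{(s+1)} := A^{(s)}$. If $s \in \mathcal{S}$, then for each $I \in \mathcal{D}_{sD}(A^{(s)})$ pick an arbitrary single child $J_I \in \mathcal{D}_{(s+1)D}(A^{(s)} \cap I)$ and set
\[
A^{(s+1)} := \bigcup_{I \in \mathcal{D}_{sD}(A^{(s)})} \bigl( A^{(s)} \cap J_I \bigr).
\]

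The heart of the verification is an invariant that I would prove by induction on $s$: for every scale $s$ and every surviving interval $P \in \mathcal{D}_{sD}(A^{(s)})$, one has $A^{(s)} \cap P = A \cap P$. Indeed, a pruning step at a coarser scale $s' < s$ either chose an $(s'+1)D$-ancestor of $P$ (in which case $P$ is retained with its interior untouched) or discarded it (in which case $P$ disappears from the tree); either way, nothing is ever removed from inside an intact $P$. From this invariant the branching $R'_t = 1$ at scales $t \in \mathcal{S}$ is immediate from the definition of the pruning step at time $t$. For $t \notin \mathcal{S}$ I would check that all $R_t$ children $P_1, \ldots, P_{R_t}$ of a surviving $tD$-parent $P$ in $A^{(t+1)}$ survive to $A'$: by the invariant each $P_i$ inherits the full subtree of $A \cap P_i$, and every subsequent pruning step at a finer scale $s > t$ keeps at least one $(s+1)D$-child per $sD$-parent inside $P_i$, so downward induction shows $P_i$ is never emptied.

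For the size, observe that at each pruning step $s \in \mathcal{S}$ the invariant ensures each surviving $sD$-interval $I$ still has its original $R_s$ children from $A$, all of which contain equal numbers of leaves by the $(D,\ell,R)$-uniformity of $A$. Hence retaining exactly one of them multiplies the total cardinality by $1/R_s$, so
\[
|A'| = \prod_{s \in \mathcal{S}} \frac{1}{R_s} \, |A| \;\ge\; 2^{-|\mathcal{S}| D} |A|,
\]
the last step using $R_s \le 2^D$.

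I do not anticipate any serious obstacle: the whole argument is a bookkeeping exercise on a spherically symmetric tree, and the hypothesis that $A$ is $(D,\ell,R)$-uniform is precisely what makes all branches at a given scale interchangeable, so that the arbitrary choices of $J_I$ commute with the final count. The one minor point to be careful about is confirming that prunings at later (finer) scales cannot spuriously kill already-established branches at coarser scales, which is exactly what the invariant above records.
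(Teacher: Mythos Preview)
Your proposal is correct and follows essentially the same top-down pruning construction as the paper's proof: start with $A^{(0)}=A$, at each $s\in\mathcal{S}$ keep a single $(s{+}1)D$-child of every surviving $sD$-interval, and set $A'=A^{(\ell)}$. You spell out the invariant and the size count more carefully than the paper (which simply asserts the desired properties are ``clear''), and you even note that one obtains equality $|A'|=\prod_{s\in\mathcal{S}}R_s^{-1}|A|$ rather than just the stated inequality.
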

\begin{proof}
We inductively construct a sequence of sets $A^{(s)}$, $s\in [\ell]$. Set $A^{(0)}=A$. Once $A^{(s)}$ is defined, if $s\notin\mathcal{S}$ set $A^{(s+1)}=A^{(s)}$. Otherwise, for each $I\in\DD_{sD}(A^{(s)})$, let $J_I$ be any interval in $\DD_{(s+1)D}(A\cap I)$, and let $A^{(s+1)}$ be the union of all such intervals $J_I$. Since $R_s\le 2^{D}$, it is clear that $A'=A^{(\ell-1)}$ has the desired properties.
\end{proof}

Given a set $A$, the next lemma extracts a large subset $A'$ of a suitable translation of $A$, such that points in $A'$ are  ``not too close to the boundary'' of $2^D$-adic intervals.
\begin{lemma} \label{lem:centering}
Let $D\in\N_{\ge 2}$, $\ell\in\N$, and let $A$ be a $2^{-m}$-set in $\R/\Z$, where $m=\ell D$. Then there are a point $x=k 2^{-m}$,  and a subset $A'\subset A$ such that:
\begin{enumerate}[label={\upshape(\roman*).}]
\item $|A'|\ge  2^{-(\log 3/D) m}|A|$.
\item For all $y\in A'$ and all $s\in [\ell]$, $y+x\in\frac12 \DD_{sD}(y+x)$.
\end{enumerate}
\end{lemma}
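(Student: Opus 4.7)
The plan is to select $x$ by a straightforward averaging (pigeonhole) argument. Parameterize the candidate shifts as $x = k \cdot 2^{-m}$ with $k \in \{0,1,\ldots,2^m-1\}$, so there are exactly $2^m$ of them. For each $y \in A$, let $E_y$ denote the set of shifts $x$ for which $y+x \in \tfrac{1}{2}\DD_{sD}(y+x)$ holds simultaneously for every $s \in [\ell]$. If we can give a uniform lower bound on $|E_y|$, then by double counting some shift $x$ will force the subset $A' := \{y \in A : x \in E_y\}$ to be large.

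First I would reduce the condition at a single scale to a constraint on two binary digits. As $k$ varies over $\{0,\ldots,2^m-1\}$, the point $z := (y+x) \bmod 1$ ranges bijectively over $2^{-m}\Z \cap [0,1)$. Write $z = \sum_{i=1}^{m} b_i\, 2^{-i}$; then the normalized position of $z$ inside its containing dyadic interval $\DD_{sD}(z)$ is precisely $0.b_{sD+1}b_{sD+2}\cdots b_m$. This lies in $[1/4,3/4)$, i.e.\ in the middle half of $\DD_{sD}(z)$, if and only if the pair $(b_{sD+1},b_{sD+2})$ equals $(0,1)$ or $(1,0)$, equivalently $b_{sD+1} \neq b_{sD+2}$. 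So the condition $z \in \tfrac{1}{2}\DD_{sD}(z)$ becomes a single parity constraint on consecutive bits.

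Now I would exploit the assumption $D \ge 2$. This guarantees that the index pairs $\{(sD+1,\,sD+2) : s \in [\ell]\}$ are pairwise disjoint subsets of $\{1,2,\ldots,m\}$, so the $\ell$ constraints are imposed on disjoint, hence independent, pairs of digits. Exactly one half of the $2^m$ bit-strings satisfy any one of them, and by independence a fraction $(1/2)^\ell$ satisfy all of them. Therefore
\[
|E_y| = 2^{m-\ell} \quad \text{for every } y \in A.
\]

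Finally, double counting yields
\[
\sum_{x} \bigl|\{y \in A : x \in E_y\}\bigr| \;=\; \sum_{y \in A} |E_y| \;=\; |A| \cdot 2^{m-\ell},
\]
so averaging over the $2^m$ possible shifts produces some $x = k\,2^{-m}$ with $|A'| \ge 2^{-\ell}|A|$. Since $2^{-\ell} \ge 3^{-\ell} = 2^{-(\log 3/D)m}$, setting $A'$ equal to this set proves both (i) and (ii). I do not anticipate a serious obstacle: the whole argument is an elementary bit-level computation followed by pigeonholing, with the only mild subtlety being the bookkeeping on the circle (wrap-arounds modulo $1$ do not affect the binary expansion of the fractional part) and the careful use of $D \ge 2$ to ensure disjointness of the relevant bit positions.
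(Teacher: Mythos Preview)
Your argument is correct, and it is genuinely different from the paper's. The paper builds the shift $x$ inductively, one scale at a time: starting with $A^{(\ell)}=A$, at level $s$ it pigeonholes on the three candidate corrections $t_s\in\{-2^{-(sD+2)},0,2^{-(sD+2)}\}$, keeps the third of the points that land in the middle half at scale $sD$ after adding $t_s$, and sets $x=\sum_s t_s$. This loses a factor $3$ per scale and yields $|A'|\ge 3^{-\ell}|A|$. You instead fix the shift globally by double counting: for each $y$ the centering conditions at the $\ell$ scales reduce to the independent bit constraints $b_{sD+1}\neq b_{sD+2}$ on disjoint pairs (using $D\ge 2$), so exactly $2^{m-\ell}$ shifts work for each $y$, and averaging over $x$ produces $A'$ with $|A'|\ge 2^{-\ell}|A|\ge 3^{-\ell}|A|$. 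Your route is shorter, non-adaptive, and gives a slightly better constant; the paper's iterative pruning is more in line with the other tree-refinement lemmas in that section and makes the structure of $x$ as a sum of scale-localized corrections explicit.
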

\begin{proof}
We note the following simple fact: for any $y\in [0,1)$ and any $j\le m-2$,  there is $t\in\{-2^{-(j+2)},0,2^{-(j+2)}\}$ such that $y+t\subset \tfrac12 \DD_j(y)$. With this in mind, we prune the tree in a similar way to Lemma \ref{lem:regular-subset} to construct sets $A^{(s)}$, starting from $A^{(\ell)}=A$ and moving up to $A^{(0)}$, such that for each $s\in [\ell]$,
\begin{enumerate}
  \item There is $t_s\in\{ -2^{-(s D+2)},0,2^{-(s D+2)}\}$ such that $y+x_s:=y+\sum_{s'=s}^{\ell-1} t_{s'}\in\frac12\DD_{s D}(y+x_s)$, for all $y\in A^{(s)}$.
  \item Moreover, $|A^{(s)}|\ge |A^{(s+1)}|/3$.
\end{enumerate}
Set $x=x_0=\sum_{s=0}^{\ell-1} t_s$ and $A'=A^{(0)}$. It is clear that $|A'|\ge 3^{-\ell}|A|= 2^{-(\log 3/D) m}|A|$. Also, since $\sum_{s'=0}^{s-1} t_{s'}$ is a multiple of $2^{-sD}$, we have $y+x\in\DD_{s D}(y+x)$ for all $y\in A'$ and $s\in [\ell]$, as claimed.
\end{proof}

The next lemma will allow us to show that if $H$ has small doubling and $A+H$ is ``not too large'', then $A$ and $H$ have a certain shared structure.
\begin{lemma} \label{lem:large-sumset-regular-sets}
Let $D,\ell\in\N$, and write $m=\ell D$. Suppose $H,A$ are $2^{-m}$-sets such $H$ is $(D,\ell,R)$-uniform and $A$ is $(D,\ell,R')$-uniform. Then
\[
|A+H| \ge 2^{-(1/D)m} |H| \prod_{s:R_s=1} R'_s.
\]
\end{lemma}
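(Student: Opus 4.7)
The plan is as follows. Set $\mathcal{S}_0 := \{s \in [\ell] : R_s = 1\}$, so the claim reads $|A + H| \ge 2^{-\ell} |H| \prod_{s \in \mathcal{S}_0} R'_s$. Applying Lemma \ref{lem:collapsing} with the set $[\ell] \setminus \mathcal{S}_0$, I first extract $A' \subset A$ which is $(D, \ell)$-uniform with branching $R'_s$ at scales $s \in \mathcal{S}_0$ and $1$ elsewhere; thus $|A'| = \prod_{s \in \mathcal{S}_0} R'_s$ and $|A + H| \ge |A' + H|$. The lemma therefore reduces to showing $|A' + H| \ge |A'|\,|H|/2^\ell$; since $\sum_x |\phi^{-1}(x)| = |A'|\,|H|$ for the addition map $\phi : A' \times H \to A' + H$, this in turn will follow from the fiber bound $|A' \cap (x - H)| \le 2^\ell$ for every $x$.

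I will prove this fiber bound by a scale-by-scale induction. Let $\pi_s$ denote the truncation $y \mapsto \lfloor 2^{sD} y \rfloor$, set $A'_s := \pi_s(A')$, $H_s := \pi_s(H)$, and consider the sets of ``compatible prefix pairs''
\[
P_s := \bigl\{(y, y') \in A'_s \times H_s : y + y' \in \{\pi_s(x), \pi_s(x) - 1\}\bigr\},
\]
where the ``$-1$'' absorbs the carry that may propagate from bits below scale $s$ in the identity $a + (x - a) = x$. Each $a \in A' \cap (x - H)$ yields a distinct element $(\pi_\ell(a), \pi_\ell(x - a)) \in P_\ell$, so it is enough to show $|P_\ell| \le 2^\ell$. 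I will prove this by induction on $s$ from the trivial base $|P_0| = 1$: at each scale $s$, by construction at least one of $A'$ or $H$ admits only a single $(s+1)D$-child per $sD$-parent (namely $A'$ when $s \notin \mathcal{S}_0$, and $H$ when $s \in \mathcal{S}_0$), so one coordinate of any extension $(y, y') \in P_{s+1}$ of a parent $(y_0, y'_0) \in P_s$ is uniquely determined, while the constraint $y + y' \in \{\pi_{s+1}(x), \pi_{s+1}(x) - 1\}$ pins the other coordinate to at most two values.

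The main technical subtlety is the careful bookkeeping of the carries; the factor of $2$ per scale in the induction reflects exactly the two possible carry values, and multiplying over all $\ell = m/D$ scales produces the loss of $2^{-m/D}$ appearing in the statement. Assembling the pieces yields $|A + H| \ge |A' + H| \ge |A'|\,|H|/2^\ell = 2^{-m/D} |H| \prod_{s : R_s = 1} R'_s$, as claimed.
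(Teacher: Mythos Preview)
Your proof is correct. Both your argument and the paper's begin with the same reduction via Lemma~\ref{lem:collapsing}, passing to $A'\subset A$ with branching only at scales in $\mathcal{S}_0$, so that at every scale exactly one of $A',H$ has a single child per parent. From there the two arguments diverge: the paper works \emph{directly on the image}, building by induction on $s$ families $\mathcal{I}_s\subset\DD_{sD}(A')$, $\mathcal{J}_s\subset\DD_{sD}(H)$ with $\{I+J:I\in\mathcal{I}_s,J\in\mathcal{J}_s\}$ pairwise disjoint and $|\mathcal{I}_s||\mathcal{J}_s|\ge 2^{-s}\NN_{sD}(A')\NN_{sD}(H)$; the factor $\tfrac12$ per scale is paid to keep chosen sub-intervals non-adjacent, guaranteeing disjointness of the sums. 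You instead work \emph{on the fibers}, bounding $|\phi^{-1}(x)|\le 2^\ell$ for the addition map $\phi:A'\times H\to A'+H$ via the prefix-pair sets $P_s$; your factor $2$ per scale is the carry bit in base-$2^D$ addition. (Your verification that each $(y,y')\in P_{s+1}$ descends to $P_s$ is correct: the only worry is $\lfloor y/2^D\rfloor+\lfloor y'/2^D\rfloor=\pi_s(x)-2$, but this would force the low digits to sum to $2^{D+1}-1>2(2^D-1)$, which is impossible.) The two approaches are essentially dual---one lower-bounds $|A'+H|$ by exhibiting many disjoint sums, the other by bounding multiplicity---and are of comparable simplicity; your carry-tracking formulation is perhaps a bit more transparent about where the $2^{-\ell}$ loss originates.
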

\begin{proof}
Write $\mathcal{S}=\{ s:R_s=1\}$. By replacing $A$ with the subset given by Lemma \ref{lem:collapsing}, we may assume that $R'_s=1$ for all $s\notin\mathcal{S}$. This makes the problem symmetric: for each $s$, either $R_s=1$ or $R'_s=1$. With this in mind, we inductively show that for each $s=0,1,\ldots,\ell$, there are families $\mathcal{I}_s\subset \DD_{s D}(A)$, $\mathcal{J}_s\subset \DD_{s D}(H)$, such that:
\begin{enumerate}
\item $|\mathcal{I}_s||\mathcal{J}_s| \ge 2^{-s} \NN_{s D}(A)\NN_{s D}(H)$
\item The intervals $\{ I+J: I\in\mathcal{I}_s, J\in\mathcal{J}_s\}$ are pairwise disjoint.
\end{enumerate}
The base case $s=0$ is trivial. Suppose this holds for some $s<\ell$. Without loss of generality, $R_{s+1}=1$. Hence, for each $J\in\mathcal{J}_s$ we pick the single $J'\in\DD_{(s+1)D}(J\cap H)$ and let $\mathcal{J}_{s+1}$ be the union of all such $J'$. Next, for each $I\in\mathcal{I}_s$, let $(I'_j)_{j=1}^{N_I}$ be a subcollection of $\DD_{(s+1)D}(I\cap A)$ such that no two of the $I'_j$ are adjacent, and $N_I\ge \lceil R'_{s+1}/2\rceil$. We let $\mathcal{I}_{s+1}$ be the union of all $I'_j$ over all $I\in\mathcal{I}_s$. It is clear from this construction that (1)--(2) hold.

The claim follows from (1)--(2) applied with $s=\ell$.
\end{proof}

We conclude this section with a version of Theorem \ref{thm:Bourgain} in which the lengths of the intervals over which there is either no or close to full branching is kept constant (at the price of worsening the quantitative estimates). This reduction is a matter of simplicity; a version of Theorem \ref{thm:inverse-thm} in which the intervals of almost full/no branching have varying lengths could be deduced directly from Theorem \ref{thm:Bourgain}.

\begin{cor} \label{cor:Bourgain}
 Given a large $D\in\N$,   the following holds for sufficiently large $\ell\in \N$ (depending on $D$).

Let $m=\ell D$. Suppose $H$ is an $(m,2^{-2D^2-1})$-small doubling set. Then there is a subset $H_1\subset H$ such that the following holds:
 \begin{enumerate}[label={\upshape(\roman*).}]
  \item $|H_1|\ge 2^{-(4 (\log D)D^{-1/4} )m} |H|$.
  \item $H_1$ is $(D,\ell,R)$-uniform, where for each $u$ either $R_u=1$, or $\log R_u\ge (1-D^{-1/4})D$.
  \end{enumerate}
 In particular, $|H_1|=\prod_{u:R_u>1} R_u$.
\end{cor}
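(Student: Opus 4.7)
The plan is to apply Theorem \ref{thm:Bourgain} with $T=D^2$, regularize the resulting set at the $D$-scale using Lemma \ref{lem:regular-subset}, and then eliminate $D$-sub-blocks with intermediate branching via Lemma \ref{lem:collapsing}. The parameter choice $T=D^2$ is dictated by the matching small-doubling hypotheses: the corollary assumes $(m,2^{-2D^2-1})$-small doubling, while Theorem \ref{thm:Bourgain} requires $(m,2^{-2T-1})$.

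After invoking Theorem \ref{thm:Bourgain} with $m_1=\ell/D$, the resulting $H'$ has spherically symmetric branching at the $T$-scale, with branching in the $s$-th $T$-block (for $s\in\mathcal{S}$) concentrated in an interval $[sD^2,n_s]$ of length at most $D^2$ and of total branching $R_s\in(2^{(1-D^{-1})(n_s-sD^2)},2^{n_s-sD^2}]$. By spherical symmetry at scale $2^{-sD^2}$, I can apply Lemma \ref{lem:regular-subset} uniformly within each top-level interval to obtain a $(D,\ell,R'')$-uniform subset $H''\subset H'$ satisfying $\prod_{j=0}^{D-1}R''_{s,j}\ge (2D)^{-D}R_s$ for each $s\in\mathcal{S}$, with global loss $(2D)^{-\ell}$. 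The key point is that the lemma preserves no-branching scales, so $R''_{s,j}=1$ whenever the $j$-th $D$-sub-block of the $s$-th $T$-block lies past $n_s$ (or if $s\notin\mathcal{S}$).

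Next, I will classify the remaining non-trivial sub-blocks within each $T$-block $s\in\mathcal{S}$ as \emph{heavy} if $\log R''_{s,j}\ge (1-D^{-1/4})D$ and \emph{light} otherwise. The number of non-trivial sub-blocks is at most $k_s+1\le D$ where $k_s=\lfloor(n_s-sD^2)/D\rfloor$. Combining the lower bound $\sum_j\log R''_{s,j}\ge (1-D^{-1})(n_s-sD^2)-D\log(2D)$ with the upper estimate $\sum_j\log R''_{s,j}\le D(k_s+1)-D^{3/4}|\mathcal{L}_s|$, and using $n_s-sD^2\ge k_sD$, a direct calculation yields $|\mathcal{L}_s|\le (3+\log D)D^{1/4}=O(D^{1/4}\log D)$ per $T$-block.

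To finish, I apply Lemma \ref{lem:collapsing} to reduce the branching at every light sub-block to $1$. The cost is $\prod_{(s,j)\text{ light}}R''_{s,j}\le 2^{D|\mathcal{L}_s|}$ per $T$-block, so summing over all $m_1=m/D^2$ $T$-blocks yields a total loss exponent of $O(\log D/D^{3/4})\cdot m$. Together with the Theorem \ref{thm:Bourgain} loss $(4\log D/D)m$ and the regularization loss $(\log(2D)/D)m$, the combined loss exponent is dominated by $O(\log D/D^{3/4})\cdot m$, which is comfortably smaller than the target $(4\log D\cdot D^{-1/4})m$ for large $D$. The hard part is the per-block counting argument: it hinges on the near-sharp lower bound $R_s>2^{(1-D^{-1})(n_s-sD^2)}$ from Theorem \ref{thm:Bourgain}; without it the branching could be spread so that every active sub-block is light, and the whole approach would break down.
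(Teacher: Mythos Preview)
Your overall strategy matches the paper's: apply Theorem~\ref{thm:Bourgain} with $T=D^2$, regularize to $(D,\ell)$-uniformity via Lemma~\ref{lem:regular-subset}, then collapse the sub-blocks with intermediate branching using Lemma~\ref{lem:collapsing}. However, the per-block lower bound you invoke,
\[
\prod_{j=0}^{D-1} R''_{sD+j} \ge (2D)^{-D} R_s,
\]
is not justified, and this is where your argument has a genuine gap. The phrase ``by spherical symmetry at scale $2^{-sD^2}$'' only tells you that every interval $I\in\DD_{sT}(H')$ has exactly $R_s$ descendants at scale $(s+1)T$; it does \emph{not} say that the internal structure of $H'\cap I$ between scales $sT$ and $(s+1)T$ is the same for all $I$. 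Hence applying Lemma~\ref{lem:regular-subset} separately inside each $I$ would produce different branching sequences for different $I$, so the outcome would not be globally $(D,\ell)$-uniform. If instead you apply Lemma~\ref{lem:regular-subset} once to all of $H'$, then the bottom-up pruning at levels below $(s+1)D$ can kill descendants in a way that concentrates the survivors at scale $(s+1)T$ inside very few intervals at scale $sT$; the effective branching across the $s$-th $T$-block in $A^{((s+1)D)}$ can then be far smaller than $R_s$, and the per-block inequality fails. Concretely, if the $100$ surviving scale-$(s+1)T$ intervals inside some block all happen to lie in $50$ of the original $R_s=100$ parents, the product $\prod_j R''_{sD+j}$ is forced to be of order~$2$, not $\ge (2D)^{-D}\cdot 100$.

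The paper avoids this by doing the count globally rather than per block. After obtaining the $(D,\ell,R)$-uniform $H''$, one bounds the total number $|\mathcal{U}|$ of potentially active $D$-levels by $S/D + m/T$ (where $S=\sum_{s\in\mathcal{S}}(n_s-sT)$), and bounds $\log|H''|$ from below using $\log|H'|\ge(1-D^{-1})S$ and the regularization loss. When $S\ge T^{-1/4}m$ this gives an average $\tfrac{1}{|\mathcal{U}|}\sum_{u\in\mathcal{U}}\log R_u/D \ge 1 - O((\log D)D^{-1/2})$, and then Markov's inequality bounds the light blocks directly. The case $S<T^{-1/4}m$ must be handled separately (there $|H|$ itself is so small that a singleton already satisfies the conclusion). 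Your per-block shortcut would eliminate this case split if it worked, but it doesn't; to repair your argument you should replace the per-block inequality with this global Markov step.
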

\begin{proof}
Let $H',\mathcal{S}$, $n_s$, $\widetilde{R}_s$ (in place of $R_s$) be as given by Theorem \ref{thm:Bourgain} with $T=D^2$. We assume that $m$ is a multiple of $T$; the general case can be deduced by applying this special case to $\max\{m_1 T: m_1 T\le m\}$.

Let $S=\sum_{s\in\mathcal{S}} n_s - s T$. If $S<T^{-1/4}m$, then
\[
|H| \le 2^{(2\log T/\sqrt{T}) m} |H'|  \le 2^{(2\log T/\sqrt{T}) m} 2^{T^{-1/4}m} \le 2^{2T^{-1/4} m}.
\]
so that a singleton satisfies the conditions in the statement. We therefore assume that $S\ge T^{-1/4}m$.

We apply Lemma \ref{lem:regular-subset} to $H'$ and $D$, to obtain a $(D,\ell,R)$-uniform set $H''$ such that
\begin{equation} \label{eq:H''-lower-bound}
|H''| \ge  2^{-(\log(2D)/D) m}|H'|,
\end{equation}
It is clear that $R_u=1$ for all $u$ of the form $sD+j, j\in[D]$, with $s\notin\mathcal{S}$, and also with $s\in\mathcal{S}$ and $jD \ge n_s-s T$, since over those scales already $H'$ had no branching.  Therefore, there is a set $\mathcal{U}$ such that $R_u=1$ for $u\notin\mathcal{U}$, and
\begin{equation} \label{eq:U-lower-bound}
 D|\mathcal{U}| \le S + (m/T)D  \le S(1+D^{-1/2}),
\end{equation}
using that $S\ge D^{-1/2}m$. Using Theorem \ref{thm:Bourgain}, \eqref{eq:H''-lower-bound} and $S\ge D^{-1/2}m$ again, we get
\begin{equation*} 
 (1-1/D)S \le \log|H'| \le \log|H''|+ \frac{\log(2D)}{D} D^{1/2} S,
\end{equation*}
so that, recalling \eqref{eq:U-lower-bound},
\[
\log|H''| \ge S(1-1/D-\log(2D) D^{-1/2}) \ge \frac{1-(2\log D) D^{-1/2}}{1+D^{-1/2}} D|\mathcal{U}|.
\]
Hence,
\[
\frac{1}{|\mathcal{U}|}\sum_{u\in\mathcal{U}} \frac{\log R_u}{D} = \frac{\log |H''|}{D|\mathcal{U}|} \ge 1-3(\log D)D^{-1/2}.
\]
Since $\log R_u/D\in [0,1]$ for all $u$, Markov's inequality yields that $\log(R_u) \ge (1- D^{-1/4})|D|$ for $u$ outside of a set $\mathcal{U}'$ with
\[
|\mathcal{U}'|\le 3(\log D)D^{-1/4}|\mathcal{U}| \le 3(\log D)D^{-1/4}m/D,
\]
provided $D$ is larger than an absolute constant. To obtain our final set $H_1$, we apply Lemma \ref{lem:collapsing} to $H''$ and the set $\mathcal{U}'$ (that is, we collapse all $R_u$ intervals to a single one for $u\in\mathcal{U}'$). Recalling Theorem \ref{thm:Bourgain}(i) and \eqref{eq:H''-lower-bound}, the resulting set satisfies
\[
|H_1| \ge 2^{-D|\mathcal{U}'|}|H''|   \ge 2^{-4(\log D)D^{-1/4} m}|H|,
\]
while the claim on the branching structure is clear from the construction.
\end{proof}

\subsection{Proof of Theorem \ref{thm:inverse-thm}}

\begin{proof}[Proof of Theorem \ref{thm:inverse-thm}]
 Let $D\in\N,\e>0$. In the course of the proof, we will impose several lower bounds to $D$ (depending on $D_0,\delta,q$ only) and upper bounds on $\e$ (depending on $D,\delta,q$ only), resulting in the verification of all the claims in the theorem. To begin, we assume $D\ge D_0$. In the course of the proof, we write $m=D\ell$, and understand $\ell$ and $m$ to be sufficiently large that any claims involving them hold.

Let $\tau>0$ be the value given by Theorem \ref{thm:BSG} for $\kappa:=2^{-2D^2-1}$. We take
\[
\e \le \frac{\tau}{2\max(q,q')}.
\]
(Later we will impose further conditions on $\e$.)

Apply Lemma \ref{lem:non-flattening-to-subset} to obtain sets $A_1$, $B_1$ and $j,j'\le 2\e q' m$ satisfying (i)--(iii) in the lemma (with $A_1, B_1$ in place of $A,B$). By our choice of $\e$ and Lemma \ref{lem:q-to-2},
\[
\|\mathbf{1}_{A_1} * \mathbf{1}_{B_1}\|_2^2 \ge 2^{-\tau m}|A_1| |B_1|^2,
\]
so that we can apply Theorem \ref{thm:BSG} to $A_1,B_1$ to  obtain an $(m,\kappa)$-small doubling set $H$ and a $2^{-m}$-set $X$ such that
\begin{align}
|A_1\cap (X+H)| & \ge 2^{-\kappa m}|A_1|, \label{eq:BSG-i} \\
|A_1| &\ge 2^{-\kappa m}|X||H|,  \label{eq:BSG-ii} \\
|B_1\cap H| & \ge 2^{-\kappa m}|B_1|. \label{eq:BSG-iii}
\end{align}
Thanks to Lemma \ref{lem:non-flattening-to-subset}, the sets $A_1, B_1$ already satisfy (A-ii), (B-ii). As the final sets $A,B$ will be subsets of $A_1, B_1$, these properties are established.

Our next step is to pass to suitable regular subsets of (a translation of) $A_1, B_1, H$:
\begin{enumerate}
\item  By our choice $\kappa=2^{-2 D^2-1}$, we can apply Corollary \ref{cor:Bourgain} to $H$. Let $H'\subset H$ be the resulting set, with branching numbers $R_s, s\in [\ell]$.
\item We first apply Lemma \ref{lem:centering} (this is the point where we need to translate the original measure), and then  Lemma \ref{lem:regular-subset} and \eqref{eq:BSG-i} , to the set $A_1\cap (X+H)$, to obtain a set $A\subset A_1\cap (X+H)$ such that:
    \begin{enumerate}
    \item $|A|\ge 2^{- (2\log D/D) m}|A_1\cap (X+H)|\ge 2^{-(3\log D/D)m}|A_1|$. Hence, in light of (A-ii), property (A-i) holds if $D$ is taken large enough in terms of $\delta$.
    \item The set $A$ is $(\ell,D,R')$-uniform for some sequence $(R'_s)_{s\in[\ell]}$. This shows that (A-iii) holds.
    \item $x\in \frac12 \DD_{s D}(x)$ for all $x\in A$ and $s\in [\ell]$. That is, (A-iv) holds.
    \end{enumerate}
\item Similarly, we apply Lemma \ref{lem:centering}, and then Lemma \ref{lem:regular-subset} and \eqref{eq:BSG-iii}, to $B_1\cap H$ to obtain a set $B_2\subset B_1\cap H$ (not yet our final set $B$) such that:
    \begin{enumerate}
    \item $|B_2|\ge 2^{- (2\log D/D) m}|B_1\cap H|\ge 2^{-(3\log D/D)m}|B_1|$.
    \item The set $B_2$ is $(\ell,D,\wt{R})$-uniform for some sequence $(\wt{R}_s)_{s\in[\ell]}$. This shows that (B-iii) holds for $B_2$.
    \item $y\in \frac12 \DD_{s D}(y)$ for all $y\in B_2$. As the final set $B$ will be a subset of $B_2$, this establishes (B-iv).
    \end{enumerate}
\end{enumerate}


Next, we note that as $A+H'\subset X+H+H$, we can use \eqref{eq:BSG-ii} and (2)(a) above to estimate
\begin{equation} \label{eq:A+H'-small}
|A+H' | \le |X||H+H| \le 2^{\kappa m}|X||H| \le 2^{2\kappa m}|A_1| \le 2^{(4\log D/D)m}|A|.
\end{equation}
Let $\mathcal{S}_0 = \{ s\in[\ell]: R_s=1\}$, $\mathcal{S}_1=[\ell]\setminus\mathcal{S}_0$, so that $\mathcal{S}_1$ indexes the scales over which $H'$ has almost full branching. We will see that $A$ has almost full branching for a large subset of scales $\mathcal{S}\subset\mathcal{S}_1$; eventually $B$ will be obtained from $B_2$ by collapsing all the branching at the scales in $[\ell]\setminus\mathcal{S}$ using Lemma \ref{lem:collapsing}.

According to Lemma \ref{lem:large-sumset-regular-sets} applied to $A$ and $H'$ (which we have seen meet the hypotheses),
\begin{equation} \label{eq:A+H'-large}
|A+H'| \ge 2^{-(1/D) m} |H'| \prod_{s\in\mathcal{S}_0} R'_s.
\end{equation}
Since $|A|=\prod_s R'_s$, $|H'|=\prod_s R_s$ and $R_s\ge 2^{(1-D^{-1/4})D}$ for $s\in\mathcal{S}_1$, we may combine \eqref{eq:A+H'-small}  and \eqref{eq:A+H'-large} to deduce that
\[
\prod_{s\in\mathcal{S}_1} R'_s = \frac{|A|}{\prod_{s\in\mathcal{S}_0} R'_s} \ge 2^{-(5\log D/D)m}|H'| \ge 2^{-(5\log D/D)m} 2^{(1-D^{-1/4})|\mathcal{S}_1|D}.
\]
Consider two cases.
\begin{enumerate}
 \item If $|\mathcal{S}_1|< D^{-1/2}\ell$ (which we note implies $H'$, hence $H$ and $B$, are very small) we set $\mathcal{S}' = \mathcal{S}_1$ and $\mathcal{S}=\varnothing$.
 \item If $|\mathcal{S}_1|\ge D^{-1/2}\ell$, then we further deduce from the above that
\[
\prod_{s\in\mathcal{S}_1} R'_s \ge 2^{(1-2 D^{-1/4})D|\mathcal{S}_1|}.
\]
Let
\begin{align*}
\mathcal{S}&=\{ s\in\mathcal{S}_1: R'_s \ge 2^{(1-D^{-1/8})D}\},\\
\mathcal{S}'&=\{ s\in\mathcal{S}_1: R'_s < 2^{(1-D^{-1/8})D}\}.
\end{align*}
Since $R'_s\le 2^D$ for all $s$, we have
\[
(1-2 D^{-1/4})|\mathcal{S}_1| \le \sum_{s\in\mathcal{S}_1} \frac{\log R'_s}{D} \le (1-D^{-1/8})|\mathcal{S}'| +  |\mathcal{S}_1|-|\mathcal{S}'|,
\]
so that
\[
|\mathcal{S}'| \le  2D^{-1/8} |\mathcal{S}_1| \le 2 D^{-1/8} \ell.
\]
\end{enumerate}
We note for later reference that, in either case
\begin{equation} \label{eq:scales-prunned}
|\mathcal{S}'| \le \max(2 D^{-1/8}\ell,D^{-1/2}\ell) = 2 D^{-1/8}\ell.
\end{equation}

We move on to the construction of $B$.  By Theorem \ref{thm:BSG} and Corollary \ref{cor:Bourgain},
\[
|B_2+H'| \le |(B_1\cap H)+H| \le 2^{\kappa m}|H| \le 2^{(5(\log D) D^{-1/4}) m}|H'|.
\]
Applying Lemma \ref{lem:large-sumset-regular-sets} to $B_2$ and $H'$, we deduce that
\[
\prod_{s\in\mathcal{S}_0} \wt{R}_s \le 2^{(1/D)m} 2^{(5(\log D) D^{-1/4}) m} \le 2^{(6(\log D) D^{-1/4})m}.
\]
We apply Lemma \ref{lem:collapsing} to $B_2$ and the set $\mathcal{S}_0$, to obtain a new set $B_3\subset B_2$ such that for all $s\in\mathcal{S}_0$ and $I\in\DD_{s D}(B_3)$, there is a single $J\in\DD_{(s+1)D}(B_3\cap I)$, while if $s\notin\mathcal{S}_0$, then $\NN(B_3\cap I,(s+1)D)=\wt{R}_s$ for all $I\in\DD_{s D}(B_3)$.  By Lemma \ref{lem:collapsing} and (3)(a) above,
\[
|B_3| \ge  2^{-(6(\log D)D^{-1/4})m}|B_2| \ge 2^{-(7(\log D)D^{-1/4})m} |B_1|.
\]
Finally, recall that we defined a set $\mathcal{S}'$, satisfying \eqref{eq:scales-prunned}.  We obtain our final set $B$ by applying Lemma \ref{lem:collapsing} to $B_3$ and $\mathcal{S}'$. Then
\[
|B| \ge 2^{-2 D^{-1/8} m} |B_3| \ge 2^{- 3 D^{-1/8} m} |B_1|,
\]
and $\NN_{(s+1)D}(I\cap B)=1$ for all $I=\DD_{s D}(B)$ for each $s\in\mathcal{S}_0\cup \mathcal{S}'=[\ell]\setminus\mathcal{S}$. We had already established (B-ii) and (B-iv). The set $B$ satisfies (B-i) if $D$ is large enough (thanks to (B-ii)); and it still satisfies (B-iii), with $R''_s=1$ for $s\notin\mathcal{S}$ and $R''_s=\wt{R}_s$ for $s\in\mathcal{S}$.

The claim (v) follows from the construction if $D$ is large enough: either $s\in\mathcal{S}$, in which case $R'_s\ge 2^{(1-o_{D\to\infty}(1))(D)}$ or $s\notin \mathcal{S}$, in which case $R''_s=1$ as we have just observed.

It remains to establish (vi). It follows from (B-i)--(B-ii)  that $\nu(x) \ge \frac12 2^{-\delta m}|B|^{-1}$ for all $x\in B$. On the other hand, we know from (B-iii) and (v) that $|B|\le 2^{D|\mathcal{S}|}$. We get
\[
\|\nu\|_q^{-q'} \le O_q(1) 2^{\delta m q'} |B| \le O_q(1) 2^{\delta m q'} 2^{D|\mathcal{S}|},
\]
which gives the left-hand inequality in (vi), with $O_q(\delta)$ in place of $\delta$.

By Lemma \ref{lem:non-flattening-to-subset}, $\mu(x) \ge \tfrac12  2^{-2\e q' m}\|\mu\|_q^{q'}$ for all $x\in A\subset A_1$, whence
\[
2^{-q} 2^{-(2qq'\e) m}\|\mu\|_q^{q q'} |A| \le \|\mu\|_q^q,
\]
so that $|A|\le  2^{3 q q'\e m} \|\mu\|_q^{-q'}$. Since $|A|\ge 2^{(1-\delta)D|\mathcal{S}|}$ by (A-iii) and (v), the right-hand side inequality in (vi) also follows (with $2\delta$ in place of $\delta$, say), concluding the proof.
\end{proof}

\section{Properties of dynamically driven self-similar measures}
\label{sec:properties-dyn-ssm}

\subsection{Preliminary lemmas}

In this section we initiate the study of measures generated by pleasant models (recall Definition \ref{def:pleasant}). We start by collecting some standard lemmas for later reference. The short proofs are included for completeness.

\begin{lemma} \label{lem:Holder}
Let $(Y,\mu,\mathcal{B})$ be a probability space. Suppose $\mathcal{P},\mathcal{Q}$ are finite families of measurable subsets of $Y$ such that each element of $\mathcal{P}$ can be covered by at most $M$ elements of $\mathcal{Q}$ and each element of $\mathcal{Q}$ intersects at most $M$ elements of $\mathcal{P}$. Then, for every $q\ge 1$,
\[
\sum_{P\in\mathcal{P}} \mu(P)^q  \le M^q \sum_{Q\in\mathcal{Q}} \mu(Q)^{q}
\]
\end{lemma}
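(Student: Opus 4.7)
The plan is a straightforward two-step Hölder plus double-counting argument. First I would select, for each $P\in\mathcal{P}$, a covering family $\mathcal{Q}(P)\subset\mathcal{Q}$ with $|\mathcal{Q}(P)|\le M$, and then discard from $\mathcal{Q}(P)$ any $Q$ with $Q\cap P=\varnothing$ (this preserves the covering property while ensuring that every $Q\in\mathcal{Q}(P)$ actually meets $P$). Subadditivity gives $\mu(P)\le\sum_{Q\in\mathcal{Q}(P)}\mu(Q)$ for every $P$.

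Next, I would apply the elementary inequality $(a_1+\cdots+a_n)^q\le n^{q-1}(a_1^q+\cdots+a_n^q)$ (which is just Hölder, or Jensen for $t\mapsto t^q$) to each $P$, using $n=|\mathcal{Q}(P)|\le M$:
\[
\mu(P)^q \le M^{q-1}\sum_{Q\in\mathcal{Q}(P)}\mu(Q)^q.
\]
Summing over $P\in\mathcal{P}$ and swapping the order of summation yields
\[
\sum_{P\in\mathcal{P}}\mu(P)^q \le M^{q-1}\sum_{Q\in\mathcal{Q}}\mu(Q)^q\cdot \#\{P\in\mathcal{P}: Q\in\mathcal{Q}(P)\}.
\]

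The key combinatorial step is then to bound $\#\{P:Q\in\mathcal{Q}(P)\}$. Since after pruning every $Q\in\mathcal{Q}(P)$ satisfies $Q\cap P\neq\varnothing$, this cardinality is at most $\#\{P\in\mathcal{P}:Q\cap P\neq\varnothing\}$, which by the second hypothesis is bounded by $M$. Putting this into the previous display gives the claimed bound $\sum_{P}\mu(P)^q\le M^q\sum_Q\mu(Q)^q$.

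There is no genuine obstacle here; the only subtle point is ensuring the pruning step so that the multiplicity bound on $\mathcal{Q}$ kicks in. Everything else is bookkeeping, and no feature of the measure $\mu$ beyond countable subadditivity is used.
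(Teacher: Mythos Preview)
Your proof is correct and essentially identical to the paper's own argument: the paper also picks a minimal covering sub-collection $Q_{P,1},\ldots,Q_{P,M_P}$ of $\mathcal{Q}$ for each $P$ (minimality plays the same role as your pruning step), applies the same H\"older/Jensen inequality $(\sum a_i)^q\le M^{q-1}\sum a_i^q$, and then uses the second hypothesis to bound the multiplicity by $M$ when swapping the sums.
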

\begin{proof}
Let $Q_{P,1},\ldots, Q_{P,M_P}$, $M_P\le M$, be a minimal sub-collection of $\mathcal{Q}$ that covers $P\in\mathcal{P}$. Using H\"{o}lder's inequality in the form $(\sum_{i=1}^m a_i)^q \le m^{q-1} \sum_{i=1}^m a_i^q$, we get
\[
\sum_{P\in\mathcal{P}} \mu(P)^q \le M^{q-1} \sum_{P\in\mathcal{P}} \sum_{i=1}^{M_P}\mu(Q_{P_i})^q \le M^q \sum_{Q\in\mathcal{Q}} \mu(Q)^q.
\]
\end{proof}

\begin{lemma} \label{lem:L-q-norm-almost-disj-supports}
Let $\mu=\sum_{i=1}^\ell \mu_i$, where $\mu_i$ are finitely supported measures on a space $Y$, such that each point is in the support of at most $M$ of the $\mu_i$. Then
\[
\|\mu\|_q^q \le M^{q-1} \sum_{i=1}^\ell \|\mu_i\|_q^q.
\]
\end{lemma}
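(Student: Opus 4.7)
The plan is to prove the estimate pointwise at each $y\in Y$ and then sum over the support.

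First I would observe that for each fixed $y\in Y$, the hypothesis says that at most $M$ of the values $\mu_1(y),\ldots,\mu_\ell(y)$ are nonzero. Since $\mu(y)=\sum_{i=1}^\ell \mu_i(y)$, the elementary inequality $(a_1+\cdots+a_k)^q\le k^{q-1}(a_1^q+\cdots+a_k^q)$ (a direct consequence of H\"older's inequality applied to the pairing with the constant vector, or equivalently of the convexity of $t\mapsto t^q$ for $q\ge 1$) gives
\[
\mu(y)^q = \Big(\sum_{i:\,\mu_i(y)>0}\mu_i(y)\Big)^q \le M^{q-1}\sum_{i=1}^\ell \mu_i(y)^q,
\]
where the extension from the $\le M$ nonzero terms to all $\ell$ terms is harmless since the zero summands contribute nothing.

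Then I would sum this inequality over all $y$ in the (finite) union of supports of the $\mu_i$'s and swap the order of summation:
\[
\|\mu\|_q^q = \sum_y \mu(y)^q \le M^{q-1}\sum_y\sum_{i=1}^\ell \mu_i(y)^q = M^{q-1}\sum_{i=1}^\ell \|\mu_i\|_q^q,
\]
which is exactly the claimed bound. There is no serious obstacle here; the only thing to be careful about is that the bounded-multiplicity hypothesis is used precisely to replace the naive factor $\ell^{q-1}$ by $M^{q-1}$. Note this is a direct discrete analogue of Lemma~\ref{lem:Holder} (with $\mathcal{P}=\{\{y\}:y\in\mathrm{supp}(\mu)\}$ and $\mathcal{Q}$ the collection of supports of the $\mu_i$), so one could alternatively deduce it from that lemma, but the pointwise H\"older argument is shorter and self-contained.
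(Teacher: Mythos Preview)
Your proof is correct and is essentially the same as the paper's: both apply H\"older's inequality pointwise at each $y$ (using that at most $M$ terms are nonzero) to get $\mu(y)^q\le M^{q-1}\sum_i \mu_i(y)^q$, and then sum over $y$. The paper's version is just terser, omitting the explicit summation step.
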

\begin{proof}
For each $x$, H\"{o}lder's inequality, together with the assumption that $\mu_i(x)>0$ for at most $M$ values of $i$, gives $(\sum_i \mu_i(x))^q \le M^{q-1} \sum_i \mu_i(x)^q$. The claim follows.
\end{proof}

\begin{lemma} \label{lem:discr-norm-conv-equivalence}
For any probability measures $\mu,\nu$ on $\R/\Z$, and any $q\in (1,\infty)$,
\[
\|(\mu*\nu)^{(m)}\|_q^q = \Theta_q(1) \|\mu^{(m)}*\nu^{(m)}\|_q^q.
\]
\end{lemma}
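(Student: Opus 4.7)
The plan is a direct comparison: for each dyadic interval $I_\ell = [\ell 2^{-m}, (\ell+1)2^{-m})$ at scale $m$, I would show that the mass of $\mu*\nu$ on $I_\ell$ and the atom mass of $\mu^{(m)}*\nu^{(m)}$ at $\ell 2^{-m}$ differ only by a ``shift by one cell'' ambiguity. The key geometric observation is that if $x \in I_j$ and $y \in I_k$, then $x+y$ lands in $I_{j+k} \cup I_{j+k+1}$ (with indices read modulo $2^m$ on $\R/\Z$). Once this is in hand, both inequalities follow from the standard convexity bound $(a+b)^q \le 2^{q-1}(a^q+b^q)$.

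For the inequality $\|(\mu*\nu)^{(m)}\|_q^q \le 2^q\|\mu^{(m)}*\nu^{(m)}\|_q^q$, I would write
\[
(\mu*\nu)(I_\ell) = \sum_{j} \int_{I_j} \nu(I_\ell - x)\, d\mu(x),
\]
and note that for $x\in I_j$, the translate $I_\ell - x$ is contained in $I_{\ell-j-1}\cup I_{\ell-j}$. Thus, setting $a_\ell := (\mu^{(m)}*\nu^{(m)})(\ell 2^{-m}) = \sum_{j+k=\ell \bmod 2^m}\mu(I_j)\nu(I_k)$, one obtains $(\mu*\nu)(I_\ell)\le a_\ell + a_{\ell-1}$. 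Raising to the $q$-th power, applying the convexity inequality, and summing over $\ell$ gives the bound.

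For the reverse inequality, I would use the dual observation: expressing
\[
a_\ell = (\mu\otimes\nu)\Bigl(\bigcup_{j+k=\ell \bmod 2^m} I_j\times I_k\Bigr),
\]
the set on the right is contained in $\{(x,y) : x+y \in I_\ell\cup I_{\ell+1}\}$ (by the same geometric observation), so $a_\ell \le (\mu*\nu)(I_\ell) + (\mu*\nu)(I_{\ell+1})$. The identical convexity-and-sum argument then yields $\|\mu^{(m)}*\nu^{(m)}\|_q^q \le 2^q\|(\mu*\nu)^{(m)}\|_q^q$.

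No serious obstacle is anticipated; this is purely a routine bookkeeping lemma. One could alternatively phrase the whole argument as an application of Lemma \ref{lem:Holder} to the two partitions of $(\R/\Z)^2$ given by ``squares'' $I_j\times I_k$ and by level sets of the addition map, since each cell of one is covered by at most $2$ cells of the other, but the direct computation above is shorter and more transparent.
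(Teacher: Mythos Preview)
Your proposal is correct and is essentially the same argument as the paper's: the paper defines the two families $P_I=\{(x,y):x+y\in I\}$ and $Q_I=\bigcup_{i}I_i\times I_{k-i}$ on $(\R/\Z)^2$, observes that each $P_I$ is covered by two $Q_J$'s and vice versa, and invokes Lemma~\ref{lem:Holder} directly. What you wrote is exactly the content of that invocation unpacked by hand (your convexity bound $(a+b)^q\le 2^{q-1}(a^q+b^q)$ is the special case $M=2$ of Lemma~\ref{lem:Holder}), and you even note this alternative phrasing yourself at the end.
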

\begin{proof}
Given $I=[k 2^{-m},(k+1)2^{-m})\in\DD_m$, let $P_I=\{ (x,y)\in(\R/\Z)^2: x+y\in I\}$ and
\[
Q_I = \bigcup_{ i\in\Z/2^m\Z } [i 2^{-m},(i+1) 2^{-m}) \times [(k-i) 2^{-m},(k-i+1) 2^{-m}).
\]
Then $\|(\mu*\nu)^{(m)}\|_q^q=\sum_I (\mu\times\nu)(P_I)^q$ and  $\|\mu^{(m)}*\nu^{(m)}\|_q^q=\sum_I (\mu\times\nu)(Q_I)^q$, so the claim follows from Lemma \ref{lem:Holder}.
\end{proof}

\subsection{A sub-multiplicative cocycle, and consequences}

Throughout the rest of this section, we use the following notation. We work with a measure-preserving system $(X,\mathbf{T},\mathbb{P})$, i.e. $\mathbf{T}:X\to X$ is a measurable map, and $\mathbf{T}\mathbb{P}=\mathbb{P}$. A model $\mathcal{X}=(X,\mathbf{T},\Delta,\lam)$ is fixed, and $\mu_x, \mu_{x,n}$ are as defined in \eqref{eq:def-mu-x}, \eqref{eq:def-mu-x-n}. Moreover, $m=m(n)$ will denote the smallest integer such that $2^{-m}\le \lam^n$ (the dependence is omitted when it is clear from context). We assume that
\begin{equation} \label{eq:assumption-support-mu-x}
\supp(\mu_x)\subset [0,1] \text{ for all }x\in X,
\end{equation}
which can always be achieved by a change of coordinates, i.e. by replacing the map $\Delta$ by $g\circ\Delta$ for an appropriate affine map $g$.

For each $x\in X$ we define a \emph{code space} $\Omega_x =  \prod_{n=0}^\infty \supp(\Delta(\mathbf{T}^n x))$ and a \emph{coding map} $\pi_x:\Omega_x \to \R$, via $\omega\mapsto \sum_{n=0}^\infty \omega_n \lam^n$. Then, by definition, $\mu_x$ is the push-down of the product measure $\prod_n \Delta(\mathbf{T}^n x)$ under this coding map. We also define the \emph{truncated coding maps} $\pi_{x,n}: \Omega_x \to \R$, $\omega\mapsto \sum_{i=0}^{n-1} \omega_i \lam^i$. Then $\mu_{x,n}$ is the image of $\prod_n \Delta(\mathbf{T}^n x)$ under the truncated coding map.

\begin{lemma} \label{lem:comparison-mu-m-mu}
For every $x\in X$, $\|\mu_x^{(m)}\|_q^q = \Theta_{\lam,q}(1) \|\mu_{x,n}^{(m)}\|_q^q$.
\end{lemma}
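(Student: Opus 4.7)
The plan is to exploit the self-similarity relation \eqref{eq:mu-x-self-similarity}: write $\mu_x = \mu_{x,n} * \nu$, where $\nu := S_{\lam^n}\mu_{\mathbf{T}^n x}$ is supported in $[0,\lam^n]$ because $\mu_{\mathbf{T}^n x}$ is supported in $[0,1]$ by \eqref{eq:assumption-support-mu-x}. The choice of $m=m(n)$ forces $\lam^n\in[2^{-m},2\cdot 2^{-m})$, so $\nu$ lives in an interval of length strictly less than two $2^{-m}$-cells. This is the key quantitative input: at scale $2^{-m}$, convolution with $\nu$ can only spread mass across a bounded number of dyadic cells, so the discretizations of $\mu_x$ and $\mu_{x,n}$ should be comparable up to a $q$-dependent multiplicative constant.

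Concretely, with $I_j=[j2^{-m},(j+1)2^{-m})$, $a_j=\mu_{x,n}(I_j)$, $b_j=\mu_x(I_j)$, the convolution formula gives
\[
b_j \;=\; \int_{[0,\lam^n]} \mu_{x,n}(I_j - t)\, d\nu(t).
\]
For the upper bound on $\|\mu_x^{(m)}\|_q^q$, I would observe that for any $t\in[0,2\cdot 2^{-m})$ the shifted interval $I_j - t$ is contained in $I_{j-2}\cup I_{j-1}\cup I_j$, so $b_j\le a_{j-2}+a_{j-1}+a_j$. Then the elementary inequality $(u+v+w)^q\le 3^{q-1}(u^q+v^q+w^q)$ (a case of Lemma \ref{lem:Holder}, or just convexity) and summation in $j$ give $\sum_j b_j^q\le 3^q\sum_j a_j^q$.

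For the reverse inequality, I would use a dual observation: for every $t\in[0,\lam^n]$, the three disjoint intervals $I_j - t,\ I_{j+1}-t,\ I_{j+2}-t$ form a length-$3\cdot 2^{-m}$ window whose left endpoint $j2^{-m}-t$ lies at or to the left of $j2^{-m}$ and whose right endpoint $(j+3)2^{-m}-t$ lies strictly to the right of $(j+1)2^{-m}$ (using $t<2\cdot 2^{-m}$). In particular $I_j$ is covered by these three translates, so by disjointness
\[
\mu_{x,n}(I_j)\;\le\;\mu_{x,n}(I_j-t)+\mu_{x,n}(I_{j+1}-t)+\mu_{x,n}(I_{j+2}-t).
\]
Integrating against $\nu$ yields $a_j\le b_j+b_{j+1}+b_{j+2}$, and the same convexity and summation step produces $\sum_j a_j^q\le 3^q\sum_j b_j^q$. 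Combining the two bounds gives $\|\mu_x^{(m)}\|_q^q=\Theta_q(\|\mu_{x,n}^{(m)}\|_q^q)$, slightly stronger than stated. There is no real obstacle here; the whole argument hinges on the scale-matching $\lam^n\asymp 2^{-m}$, and in fact the implied constant does not depend on $\lam$.
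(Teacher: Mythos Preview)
Your proof is correct and follows essentially the same idea as the paper's: both exploit that $\mu_x$ and $\mu_{x,n}$ differ only by a perturbation living at scale $\lam^n\asymp 2^{-m}$, so their $2^{-m}$-discretizations are comparable via a bounded-overlap argument. The paper implements this by pulling back to the symbolic space $\Omega_x$ and observing $\|\pi_x-\pi_{x,n}\|_\infty\le \lam^n$, then invoking Lemma~\ref{lem:Holder} on the families $\{\pi_x^{-1}I\}$ and $\{\pi_{x,n}^{-1}I\}$; you instead work directly on $\R$ via the convolution identity \eqref{eq:mu-x-self-similarity} and an explicit three-cell covering. Your route is slightly more elementary (no symbolic space needed) and makes the constant $3^q$ explicit; as you note, under the normalization \eqref{eq:assumption-support-mu-x} the constant is indeed independent of $\lam$, which the paper's $\Theta_{\lam,q}(1)$ obscures.
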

\begin{proof}
Let $\eta=\prod_{n=0}^\infty \Delta(\mathbf{T}^n x)$, so that $\mu_x=\pi_x\eta$ and $\mu_{n,x}=\pi_{n,x}\eta$. Then
\begin{align*}
\|\mu_x^{(m)}\|_q^q  &= \sum_{I\in\DD_m} \eta(\pi_x^{-1}I)^q,\\
\|\mu_{x,n}^{(m)}\|_q^q &= \sum_{I\in\DD_m} \eta(\pi_{x,n}^{-1}I)^q.
\end{align*}
Since $\|\pi_x-\pi_{x,n}\|_\infty \le O(\lam^n) = O_\lam(2^m)$, the lemma follows easily from Lemma \ref{lem:Holder}.
\end{proof}

We recall some well-known properties of the $L^q$ spectrum $\tau_\mu$. See e.g. \cite[Proposition 3.2]{LauNgai99} for the proofs.
\begin{lemma} \label{lem:basic-tau-properties}
For any probability measure on $\R$ of bounded support, the function $\tau=\tau_\mu:[0,\infty)\to \R$ is increasing, concave, and satisfies $\tau(1)=0$.
\end{lemma}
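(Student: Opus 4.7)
The plan is to establish the three claims by direct computation from the definition, leveraging only Hölder's inequality and the elementary behavior of $t \mapsto t^q$ on $[0,1]$. Write $f_m(q) = -\tfrac{1}{m}\log \sum_{I\in\DD_m}\mu(I)^q$, so that $\tau(q) = \liminf_{m\to\infty} f_m(q)$.

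For $\tau(1)=0$, I would observe that $\mu$ is a probability measure, so $\sum_{I\in\DD_m}\mu(I) = \mu(\R) = 1$ for every $m$; hence $f_m(1)=0$ for all $m$ and $\tau(1)=0$.

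For monotonicity, fix $0\le q_1 \le q_2$. Since $\mu(I)\in[0,1]$ for every dyadic interval $I$, we have $\mu(I)^{q_2} \le \mu(I)^{q_1}$, so $\sum_I\mu(I)^{q_2}\le \sum_I\mu(I)^{q_1}$. Taking $-\log/m$ reverses the inequality, giving $f_m(q_2)\ge f_m(q_1)$ for every $m$, and passing to $\liminf$ yields $\tau(q_2)\ge\tau(q_1)$.

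For concavity, fix $q_1,q_2\ge 0$ and $\theta\in(0,1)$ and set $q=\theta q_1+(1-\theta)q_2$. Writing $\mu(I)^q = \mu(I)^{\theta q_1}\mu(I)^{(1-\theta)q_2}$ and applying Hölder's inequality with exponents $1/\theta$ and $1/(1-\theta)$,
\[
\sum_{I\in\DD_m}\mu(I)^q \;\le\; \Bigl(\sum_{I\in\DD_m}\mu(I)^{q_1}\Bigr)^{\!\theta}\Bigl(\sum_{I\in\DD_m}\mu(I)^{q_2}\Bigr)^{\!1-\theta}.
\]
Taking $-\log/m$ reverses the inequality and turns the product into a convex combination, giving $f_m(q)\ge \theta f_m(q_1)+(1-\theta)f_m(q_2)$. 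Hence each $f_m$ is concave on $[0,\infty)$. Using the general inequality $\liminf_m (a_m+b_m)\ge \liminf_m a_m+\liminf_m b_m$ applied with $a_m=\theta f_m(q_1)$ and $b_m=(1-\theta)f_m(q_2)$, I conclude
\[
\tau(q) \;=\; \liminf_{m\to\infty} f_m(q) \;\ge\; \theta\,\tau(q_1)+(1-\theta)\,\tau(q_2),
\]
which is concavity. None of the three steps presents a real obstacle; the only point requiring minor care is that $\tau$ is defined as a $\liminf$ rather than a limit, which forces the use of superadditivity of $\liminf$ in the concavity argument, but this is harmless since we only need an inequality in one direction at each scale $m$.
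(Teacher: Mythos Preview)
Your proof is correct and complete. The paper does not actually prove this lemma; it simply cites \cite{LauNgai99} for the standard argument. Your direct verification via H\"older's inequality and superadditivity of $\liminf$ is exactly the standard route, so there is nothing further to compare.
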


The next proposition introduces a sub-multiplicative cocycle (which was first used in \cite{NPS12}, in a special case) that will play a crucial r\^{o}le in the proof of Theorem \ref{thm:L-q-dim-dyn-ssm}. Let us define the following sequence of functions, parametrized by $q\in [1,\infty)$:
\[
\phi_n^q(x) = \| \mu_x^{(m(n))} \|_q^q.
\]
\begin{prop} \label{prop:cocycle}
For any $n,n'\in\N$,
\[
\phi_{n+n'}^q(x) \le O_{q,\lam}(1)  \phi_n^q(x) \phi_{n'}^q(\mathbf{T}^n x).
\]
In particular, for each $q\in [1,\infty)$ there exists a number $T(q)$ such that
\begin{equation} \label{eq:T-q-for-almost-all-x}
\lim_{n\to\infty} -\frac{1}{m} \log\|\mu_{x}^{(m)}\|_q^q = T(q)
\end{equation}
for $\mathbb{P}$-a.e. $x$. Moreover, for $\mathbb{P}$-a.e. $x$ it holds that $T(q) = \tau_{\mu_x}(q)$ for all $q\in [1,+\infty)$.  In particular, $T:[1,\infty)\to\R$ is increasing and concave, and $T(1)=0$.
\end{prop}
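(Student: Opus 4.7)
The plan is to establish the submultiplicativity inequality first, then deduce everything else by applying Kingman's subadditive ergodic theorem together with standard facts about $L^q$ spectra and ergodicity of uniquely ergodic systems.

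For the submultiplicativity, I would start from the self-similarity identity $\mu_x = \mu_{x,n} * S_{\lambda^n}\mu_{\mathbf{T}^n x}$ provided by \eqref{eq:mu-x-self-similarity}. Let $\rho = S_{\lambda^n}\mu_{\mathbf{T}^n x}$; since $\supp\rho$ has diameter $O(2^{-m(n)})$, the decomposition $\mu_{x,n} = \sum_{J \in \DD_{m(n)}} \mu_{x,n}|_J$ yields a sum
\[
\mu_x = \sum_{J \in \DD_{m(n)}} (\mu_{x,n}|_J) * \rho,
\]
whose summands have supports of $O(2^{-m(n)})$-diameter and therefore overlap with bounded multiplicity. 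Lemma \ref{lem:L-q-norm-almost-disj-supports} followed by Lemma \ref{lem:discr-norm-conv-equivalence} yields
\[
\|\mu_x^{(m(n+n'))}\|_q^q \lesssim_q \sum_J \|(\mu_{x,n}|_J)^{(m(n+n'))} * \rho^{(m(n+n'))}\|_q^q,
\]
and Young's inequality $\|\mu * \nu\|_q \le \|\mu\|_1 \|\nu\|_q$ bounds each summand by $\mu_{x,n}(J)^q \|\rho^{(m(n+n'))}\|_q^q$. Summing over $J$ recovers $\|\mu_{x,n}^{(m(n))}\|_q^q$; using the easy relation $m(n+n') - m(n) = m(n') + O(1)$ and Lemma \ref{lem:Holder} with the change of variables $S_{\lambda^n}$ gives $\|\rho^{(m(n+n'))}\|_q^q \asymp_{q,\lambda} \|\mu_{\mathbf{T}^n x}^{(m(n'))}\|_q^q$. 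Finally, Lemma \ref{lem:comparison-mu-m-mu} replaces $\|\mu_{x,n}^{(m(n))}\|_q^q$ by $\phi_n^q(x)$, giving the claimed cocycle inequality.

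Once the inequality is in hand, set $a_n(x) = \log \phi_n^q(x) + \log C$ where $C = C(q,\lambda)$ is the implicit constant; this is a genuine subadditive cocycle for $\mathbf{T}$. Integrability is easy since $\phi_n^q(x) \le 1$ and Hölder gives $\phi_n^q(x) \ge 2^{-m(n)(q-1)}$, so $|a_n|$ grows at most linearly. Kingman's subadditive ergodic theorem then provides an a.s. limit of $a_n/n$; unique ergodicity implies ergodicity of $\mathbf{T}$ with respect to $\mathbb{P}$, so the limit is $\mathbb{P}$-a.s. constant. Since $m(n) = n\log(1/\lambda)/\log 2 + O(1)$, division by $m$ rather than $n$ defines the constant $T(q)$ in \eqref{eq:T-q-for-almost-all-x}.

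To identify $T(q)$ with $\tau_{\mu_x}(q)$, I observe that $\|\mu_x^{(m+1)}\|_q^q$ and $\|\mu_x^{(m)}\|_q^q$ differ by a factor in $[2^{-(q-1)},1]$ (using $a^q + b^q \le (a+b)^q \le 2^{q-1}(a^q + b^q)$), so the $\liminf$ defining $\tau_{\mu_x}$ over all $m \in \N$ coincides with the $\liminf$ over the sparser subsequence $m = m(n)$, and the latter equals $T(q)$ $\mathbb{P}$-a.s. To get the statement for all $q$ simultaneously, I fix a countable dense $Q \subset (1,\infty)$ and intersect the full-measure sets to obtain $X_0$ of full measure on which $T(q) = \tau_{\mu_x}(q)$ for every $q \in Q$; since $\tau_{\mu_x}$ is concave on $[1,\infty)$ by Lemma \ref{lem:basic-tau-properties} and hence continuous on $(1,\infty)$, the agreement extends to all $q>1$, while at $q=1$ both sides are trivially $0$ since $\phi_n^1 \equiv 1$. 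Monotonicity, concavity, and $T(1)=0$ for $T$ then transfer from $\tau_{\mu_x}$ via Lemma \ref{lem:basic-tau-properties}.

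The main technical obstacle is the submultiplicativity step, where one must carefully reconcile two discretization scales ($2^{-m(n)}$ for $\mu_{x,n}$ and $2^{-m(n+n')}$ for the convolved measure) and apply Young's inequality only after localizing via the $\DD_{m(n)}$-decomposition, so that the factor $\|\rho^{(m(n+n'))}\|_q^q$ corresponds to $\phi_{n'}^q(\mathbf{T}^n x)$ rather than to the full (trivial) bound. The $O(2^{-m(n)})$-diameter of $\supp\rho$ is precisely what makes the almost-disjoint support reduction legitimate.
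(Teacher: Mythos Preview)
Your approach is essentially the same as the paper's: submultiplicativity via the self-similarity relation, bounded-overlap decomposition, Young's inequality, and the comparison lemmas, followed by Kingman's theorem and a density argument in $q$. One small point: in the last step you extend from a dense set $Q$ to all $q>1$ using only continuity of $\tau_{\mu_x}$, but this alone does not pin down $T$ at the remaining $q$'s; the paper additionally uses that $T$ is increasing (immediate since $q\mapsto -\log\sum_I \mu_x(I)^q$ is increasing for each fixed $m$), which together with continuity of $\tau_{\mu_x}$ forces $T(q)=\tau_{\mu_x}(q)$ everywhere by squeezing from both sides along $Q$.
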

\begin{proof}
We estimate:
\begin{align*}
\|\mu_{x}^{(m(n+n'))}\|_q^q &\le O_{\lam,q}(1) \| \mu_{x,n}^{(m(n+n'))} *  \left(S_{\lam^n} \mu_{\mathbf{T}^n x}\right)^{(m(n+n'))}\|_q^q \\
&\le O_{\lam,q}(1) \sum_{I\in\DD_{m(n)}} \|\mu_{x,n}^{(m(n+n'))}|_I *  \left(S_{\lam^n} \mu_{\mathbf{T}^n x}\right)^{(m(n+n'))}\|_q^q\\
&\le O_{\lam,q}(1) \sum_{I\in\DD_{m(n)}} \mu_{x,n}(I)^q \sum_{J\in\DD_{m(n+n')}} \mu_{\mathbf{T}^n x}(S_{\lam^{-n}}J)^q \\
&\le O_{\lam,q}(1) \|\mu_{x}^{(m(n))}\|_q^q \|\mu_{\mathbf{T}^n x}^{(m(n'))}\|_q^q.
\end{align*}
We have used the self-similarity relation \eqref{eq:mu-x-self-similarity} and Lemma \ref{lem:discr-norm-conv-equivalence} in the first line, Lemma \ref{lem:L-q-norm-almost-disj-supports} in the second line (which is justified since the support of $S_{\lam^n} \mu_{\mathbf{T}^n x}$ has diameter $O_\lam(2^{-m(n)})$), Young's inequality in the third line, and Lemmas \ref{lem:Holder}, \ref{lem:comparison-mu-m-mu}  in the last line.

The subadditive ergodic theorem applied to the sequence of (bounded and measurable) functions $x\mapsto \log \phi_n^q(x) -  C_{\lam,q}$ for a sufficiently large constant $C_{\lam,q}$ yields \eqref{eq:T-q-for-almost-all-x}. More precisely, we know the convergence for the subsequence $m(n)$, $n\in\mathbb{N}$, but since this sequence has positive density,  \eqref{eq:T-q-for-almost-all-x} follows from the monotonicity of $m\mapsto \|\nu^{(m)}\|_q^q$.

Finally, if $(q_j)$ is a dense subset of $(1,\infty)$, then we know from the previous claim that $\tau_{\mu_x}(q_j)=T(q_j)$ for all $j$, for $\mathbb{P}$-almost all $x$. Since $\tau_\mu$ is concave and increasing, and $T(q)$ is clearly increasing, we deduce that the equality extends to all $q\in (1,\infty)$.

The last claim is immediate from Lemma \ref{lem:basic-tau-properties}
\end{proof}

In order to prove Theorem \ref{thm:L-q-dim-dyn-ssm}, we would like to draw conclusions for \emph{all} $x$ rather than almost all. Indeed, the strategy will be to prove that the convergence in \eqref{eq:T-q-for-almost-all-x} holds for all $x$, and $T(q)$ has the ``expected'' value. It is well known that for uniquely ergodic systems, the ergodic averages of sufficiently regular (a.e. continuous) observables converge uniformly. The next known lemma asserts that a one-sided version of this remains valid for subadditive cocycles.
\begin{lemma} \label{lem:uniform-convergence}
Let $(X,\mathbf{T},\mathbb{P})$ be a uniquely ergodic measure-preserving system, with $X$ a compact metric space, and $\mathbf{T}$ continuous. Suppose $\phi_n:X\to\R$ are continuous $\PP$-almost everywhere and bounded, and
\[
\phi_{n+n'}(x) \le \phi_n(x) + \phi_{n'}(\mathbf{T}^n x)
\]
for all $n,n'\in\N$, $x\in X$. Then, denoting by $L$ the $\mathbb{P}$-almost sure limit of $\phi_n(x)/n$, we have
\begin{equation} \label{eq:Furman-uniform}
\limsup_{n\to\infty} \frac1n \phi_n(x) \le L \quad\text{uniformly in }x\in X.
\end{equation}
\end{lemma}
\begin{proof}
For continuous $\phi_n$, the claim was observed by Furman \cite[Theorem 1]{Furman97}. In the case the $\phi_n$ are only a.e. continuous and bounded, a classical exercise in measure theory yields that for each $n$ and $\e>0$ there exists a continuous function $\phi_{n,\e}$ such that $\phi_n\le \phi_{n,\e}$ pointwise, and $\int \phi_{n,\e}-\phi_n \,d\PP\le \e$. Indeed, using compactness and the fact that $\PP$ is a Radon measure, we may find a finite open cover $(\cup_i B_i) \cup B'$ of $X$ such that the variation of $\phi_n$ on each $B_i$ is at most $\e/2$, the discontinuity set of $\phi_n$ is contained in $B'$, and $\PP(B')< \e/(2\|\phi_n\|_\infty)$. Let $(h_i),h'$ be a continuous partition of unity subordinated to $B_i, B'$, and define $\phi_{n,\e}=\sum_i h_i \|\phi_n|_{B_i}\|_\infty + h' \|\phi_n\|_\infty$.

Other than the uniformity in $x$, the claim \eqref{eq:Furman-uniform} follows from \cite[Theorem 3.5]{GSSY16}, which in turn is established by inspecting the proof of the subadditive ergodic theorem given by Katznelson and Weiss \cite{KatznelsonWeiss82} (recall that for uniquely ergodic systems all points are generic). To deduce the uniform convergence, we recall that the ergodic averages of the continuous functions $\phi_{n,\e}$ converge uniformly (thanks to unique ergodicity), and apply \cite[Eq.(18)]{GSSY16}.
\end{proof}

Furman \cite[Theorem 1]{Furman97} also showed that, even in the continuous case, the set of $x$ such that $f_n(x)/n\not\to L$ may be nonempty and, indeed, can equal any $F_\sigma$, $\mathbb{P}$-null set.

From Proposition \ref{prop:cocycle} and Lemma \ref{lem:uniform-convergence} we obtain the following crucial corollary; this is the main place where the pleasantness of the model gets used.
\begin{cor} \label{cor:unif-continuity}
Suppose $(X,\mathbf{T},\Delta,\lam)$ is a pleasant model. Then
\[
\liminf_{m\to\infty} -\frac1m \log\|\mu_x^{(m)}\|_q^q \ge T(q) \quad\text{uniformly in }x\in X,
\]
where $T(q)$ is the function from Proposition \ref{prop:cocycle}.
\end{cor}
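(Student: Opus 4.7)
The plan is to apply the uniform subadditive ergodic theorem, Lemma \ref{lem:uniform-convergence}, to a cocycle that is essentially already built in Proposition \ref{prop:cocycle}. Setting $\phi_n^q(x) = \|\mu_x^{(m(n))}\|_q^q$, the sub-multiplicativity
\[
\phi_{n+n'}^q(x) \le C_{\lam,q}\, \phi_n^q(x)\,\phi_{n'}^q(\mathbf{T}^n x)
\]
becomes, after taking logarithms and absorbing the multiplicative constant into an additive one, a genuinely subadditive cocycle $g_n(x) := \log\phi_n^q(x) + \log C_{\lam,q}$. Its almost sure limit may be computed from Proposition \ref{prop:cocycle} and the relation $m(n)/n \to \log(1/\lam)/\log 2$: one gets $\lim_n g_n(x)/n = -T(q)\log(1/\lam)/\log 2$ for $\PP$-a.e. $x$.

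The main task is to verify the hypotheses of Lemma \ref{lem:uniform-convergence}, and this is where pleasantness enters essentially (though mildly). Boundedness of $g_n$ for each fixed $n$ is immediate since $\|\mu_x^{(m(n))}\|_q^q \in (0,1]$, with uniform lower bound coming from the pigeonhole inequality $\|\nu\|_q^q \ge N^{1-q}$ for a probability measure supported on $N$ points (here $N \le 2^{m(n)}+1$). For continuity, the key observation is that if $x_k \to x$ with $\mu_{x_k} \to \mu_x$ weakly and $\mu_x$ is non-atomic, then for any dyadic interval $I \in \DD_{m(n)}$ the two endpoints of $I$ are $\mu_x$-null, so the portmanteau theorem yields $\mu_{x_k}(I) \to \mu_x(I)$. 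Summing $q$-th powers over the finite collection $\DD_{m(n)}$ then gives continuity of $g_n$ at $x$. Pleasantness furnishes both weak continuity of $x \mapsto \mu_x$ and non-atomicity off a $\PP$-null set, providing exactly the required a.e. continuity.

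Once Lemma \ref{lem:uniform-convergence} supplies $\limsup_n g_n(x)/n \le -T(q)\log(1/\lam)/\log 2$ uniformly in $x$, multiplying by $n/m(n) \to \log 2/\log(1/\lam)$ and flipping signs converts this into the uniform lower bound for $-\log\|\mu_x^{(m(n))}\|_q^q/m(n)$ along the subsequence $m(n)$. To promote this to all $m$, the plan is to use the elementary monotonicity $\|\mu^{(m+1)}\|_q^q \le \|\mu^{(m)}\|_q^q$, which follows from $(a+b)^q \ge a^q + b^q$ for $a,b \ge 0$ and $q \ge 1$. For $m(n) \le m < m(n+1)$ this gives
\[
-\frac{1}{m}\log\|\mu_x^{(m)}\|_q^q \ge \frac{m(n)}{m(n+1)}\cdot \left(-\frac{1}{m(n)}\log\|\mu_x^{(m(n))}\|_q^q\right),
\]
and since $m(n)/m(n+1)\to 1$, the desired uniform lower bound $\liminf_m -\log\|\mu_x^{(m)}\|_q^q/m \ge T(q)$ follows. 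The only step requiring any care is the a.e. continuity verification; the rest is a direct assembly of machinery already developed.
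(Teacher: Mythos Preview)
Your proof is correct and follows essentially the same approach as the paper: apply Lemma \ref{lem:uniform-convergence} to the subadditive cocycle $\log\phi_n^q(x)+C$ built in Proposition \ref{prop:cocycle}, then pass from the subsequence $m(n)$ to all $m$ via monotonicity of $m\mapsto\|\mu^{(m)}\|_q^q$. The only cosmetic difference is in the verification of a.e.\ continuity of the cocycle: the paper replaces indicator functions of dyadic intervals by continuous bump functions $\psi_m$ to obtain a quantity $\Psi_m(x)=\Theta_q(\|\mu_x^{(m)}\|_q^q)$ that is manifestly weakly continuous in $\mu_x$, whereas you invoke the portmanteau theorem directly, using non-atomicity of $\mu_x$ to conclude $\mu_{x_k}(I)\to\mu_x(I)$ for each dyadic $I$; both devices serve the same purpose and are equally valid.
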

\begin{proof}
Let $\psi_m:\R/\Z\to [0,1]$ be a continuous bump function supported on the interval $[-2^{-m},2^{-m}]$ such that $\psi_m\equiv 1$ on $[-2^{-m}/2,2^{-m}/2]$. It follows easily from Lemma \ref{lem:Holder} that
\[
\Psi_m(x):= \sum_{k=0}^{2^m-1} \left(\int \psi_m(t+k 2^{-m}) d \mu_x(t)\right)^q = \Theta_q(\|\mu_x^{(m)}\|_q^q).
\]
Since the model is pleasant, $\Psi_m$ is bounded and continuous $\PP$-a.e. The corollary is now immediate from (the proof of) Proposition \ref{prop:cocycle} and Lemma \ref{lem:uniform-convergence}.
\end{proof}
We point out that, in the special case given by Lemma \ref{lem:mu-x-conv-ssm} below, this corollary was first obtained in \cite{NPS12}.

\subsection{Multifractal structure}

Next, we investigate the scaling (or multifractal) properties of measures generated by pleasant models. Throughout the rest of this section, we always assume the following:

\textbf{Standing assumption}. $(X,\mathbf{T},\Delta,\lam)$ is a pleasant model,  $T(q)$ is the function given by Proposition \ref{prop:cocycle} for this model. Any constants or parameters are allowed to depend on the model (in particular, on the function $T$).

Later on, in \S\ref{subsec:general-ssm}, we will need small variants of the results of this section in which $T$ is replaced by the $L^q$ spectrum of a fixed (non-homogeneous) self-similar measure. With a view towards this, it may be useful to observe that the proofs only use the concavity of $T$ together with Corollary \ref{cor:unif-continuity}.

We will establish some regularity of the multifractal structure for those values of $q$ such that $T$ is differentiable at $q$. The Legendre transform plays a key role in multifractal analysis. Given a concave function $\tau:\R\to\R$, its Legendre transform $\tau^*:\R\to [-\infty,\infty)$ is defined as
\[
\tau^*(\alpha) = \inf_{q\in\R} \alpha q -\tau(q).
\]
It is easy to check that if $\tau$ is concave and is differentiable at $q$, then
\[
\tau^*(\alpha)=\alpha q-\tau(q) \text{ for } \alpha=\tau'(q).
\]

The next lemma is also well known; the short proof is included for completeness.
\begin{lemma} \label{lem:f-alpha-smaller-than-one}
If $T$ is differentiable at $q>1$, $T(q)<q-1$, and $\alpha=T'(q)$, then $T^*(\alpha) \le \alpha < 1$
\end{lemma}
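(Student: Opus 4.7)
The plan is to exploit the concavity of $T$ together with the normalization $T(1)=0$. Since $T$ is concave on $(1,\infty)$ and differentiable at $q$ with $T'(q)=\alpha$, the graph of $T$ lies below its tangent line at $q$; evaluating this supporting line at the point $1$ gives the basic inequality
\[
0 = T(1) \le T(q) + \alpha(1-q),
\]
equivalently $\alpha(q-1) \le T(q)$. Because $q>1$, this can be rewritten as $\alpha \le T(q)/(q-1)$.

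From here both assertions follow immediately. First, using the hypothesis $T(q)<q-1$, the previous bound yields $\alpha \le T(q)/(q-1) < 1$. Second, since $T$ is concave and differentiable at $q$, the Legendre transform can be evaluated explicitly as
\[
T^*(\alpha) = \alpha q - T(q).
\]
Combining with $\alpha(q-1)\le T(q)$ gives
\[
T^*(\alpha) = \alpha q - T(q) \le \alpha q - \alpha(q-1) = \alpha,
\]
which is exactly the claim.

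There is no substantive obstacle here: the lemma is a direct consequence of the concavity of $T$ recorded in Proposition \ref{prop:cocycle} together with the normalization $T(1)=0$; the only care needed is in applying the supporting-line inequality in the correct direction and in using the identification of $T^*(\alpha)$ at a differentiability point of $T$, both of which are standard.
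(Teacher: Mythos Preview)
Your proof is correct and follows essentially the same approach as the paper: both rely on the concavity of $T$ and $T(1)=0$ to get $\alpha \le T(q)/(q-1) < 1$, and both bound $T^*(\alpha)$ by $\alpha$ via the same underlying inequality $\alpha(q-1)\le T(q)$. The only cosmetic difference is that the paper bounds $T^*(\alpha)$ directly from the infimum definition by plugging in $q=1$, whereas you use the identification $T^*(\alpha)=\alpha q - T(q)$ at the differentiability point; these are equivalent here.
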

\begin{proof}
Since $T(1)=0$ and  $T(q)<q-1$, we have $(T(q)-T(1))/(q-1)<1$. On the other hand, as $T$ is concave and differentiable at $q$, we must have $\alpha\le (T(q)-T(1))/(q-1)<1$. Furthermore, $T^*(\alpha)\le \alpha\cdot 1-T(1)=\alpha$, so the lemma follows.
\end{proof}

It is known that the multifractal structure of general measures displays some regularity for values of $q$ such that $\tau_\mu$ is differentiable at $q$ (or, dually, values of $\alpha$ such that $\tau^*$ is strictly concave at $\alpha$); see for example \cite[Theorem 5.1]{LauNgai99}. The following lemmas, which are proved with similar ideas, are a further illustration of this. For a single measure $\mu$, the heuristic to keep in mind is that, whenever $\alpha=\tau_\mu'(q)$ exists, almost all of the contribution to $\|\mu^{(m)}\|_q^q$ comes from $\approx 2^{\tau^*(\alpha) m}$ intervals, each of mass $\approx 2^{-\alpha m}$. In our case, we are dealing with a family $(\mu_x)_{x\in X}$; with the help of Corollary \ref{cor:unif-continuity} we will establish results which are uniform in $x$, at the price of dealing with $T(q)$ in place of $\tau_{\mu_x}(q)$.

\begin{lemma} \label{lem:size-set-A-in-terms-of-f-alpha}
Suppose that $\alpha_0=T'(q_0)$ exists for some $q_0\in (1,\infty)$.

Given $\e>0$, the following holds if $\delta$ is small enough in terms of $\e, q_0$ and $m$ is large enough in terms of $\e, q_0$ and $\delta$.

Suppose $\mathcal{D}'\subset\mathcal{D}_m$ is such that, for some $x\in X$:
\begin{enumerate}
\item[\textup{(1)}]  $2^{-\alpha m}\le\mu_x(I)\le 2\cdot 2^{-\alpha m}$ for all $I\in\mathcal{D}'$ and some $\alpha\ge 0$.
\item[\textup{(2)}]  $\sum_{I\in\mathcal{D}'} \mu_x(I)^{q_0} \ge 2^{-(T(q_0)+\delta)m}$.
\end{enumerate}
 Then $|\mathcal{D}'|\le 2^{m(T^*(\alpha_0)+\e)}$.
\end{lemma}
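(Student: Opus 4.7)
The strategy is to exploit the uniform upper bound on $L^q$ norms from Corollary \ref{cor:unif-continuity}, in combination with hypotheses (1) and (2), to first show that the exponent $\alpha$ must be close to $\alpha_0$, and then to deduce the bound on $|\mathcal{D}'|$ via the identity $T^*(\alpha_0) = q_0\alpha_0 - T(q_0)$ which holds because $T$ is differentiable at $q_0$.

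\textbf{Step 1: a generic upper bound on $|\mathcal{D}'|$.} For any fixed $q > 1$ and $\e'' > 0$, Corollary \ref{cor:unif-continuity} yields, uniformly in $x \in X$ and for $m$ large enough,
\[
\sum_{I \in \mathcal{D}_m} \mu_x(I)^q \le 2^{-m(T(q) - \e'')}.
\]
Using the lower bound $\mu_x(I) \ge 2^{-\alpha m}$ from hypothesis (1) and summing only over $I \in \mathcal{D}'$, this gives the general bound
\[
|\mathcal{D}'| \le 2^{m(q\alpha - T(q) + \e'')} \quad\text{for every } q > 1.
\]

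\textbf{Step 2: control of $\alpha$.} Hypothesis (2) together with the upper bound $\mu_x(I) \le 2 \cdot 2^{-\alpha m}$ from (1) yields the matching lower bound
\[
|\mathcal{D}'| \ge 2^{-q_0} \cdot 2^{m(q_0\alpha - T(q_0) - \delta)}.
\]
Comparing this with the upper bound from Step 1 applied at $q = q_0 - t$ for some small $t \in (0, q_0-1)$, and rearranging, I obtain
\[
t\alpha \le \bigl(T(q_0) - T(q_0 - t)\bigr) + \delta + \e'' + q_0/m.
\]
By differentiability of $T$ at $q_0$, for any prescribed $\kappa > 0$ there is $t=t(\kappa)\in(0,q_0-1)$ such that $T(q_0) - T(q_0 - t) \le (\alpha_0 + \kappa)t$. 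Provided $\delta$ is small enough and $m$ large enough that $(\delta + \e'' + q_0/m)/t \le \kappa$, this forces $\alpha \le \alpha_0 + 2\kappa$.

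\textbf{Step 3: conclusion.} Applying the generic upper bound from Step 1 at $q = q_0$ and writing $q_0\alpha - T(q_0) = T^*(\alpha_0) + q_0(\alpha - \alpha_0)$, I get
\[
|\mathcal{D}'| \le 2^{m(T^*(\alpha_0) + q_0(\alpha - \alpha_0) + \e'')} \le 2^{m(T^*(\alpha_0) + 2q_0\kappa + \e'')}.
\]
Given $\e>0$, the parameters are chosen in the order $\e \rightsquigarrow (\kappa,\e'')$ with $2q_0\kappa + \e'' < \e$; this together with the modulus of differentiability of $T$ at $q_0$ fixes $t = t(\kappa)$; this in turn determines how small $\delta$ and $1/m$ must be. The only real point requiring care is precisely this order of quantifiers and ensuring $q_0 - t > 1$ so that Corollary \ref{cor:unif-continuity} may be applied; everything else is bookkeeping. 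The main obstacle is thus conceptual rather than technical: one must realize that hypothesis (2) alone (together with (1)) is what pins $\alpha$ down to a neighbourhood of the Legendre-conjugate exponent $\alpha_0 = T'(q_0)$.
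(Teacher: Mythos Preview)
Your proof is correct and follows essentially the same approach as the paper's: both arguments use Corollary \ref{cor:unif-continuity} at a nearby exponent $q_0-t$ (the paper takes $t=\delta^{1/2}$) together with hypotheses (1)--(2) to pin $\alpha$ close to $\alpha_0$, and then apply the same corollary at $q_0$ combined with the Legendre identity $T^*(\alpha_0)=q_0\alpha_0-T(q_0)$ to conclude. Your decoupling of the perturbation $t$ from $\delta$ is slightly cleaner bookkeeping, but otherwise the arguments coincide.
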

\begin{proof}
Set $\eta:=\e/(3q_0)$, and pick $\delta\le \eta^2/9$, and also small enough that, if $q_1=q_0-\delta^{1/2}$, then
\begin{equation} \label{eq:using-tau-diff-1}
 T(q_0)-T(q_1) \le \delta^{1/2}\alpha_0 + \delta^{1/2}\eta .
\end{equation}
On one hand, using (1) and Corollary \ref{cor:unif-continuity}, we get
\begin{equation*} 
2^{-(T(q_1)-\delta)m} \ge \|\mu_x^{(m)}\|_{q_1}^{q_1} \ge |\mathcal{D}'| 2^{- \alpha  q_1 m},
\end{equation*}
if $m$ is large enough (depending on $q_0, T$, but not on $x$). On the other hand, by the assumptions (1)--(2),
\[
|\mathcal{D}'| 2^{-\alpha q_0 m} \ge 2^{-q_0} 2^{(-T(q_0)-\delta)m} \ge 2^{(-T(q_0)-2\delta)m}
\]
if $m\gg_{\delta,q_0} 1$. Eliminating $|\mathcal{D}'|$ from the last two displayed equations yields
\[
\alpha q_0 -T(q_0) -2\delta \le \alpha (q_0-\delta^{1/2})-T(q_0-\delta^{1/2})+\delta,
\]
so that, recalling \eqref{eq:using-tau-diff-1},
\[
\delta^{1/2}\alpha \le T(q_0)-T(q_0-\delta^{1/2}) +3\delta \le \delta^{1/2}\alpha_0 +\delta^{1/2}\eta+3\delta.
\]
Hence $\alpha-\alpha_0< 2\eta$, since we assumed $\delta\le (\eta/3)^2$.  Using this, a further application of Corollary  \ref{cor:unif-continuity} guarantees that if $m\gg_\e 1$, then
\[
2^{(-T(q_0)+\e/3)m} \ge \|\mu_x^{(m)}\|_{q_0}^{q_0} \ge 2^{-q_0\alpha m} |\mathcal{D}'| \ge 2^{-q_0\alpha_0 m} 2^{-(q_0 2 \eta) m} |\mathcal{D}'|.
\]
The conclusion follows from the formula $T^*(\alpha_0)=q_0 \alpha_0-T(q_0)$ and our choice $\eta=\e/(3q_0)$.
\end{proof}

\begin{lemma}  \label{lem:Lq-sum-large-mass}

Let $q_0>0$ be such that $\alpha_0=T'(q_0)$ exists. Given $\sigma>0$, there is $\e=\e(\sigma,q_0)>0$ such that the following holds for large enough $m$ (in terms of $\sigma, q_0$): for all $x\in X$,
\begin{equation} \label{eq:Lq-sum-large-mass}
\sum \{ \mu_x(I)^{q_0}: I\in\DD_m, \mu_x(I) \ge 2^{-m(\alpha_0-\sigma)} \} \le 2^{-m(T(q_0)+\e)}.
\end{equation}
\end{lemma}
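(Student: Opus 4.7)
The plan is to split the sum according to the dyadic value of $\mu_x(I)$ (a standard multifractal binning) and bound each piece using Corollary \ref{cor:unif-continuity} applied at a parameter $q_1 > q_0$ slightly larger than $q_0$. The key analytic input is that, since $\alpha_0 = T'(q_0)$ exists and $T$ is concave, we have $T(q_0+h) \ge T(q_0) + \alpha_0 h - \eta h$ for any prescribed $\eta>0$, provided $h$ is sufficiently small.

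I would begin by fixing $\eta$ with $0<\eta<\sigma/2$, choosing $h>0$ small enough that $T(q_0+h) \ge T(q_0) + (\alpha_0 - \eta)h$, and setting $q_1 := q_0 + h$. We may assume $\alpha_0 - \sigma \ge 0$, else the sum is empty. Partition the relevant intervals into $O(m)$ dyadic bins
\[
\mathcal{D}'(\alpha) := \{I \in \DD_m : 2^{-\alpha m} \le \mu_x(I) < 2\cdot 2^{-\alpha m}\},
\]
indexed by $\alpha = j/m$ with $j \in \{0,1,\ldots, \lfloor (\alpha_0 - \sigma) m \rfloor\}$, so that each appearing $\alpha$ satisfies $\alpha \le \alpha_0 - \sigma$.

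For each such $\alpha$, Corollary \ref{cor:unif-continuity} at exponent $q_1$, together with the trivial inequality $\mu_x(I)^{q_1} \ge 2^{-\alpha q_1 m}$ on $\mathcal{D}'(\alpha)$, gives
\[
|\mathcal{D}'(\alpha)| \cdot 2^{-\alpha q_1 m} \le \|\mu_x^{(m)}\|_{q_1}^{q_1} \le 2^{-(T(q_1) - \delta')m}
\]
for every $\delta'>0$ and all sufficiently large $m$ (uniformly in $x$). Therefore
\[
\sum_{I \in \mathcal{D}'(\alpha)} \mu_x(I)^{q_0} \le 2^{q_0} |\mathcal{D}'(\alpha)| \cdot 2^{-\alpha q_0 m} \le O_{q_0}(1) \cdot 2^{(\alpha(q_1 - q_0) - T(q_1) + \delta')m}.
\]
Using $\alpha \le \alpha_0 - \sigma$ and $T(q_1) \ge T(q_0) + (\alpha_0 - \eta)h$, the exponent is at most
\[
(\alpha_0 - \sigma)h - T(q_0) - (\alpha_0 - \eta)h = -T(q_0) - (\sigma - \eta)h.
\]

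Summing over the $O(m)$ values of $\alpha$, and choosing $\delta' < (\sigma - \eta)h/4$ so that the polynomial factor can be absorbed into a further exponential loss, yields
\[
\sum\{\mu_x(I)^{q_0} : I \in \DD_m,\ \mu_x(I) \ge 2^{-m(\alpha_0 - \sigma)}\} \le 2^{-(T(q_0) + \e)m}
\]
for $\e := (\sigma - \eta)h / 8$ and all $m$ large enough (depending on $\sigma, q_0$). I do not anticipate a serious obstacle: this is the dual counterpart of Lemma \ref{lem:size-set-A-in-terms-of-f-alpha}, with the sign of $q_1 - q_0$ reversed because here we control mass \emph{concentrations} above the typical scaling. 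The only point requiring a minor amount of care is that the bounds be uniform in $x$; this is precisely the content of Corollary \ref{cor:unif-continuity}, which relies on the pleasantness of the model.
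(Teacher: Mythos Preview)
Your proof is correct and follows essentially the same approach as the paper: dyadic binning of the intervals according to $\mu_x(I)$, applying Corollary~\ref{cor:unif-continuity} at a parameter $q_1=q_0+h>q_0$, and exploiting differentiability of $T$ at $q_0$ to produce the exponential gain. The only cosmetic difference is that the paper uses bins of constant width $\delta$ in $\alpha$ (with $\alpha_j=\alpha_0-\delta j$) and sums the resulting geometric series, whereas you use bins of width $1/m$ and absorb the polynomial $O(m)$ factor; both routes work equally well.
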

\begin{proof}
Let $\eta\in (0,1)$ be small enough that
\begin{equation}   \label{eq:using-tau-diff-2}
T(q_0+\eta) \ge T(q_0) + \eta \alpha_0 - \delta,
\end{equation}
where $\delta= \eta\sigma/(4+2 q_0)$.

Let $\alpha_j = \alpha_0 -\delta j$, and write $N_x(\alpha_j,m)$ for the number of intervals $I$ in $\DD_m$ such that $2^{-m\alpha_j}\le\mu_x(I)< 2^{-m\alpha_{j+1}}$. By Corollary \ref{cor:unif-continuity}, for any fixed value of $q$, if $m\gg_q 1$ then,
\begin{equation*}  
N_x(\alpha_j,m) 2^{-m q\alpha_j} \le \|\mu_x^{(m)}\|_q^q \le 2^{-m(T(q)-\delta)}.
\end{equation*}
Applying this to $q=q_0+\eta$, and using \eqref{eq:using-tau-diff-2}, we estimate
\begin{align*}
N_x(\alpha_j,m) 2^{-m q_0\alpha_j} &\le 2^{m \eta \alpha_j} 2^{-m(T(q_0+\eta)-\delta)} \\
&\le 2^{2\delta m} 2^{-j\delta\eta m} 2^{-T(q_0)m}.
\end{align*}
Let $S_x$ be the sum in the left-hand side of \eqref{eq:Lq-sum-large-mass} that we want to estimate. Using that $\delta=\eta\sigma/(4+2 q_0)$, we conclude that
\begin{align*}
S_x  &\le \sum_{j: \delta (j+1)\ge \sigma} N_x(\alpha_j,m) 2^{-m q_0\alpha_{j+1}} \\
&\le \sum_{j: \delta (j+1)\ge \sigma}  2^{\delta q_0 m}  2^{2\delta m} 2^{-j\delta\eta m} 2^{-T(q_0)m}\\
&\le \sum_{j\ge 0} 2^{-j\delta\eta m} 2^{(2+q_0)\delta m}  2^{-\eta\sigma m} 2^{-T(q_0)m} \\
&\le O_{\delta\eta}(1) 2^{(\eta\sigma/2-\eta\sigma)m} 2^{-T(q_0)m},
\end{align*}
as claimed.
\end{proof}

\begin{lemma} \label{lem:Lq-sum-over-small-set}
Let $q_0>1$ be such that $\alpha_0=T'(q_0)$ exists. Given $\kappa>0$, there is $\e=\e(\kappa,q_0)>0$ such that the following holds for large enough $m$ (in terms of $q_0,\e$) and all $x\in X$.

If $\mathcal{D}'\subset\DD_m$ has $\le 2^{(T^*(\alpha_0)-\kappa)m}$ elements, then
\[
\sum_{I\in\mathcal{D}'} \mu_x(I)^{q_0} \le 2^{-(T(q_0)+\e)m}
\]
for all $x\in X$.
\end{lemma}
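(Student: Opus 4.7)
The plan is to split $\mathcal{D}'$ into three pieces according to the mass $\mu_x(I)$. Fix a parameter $\sigma > 0$ (to be chosen small depending on $\kappa$ and $q_0$) and set
\begin{align*}
\mathcal{D}'_{\mathrm{hi}} &= \{I\in\mathcal{D}': \mu_x(I) > 2^{-m(\alpha_0-\sigma)}\},\\
\mathcal{D}'_{\mathrm{mid}} &= \{I\in\mathcal{D}': 2^{-m(\alpha_0+\sigma)} \le \mu_x(I) \le 2^{-m(\alpha_0-\sigma)}\},\\
\mathcal{D}'_{\mathrm{lo}} &= \{I\in\mathcal{D}': \mu_x(I) < 2^{-m(\alpha_0+\sigma)}\}.
\end{align*}
The heuristic is that the $L^{q_0}$ sum concentrates near exponent $\alpha_0$, so there should be an automatic exponential gain on the tails $\mathcal{D}'_{\mathrm{hi}}$ and $\mathcal{D}'_{\mathrm{lo}}$ (with no assumption on $|\mathcal{D}'|$), while the bulk $\mathcal{D}'_{\mathrm{mid}}$ is controlled using the hypothesis $|\mathcal{D}'| \le 2^{(T^*(\alpha_0)-\kappa)m}$ together with the Legendre identity $T^*(\alpha_0) = q_0\alpha_0 - T(q_0)$ (valid because $T$ is differentiable at $q_0$ with derivative $\alpha_0$).

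The high-mass piece is handled directly by Lemma \ref{lem:Lq-sum-large-mass}, which yields $\sum_{\mathcal{D}'_{\mathrm{hi}}} \mu_x(I)^{q_0} \le 2^{-m(T(q_0)+\e_1)}$ for some $\e_1 = \e_1(\sigma,q_0) > 0$. For the medium piece, the size restriction on $\mathcal{D}'$ and the trivial bound $\mu_x(I) \le 2^{-m(\alpha_0-\sigma)}$ give
\[
\sum_{\mathcal{D}'_{\mathrm{mid}}} \mu_x(I)^{q_0} \le |\mathcal{D}'|\cdot 2^{-mq_0(\alpha_0-\sigma)} \le 2^{m(T^*(\alpha_0)-\kappa-q_0(\alpha_0-\sigma))} = 2^{-m(T(q_0)+\kappa-q_0\sigma)},
\]
so if $\sigma \le \kappa/(2q_0)$ this is at most $2^{-m(T(q_0)+\kappa/2)}$.

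The low-mass piece requires a Hölder-type argument with an auxiliary exponent $q_1 \in (1,q_0)$: writing $\mu_x(I)^{q_0} = \mu_x(I)^{q_1}\mu_x(I)^{q_0-q_1}$ and using $\mu_x(I) < 2^{-m(\alpha_0+\sigma)}$ on $\mathcal{D}'_{\mathrm{lo}}$, together with Corollary \ref{cor:unif-continuity} applied at exponent $q_1$,
\[
\sum_{\mathcal{D}'_{\mathrm{lo}}} \mu_x(I)^{q_0} \le 2^{-m(q_0-q_1)(\alpha_0+\sigma)}\|\mu_x^{(m)}\|_{q_1}^{q_1} \le 2^{-m\left[(q_0-q_1)(\alpha_0+\sigma) + T(q_1) - \delta\right]}
\]
for $m$ large enough. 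By differentiability of $T$ at $q_0$, for any $\eta>0$ we may pick $q_1 < q_0$ close enough to $q_0$ so that $T(q_0) - T(q_1) \le (\alpha_0+\eta)(q_0-q_1)$; taking $\eta = \sigma/4$ and $\delta < (q_0-q_1)\sigma/8$, the exponent above exceeds $T(q_0) + (q_0-q_1)\sigma/8$, yielding a gain $\e_3 > 0$. Combining the three bounds and absorbing the additive factor of $3$ into a slightly smaller $\e$, we obtain the claim with $\e = \e(\kappa,q_0) := \tfrac{1}{2}\min(\e_1,\kappa/2,\e_3)$. The delicate point is the \emph{order} of parameter choices: $\sigma$ is fixed first (depending on $\kappa,q_0$), then $q_1$ is chosen close to $q_0$ via differentiability of $T$ at $q_0$, and finally $\delta$ and $m$ are taken sufficiently small/large; this is also where the hypothesis $T'(q_0)$ exists is essential, since without it we would not have a matching upper and lower bound for $T(q_0)-T(q_1)$ in terms of $\alpha_0(q_0-q_1)$.
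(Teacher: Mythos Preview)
Your proof is correct and follows essentially the same approach as the paper: handle the high-mass intervals via Lemma~\ref{lem:Lq-sum-large-mass}, and the rest via the size hypothesis on $\mathcal{D}'$ combined with the Legendre identity $T^*(\alpha_0)=q_0\alpha_0-T(q_0)$.

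The one simplification worth noting is that your three-way split is unnecessary. Your argument for $\mathcal{D}'_{\mathrm{mid}}$ only uses the upper bound $\mu_x(I)\le 2^{-m(\alpha_0-\sigma)}$ and the size bound $|\mathcal{D}'|\le 2^{(T^*(\alpha_0)-\kappa)m}$, so it applies verbatim to $\mathcal{D}'_{\mathrm{lo}}$ as well. The paper does exactly this: after invoking Lemma~\ref{lem:Lq-sum-large-mass} with $\sigma=\kappa/(2q_0)$, it bounds the entire remaining sum (all $I$ with $\mu_x(I)\le 2^{-m(\alpha_0-\sigma)}$) by $|\mathcal{D}'|\cdot 2^{-q_0(\alpha_0-\sigma)m}\le 2^{-(T(q_0)+\kappa/2)m}$ in one line. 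Your separate H\"older argument for $\mathcal{D}'_{\mathrm{lo}}$ (using an auxiliary $q_1<q_0$ and Corollary~\ref{cor:unif-continuity}) is correct and gives an unconditional tail bound independent of $|\mathcal{D}'|$, but it is not needed here.
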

\begin{proof}
Let $\sigma=\kappa/(2 q_0)$ and fix $x\in X$. In light of Lemma \ref{lem:Lq-sum-large-mass}, we only need to worry about those $I$ with $\mu_x(I)\le 2^{-m(\alpha_0-\sigma)}$. But
\begin{align*}
\sum\{ \mu_x(I)^{q_0}: I\in\mathcal{D}',\mu_x(I) \le 2^{-m(\alpha_0-\sigma)} \} &\le 2^{(T^*(\alpha_0)-\kappa)m} 2^{-(q_0\alpha_0-q_0\sigma)m} \\
&=   2^{-(\kappa-q_0\sigma)m}2^{-T(q_0)m}.
\end{align*}
By our choice of $\sigma$, $\kappa-q_0\sigma=\kappa/2>0$, so this gives the claim.
\end{proof}


The second part of the following proposition can be used to give another (though closely related) proof of Proposition \ref{prop:cocycle}, and was obtained in \cite{PeresSolomyak00, NPS12} in special cases. The first part is proved in a similar way, relying on Lemma \ref{lem:Lq-sum-over-small-set}.

\begin{prop}   \label{prop:Lq-over-small-set-is-small}
Let $q>1$ be such that $\alpha=T'(q)$ exists.
\begin{enumerate}[label={\upshape(\roman*).}]
\item Given $\kappa>0$, there is $\eta=\eta(\kappa,q)>0$ such that the following holds for all large enough $m$: for any $s\in\N$, $I\in\DD_s$ and $x\in X$, if $\mathcal{D}'$ is a collection of intervals in $\DD_{s+m}(I)$ with $|\mathcal{D}'|\le 2^{(T^*(\alpha)-\kappa)m}$, then
\[
\sum_{J\in\mathcal{D}'} \mu_x(J)^q  \le 2^{-(T(q)+\eta)m} \mu_{x}(2 I)^q.
\]
\item Given $\delta>0$, the following holds for all large enough $m$: for any $I\in\DD_s$, $s\in\N$, and $x\in X$,
\[
\sum_{J\in\DD_{s+m}(I)} \mu_x(J)^q \le 2^{-(T(q)-\delta)m} \mu_{x}(2 I)^q.
\]
\end{enumerate}
\end{prop}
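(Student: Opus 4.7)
The plan is to bootstrap from the case $s=0$ (handled by Lemma \ref{lem:Lq-sum-over-small-set} for (i) and Corollary \ref{cor:unif-continuity} for (ii)) to general $s\ge 0$, using the self-similarity \eqref{eq:mu-x-self-similarity}. I would take $n=n(s)$ to be the smallest integer with $\lam^n\le 2^{-s-1}$, so that $\lam^n\in(\lam\cdot 2^{-s-1},2^{-s-1}]$ is comparable to $|I|=2^{-s}$ yet strictly at most $|I|/2$. Writing $\eta_1:=\mu_{x,n}$ and $\tilde\mu:=S_{\lam^n}\mu_{\mathbf{T}^n x}$ (supported in $[0,\lam^n]$), self-similarity gives $\mu_x=\eta_1*\tilde\mu$, so for every $J\in\DD_{s+m}(I)$ one has $\mu_x(J)=\int\tilde\mu(J-t)\,d\eta_1(t)$, with the integrand vanishing outside $t\in\tilde I:=[I_--\lam^n,I_+]$. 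Crucially, $\lam^n\le|I|/2$ forces $\tilde I\subset 2I$.

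Setting $M_*:=\eta_1(\tilde I)$ and applying Jensen's inequality to the probability measure $\eta_1|_{\tilde I}/M_*$ with the convex function $u\mapsto u^q$ gives
\[
\mu_x(J)^q\le M_*^{q-1}\sum_{t\in\tilde I}\eta_1(t)\,\tilde\mu(J-t)^q.
\]
Summing over the desired family and swapping sums reduces matters to bounding, for each fixed $t$, the inner quantity $\sum_J\tilde\mu(J-t)^q=\sum_J\mu_{\mathbf{T}^n x}(\lam^{-n}(J-t))^q$. The intervals $\lam^{-n}(J-t)$ are pairwise disjoint of length in $[2^{1-m},2^{1-m}/\lam]$; choosing $K:=\lceil 1-\log\lam\rceil$, each is covered by at most $2$ members of $\DD_{m-K}$ and each $\DD_{m-K}$-interval meets at most $O_\lam(1)$ of them, so Lemma \ref{lem:Holder} converts the inner sum into $C_\lam\sum_{Q\in\mathcal{Q}_t}\mu_{\mathbf{T}^n x}(Q)^q$ for some $\mathcal{Q}_t\subset\DD_{m-K}$ with $|\mathcal{Q}_t|\le C_\lam|\mathcal{D}'|$ (in (i)) or $|\mathcal{Q}_t|\le|\DD_{m-K}|$ (in (ii)). In (i), the hypothesis $|\mathcal{D}'|\le 2^{(T^*(\alpha)-\kappa)m}$ makes $\mathcal{Q}_t$ meet the size constraint of Lemma \ref{lem:Lq-sum-over-small-set} at scale $m-K$ with $\kappa/2$ in place of $\kappa$ (valid once $m$ is large, using $T^*(\alpha)\le 1$), yielding $\eta'=\eta'(\kappa,q)>0$ with $\sum_{Q\in\mathcal{Q}_t}\mu_{\mathbf{T}^n x}(Q)^q\le 2^{-(T(q)+\eta')m}$; in (ii), Corollary \ref{cor:unif-continuity} supplies the analogous uniform bound $2^{-(T(q)-\delta/2)m}$. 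Since $\sum_{t\in\tilde I}\eta_1(t)=M_*$, collecting everything gives $\sum_J\mu_x(J)^q\le C'_\lam M_*^q\cdot 2^{-(T(q)+\eta')m}$ in (i), and the corresponding estimate in (ii).

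The last—and most delicate—step is to replace $M_*^q$ by $\mu_x(2I)^q$, which is where the exact choice of $n$ matters. For $t\in\tilde I$, the translate $[t,t+\lam^n]\subset[I_--\lam^n,I_++\lam^n]$ is contained in $2I$ (since $\lam^n\le|I|/2$), so $\tilde\mu(2I-t)=1$ and hence
\[
\mu_x(2I)=\int\tilde\mu(2I-t)\,d\eta_1(t)\ge\eta_1(\tilde I)=M_*.
\]
Absorbing the constant $C'_\lam$ into the exponent by taking $m$ large (and setting $\eta=\eta'/2$ in (i)) concludes both parts. The main obstacle I expect to navigate is precisely this boundary estimate: without the tailored choice of $n$ forcing $\lam^n\le|I|/2$, the natural comparison would involve $\mu_x(cI)$ for some $c>2$, which is not dominated by $\mu_x(2I)$ uniformly and would invalidate the proposition as stated.
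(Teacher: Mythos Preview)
Your proof is correct and follows essentially the same approach as the paper: choose $n$ so that $\lam^n$ is comparable to but at most $|I|/2$, use the self-similarity $\mu_x=\mu_{x,n}*S_{\lam^n}\mu_{\mathbf{T}^n x}$ and Jensen's inequality to reduce to a sum over $\mu_{\mathbf{T}^n x}$-masses of rescaled intervals, then invoke Lemma~\ref{lem:Lq-sum-over-small-set} (resp.\ Corollary~\ref{cor:unif-continuity}) via Lemma~\ref{lem:Holder}. The paper takes $n$ with $\lam^n<2^{-s-2}$ rather than $\lam^n\le 2^{-s-1}$, but the argument and the key inequality $\eta_1(\tilde I)\le\mu_x(2I)$ are identical.
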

\begin{proof}
We prove (i) first. Let $n$ be the smallest integer such that $\lam^n < 2^{-s-2}$. Let $y_j$ be the atoms of $\mu_{x,n}$ such that $[y_j,y_j+\lam^{n}]\cap I\neq \varnothing$, let $p_j$ be their respective masses, and write
\[
\mu_{x,n,I} = \sum_j p_j \delta_{y_j}.
\]
Then the support of $\mu_{x,n,I}$ is contained in the $\lam^n$-neighborhood of $I$. Moreover, since $\delta_z * S_{\lam^n}\mu_{T^n x}$ is supported on $[z,z+\lam^n]$, thanks to \eqref{eq:assumption-support-mu-x}, it follows from the self-similarity relation $\mu_x = \mu_{x,n}* S_{\lam^n}\mu_{\mathbf{T}^n x}$ and the definition of $\mu_{x,n,I}$ that  $\mu_x|_I =(\mu_{x,n,I}*S_{\lam^n}\mu_{\mathbf{T}^n x})|_I$. Write
\[
p = \|\mu_{x,n,I}\|_1 = \sum_j p_j \le \mu_x(2 I),
\]
using that, again by \eqref{eq:assumption-support-mu-x}, the support of $\mu_{x,n}$ is contained in the $\lam^n$ neighborhood of the support of $\mu_x$, and that $4 \lam^n \le 2^{-s}$.

We can then estimate
\begin{align*}
\sum_{J\in\DD'}  \mu_x(J)^q &= \sum_{J\in\DD'} \left( \sum_j p_j \delta_{y_j} * S_{\lam^n}\mu_{\mathbf{T}^n x}(J)  \right)^q\\
&= \sum_{J\in\DD'} \left( \sum_j p_j \mu_{\mathbf{T}^n x}(\lam^{-n}(J-y_j)) \right)^q\\
&\le \sum_{J\in\DD'} p^{q-1} \sum_j p_j \, \mu_{\mathbf{T}^n x}(\lam^{-n}(J-y_j))^q\\
&= p^{q-1}\sum_j p_j \sum_{J\in\DD'} \mu_{\mathbf{T}^n x}(\lam^{-n}(J-y_j))^q,
\end{align*}
where we used the convexity of $t^q$ in the third line. Now for each fixed $j$, each interval $\lam^{-n}(J-y_j)$ with $J\in\DD'$ can be covered by $O_{\lam}(1)$ intervals in $\DD_m$, and reciprocally each interval in $\DD_m$ hits at most $2$ intervals among the $\lam^{-n}(J-y_j)$. We deduce from Lemmas \ref{lem:Holder} and \ref{lem:Lq-sum-over-small-set} that, still for a fixed $j$,
\[
 \sum_{J\in\DD'} \mu_{\mathbf{T}^n x}(\lam^{-n}(J-y_j))^q \le O_{\lam,q}(1) 2^{-(T(q)+\e)m},
\]
provided $m$ is taken large enough, where $\e=\e(\kappa,q)>0$ is given by Lemma \ref{lem:Lq-sum-over-small-set}. Combining the last three displayed equations yields the first claim with $\eta=\e/2$.

The second claim follows in the same way, adding over $\DD_{s+m}(I)$ instead of $\DD'$, and using Corollary \ref{cor:unif-continuity} instead of Lemma \ref{lem:Lq-sum-over-small-set}.
\end{proof}

\section{Proof of Theorem \ref{thm:L-q-dim-dyn-ssm}}
\label{sec:proof-of-main-thm}

\subsection{Flattening of \texorpdfstring{$L^q$}{Lq} norm for dynamically driven self-similar measures}

As noted in the introduction, we aim to prove a generalization of  \cite[Theorem 1.1]{Hochman14}, by following the same broad outline. One of the key steps in the proof of  \cite[Theorem 1.1]{Hochman14} consists in showing that convolving a self-similar measure with an arbitrary measure, on which only a lower bound on the entropy is assumed, results in an entropy increment: see \cite[Corollary 5.5]{Hochman14}. In turn, this is derived from the inverse theorem of \cite{Hochman14} by proving that the entropy of self-similar measures is roughly constant at most scales and locations, a property that Hochman termed \emph{uniform entropy dimension}, see \cite[Definition 5.1 and Proposition 5.2]{Hochman14} for precise details. Once again, we will follow a different path to obtain a statement for $L^q$ norms which is similar in spirit.

We continue to work with a fixed pleasant model $(X,\mathbf{T},\Delta,\lam)$, and the function $T$ from Proposition \ref{prop:cocycle}.
\begin{thm} \label{thm:conv-with-ssm-flattens}
Given $\sigma>0$ and $q>1$ such that $T$ is differentiable at $q$ and $T(q)<q-1$, there is $\e=\e(\sigma,q)>0$ such that the following holds for $m$ large enough in terms of all previous parameters:

If $\nu$ is a $2^{-m}$-measure with $\|\nu\|_q^{q'}\le 2^{-\sigma m}$, and $x\in X$, then
\[
\| \nu * \mu_x^{(m)}\|_q^q \le 2^{-(T(q)+\e) m}.
\]
\end{thm}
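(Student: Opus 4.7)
My plan is to argue by contradiction, combining the inverse theorem (Theorem~\ref{thm:inverse-thm}) with the multifractal regularity of $\mu_x$ proved in Lemma~\ref{lem:size-set-A-in-terms-of-f-alpha} and Proposition~\ref{prop:Lq-over-small-set-is-small}. Write $\mu = \mu_x^{(m)}$ and suppose for contradiction that $\|\nu * \mu\|_q^q > 2^{-(T(q)+\e)m}$ with $\e$ to be chosen. Young's inequality forces $\|\mu\|_q^q > 2^{-(T(q)+\e)m}$ as well (otherwise we are already done), and Corollary~\ref{cor:unif-continuity} provides the complementary upper bound, so the hypothesis $\|\nu*\mu\|_q \ge 2^{-\e' m}\|\mu\|_q$ of Theorem~\ref{thm:inverse-thm} holds with $\e'$ arbitrarily small. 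Applying the inverse theorem with a small parameter $\delta_1$ and a large $D$ extracts sets $A \subset \supp\mu$, $B \subset \supp\nu$ and a scale set $\mathcal{S} \subset [\ell]$ (where $m = \ell D$) carrying the nearly-full branching of $A$, with branching numbers $R'_s$.

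Let $\alpha = T'(q)$. By Lemma~\ref{lem:f-alpha-smaller-than-one} we have $T^*(\alpha) < 1$, which is the gap that makes the mechanism work. Two bounds now come together. First, property~(vi) of the inverse theorem combined with the hypothesis $\|\nu\|_q^{q'} \le 2^{-\sigma m}$ gives a \emph{positive density} $|\mathcal{S}|/\ell \ge \sigma - \delta_1$ of full-branching scales. Second, Lemma~\ref{lem:size-set-A-in-terms-of-f-alpha} applies to $A$ (its hypotheses follow from (A-ii) together with the running lower bound on $\|\mu\|_q^q$) and gives $\log_2 |A| \le (T^*(\alpha)+\delta_2)m$. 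Since each $s \in \mathcal{S}$ contributes $\log_2 R'_s \ge (1-\delta_1)D$, and $\tau := 1 - \delta_1 - T^*(\alpha) > 0$, the scales outside $\mathcal{S}$ are forced to collectively undershoot $T^*(\alpha)D$ by an amount proportional to $\sigma\tau$. A Markov-type averaging then produces a positive proportion $c = c(\sigma, q) > 0$ of \emph{strongly deficient} scales $s \notin \mathcal{S}$ for which $R'_s \le 2^{(T^*(\alpha)-\kappa)D}$ for some fixed $\kappa > 0$.

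With these strongly deficient scales identified, I iterate Proposition~\ref{prop:Lq-over-small-set-is-small} one scale at a time. At a strongly deficient scale $s$, part~(i), applied with inner scale $D$ and parameter $\kappa$, gives
\[
\sum_{J \in \DD_{(s+1)D}(A \cap I)} \mu_x(J)^q \le 2^{-(T(q)+\eta)D}\,\mu_x(2I)^q
\]
for every $I \in \DD_{sD}(A)$, with $\eta = \eta(\kappa, q) > 0$. At the remaining scales, part~(ii) gives the weaker bound with $2^{-(T(q)-\delta_3)D}$ in place of $2^{-(T(q)+\eta)D}$, for any prescribed $\delta_3$. Summing over $I$ (which costs only an $O_q(1)$ factor per scale, since $\mu_x(2I)^q \lesssim_q \mu_x(I)^q + \mu_x(I^-)^q + \mu_x(I^+)^q$) and telescoping the resulting inequalities over all $\ell$ scales yields
\[
\|\mu|_A\|_q^q \le C_q^\ell \cdot 2^{-T(q)m} \cdot 2^{-D(\eta c - \delta_3)\ell},
\]
using the density lower bound on strongly deficient scales. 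Choosing $\delta_3 < \eta c/4$ and $D$ large enough that $\log_2 C_q / D < \eta c/4$ collapses this to $\|\mu|_A\|_q^q \le 2^{-(T(q) + \eta c/2)m}$.

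Finally, this contradicts (A-i): $\|\mu|_A\|_q^q \ge 2^{-\delta_1 q m}\|\mu\|_q^q > 2^{-(T(q)+\e+\delta_1 q)m}$, so picking $\e$ and $\delta_1$ small enough that $\e + \delta_1 q < \eta c/2$ closes the argument, giving a value $\e = \e(\sigma, q) > 0$ that depends only on $\sigma$ and $q$ (through $T$, $T^*(\alpha)$, $\tau$, $\kappa$, $\eta$ and $c$). The principal obstacle I anticipate is the delicate parameter choreography: $D$ must be chosen large enough to serve both parts of Proposition~\ref{prop:Lq-over-small-set-is-small} \emph{and} to absorb the telescoped $C_q^\ell$, while $\delta_1$ is determined by the inverse theorem only \emph{after} $\e$ and $\kappa$ have been fixed, and all of this must be settled before letting $\ell \to \infty$.
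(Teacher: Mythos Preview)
Your argument follows the paper's proof essentially step for step: argue by contradiction, apply the inverse theorem, use (vi) together with Lemma~\ref{lem:size-set-A-in-terms-of-f-alpha} and a Markov argument to locate a positive proportion of scales with branching below $2^{(T^*(\alpha)-\kappa)D}$, then telescope via Proposition~\ref{prop:Lq-over-small-set-is-small} to contradict (A-i).

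One technical step does not work as you wrote it. In the telescoping, you bound $\sum_{I\in\DD_{sD}(A)}\mu_x(2I)^q$ using $\mu_x(2I)^q\lesssim_q \mu_x(I^-)^q+\mu_x(I)^q+\mu_x(I^+)^q$ and claim this costs only an $O_q(1)$ factor. But the neighbours $I^\pm$ need not belong to $\DD_{sD}(A)$, and there is no reason their $\mu_x$-mass is controlled by $\sum_{I\in\DD_{sD}(A)}\mu_x(I)^q$; so the recursion does not close on the quantity $\Sigma_s=\sum_{I\in\DD_{sD}(A)}\mu_x(I)^q$. The paper handles this by applying Proposition~\ref{prop:Lq-over-small-set-is-small} with base scale $sD+2$ rather than $sD$: for $\tilde I\in\DD_{sD+2}(A)$, property (A-iv) guarantees $2\tilde I$ is contained in a single interval of $\DD_{sD}(A)$, and each such interval meets at most two of the $2\tilde I$, so $\sum_{\tilde I}\mu_x(2\tilde I)^q\le 2\,\Sigma_s$. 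With this correction (and the loss of $2$ in $D'$ absorbed into the choice of $D$), your argument goes through and matches the paper's.
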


The analogy with \cite[Corollary 5.5]{Hochman14} is clear. However, there is no useful analog of the notion of uniform entropy dimension for $L^q$ norms. One of the key differences is that nearly all of the $L^q$ norm may be (and often is) captured by sets of extremely small measure; while sets of small measure also have small entropy. Instead, we will use the regularity of the multifractal spectrum established in the previous section in the following manner: if the flattening claimed in the theorem does not hold, then the inverse theorem provides a regular set $A$ which captures much of the $L^q$ norm of $\mu_x$. The upper bound on $\|\nu\|_q^{q'}$, together with (v)--(vi) in the inverse theorem imply that $A$ has nearly full branching for a positive proportion of $2^{D}$-scales, so it must have substantially less than average branching also on a positive proportion of scales. On the other hand, we will call upon the lemmas from the previous section to show that, in fact, $A$ must have nearly constant branching on nearly all scales (this is the part that uses the differentiability of $T$ at $q$), obtaining the desired contradiction.

\begin{proof}[Proof of Theorem \ref{thm:conv-with-ssm-flattens}]
Suppose $\nu$ is a $2^{-m}$-measure with $\|\nu\|_q^{q'} \le 2^{-\sigma m}$. In the course of the proof, we will choose many numbers which ultimately depend on $\sigma$ and $q$ only. To ensure that there is no circularity in their definitions, we indicate their dependencies: $\alpha=\alpha(q)$, $\kappa=\kappa(\alpha,\sigma)$, $\gamma=\gamma(q,\alpha,\kappa)$, $\delta'=\delta'(\alpha,\sigma,\kappa)$,   $\eta=\eta(q,\kappa)$, $\delta=\delta(q,\delta',\gamma,\eta)$, $\xi=\xi(q,\delta',\eta,\gamma)$,  $D_0=D_0(q,\sigma,\delta)$, $D=D(q,\delta,D_0)$, $\e=\e(q,\delta,D_0)$. Moreover, at different parts of the proof we will require $\delta',\delta,\xi$ to be smaller than certain (positive) functions of the parameters they depend on; in particular, all of the requirements can be satisfied simultaneously.

Finally, $m$ will be taken large enough in terms of all the previous parameters (hence ultimately in terms of $q$ and $\sigma$).

Write $\alpha=T'(q)$, and define $\kappa$ as
\begin{equation} \label{eq:def-kappa}
\kappa = (1-T^*(\alpha))\sigma/4.
\end{equation}
Then $\kappa>0$ thanks to Lemma \ref{lem:f-alpha-smaller-than-one}, and the assumption $T(q)<q-1$. (The reason for this choice will become clear later.)

We fix $x\in X$ for the rest of the proof, and observe that all estimates will in fact be independent of $x$. Let $\xi>0$ be a small enough number to be chosen later. If $\|\mu_x^{(m)}\|_q^q \le 2^{-(T(q)+\xi)m}$ then there is nothing to do, so from now on we assume that
\begin{equation} \label{eq:lower-bound-assumption}
\|\mu_x^{(m)}\|_q^q \ge 2^{-(T(q)+\xi)m}.
\end{equation}

We apply Proposition \ref{prop:Lq-over-small-set-is-small} to obtain a sufficiently large $D_0$  (in terms of $\delta,\sigma,q$, with $\delta$ yet to be specified) such that
\begin{enumerate}
\item For any $D'\ge D_0-2$, any $I\in\DD_{s'}$, $s'\in\N$, and any subset $\DD'\subset \DD_{s'+D'}(I)$ with $|\DD'|\le 2^{(T^*(\alpha)-\kappa)D'}$,
\[
\sum_{J\in\DD'} \mu_x(J)^q \le 2^{-(T(q)+\eta)D'}\mu_{x}(2I)^q,
\]
where $\eta$ depends on $\kappa$ and $q$, hence on $\sigma,q$ only.
\item For any $D'\ge D_0-2$ and any $I\in\DD_{s'}$, $s'\in\N$,
\[
\sum_{J\in\DD_{s'+D'}(I)} \mu_x(J)^q \le 2^{-(T(q)-\delta)D'}\mu_{x}(2I)^q.
\]
\item $1/D_0 <\delta$.
\end{enumerate}

Let $\e>0,D\in\N$ be the numbers given by Theorem  \ref{thm:inverse-thm} applied to $\delta, D_0$ and $q$. Suppose
\begin{equation} \label{eq:non-flattening-assumption}
\| \nu*\mu_x^{(m)}\|_q^q \ge 2^{-(T(q)+\e q/2) m}
\end{equation}
We will derive a contradiction from this provided $m=\ell D$ is large enough, proving the theorem with $\e q/2$ in place of $\e$ (if $m$ is not of the form $\ell D$, we apply the argument to $\lfloor m/D\rfloor D$ instead).

By Corollary \ref{cor:unif-continuity}, if $m$ is large enough (depending only on $\e,q$) and  \eqref{eq:non-flattening-assumption} holds, then
\[
\| \nu*\mu_x^{(m)}\|_q \ge 2^{-\e m} \|\mu_x^{(m)}\|_q.
\]
We apply Theorem \ref{thm:inverse-thm} to obtain (assuming $m$ is large enough) a set $A\subset\supp(\mu_x^{(m)})$ as in the theorem, with corresponding branching numbers $R'_s$.

The key to the proof is to show, using the structure of $A$ provided by Theorem \ref{thm:inverse-thm}, that
\begin{equation} \label{eq:small-branching-at-many-scales}
|\{ s\in [\ell]: R'_s \le 2^{(T^*(\alpha)-\kappa)D} \}| \ge \gamma \ell,
\end{equation}
where $\gamma>0$ depends on $q,\alpha$ and $\kappa$ only (and $\kappa$ is given by \eqref{eq:def-kappa}). We first show how to complete the proof assuming this. Consider the sequence
\[
L_s = -\log\sum_{I\in\DD_{s D}(A)} \mu_x(I)^q.
\]
By (2) applied with $s'=sD+2$ and $D'=D-2$,
\[
L_{s+1} \ge (T(q)-\delta)(D-2) -\log \sum_{I\in \DD_{sD+2}(A)} \mu_x(2I)^q.
\]
But if $I\in \DD_{sD+2}(A)$, then $2I$ is contained in a single interval in $\DD_{sD}(A)$ by property (A-iv) from Theorem \ref{thm:inverse-thm}, and conversely $J\in\DD_{sD}(A)$ hits at most two intervals $2I$, $I\in\DD_{sD+2}(A)$. We deduce that
\[
L_{s+1} \ge L_s +(T(q)-\delta)(D-2)-1
\]
for all $s\in[\ell]$. Likewise, by (1),
\[
L_{s+1} \ge L_s + (T(q)+\eta)(D-2)-1,
\]
whenever $R'_s \le  2^{(T^*(\alpha)-\kappa)D}$. Recall that $\eta$ depends on $q,\kappa$. In light of \eqref{eq:small-branching-at-many-scales}, and using also (3), we have
\begin{align*}
L_{\ell} &\ge (T(q)+\eta)\gamma \ell (D-2) + (T(q)-\delta)(1-\gamma)\ell (D-2)-\ell\\
 &\ge (T(q)+\eta\gamma-\delta(1-\gamma))m - 2\delta (T(q)+\eta) m -\delta m.
\end{align*}

Hence, by choosing $\delta$ small enough in terms of $T(q),\gamma$ and $\eta$ we can ensure that, for $m$ large enough,
\[
L_{\ell} = -\log\|\mu_x^{(m)}|_A\|_q^q \ge (T(q)+\eta\gamma/2)m.
\]
On the other hand, by (A-i) in Theorem \ref{thm:inverse-thm} and our assumption \eqref{eq:lower-bound-assumption},
\[
\|\mu_x^{(m)}|_A\|_q^q \ge 2^{-q\delta m}\|\mu_x^{(m)}\|_q^q \ge 2^{-q\delta m} 2^{-(T(q)+\xi)m}.
\]
From the last two displayed equations,
\[
\eta\gamma/2 \le q\delta +\xi.
\]
Recall that $\eta=\eta(\kappa,q),\gamma=\gamma(q,\alpha,\kappa)$ is yet to be specified, while $\delta$ so far was taken small enough in terms of $T(q),\gamma$ and $\eta$, and no conditions have been yet imposed on $\xi$. By ensuring $q\delta <\eta\gamma/8$ and $\xi\le \eta\gamma/8$ we reach a contradiction. Hence \eqref{eq:non-flattening-assumption} cannot hold, which is what we wanted to show.

It remains to establish \eqref{eq:small-branching-at-many-scales}. The idea is very simple: Theorem \ref{thm:inverse-thm} (together with the assumption that $\|\nu\|_q^{q'}\le 2^{-\sigma m}$) imply that $A$ has ``nearly full branching'' on a positive proportion of scales. On the other hand, Lemma \ref{lem:size-set-A-in-terms-of-f-alpha} says the size of $A$ is at most roughly $2^{T^*(\alpha)m}\ll 2^m$ (by Lemma \ref{lem:f-alpha-smaller-than-one}), so there must be a positive proportion of scales on which the average $2^D$-adic branching is far smaller than $2^{T^*(\alpha)D}$, which is what \eqref{eq:small-branching-at-many-scales} says.

We proceed to the details. Using (A-i), (A-ii) in Theorem \ref{thm:inverse-thm} and \eqref{eq:lower-bound-assumption}, we get that (for $m\gg_\delta 1$) there is $\wt{\alpha}>0$ such that $\mu_x(a)\in [2^{-\wt{\alpha} m},2^{1-\wt{\alpha} m}]$ for all $a\in A$, and
\begin{equation*} 
\sum_{I\in\DD_m(A)} \mu_x(I)^q \ge 2^{-q\delta m} \sum_{I\in\DD_m}\mu_x(I)^q \ge  2^{-(T(q)+ q\delta+\xi)m}.
\end{equation*}
We let $\delta\le \delta'$  and $\xi$ be small enough in terms of $\delta'$ and $q$ that, invoking Lemma \ref{lem:size-set-A-in-terms-of-f-alpha},
\begin{equation} \label{eq:size-A-upper-bound}
|A| \le 2^{(T^*(\alpha)+\delta')m}.
\end{equation}

Let $\mathcal{S}'=[\ell]\setminus \mathcal{S}$, where $\mathcal{S} = \{ s: R'_s \ge 2^{(1-\delta)D}\}$. Using (A-iii) in Theorem \ref{thm:inverse-thm}, we see that
\begin{equation} \label{eq:size-A-lower-bound}
|A| = \prod_{s=0}^{\ell-1} R'_s \ge  2^{(1-\delta)D|\mathcal{S}|} \prod_{s\in\mathcal{S}'} R'_s.
\end{equation}
Let $m_1=D|\mathcal{S}|$, $m_2=D|\mathcal{S}'|=m-m_1$. Combining \eqref{eq:size-A-upper-bound} and \eqref{eq:size-A-lower-bound}, and using that $\delta\le \delta'$, we deduce
\begin{equation} \label{eq:upper-bound-Rs-S-prime}
\prod_{s\in\mathcal{S}'} R'_s \le 2^{-(1-\delta)m_1} 2^{(T^*(\alpha)+\delta')m} \le 2^{-(1-T^*(\alpha)-2\delta')m_1} 2^{(T^*(\alpha)+\delta')m_2}.
\end{equation}
Note that $1-T^*(\alpha)>0$ by Lemma \ref{lem:f-alpha-smaller-than-one}. At this point we take $\delta'$ small enough that $1-T^*(\alpha)-2\delta'>0$.
Using (vi) in Theorem \ref{thm:inverse-thm}, and the assumptions \eqref{eq:lower-bound-assumption} and $\|\nu\|_q^{q'}\le 2^{-\sigma m}$, we further estimate
\begin{equation} \label{eq:bound-total-branching-scales}
(\sigma - \delta)m  \le m_1 \le \left( (T(q)+\xi)/(q-1)+\delta\right)m.
\end{equation}
We can plug in the left inequality (together with $m_2\le m$) into \eqref{eq:upper-bound-Rs-S-prime}, to obtain the key estimate
\[
\log\prod_{s\in\mathcal{S}'} R'_s \le \left(T^*(\alpha)+ \delta' -(1-T^*(\alpha)-2\delta')(\sigma-\delta)\right)m_2.
\]
Recalling \eqref{eq:def-kappa}, this shows that by making $\delta'$ (hence also $\delta\le\delta'$) small enough in terms of $\alpha,\sigma,\kappa$, we have
\[
\log\prod_{s\in\mathcal{S}'} R'_s \le (T^*(\alpha)-2\kappa) m_2.
\]
Let $\mathcal{S}_1 = \{ s\in\mathcal{S}': \log R'_s \le (T^*(\alpha)-\kappa)D\}$. Recall that our goal is to show \eqref{eq:small-branching-at-many-scales}, i.e. $|\mathcal{S}_1|\ge \gamma(q,\alpha,\kappa)\ell$. We have
\[
D|\mathcal{S}'\setminus\mathcal{S}_1| \le \frac{1}{T^*(\alpha)-\kappa}\sum_{s\in\mathcal{S}'\setminus\mathcal{S}_1} \log R'_s \le \frac{T^*(\alpha)-2\kappa}{T^*(\alpha)-\kappa} D|\mathcal{S}'|,
\]
so that, using the right-most inequality in \eqref{eq:bound-total-branching-scales}, and recalling that $D|\mathcal{S}'|=m-m_1$,
\[
D|\mathcal{S}_1| \ge \frac{\kappa (m-m_1)}{T^*(\alpha)-\kappa} \ge \left(\frac{\kappa(1-(T(q)+\xi)/(q-1)-\delta)}{T^*(\alpha)-\kappa}\right)  m.
\]
By ensuring that $\delta,\xi$ are small enough in terms of $q$, the right-hand side above can be bounded below by
\[
\left(\frac{\kappa(1-T(q)/(q-1))/2}{T^*(\alpha)-\kappa}\right) m,
\]
confirming that \eqref{eq:small-branching-at-many-scales} holds with $\gamma=\gamma(q,\alpha,\kappa)$.
\end{proof}

\subsection{\texorpdfstring{$L^q$}{Lq} norms of \texorpdfstring{$\mu_{x,n}$}{discrete approximations} at finer scales}

Theorem \ref{thm:L-q-dim-dyn-ssm} will be an easy consequence of the following proposition,  which relies on Theorem \ref{thm:conv-with-ssm-flattens}. It is an analog of \cite[Theorem 1.4]{Hochman14}, and we follow a similar outline.
\begin{prop} \label{prop:ssm-scale-Rm-norm}
Let $(X,\mathbf{T},\Delta,\lam)$ be a pleasant model, and let $T$ be the function from Proposition \ref{prop:cocycle}. Let $q\in (1,\infty)$ be such that $T$ is differentiable at $q$ and $T(q)<q-1$, and let $x\in X$ be such that
\begin{equation} \label{eq:Lq-dim-exists-equal-Tq}
\lim_{m\to\infty} \frac1m \log\|\mu_x^{(m)}\|_q^q = -T(q).                                                                                                                                                                                                                                                                                                                                                                                                                                                                              \end{equation}

Fix $R\in\N$. Then
\[
\lim_{n\to\infty} \frac{\log \|\mu_{x,n}^{(Rm(n))}\|_q^q}{n\log\lam} = T(q),
\]
where $m(n)$ is the smallest integer with $2^{-m(n)}\le \lam^n$.
\end{prop}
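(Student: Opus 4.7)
The plan is to bound the ratio $\log\|\mu_{x,n}^{(Rm)}\|_q^q/(n\log\lam)$ from both sides. For $\liminf \ge T(q)$, I will use that $s\mapsto \|\mu^{(s)}\|_q^q$ is nonincreasing (a standard consequence of Jensen's inequality since $q>1$), so that by Lemma~\ref{lem:comparison-mu-m-mu} and \eqref{eq:Lq-dim-exists-equal-Tq},
\[
\|\mu_{x,n}^{(Rm)}\|_q^q \;\le\; \|\mu_{x,n}^{(m)}\|_q^q \;=\; \Theta\bigl(\|\mu_x^{(m)}\|_q^q\bigr) \;=\; 2^{-T(q)m+o(m)}.
\]
Dividing by $n\log\lam = -m+O(1)$ yields the bound, and settles the case $R=1$ at once.

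For the reverse direction, assume $R\ge 2$ and exploit the self-similarity $\mu_x = \mu_{x,n}*S_{\lam^n}\mu_{\mathbf{T}^n x}$. Write $\mu_{x,n}=\sum_{I\in \DD_m} p_I\wt\rho_I$ with $p_I=\mu_{x,n}(I)$ and $\wt\rho_I=\mu_{x,n}|_I/p_I$, and set $\rho_I = S_{\lam^{-n}}\wt\rho_I$ (recentered so that $\supp(\rho_I)$ has diameter $O(1)$). The summands $\wt\rho_I*S_{\lam^n}\mu_{\mathbf{T}^n x}$ all sit in $O(2^{-m})$-neighborhoods of $I$, and so have bounded overlap. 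Combining this with the rescaling and Lemmas~\ref{lem:L-q-norm-almost-disj-supports}--\ref{lem:discr-norm-conv-equivalence} produces the twin identities (with constants depending only on $q,\lam$)
\[
\|\mu_x^{(Rm)}\|_q^q \asymp \sum_{I} p_I^q \|\rho_I^{((R-1)m)} * \mu_{\mathbf{T}^n x}^{((R-1)m)}\|_q^q, \qquad \|\mu_{x,n}^{(Rm)}\|_q^q \asymp \sum_{I} p_I^q \|\rho_I^{((R-1)m)}\|_q^q,
\]
and by \eqref{eq:Lq-dim-exists-equal-Tq} the first left-hand side equals $2^{-T(q)Rm+o(m)}$.

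Fix a small $\sigma>0$ (to be sent to $0$ at the end) and call $I$ \emph{good} if $\|\rho_I^{((R-1)m)}\|_q^{q'}\ge 2^{-\sigma(R-1)m}$, \emph{bad} otherwise. Since $T(q)<q-1$ and $T$ is differentiable at $q$, Theorem~\ref{thm:conv-with-ssm-flattens} applies at scale $(R-1)m$ and furnishes $\e=\e(\sigma,q)>0$ such that every bad $I$ satisfies $\|\rho_I^{((R-1)m)} * \mu_{\mathbf{T}^n x}^{((R-1)m)}\|_q^q \le 2^{-(T(q)+\e)(R-1)m}$. Summing these bounds gives a total bad contribution of at most $\|\mu_{x,n}^{(m)}\|_q^q \cdot 2^{-(T(q)+\e)(R-1)m} \lesssim 2^{-T(q)Rm - \e(R-1)m}$, exponentially smaller than the first left-hand side, so the good $I$ must carry essentially all of its mass. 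On the other hand, Young's inequality (with $\|\rho_I\|_1=1$) together with Corollary~\ref{cor:unif-continuity} furnishes a \emph{uniform} upper bound $\|\rho_I^{((R-1)m)} * \mu_{\mathbf{T}^n x}^{((R-1)m)}\|_q^q \le \|\mu_{\mathbf{T}^n x}^{((R-1)m)}\|_q^q \le 2^{-T(q)(R-1)m + o(m)}$ on each summand. Dividing the total good mass by this bound forces $\sum_{\text{good}} p_I^q \ge 2^{-T(q)m+o(m)}$.

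Plugging this into the second identity and using $\|\rho_I^{((R-1)m)}\|_q^q \ge 2^{-\sigma(q-1)(R-1)m}$ for good $I$ gives
\[
\|\mu_{x,n}^{(Rm)}\|_q^q \;\gtrsim\; 2^{-T(q)m - \sigma(q-1)(R-1)m + o(m)}.
\]
Passing to $\limsup_{n\to\infty}$ with $\sigma$ fixed bounds $\log\|\mu_{x,n}^{(Rm)}\|_q^q/(n\log\lam)$ above by $T(q)+\sigma(q-1)(R-1)$, and then $\sigma\downarrow 0$ concludes. The main obstacle is the choreography of the two decompositions: the flattening theorem disposes of the bad $I$ on the $\mu_x$ side; a Young-type upper bound then forces the total good $p_I^q$-mass to be comparable to $\|\mu_x^{(m)}\|_q^q$; and only at this final step does the lower bound $\|\rho_I^{((R-1)m)}\|_q^q \ge 2^{-\sigma(q-1)(R-1)m}$ for good $I$ convert that $p_I$-mass into the sought lower bound for $\|\mu_{x,n}^{(Rm)}\|_q^q$.
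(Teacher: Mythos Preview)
Your proposal is correct and follows essentially the same route as the paper's proof: decompose via self-similarity, split the $I\in\DD_m$ into good and bad according to the size of $\|\rho_I\|_q$, kill the bad part with Theorem~\ref{thm:conv-with-ssm-flattens}, and control the good part using Young's inequality together with Corollary~\ref{cor:unif-continuity}. The only cosmetic difference is the choice of scales---the paper works with $\mu_x^{((R+1)m)}$ and $\rho_I^{(Rm)}$ while you use $\mu_x^{(Rm)}$ and $\rho_I^{((R-1)m)}$, handling $R=1$ directly via Lemma~\ref{lem:comparison-mu-m-mu}---but this is an index shift with no mathematical content.
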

\begin{proof}
Fix $n\in\N$. We write $m=m(n)$ for simplicity, and allow all implicit constants to depend on $q$ and the model only. Using the self-similarity relation \eqref{eq:mu-x-self-similarity} and Lemma \ref{lem:discr-norm-conv-equivalence}, we get
\begin{align*}
\|\mu_x^{((R+1)m)}\|_q^q &\le O(1) \| \mu_{x,n}^{((R+1)m)} * (S_{\lam^n} \mu_{\mathbf{T}^n x})^{((R+1)m)} \|_q^q \\
&= O(1) \big\|  \sum_{I\in\DD_m} \mu_{x,n}(I) (\mu_{x,n})_I^{((R+1)m)} * (S_{\lam^n} \mu_{\mathbf{T}^n x})^{((R+1)m)} \big\|_q^q.
\end{align*}
Here $(\mu_{x,n})_I  = \mu_{x,n}|_I / \mu_{x,n}(I)$ is the normalized restriction of $\mu_{x,n}$ to $I$ (note that we are only summing over $I$ such that $\mu_{x,n}(I)>0$). Since the measures $(\mu_{x,n})_I^{((R+1)m)} * (S_{\lam^n} \mu_{\mathbf{T}^n x})^{((R+1)m)}$ are supported on $I+[0,\lam^n]$, the support of each of them hits the supports of $O(1)$ others. We can then apply Lemma \ref{lem:L-q-norm-almost-disj-supports} to obtain
\[
\|\mu_x^{((R+1)m)}\|_q^q  \le O(1)\sum_{I\in\DD_m} \mu_{x,n}(I)^q  \|(\mu_{x,n})_I^{((R+1)m)} * (S_{\lam^n} \mu_{\mathbf{T}^n x})^{((R+1)m)} \|_q^q
\]
Let $\rho_{x,I} = S_{\lam^{-n}}(\mu_{x,n})_I$ (we suppress the dependence on $n$ from the notation, but keep it in mind). Note that $S_a(\eta)*S_a(\eta')=S_a(\eta*\eta')$ for any $a>0$ and measures $\eta,\eta'$. It follows from Lemmas \ref{lem:Holder} and \ref{lem:discr-norm-conv-equivalence}  that
\[
 \|(\mu_{x,n})_I^{((R+1)m)} * (S_{\lam^n} \mu_{\mathbf{T}^n x})^{((R+1)m)} \|_q^q \le O(1) \| \rho_{x,I}^{(Rm)} * \mu_{\mathbf{T}^n x}^{(Rm)} \|_q^q,
\]
so that, combining the last two displayed formulas,
\begin{equation} \label{eq:estimate-mu-Rp1}
\|\mu_x^{((R+1)m)}\|_q^q  \le O(1) \sum_{I\in\DD_m} \mu_{x,n}(I)^q \| \rho_{x,I}^{(Rm)} * \mu_{\mathbf{T}^n x}^{(Rm)} \|_q^q.
\end{equation}
On the other hand,  using Lemma \ref{lem:Holder} again,
\begin{equation} \label{eq:estimate-mu-m-Rp1}
\|\mu_{x,n}^{((R+1)m)}\|_q^q = \sum_{I\in\DD_m} \mu_{x,n}(I)^q \|(\mu_{x,n})_I^{((R+1)m)}\|_q^q \ge  \Omega(1) \sum_{I\in\DD_m} \mu_{x,n}(I)^q \| \rho_{x,I}^{(Rm)} \|_q^q.
\end{equation}

Fix $\sigma>0$, and let $\mathcal{D}' = \{ I\in\DD_{m}: \| \rho_{x,I}^{(Rm)}\|_q^q \le 2^{-\sigma m}\}$. According to Theorem \ref{thm:conv-with-ssm-flattens}, there is $\e=\e(\sigma,q)>0$ such that, if $n$ is taken large enough, then
\[
I\in\mathcal{D}' \quad\Longrightarrow\quad\|\rho_{x,I}^{(Rm)} * \mu_{\mathbf{T}^{n}(x)}^{(Rm)}\|_q^q \le 2^{-(T(q)+\e)Rm}.
\]
Applying this to \eqref{eq:estimate-mu-Rp1},  we get
\begin{align*}
\|\mu_x^{((R+1)m)}\|_q^q  &\le O(1) 2^{-(T(q)+\e)Rm} \sum_{I\in\DD'} \mu_{x,n}(I)^q  + O(1) \sum_{I\notin \DD'}  \mu_{x,n}(I)^q \|  \mu_{\mathbf{T}^n x}^{(Rm)} \|_q^q \\
&\le O(1) 2^{-(T(q)+\e)Rm} \|\mu^{(m)}_{x}\|_q^q  + O(1)\|  \mu_{\mathbf{T}^n x}^{(Rm)} \|_q^q \sum_{I\notin\DD'} \mu_{x,n}(I)^q\\
\end{align*}
using Young's inequality in the first line, and Lemma \ref{lem:comparison-mu-m-mu} in the second. On the other hand, our assumption \eqref{eq:Lq-dim-exists-equal-Tq} implies that
\[
 2^{-(T(q)+\e)Rm} \|\mu^{(m)}_{x}\|_q^q \le 2^{-\e m/2} \|\mu_x^{((R+1)m)}\|_q^q
\]
if $n$ is large enough (depending on $x$ and $R$). Inspecting the last two displayed equations, we deduce that if $n\gg_{x,\sigma} 1$, then
\[
\sum_{I\notin \DD'}  \mu_{x,n}(I)^q \ge \Omega(1) \frac{\|\mu_x^{((R+1)m)}\|_q^q}{\|\mu_{\mathbf{T}^n x}^{(Rm)} \|_q^q } \ge 2^{-m(T(q)+\sigma)},
\]
where for the right-most inequality we used  the assumption \eqref{eq:Lq-dim-exists-equal-Tq} and Corollary \ref{cor:unif-continuity}. Recalling \eqref{eq:estimate-mu-m-Rp1}, we conclude that
\begin{align*}
\|\mu_{x,n}^{((R+1)m)}\|_q^q  &\ge \Omega(1) \sum_{I\notin\DD'} \mu_{x,n}(I)^q \| \rho_{x,I}^{(Rm)} \|_q^q \\
&\ge \Omega(1) 2^{-\sigma m}\sum_{I\notin\DD'} \mu_{x,n}(I)^q \ge \Omega(1) 2^{-2\sigma m} 2^{-m T(q)}.
\end{align*}
The inequality $\|\mu_{x,n}^{((R+1)m)}\|_q^q \le \|\mu_{x,n}^{(m)}\|_q^q$ holds trivially, so that by Lemma \ref{lem:comparison-mu-m-mu}
\[
\|\mu_{x,n}^{((R+1)m)}\|_q^q  \le \|\mu_{x,n}^{(m)}\|_q^q \le 2^{\sigma m} 2^{-m T(q)},
\]
provided $n\gg_\sigma 1$. Since $\sigma>0$ was arbitrary and $2^{-m}=\Theta(\lam^n)$, this concludes the proof.
\end{proof}

\subsection{Proof of Theorem \ref{thm:L-q-dim-dyn-ssm}}

We can now conclude the proof of the theorem.
\begin{proof}[Proof of Theorem \ref{thm:L-q-dim-dyn-ssm}]
We continue to write $m=m(n)=\lceil n\log(1/\lam)\rceil$. To begin, we note that, without any assumptions on the model, for any $q\in (1,\infty)$,
\begin{equation} \label{eq:main-thm-upper-bound-1}
\|\mu_{x,n}^{(m)}\|_q^q \ge \|\mu_{x,n}\|_q^q \ge \prod_{i=0}^{n-1} \|\Delta(\mathbf{T}^i x)\|_q^q.
\end{equation}
(The latter inequality is an equality if and only if there are no overlaps among the atoms of $\mu_{x,n}$.) By our assumptions on the map $\Delta(\cdot)$, the function $x\mapsto \|\Delta(x)\|_q^q$ is bounded away from zero and continuous $\PP$-a.e. Then, by unique ergodicity,
\begin{equation} \label{eq:main-thm-upper-bound-2}
\lim_{n\to\infty} \frac1n\log\prod_{i=0}^{n-1} \|\Delta(\mathbf{T}^i x)\|_q^q = \int_X \log\|\Delta(x)\|_q^q \,d\mathbb{P}(x)\quad\text{uniformly in }x\in X.
\end{equation}
This property of uniquely ergodic systems is well known, or one can apply Lemma \ref{lem:uniform-convergence} to the additive sequence $\log\prod_{i=0}^{n-1} \|\Delta(\mathbf{T}^i x)\|_q^q$. Since $\|\nu^{(m)}\|_q^{q'} \ge 2^{-m}$ for any probability measure $\nu$, from \eqref{eq:main-thm-upper-bound-1}, \eqref{eq:main-thm-upper-bound-2} and Lemma \ref{lem:comparison-mu-m-mu}, we deduce that
\[
\limsup_{m\to\infty} -\frac{\log\|\mu_x^{(m)}\|_q^q}{(q-1)m}  \le  \min\left( \frac{\int_X \log\|\Delta(x)\|_q^q \,d\mathbb{P}(x)}{(q-1)\log\lam},1\right),
\]
uniformly in $x\in X$. In light of this and Corollary \ref{cor:unif-continuity}, the proof will be completed if we can show that for each $q\in (1,\infty)$, either $T(q)\ge q-1$ (so that in fact $T(q)=q-1$) or
\begin{equation} \label{eq:formula-T-q-want-to-prove}
T(q) = \frac{\int_X \log\|\Delta(x)\|_q^q \,d\mathbb{P}(x)}{\log\lam}.
\end{equation}
Since $T(q)$ is concave, it is enough to prove this for all $q$ such that $T$ is differentiable at $q$. Hence, we fix $q$ such that $T(q)<q-1$ and $T$ is differentiable at $q$, and we set out to prove \eqref{eq:formula-T-q-want-to-prove}.

By Proposition \ref{prop:cocycle} and the exponential separation assumption, there is $x\in X$ such that \eqref{eq:Lq-dim-exists-equal-Tq} holds, and the atoms of $\mu_{x,n}$ are $\lam^{R n}$-separated for infinitely many $n$ and some $R\in\N$ (indeed, this holds for $\PP$-almost all $x$). We known from Proposition \ref{prop:ssm-scale-Rm-norm}  that
\begin{equation} \label{eq:tau-mu-x-equals-T}
\lim_{n\to\infty} \frac{\log \|\mu_{x,n}^{(Rm(n))}\|_q^q}{n\log\lam} = T(q).
\end{equation}
On the other hand, if $n$ is such that the atoms of $\mu_{x,n}$ are $\lam^{R n}$-separated then (since $\lam^{Rn}\ge 2^{-R m(n)}$)
\begin{equation} \label{eq:tau-mu-x-exp-separation}
\|\mu_{x,n}^{(Rm(n))}\|_q^q = \|\mu_{x,n}\|_q^q = \prod_{i=0}^{n-1} \| \Delta(\mathbf{T}^i x)\|_q^q.
\end{equation}
 Combining Equations \eqref{eq:main-thm-upper-bound-2}, \eqref{eq:tau-mu-x-equals-T} and \eqref{eq:tau-mu-x-exp-separation}, we conclude that \eqref{eq:formula-T-q-want-to-prove} holds, finishing the proof.
\end{proof}

\section{\texorpdfstring{$L^q$}{Lq} dimensions of self-similar measures, and applications}
\label{sec:dim-ssm-and-applications}

In this section we apply Theorem \ref{thm:L-q-dim-dyn-ssm} to prove  Theorem \ref{thm:infinity-dim-BCs}; in fact, we will obtain a far more general result for self-similar measures on the line. We also derive some geometric applications.

\subsection{Background on self-similar sets and measures}
\label{subsec-sssm}

We begin by recalling some basic facts about self-similar sets and measures, fixing notation along the way. For further background, see e.g. \cite{Falconer97}.

Let $\mathcal{I}$ be a finite set with at least two elements. Let $(f_i)_{i\in\mathcal{I}}$ be a collection of strictly contracting similarities on $\R^d$ (usually referred to as an \emph{iterated function system} or IFS). That is, $f_i(x)=\lambda_i O_i(x)+t_i$, where $\lam_i\in (0,1)$, $O_i$ is an orthogonal map on $\R^d$, and $t_i\in\R^d$. Then there exists a unique nonempty compact set $A\subset\R^d$ such that
\[
A = \bigcup_{i\in\mathcal{I}} f_i(A).
\]
If a probability vector $(p_i)_{i\in\mathcal{I}}$ is also given, then there is a unique Borel probability measure $\mu$ such that
\[
\mu = \sum_{i\in\mathcal{I}} p_i\, f_i\mu.
\]
Moreover, $\supp(\mu)\subset A$, with equality if $p_i>0$ for all $i$.

If one replaces $\mathcal{I}$ by $\mathcal{I}^n$, $(f_i)$ by $(f_{i_1}\circ\cdots\circ f_{i_n})$, and $(p_i)$ by $(p_{i_1}\cdots p_{i_n})$, then the invariant set $A$ and the invariant measure $\mu$ do not change.

The Hausdorff and box counting dimensions agree for any self-similar set. The \emph{open set condition} holds if there is a nonempty open set $U$ such that $f_i(U)\subset U$ and $f_i(U)\cap f_j(U)=\varnothing$ for all $i\neq j\in\mathcal{I}$. In this case, the Hausdorff dimension of $A$ is the only positive number $s$ such that $\sum_{i\in\mathcal{I}} \lam_i^s=1$. Moreover, the \emph{uniform self-similar measure} $\mu$ given by the weights $\lam_i^s$ satisfies $\mu(B(x,r))=\Theta(r^s)$ for $x\in A$ and $r\in (0,1]$, with the implicit constants depending only on $(f_i)$.

In this article we will be mostly concerned with \emph{homogeneous} iterated function systems: those for which $\lambda_i\equiv\lam$ and $O_i\equiv O$ are constant for all $i\in\mathcal{I}$. In this case, the self-similar set $A$ can be explicitly written as an infinite arithmetic sum:
\[
A = \sum_{i=0}^\infty S_{\lam^i}(O^i E),
\]
where $E=\{t_i:i\in\mathcal{I}\}$ is the set of translations, and the self-similar measure $\mu$ can be expressed as an infinite convolution:
\[
\mu = *_{i=0}^\infty S_{\lam^i}(O^i\Delta),
\]
where $\Delta= \sum_{i\in\mathcal{I}} p_i \delta(t_i)$. Note that in dimension $1$ (where most of the focus will be), $O$ is either the identity or minus the identity, and the latter case can always be reduced to the first by iterating the IFS, as above.

If the system is homogeneous and the open set condition holds, then there is $c>0$ such that for all $n\in\N$, the points in the finite approximation
\[
A_n = \sum_{i=0}^{n-1} S_{\lam^i}(O^i E)
\]
are all distinct (i.e. there are $|E|^n$ of them) and $c\lam^n$ separated. See e.g. \cite[Example 1 in Section 6]{LauNgai99}. Moreover,
in this case the $L^q$ dimensions of $\mu$ are given by
\[
D(\mu,q) = \frac{\log \|\Delta\|_q^q}{(q-1)\log\lam}.
\]
The right-hand side majorizes the $L^q$ dimension without any separation assumption (always assuming homogeneity).

Finally, we point out that the limit in the definition of $L^q$ dimension exists for arbitrary self-similar measures, see \cite{PeresSolomyak00}.

\subsection{\texorpdfstring{$L^q$}{Lq} dimensions and Frostman exponents of self-similar measures}

Next, we obtain Theorem \ref{thm:infinity-dim-BCs} as a special case of a result valid for more general self-similar measures on $\R$. Fix $\Delta=\sum_{i\in\mathcal{I}} p_i \delta_{t_i}\in\mathcal{A}$ and $\lam\in (0,1)$, and let
\begin{equation} \label{eq:def-ssm-hom}
\mu = \mu_{\Delta,\lam} = *_{i=0}^\infty S_{\lam^i}\Delta
\end{equation}
be the associated self-similar measure. Bernoulli convolutions correspond to the special case $\Delta=\frac12(\delta_{-1}+\delta_1)$.

\begin{defn} \label{def:exponential-separation-hom-ssm}
Given a set $E\subset\R$ and $n\in\N$, we let $\mathcal{P}_{E,n}$ be the family of non-zero polynomials of degree at most $n$ and coefficients in $E-E$. Slightly abusing notation, we write $\mathcal{P}_{\Delta,n}=\mathcal{P}_{\supp(\Delta),n}$.

We say that a measure $\mu$ as in \eqref{eq:def-ssm-hom} has \emph{exponential separation} if there exists $R>0$ such that, for infinitely many $n$,
\begin{equation} \label{eq:ssm-exponential-separation}
\min_{P\in\mathcal{P}_{\Delta,n}} |P(\lam)| \ge \lam^{R n}.
\end{equation}
\end{defn}

Note that this is a property of $\supp(\Delta)$ and $\lam$, and not of the particular distribution of mass on $\supp(\Delta)$. Recall that if the open set condition  holds, then there is $c>0$ such that
\[
|P(\lam)| \ge c\lam^n \quad\text{for all }n\in\N, P\in\mathcal{P}_{\Delta,n}.
\]
Hence, exponential separation is a weaker property than the open set condition.

\begin{thm} \label{thm:dim-ssm}
Let $\mu=\mu_{\Delta,\lam}$ be a self-similar measure as in \eqref{eq:def-ssm-hom} with exponential separation.
Then for all $q\in (1,+\infty)$,
\[
D(\mu,q) = \min\left(\frac{\log\|\Delta\|_q^q}{(q-1)\log\lam},1\right).
\]
In particular, for every
\[
\alpha < \min\left(\frac{\log \|\Delta\|_\infty}{\log\lam},1\right),
\]
it holds that $\mu(B(x,r)) \le r^\alpha$ for all $r\in (0,r_0(\alpha))$ and all $x\in\R$.
\end{thm}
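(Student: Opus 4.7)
The plan is to realize $\mu=\mu_{\Delta,\lam}$ as a dynamically driven self-similar measure over a one-point base and apply Theorem \ref{thm:L-q-dim-dyn-ssm} directly. The case $|\supp(\Delta)|=1$ is trivial (both sides are $0$ and the Frostman claim is vacuous), so we may assume $|\supp(\Delta)|\ge 2$.

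First I would take $X=\{\ast\}$, $\mathbf{T}=\mathrm{id}$, $\PP=\delta_\ast$, and the constant map $x\mapsto\Delta$; then $\mu_\ast=\mu$, and after an affine conjugation we may assume $\supp(\mu)\subset[0,1]$. Pleasantness reduces to verifying that $\mu$ is non-atomic. This follows from exponential separation: the self-similarity relation $\mu=\mu_n*S_{\lam^n}\mu$ yields $\mu(\{x\})\le\|\mu_n\|_\infty$ for every $x$, and \eqref{eq:ssm-exponential-separation} ensures that for infinitely many $n$ all the $|\supp(\Delta)|^n$ atoms of $\mu_n$ are distinct, so $\|\mu_n\|_\infty\le(\max_i p_i)^n\to 0$. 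The continuity of $x\mapsto\Delta(x)$ and the uniform bound on the number of atoms are trivial. As for exponential separation in the sense of Definition \ref{def:exponential-separation}: over the one-point base, differences of atoms of $\mu_n$ have the form $P(\lam)$ with $P\in\mathcal{P}_{\Delta,n}$, so the hypothesis \eqref{eq:ssm-exponential-separation} is exactly what is required.

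With these verifications, Theorem \ref{thm:L-q-dim-dyn-ssm} directly yields
\[
D(\mu,q)=\lim_{m\to\infty}-\frac{\log\sum_{I\in\DD_m}\mu(I)^q}{(q-1)m}=\min\!\left(\frac{\log\|\Delta\|_q^q}{(q-1)\log\lam},\,1\right),
\]
which is the first assertion. For the Frostman statement, let $M=\min(\log\|\Delta\|_\infty/\log\lam,\,1)$. Since $\|\Delta\|_q\to\|\Delta\|_\infty$ as $q\to\infty$ and $q/(q-1)\to 1$, we have $D(\mu,q)\to M$. Given $\alpha<M$, both $D(\mu,q)$ and $\tfrac{q-1}{q}M$ exceed $\alpha$ for $q$ large enough, so we may pick $s\in\bigl(\tfrac{q}{q-1}\alpha,\,D(\mu,q)\bigr)$ and invoke Lemma \ref{lem:Lq-dim-to-Frostman-exp} to conclude that $\mu(B(x,r))\le r^{(1-1/q)s}\le r^\alpha$ for all sufficiently small $r$, uniformly in $x$.

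The proof is essentially bookkeeping on top of Theorem \ref{thm:L-q-dim-dyn-ssm}; the only point requiring any care is the translation between the two notions of exponential separation (Definitions \ref{def:exponential-separation} and \ref{def:exponential-separation-hom-ssm}), which is immediate once one writes out the atoms of $\mu_n$ as cylinder values.
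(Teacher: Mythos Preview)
Your proposal is correct and follows essentially the same route as the paper: realize $\mu$ as the measure generated by the trivial one-point model, observe that the two notions of exponential separation coincide because differences of atoms of $\mu_n$ are precisely values $P(\lam)$ with $P\in\mathcal{P}_{\Delta,n}$, apply Theorem \ref{thm:L-q-dim-dyn-ssm}, and deduce the Frostman bound via Lemma \ref{lem:Lq-dim-to-Frostman-exp} by sending $q\to\infty$. You are simply more explicit than the paper about verifying non-atomicity and about the arithmetic in the last step.
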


Before presenting the short deduction from Theorem \ref{thm:L-q-dim-dyn-ssm}, we make some remarks on this statement:
\begin{enumerate}[label={\upshape\alph*)}]
\item Theorem \ref{thm:infinity-dim-BCs} is an immediate consequence of the last claim in the theorem.
\item Recall from \S\ref{subsec-sssm} that the claim in the theorem is well-known under the open set condition. The point is that the separation assumption is far weaker than the open set condition. This notion of ``exponential separation'' was introduced in \cite{Hochman14} and, as explained there, it is a quantitative version of the ``no exact overlaps'' condition which is conjectured to already imply the claims in Theorem \ref{thm:dim-ssm}.
\item As  shown by Hochman \cite{Hochman14}, if $\lam$ and all points in $\supp(\Delta)$ are algebraic, then either \eqref{eq:ssm-exponential-separation} holds for all $n$, or $\lam$ is a root of some $P\in\mathcal{P}_{\Delta,n}$, $n\in\N$ (which corresponds to an exact overlap).
\item Hochman (\cite[Theorem 1.8]{Hochman14}, \cite[Theorem 1.10]{Hochman17}) has also shown that in quite general parametrized families of self-similar measures, the exponential separation assumption in Theorem  \ref{thm:dim-ssm} holds outside of a set of parameters of packing and Hausdorff co-dimension at least $1$.
\item The analog of Theorem \ref{thm:dim-ssm} for exact (or Hausdorff) dimension was established by Hochman \cite[Theorem 1.1]{Hochman14}. We recover his result in the homogeneous case by letting $q\to 1^{+}$ in Theorem \ref{thm:dim-ssm}.
\item  Note that $q\mapsto-\log\|\Delta\|_q^q$ is linear if and only if $\Delta$ is uniform on its support; otherwise, it is a strictly concave real-analytic function. It follows from the theorem that, under the separation assumption \eqref{eq:ssm-exponential-separation}, the map $q\mapsto \tau_\mu(q)$ is differentiable except, perhaps, at a single point $q>1$ such that $\|\Delta\|_q^q = \lam^{q-1}$. It follows from a result of D-J. Feng \cite{Feng07} that the multifractal formalism holds for $\mu$ and all $q\in (1,\infty)$ outside, possibly, of this point. See \cite{Feng07} for details.
\end{enumerate}

\begin{proof}[Proof of Theorem \ref{thm:dim-ssm}]
We apply Theorem \ref{thm:L-q-dim-dyn-ssm} with a constant function $\Delta$ (corresponding to a one-point set $X=\{0\}$). Such a trivial model is clearly pleasant and satisfies the continuity assumption in Theorem  \ref{thm:L-q-dim-dyn-ssm}. The support of $\mu_n:=\mu_{0,n}$ is
\[
\sum_{i=0}^{n-1} S_{\lam^i}(\Delta) =  \left\{ \sum_{i=0}^{n-1}  \lam^i y_i : y_i\in\supp(\Delta)\right\},
\]
so the model has exponential separation if and only if $\mu$ has exponential separation. The application of Theorem \ref{thm:L-q-dim-dyn-ssm} is therefore justified, and yields the claimed formula for $D_\mu(q)$. The latter claim for the Frostman exponent then follows from Lemma \ref{lem:Lq-dim-to-Frostman-exp} by letting $q\uparrow\infty$.
\end{proof}

\subsection{Some applications}

We present some consequences of Theorem \ref{thm:dim-ssm}.  Recall that the one-dimensional Sierpi\'{n}ski Gasket $S$ is the set of all points in $[0,1]^2$ of the form
\[
\left\{ \sum_{n=1}^\infty X_n 3^{-n} : X_n\in \{(0,0),(0,1),(1,0)\} \right\}.
\]
The gasket $S$ is a self-similar set, with open set condition, of Hausdorff dimension $1$. Furstenberg conjectured that all orthogonal projections of $S$ in directions with irrational slope also have Hausdorff dimension $1$; this was proved in \cite[Theorem 1.6]{Hochman14}. We can deduce a stronger statement from Theorem \ref{thm:dim-ssm}:

\begin{cor} \label{cor:sierpinski-gasket}
Let $\Pi_t(x,y)=x+t y$. For every Borel subset $A\subset S$ and for every $t\in\R\setminus\mathbb{Q}$,
\[
 \hdim(\Pi_t A) = \hdim(A).
\]
\end{cor}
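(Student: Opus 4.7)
The plan is to push the natural measure on $S$ forward under $\Pi_t$, view the image as a homogeneous self-similar measure on $\R$ to which Theorem \ref{thm:dim-ssm} applies, extract a Frostman exponent arbitrarily close to $1$, and transfer the resulting fiber control to an arbitrary Borel $A\subset S$ via Frostman's lemma.

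Let $\mu$ be the uniform self-similar measure on $S$ with weights $1/3$; by the open set condition, $\mu$ is $1$-Ahlfors regular. Since the IFS $\{f_i\}$ defining $S$ is homogeneous with contraction $1/3$, and $\Pi_t\circ f_i = g_i\circ\Pi_t$ for the affine maps $g_i(z) = (z+\Pi_t v_i)/3$, the pushforward $\nu_t := \Pi_t\mu$ is, after an affine change of coordinates, the homogeneous self-similar measure $*_{i\ge 0}S_{\lam^i}\Delta_t$ with $\lam = 1/3$ and $\Delta_t = \tfrac{1}{3}(\delta_0+\delta_t+\delta_1)$.

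Applying Theorem \ref{thm:dim-ssm} to $\nu_t$ (granting the exponential separation, discussed at the end), and using $\|\Delta_t\|_q^q = 3\cdot 3^{-q} = 3^{1-q}$, gives
\[
\frac{\log\|\Delta_t\|_q^q}{(q-1)\log\lam} = \frac{(1-q)\log 3}{(q-1)(-\log 3)} = 1
\]
for every $q\in(1,\infty)$; therefore $D(\nu_t,q) = 1$ and $\nu_t(B(y,r))\le r^\alpha$ for every $\alpha<1$ and all sufficiently small $r$. Fix now $s<\hdim(A)$; by Frostman's lemma there is a nonzero finite Borel measure $\rho$ supported on $A$ with $\rho(B(x,r)) \le r^s$. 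Since $\mu$ is lower $1$-Ahlfors regular on $S$, Lemma \ref{lem:Frostman-exp-to-small-fiber} applied to $\Pi_t|_S$ with Frostman exponent $\alpha<1$ of $\nu_t$ yields a constant $C$ such that $S\cap\Pi_t^{-1}(B(y,r))$ admits an $r$-cover by at most $C\,r^{-(1-\alpha)}$ balls, uniformly in $y\in\R$. Pushing $\rho$ forward,
\[
\Pi_t\rho(B(y,r)) = \rho\bigl(A\cap\Pi_t^{-1}(B(y,r))\bigr) \le C\,r^{-(1-\alpha)}\,r^s = C\,r^{s+\alpha-1},
\]
so $\hdim(\Pi_t A) \ge \hdim(\Pi_t\rho) \ge s+\alpha-1$. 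Letting $\alpha\uparrow 1$ and $s\uparrow \hdim(A)$ yields $\hdim(\Pi_t A) \ge \hdim(A)$, while the reverse inequality is immediate from the Lipschitz property of $\Pi_t$.

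The main technical point is to verify exponential separation of $\nu_t$ for every irrational $t$. Writing any nonzero $P\in\mathcal{P}_{\Delta_t,n}$ uniquely as $A + tB$ with $A,B$ of degree at most $n$ and $\{-1,0,1\}$ coefficients (uniqueness using the irrationality of $t$), the required bound $|P(1/3)|\ge \lam^{Rn}$ for infinitely many $n$ reduces to a Diophantine lower bound for $t$ against rationals of denominator bounded by a constant times $3^n$. This is automatic for algebraic irrational $t$ by Roth's theorem, and is the main obstacle for general irrational $t$.
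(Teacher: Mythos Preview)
Your approach is exactly the paper's: push forward the uniform measure on $S$, recognise $\Pi_t\mu$ as a homogeneous self-similar measure, apply Theorem~\ref{thm:dim-ssm} to get Frostman exponent arbitrarily close to $1$, feed this into Lemma~\ref{lem:Frostman-exp-to-small-fiber} to control the covering number of fibres, and transfer to an arbitrary Borel $A\subset S$ via Frostman's lemma.

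The genuine gap is the one you yourself flag: you do not establish exponential separation for arbitrary irrational $t$, only for algebraic $t$. The paper resolves this by citing the proof of \cite[Theorem~1.6]{Hochman14}, but the argument is short and does not require Roth. After your reduction, one must show: for some fixed $R$ there are infinitely many $n$ such that $|t-p/q|\ge 3^{-Rn}$ for all integers $p,q$ with $0<|q|\le 3^{n+1}$. Suppose this fails with $R=5$; then for all large $n$ there exist $p_n,q_n$ with $|q_n|\le 3^{n+1}$ and $|t-p_n/q_n|<3^{-4n}$. If $p_n/q_n\neq p_{n+1}/q_{n+1}$, then
\[
\left|\frac{p_n}{q_n}-\frac{p_{n+1}}{q_{n+1}}\right|\ge \frac{1}{|q_n q_{n+1}|}\ge 3^{-2n-3},
\]
whereas both fractions lie within $3^{-4n}$ of $t$, forcing their difference below $2\cdot 3^{-4n}<3^{-2n-3}$ for large $n$ --- a contradiction. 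Hence $p_n/q_n$ is eventually constant and $t$ is rational. (The case $B(1/3)=0$ that you implicitly set aside is immediate: balanced-ternary uniqueness forces $B=0$, so $A\neq 0$ and $|A(1/3)|\ge 3^{-n}$.) This closes the gap for every irrational $t$, and with it your proof is complete and matches the paper's.
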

\begin{proof}
Let $\mu$ be the uniform self-similar measure on $S$, so that $\mu(B(x,r))=\Theta(r)$ for $x\in S$ and $r\le 1$. For each $t\in\R$, the projection $\Pi_t\mu$ is the uniform self-similar measure for the iterated function system $\{ x/3, x/3+1, x/3+t\}$. As shown in the proof of \cite[Theorem 1.6]{Hochman14}, this IFS satisfies the exponential separation hypothesis \eqref{eq:ssm-exponential-separation} for all irrational $t$. From now on let $t$ be a fixed irrational number. We deduce from Theorem \ref{thm:dim-ssm} that
\[
\Pi_t\mu(B(y,\e)) = O_{t,\delta}(\e^{1-\delta})\quad\text{for all }\delta>0.
\]
In turn, Lemma \ref{lem:Frostman-exp-to-small-fiber} says that $\Pi_t^{-1}(B(y,\e))$ can be covered by $O_{t,\delta}(\e^{-\delta})$ balls of radius $\e$ for any $y\in\R$. Indeed, if $(x_j)_j$ is a maximal $\e$-separated subset of some set, then $(B(x_j,\e))_j$ covers the set.

Now fix a Borel subset $A\subset S$ of Hausdorff dimension $s$, and  $\delta>0$. By Frostman's Lemma (see e.g. \cite[Theorem 8.8]{Mattila95}) there is a Borel probability measure $\nu$ supported on $A$ such that $\nu(B(x,\e)) = O_{A,\delta}(\e^{s-\delta})$ for all $x\in\R^2, \e>0$. It follows that
\[
\Pi_t\nu(B(y,\e))  = O_{A,\delta,t}(\e^{s-2\delta}).
\]
Since $\delta>0$ was arbitrary, the conclusion follows from the mass distribution principle (see e.g. \cite[Proposition 2.1]{Falconer97}).
\end{proof}

The gasket $S$ could be replaced by the attractor of any iterated function system in the plane, satisfying the open set condition and of Hausdorff dimension at most $1$, of the form $(\lambda x+a_i,\lambda y+b_i)_{i\in\mathcal{I}}$ with $\lambda,a_i,b_i$ all rational. If $\lam,a_i,b_i$ are only assumed to be algebraic, then the same holds assuming that $t$ is transcendental, instead of irrational. The proof works verbatim since in this more general situation $\Pi_t S$ continues to be a self-similar set satisfying \eqref{eq:ssm-exponential-separation}, see the proof of \cite[Theorem 5.3]{Shmerkin15}.

When the Hausdorff dimension of the self-similar set is larger than $1$ we cannot reach the same conclusion, but Lemma \ref{lem:Frostman-exp-to-small-fiber} still provides an upper bound for the size of the fibers. We conclude this section by discussing some concrete classes of examples.
\begin{cor} 
Let $A\subset [0,1)$ be a $p$-Cantor set, $p\ge 2$. Then for every irrational number $t\in\R$ and any $u\in\R$,
\[
\ubdim(A\cap (t A+ u)) \le \max(2\hdim(A)-1,0).
\]
\end{cor}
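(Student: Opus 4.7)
The strategy is to realise $A\cap(tA+u)$ as (the projection of) a fibre of a linear map on $A\times A$, and control its upper box dimension using Lemmas \ref{lem:Lq-dim-to-Frostman-exp} and \ref{lem:Frostman-exp-to-small-fiber} applied to that map. Let $D\subset\{0,\ldots,p-1\}$ be the digit set of $A$, put $s=\hdim A=\log|D|/\log p$, and let $\mu$ be the uniform self-similar measure on $A$ with generator $\Delta=|D|^{-1}\sum_{d\in D}\delta_d$ and contraction $\lam=1/p$. By the open set condition, $\mu(B(x,r))=\Theta(r^s)$ for $x\in A$, hence $(\mu\times\mu)(B(z,r))\ge r^{2s+\eta}$ for any $\eta>0$, $z\in A\times A$, and $r$ sufficiently small. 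Define the Lipschitz map $\Pi(x,y)=x-ty$; because $t\neq 0$, the fibre $\Pi^{-1}(-u)\cap(A\times A)$ is in bi-Lipschitz bijection with $A\cap(tA+u)$ via projection onto the first coordinate, so it suffices to bound $\ubdim(\Pi^{-1}(-u)\cap(A\times A))$.

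The key step is to analyse the pushforward $\nu:=\Pi(\mu\times\mu)=\mu*S_{-t}\mu$. This is itself a homogeneous self-similar measure with contraction $\lam=1/p$ and generator $\wt\Delta=\Delta*S_{-t}\Delta$. For irrational $t$ the atoms $\{d-td':d,d'\in D\}$ are pairwise distinct, so $\wt\Delta$ has $|D|^2$ equi-weighted atoms and $\|\wt\Delta\|_q^q=|D|^{-2(q-1)}$. The plan is then to invoke Theorem \ref{thm:dim-ssm}, which requires verifying exponential separation. Any nonzero $P\in\mathcal{P}_{\wt\Delta,n}$ can be written $P(x)=Q(x)+tR(x)$ with $Q,R$ integer polynomials of degree $\le n$ having coefficients in $\{-(p-1),\ldots,p-1\}$. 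Irrationality of $t$ forces $P(1/p)=0$ to imply $Q(1/p)=R(1/p)=0$, and uniqueness of base-$p$ representations with digits of absolute value at most $p-1$ then forces $Q\equiv R\equiv 0$, a contradiction. Hence the IFS defining $\nu$ has no exact overlaps; as $\lam=1/p$ is algebraic, exponential separation follows from the results of Hochman recalled after Definition \ref{def:ssc-BCs}.

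Theorem \ref{thm:dim-ssm} now yields
\[
D(\nu,q)=\min\!\left(\frac{-2(q-1)\log|D|}{(q-1)(-\log p)},1\right)=\min(2s,1)
\]
for every $q\in(1,\infty)$. By Lemma \ref{lem:Lq-dim-to-Frostman-exp}, letting $q\to\infty$ gives $\nu$ a Frostman exponent arbitrarily close to $\min(2s,1)$. Applying Lemma \ref{lem:Frostman-exp-to-small-fiber} with $X=A\times A$, $\pi=\Pi$, and the source Frostman lower exponent $2s+\eta$ produces
\[
\ubdim\bigl(\Pi^{-1}(y)\cap(A\times A)\bigr)\le (2s+\eta)-\alpha
\]
for every $y\in\R$, every $\eta>0$, and every $\alpha<\min(2s,1)$. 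Sending $\alpha\uparrow\min(2s,1)$ and $\eta\downarrow 0$ yields $\ubdim(A\cap(tA+u))\le\max(2s-1,0)$, which is the intended bound.

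The main obstacle is the exponential-separation step; the rest is a routine combination of Theorem \ref{thm:dim-ssm} with the Frostman lemmas. Algebraicity of $\lam=1/p$ is decisive here, because for transcendental contractions the implication ``no exact overlaps $\Rightarrow$ exponential separation'' is not available; this is why the statement is restricted to $p$-Cantor sets with $p\in\N_{\ge 2}$, and why no Diophantine assumption on $t$ is needed beyond irrationality.
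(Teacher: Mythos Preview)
Your approach matches the paper's: view $A\cap(tA+u)$ as a fibre of a linear projection of $A\times A$, compute the $L^q$ dimension of the projected product measure via Theorem~\ref{thm:dim-ssm}, and conclude with Lemmas~\ref{lem:Lq-dim-to-Frostman-exp} and~\ref{lem:Frostman-exp-to-small-fiber}. The no-exact-overlaps argument and the final deduction are correct.

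The gap is in your justification of exponential separation. The Hochman result you invoke requires \emph{all} IFS parameters---contraction ratio and translations---to be algebraic (this is remark~(c) after Theorem~\ref{thm:dim-ssm}, not the Bernoulli-convolution statement after Definition~\ref{def:ssc-BCs}). Here the translations $d-td'$ are transcendental whenever $t$ is, so that result does not apply; algebraicity of $\lam=1/p$ alone is not sufficient, contrary to what your last paragraph suggests. The conclusion is nonetheless true: since atom differences have the special form $Q(1/p)-tR(1/p)$ with integer $Q,R$, clearing denominators reduces the question to bounding $|a-tb|$ for integers $|a|,|b|\lesssim p^{n}$, and a continued-fraction argument (choose $n$ so that $p^{n}$ sits just below a convergent denominator $q_{k+1}$ of $t$) gives $|a-tb|\ge p^{-Cn}$ for infinitely many $n$ and every irrational $t$. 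This is exactly the argument carried out in the proof of \cite[Theorem~1.6]{Hochman14}, and it is what the paper cites at this point (via \cite[Theorem~5.3]{Shmerkin15}). Once you replace your exponential-separation step with this, the proof is complete.
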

\begin{proof}
The product set $S=A\times A$ is the attractor of an iterated function system with rational coefficients satisfying the open set condition, and $\hdim(S)=2\hdim A$. As pointed out above, it is shown in the proof of \cite[Theorem 5.3]{Shmerkin15} that $\Pi_t S$ is a self-similar set satisfying \eqref{eq:ssm-exponential-separation} whenever $t$ is irrational (the argument for this holds regardless of the dimension of the self-similar set). Since the fiber $\Pi_t^{-1}(u)\cap S$ is, up to an affine change of coordinates, equal to $A\cap (t A+ u)$, the conclusion follows from Theorem \ref{thm:dim-ssm} and Lemma \ref{lem:Frostman-exp-to-small-fiber}.
\end{proof}

The corollary generalizes to $T_p$-invariant sets, by embedding them in $p$-Cantor sets of arbitrarily close dimension, see the proof of Theorem \ref{thm:Furstenberg} in \S \ref{subsec:proof-of-Furst-conjecture} below. The dimension of the intersections of the middle-thirds Cantor set with translates of itself (without scaling) was investigated by Hawkes \cite{Hawkes75}, and this was greatly generalized to $T_p$-invariant sets by Kenyon and Peres \cite{KenyonPeres91}. Without scaling, the situation is very different; in particular, $\hdim(A\cap A+u) > 2\hdim(A)-1$ for many values of $u$. We mention also a related result of M. Hochman \cite{Hochman12} for invariant \emph{measures}: if $\mu$ is $T_p$-invariant, $\hdim\mu\in (0,1)$ and $f(x)=t x+u$ with $\log t/\log p\notin \Q$, then $\mu$ and $f\mu$ are mutually singular.

Likewise, if $S$ is the standard Sierpi\'{n}ski gasket or the Sierpi\'{n}ski carpet, or more generally if $S$ is the attractor of an IFS of the form $\left(N^{-1}(x+a_i,y+b_i)\right)_{i\in I}$ with  $(a_i,b_i)\in [N]^2$, and $|\mathcal{I}|>N$ (so that $\hdim S>1$), then
\[
\ubdim(S\cap\ell) \le \hdim(S)-1
\]
for all lines $\ell$ with irrational slope. The intersections of these carpets with lines of \emph{rational} slope was investigated in several papers; see \cite{BFS12, BaranyRams14} and references there. In particular, in those two papers it is shown that for the gasket and many other carpets $S$, there are many lines with a given rational slope that intersect $S$ in a set of dimension $> \hdim(S)-1$. More precisely, given a rational slope, a typical slice (with respect to the uniform self-similar measure) has a constant dimension strictly larger than $\hdim(S)-1$.

\subsection{General self-similar measures on the line}
\label{subsec:general-ssm}

We conclude this section by extending Theorem \ref{thm:dim-ssm} to general (not necessarily homogeneous) self-similar measures on $\R$. Although we are no longer in a setting in which Theorem \ref{thm:L-q-dim-dyn-ssm} can be applied, we will see that the same approach, with minor changes, can be used to directly establish the desired result.

We begin by defining a notion of exponential separation, which again agrees with that in \cite{Hochman14}, and extends the one given here in the homogeneous case. We define a distance between two affine maps $g_i(x)=\lam_i x +t_i$ on $\R$ as
\[
d(g_1,g_2) = \left\{ \begin{array}{ccc}
                            |t_1-t_2| & \text{ if } & \lam_1=\lam_2 \\
                            1 & \text{ if } & \lam_1\neq\lam_2
                          \end{array}
  \right..
\]

Let $(f_i)_{i\in\mathcal{I}}$ be strictly contractive, invertible affine maps on $\R$, i.e. $f_i(x)=\lam_i(x)+t_i$, where $|\lam_i|\in (0,1)$ and $t_i\in\R$. Given a finite word $u\in\mathcal{I}^k$, we write $f_u = f_{u_1}\cdots f_{u_k}$,  $f_u(x)=\lam_u x+t_u$, and $p_u=p_{u_1}\cdots p_{u_k}$. If $k\ge 1$, we also write $u^{-}$ for the word obtained from $u$ by deleting the last symbol.

Given $m\in\N$, let $\Omega_m$ be the family of all words $u$ such that $\lam_u\le 2^{-m}$ but $\lam_{u^{-}}>2^{-m}$. We can now define:
\begin{defn}
We say that the IFS $(f_i)_{i\in\mathcal{I}}$ has \emph{exponential separation} if there are $R>0$ and a sequence $m_j\to\infty$ such that
\[
d(f_u,f_v) \ge 2^{-R m_j} \quad\text{for all }u\neq v\in \Omega_{m_j}.
\]
\end{defn}

\begin{thm} \label{thm:dim-general-ssm}
Let $(f_i)_{i\in\mathcal{I}}$ be an IFS with exponential separation, and consider a self-similar measure
\[
\mu = \sum_{i\in\mathcal{I}} p_i\, f_i\mu.
\]
Then $D(\mu,q)=\min(\wt{\tau}(q)/(q-1),1)$, where $\wt{\tau}(q)$ is the only solution to $\sum_{i\in\mathcal{I}} p_i^q |\lam_i|^{-\wt{\tau}(q)}=1$.
\end{thm}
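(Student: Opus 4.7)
The plan is to directly adapt the proof scheme of Theorem \ref{thm:L-q-dim-dyn-ssm}, using the stopping-time set $\Omega_m$ of words $u$ with $|\lam_u|\le 2^{-m}<|\lam_{u^{-}}|$ in place of the uniform $n$-fold iteration of a fixed common contraction. The self-similarity of $\mu$ yields the decomposition $\mu=\sum_{u\in\Omega_m} p_u\, f_u\mu$, with each $f_u\mu$ a rescaled copy of $\mu$ supported on an interval of length $|\lam_u|\asymp 2^{-m}$. A standard branching/stopping-time argument, based on the defining identity $\sum_i p_i^q|\lam_i|^{-\wt\tau(q)}=1$, gives that $(p_u^q|\lam_u|^{-\wt\tau(q)})_{u\in\Omega_m}$ is a probability vector, hence
\[
\sum_{u\in\Omega_m} p_u^q\asymp 2^{-m\wt\tau(q)}.
\]

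The easy direction $\tau_\mu(q)\ge\wt\tau(q)$ then follows at once: each $(f_u\mu)^{(m)}$ is a probability measure supported on $O(1)$ points of $2^{-m}\Z$, so $\|p_u(f_u\mu)^{(m)}\|_q^q\asymp p_u^q$; by Lemma \ref{lem:L-q-norm-almost-disj-supports} (which handles the bounded overlap between the different $f_u(\supp\mu)$) we get $\|\mu^{(m)}\|_q^q\lesssim \sum_{u\in\Omega_m} p_u^q\asymp 2^{-m\wt\tau(q)}$. Together with the universal bound $\tau_\mu(q)\le q-1$, this already yields $D(\mu,q)\ge\min(\wt\tau(q)/(q-1),1)$.

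For the matching upper bound, I would parallel Sections \ref{sec:properties-dyn-ssm}--\ref{sec:proof-of-main-thm}. Decomposing $\mu^{(m+m')}$ through $\Omega_m$ and rescaling gives the submultiplicative estimate $\|\mu^{(m+m')}\|_q^q\lesssim\big(\sum_{u\in\Omega_m} p_u^q\big)\|\mu^{(m')}\|_q^q$, which ensures the limit $T(q):=\lim-\log\|\mu^{(m)}\|_q^q/m$ exists and equals $\tau_\mu(q)$ (no uniform-continuity step is needed since $\mu$ is fixed). The multifractal-regularity lemmas of Section \ref{subsec:Lq-spectrum}, the flattening theorem (Theorem \ref{thm:conv-with-ssm-flattens}), and the finer-scale identity of Proposition \ref{prop:ssm-scale-Rm-norm} transfer with essentially the same proofs, always invoking the decomposition through $\Omega_m$ together with the tight bound $|\lam_u|\asymp 2^{-m}$. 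Along the subsequence $m_j$ supplied by exponential separation, the maps $\{f_u:u\in\Omega_{m_j}\}$ are $2^{-Rm_j}$-separated in the metric $d$, so unfolding $\mu$ through $\Omega_{m_j}$ at scale $2^{-Rm_j}$ produces essentially non-overlapping contributions, giving $\|\mu^{(Rm_j)}\|_q^q\gtrsim \sum_{u\in\Omega_{m_j}} p_u^q\asymp 2^{-m_j\wt\tau(q)}$; combined with the adapted Proposition \ref{prop:ssm-scale-Rm-norm} this forces $T(q)\le\wt\tau(q)$. The main obstacle is checking that the multi-scale structural lemmas still function when ``cells'' at resolution $m$ no longer arise from a uniform iteration; the tight two-sided bound $|\lam_u|\asymp 2^{-m}$ for $u\in\Omega_m$ turns out to be exactly what is needed.
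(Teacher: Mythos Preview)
Your proposal has two genuine gaps. First, the directions in your second paragraph are reversed: the inequality $\|\mu^{(m)}\|_q^q\lesssim\sum_{u\in\Omega_m}p_u^q$ is the hard direction, not the easy one, and your argument for it fails. Lemma~\ref{lem:L-q-norm-almost-disj-supports} requires that each point lie in the support of boundedly many of the $p_u(f_u\mu)^{(m)}$, but for an overlapping IFS the number of $u\in\Omega_m$ with $f_u(\supp\mu)$ containing a given point can be exponential in $m$; indeed the claimed bound is simply false when $\wt\tau(q)>q-1$, e.g.\ for Bernoulli convolutions with $\lam>1/2$. The actual easy direction is $\|\mu^{(m)}\|_q^q\gtrsim\sum_u p_u^q$, which follows from pointwise superadditivity of $t\mapsto t^q$ and yields $\tau_\mu(q)\le\wt\tau(q)$.

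Second, and more importantly, your adaptation of Proposition~\ref{prop:ssm-scale-Rm-norm} misses the main new idea required in the non-homogeneous case. That proposition uses that $\mu_x=\mu_{x,n}*S_{\lam^n}\mu_{\mathbf{T}^n x}$ is a \emph{single} convolution, so that Theorem~\ref{thm:conv-with-ssm-flattens} can be applied to $\rho_{x,I}*\mu_{\mathbf{T}^n x}^{(Rm)}$. For a non-homogeneous IFS, $\mu=\sum_{u\in\Omega_m}p_u\,\delta_{t_u}*S_{\lam_u}\mu$ is not a convolution of $\mu$ with any fixed measure, since the scaling $\lam_u$ varies with $u$; the ``tight bound $|\lam_u|\asymp 2^{-m}$'' is not enough to rescue the convolution structure. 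The paper's remedy is to group by the exact contraction ratio: setting $\Lambda_m=\{\lam_u:u\in\Omega_m\}$ and $\mu_{m,\lam}=\sum_{u:\lam_u=\lam}p_u\delta_{t_u}$, one has $\mu=\sum_{\lam\in\Lambda_m}\mu_{m,\lam}*S_\lam\mu$, a sum of $|\Lambda_m|=O(m^{|\mathcal{I}|})$ genuine convolutions, and H\"older over this subexponential index set introduces a harmless factor. The same grouping is essential at the final step: the exponential-separation hypothesis only separates $t_u,t_v$ when $\lam_u=\lam_v$ (the metric $d$ equals $1$ otherwise and carries no information on $|t_u-t_v|$), so without grouping you cannot conclude that the atoms of $\mu_m$ are $2^{-Rm_j}$-separated and hence cannot identify $\|\mu_{m_j}^{(Rm_j)}\|_q^q$ with $\sum_{u\in\Omega_{m_j}}p_u^q$.
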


As many of the steps in the proof of Theorem \ref{thm:dim-general-ssm} are small variants of corresponding steps in the proof of Theorem \ref{thm:L-q-dim-dyn-ssm}, we will present an outline emphasizing the main differences, and leave the verification of the details to the interested reader. For simplicity we will assume that $\lam_i>0$ for all $i$; the general case can be deduced with minor notational changes.

Let $\tau(q)=\tau(\mu,q)$. We have to show that either $\tau(q)=q-1$ or $\tau(q)=\wt{\tau}(q)$. Hence, in order to prove Theorem \ref{thm:dim-general-ssm} it is enough to establish:
\begin{prop} \label{prop:dim-general-ssm}
Under the assumptions of Theorem \ref{thm:dim-general-ssm}, if $q\in (1,\infty)$ is such that $\tau(q)<q-1$, then $\tau(q)= \wt{\tau}(q)$.
\end{prop}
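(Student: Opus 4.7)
The strategy mirrors the proof of Theorem~\ref{thm:L-q-dim-dyn-ssm}, adapted to the non-homogeneous setting. Recall that for self-similar measures the limit defining $\tau_\mu(q)$ exists (by Peres--Solomyak). The upper bound $\tau_\mu(q) \le \wt{\tau}(q)$ is standard (and follows quickly from the sum decomposition below combined with Lemma~\ref{lem:L-q-norm-almost-disj-supports}), so we focus on the reverse. Since $\tau_\mu$ is concave on $(1,\infty)$ and $\wt{\tau}$ is real-analytic, it suffices to verify $\tau_\mu(q) \ge \wt{\tau}(q)$ at values of $q$ where $\tau_\mu$ is differentiable and $\tau_\mu(q) < q-1$; fix such a $q$ and set $\alpha = \tau_\mu'(q)$.

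The role of the convolution self-similarity \eqref{eq:mu-x-self-similarity} is now played by the sum decomposition
\[
\mu = \sum_{u \in \Omega_m} p_u\, f_u\mu,
\]
whose pieces $f_u\mu$ are rescaled-translated copies of $\mu$ with contraction ratios $\lam_u \asymp 2^{-m}$. The multifractal regularity results of \S\ref{subsec:Lq-spectrum} (Lemmas~\ref{lem:size-set-A-in-terms-of-f-alpha}--\ref{lem:Lq-sum-over-small-set} and Proposition~\ref{prop:Lq-over-small-set-is-small}) carry over essentially verbatim: their proofs rely only on the existence and differentiability of $\tau_\mu(q)$ together with the uniform bound $\|\mu^{(m)}\|_q^q = 2^{-\tau_\mu(q)m + o(m)}$. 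In the localized Proposition~\ref{prop:Lq-over-small-set-is-small}, the convolution step is replaced by restricting the sum decomposition to those $u \in \Omega_s$ with $f_u(\supp \mu) \cap 2I \ne \varnothing$ and applying convexity of $t \mapsto t^q$. Consequently, the flattening theorem (Theorem~\ref{thm:conv-with-ssm-flattens}) goes through unchanged for $\mu$: if $\nu$ is a $2^{-m}$-measure with $\|\nu\|_q^{q'} \le 2^{-\sigma m}$, then $\|\nu * \mu^{(m)}\|_q^q \le 2^{-(\tau_\mu(q)+\e) m}$ for some $\e = \e(\sigma, q) > 0$, since its proof uses only the inverse theorem (Theorem~\ref{thm:inverse-thm}) and multifractal regularity.

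To conclude, we adapt Proposition~\ref{prop:ssm-scale-Rm-norm}. Fix $y_0 \in \supp \mu$ and set $\wt{\mu}_m = \sum_{u \in \Omega_m} p_u\, \delta_{f_u(y_0)}$, a discrete $O(2^{-m})$-approximation of $\mu$. Discretizing the sum decomposition at scale $2^{-(R+1)m}$ and invoking Lemma~\ref{lem:L-q-norm-almost-disj-supports}, with $M$ an absolute constant controlling the maximum overlap among the supports $f_u(\supp \mu)$ (each of diameter $O(2^{-m})$), yields
\[
\|\mu^{((R+1)m)}\|_q^q \lesssim \sum_{I\in\mathcal{D}_m} \wt{\mu}_m(I)^q \,\|\rho_I * \mu^{(Rm)}\|_q^q,
\]
for discrete measures $\rho_I$ built from the rescalings $S_{\lam_u^{-1}}(f_u\mu)$ of the pieces contributing to $I$. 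Running the argument of Proposition~\ref{prop:ssm-scale-Rm-norm} --- applying flattening on those $I$ for which $\|\rho_I\|_q^{q'}$ is not exponentially small, and using the trivial lower bound for the rest --- gives
\[
\lim_{m\to\infty} -\frac{\log\|\wt{\mu}_m\|_q^q}{m} = \tau_\mu(q).
\]
By exponential separation, along a subsequence $m_j$ the atoms of $\wt{\mu}_{m_j}$ are distinct and $2^{-Rm_j}$-separated, so $\|\wt{\mu}_{m_j}\|_q^q = \sum_{u\in\Omega_{m_j}} p_u^q$. Iterating the defining identity $\sum_i p_i^q \lam_i^{-\wt{\tau}(q)} = 1$ to the word length $|u|$ for $u \in \Omega_{m_j}$ and using $\lam_u \asymp 2^{-m_j}$, one obtains $\sum_{u\in\Omega_{m_j}} p_u^q \asymp 2^{-m_j\wt{\tau}(q)}$, whence $\tau_\mu(q) = \wt{\tau}(q)$.

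The main technical obstacle is the bookkeeping caused by the fact that for $u \in \Omega_m$ the ratios $\lam_u$ are only comparable, not equal, to $2^{-m}$, so the pieces $f_u\mu$ sit at a bounded range of scales near $2^{-m}$ rather than at a single scale. Every flattening and multifractal estimate must therefore be applied with constants uniform across this finite range, which thickens the proof; no new conceptual idea is needed beyond what appears in the homogeneous case.
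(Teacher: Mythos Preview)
Your overall strategy is correct and matches the paper's, but there is a genuine gap in how you handle the non-homogeneity. The expression ``$\rho_I * \mu^{(Rm)}$'' does not make sense here: the pieces $f_u\mu$ with $f_u(y_0)\in I$ have \emph{varying} contraction ratios $\lam_u$, so their sum $\sum_u p_u f_u\mu$ cannot be written as the convolution of a single discrete measure with one rescaled copy of $\mu$. The flattening theorem, however, applies only to genuine convolutions $\nu*\mu$. Relatedly, your final step is not justified: exponential separation as defined gives $d(f_u,f_v)\ge 2^{-Rm_j}$, but when $\lam_u\ne\lam_v$ this distance equals $1$ by convention and says nothing about $|f_u(y_0)-f_v(y_0)|$ or $|t_u-t_v|$. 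Atoms coming from different contraction ratios may well collide or cluster, so neither the claimed $2^{-Rm_j}$-separation of $\wt\mu_{m_j}$ nor the equality $\|\wt\mu_{m_j}\|_q^q=\sum_u p_u^q$ follows.

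The paper fixes both issues simultaneously by grouping according to contraction ratio. One sets $\Lambda_m=\{\lam_u:u\in\Omega_m\}$ and observes the key arithmetic fact $|\Lambda_m|=O(m^{|\mathcal{I}|})$, polynomial in $m$. Then $\mu=\sum_{\lam\in\Lambda_m}\mu_{m,\lam}*S_\lam\mu$ with $\mu_{m,\lam}=\sum_{u:\lam_u=\lam}p_u\delta(t_u)$, so each summand \emph{is} a convolution with a single rescaled copy of $\mu$, and flattening applies termwise; after pigeonholing on $\lam$ the extra H\"older loss of $|\Lambda_m|^{q}$ is sub-exponential and harmless. Likewise, exponential separation guarantees that for each fixed $\lam$ the atoms of $\mu_{m_j,\lam}$ are $2^{-Rm_j}$-separated, giving $\|\mu_{m_j,\lam}^{(Rm_j)}\|_q^q=\sum_{u:\lam_u=\lam} p_u^q$ exactly, and another H\"older with $|\Lambda_{m_j}|^{q-1}$ bridges to $\|\mu_{m_j}^{(Rm_j)}\|_q^q$. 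This grouping-by-ratio is the missing ingredient; your last paragraph misidentifies the difficulty as ``constants uniform across a range of scales'' when in fact the obstacle is structural.
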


To prove the proposition, we begin by observing that Lemmas \ref{lem:f-alpha-smaller-than-one}--\ref{lem:Lq-sum-over-small-set} hold if $T(q)$ if replaced by $\tau(q)$ and $\mu_x$ by $\mu$. Indeed, the proofs only use concavity of $T$, and Corollary \ref{cor:unif-continuity}, both of which remain true for $\tau$ and $\mu$ by the definition and basic properties of $\tau$ (since we are dealing with just one measure, one needs not worry about uniform convergence in this context).

As a consequence, Proposition \ref{prop:Lq-over-small-set-is-small} also remains valid with $\tau$ in place of $T$ and $\mu$ in place of $\mu_x$. Indeed, given $m\in\N$, we define
\[
\mu_m = \sum_{u\in\Omega_m} p_u \delta(t_u).
\]
We note that this does not fully agree with our earlier notation in the homogeneous case. Given $s\in\N$ and $I\in\DD_s$, we let $y_j$ be the atoms of $\mu_{s+2}$  such that $[y_j,y_j+2^{-s-2}]\cap I\neq\varnothing$, let $p_j$ be their masses, and define
\[
\mu_{I} = \sum_j p_j \delta_{y_j}.
\]
The proof of Proposition \ref{prop:Lq-over-small-set-is-small} then goes through using the measures $\mu_{I}$ instead of $\mu_{x,n,I}$.

In turn, Theorem \ref{thm:conv-with-ssm-flattens} remains valid if, once again, we replace $T(q)$ by $\tau(q)$ and $\mu_x$ by the fixed self-similar measure $\mu$. This is because the proof of Theorem \ref{thm:conv-with-ssm-flattens} relies only on Corollary \ref{cor:unif-continuity}, Lemmas \ref{lem:f-alpha-smaller-than-one} and \ref{lem:size-set-A-in-terms-of-f-alpha}, and Proposition \ref{prop:Lq-over-small-set-is-small}, all of which we have seen  continue to hold in our context.

The main change comes in the proof of the analog of Proposition \ref{prop:ssm-scale-Rm-norm}, which nevertheless remains valid:
\begin{prop}  \label{prop:general-ssm-smaller-scale}
Using the notation above, fix $q\in (1,\infty)$ such that $\tau$ is differentiable at $q$ and $\tau(q)<q-1$. Then, for any $R\in\N$
\[
\lim_{m\to\infty} \frac{\log\|\mu_{m}^{(R m)}\|_q^q }{m} =  -\tau(q)
\]
\end{prop}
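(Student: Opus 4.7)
The plan is to mimic the proof of Proposition \ref{prop:ssm-scale-Rm-norm}, using $\mu_m$ in the role played there by $\mu_{x,n}$ and $\mu$ itself in place of both $\mu_x$ and $\mu_{T^n x}$, since here self-similarity returns $\mu$ to itself. The key complication is the absence of an exact convolution factorization: the contraction ratios $\lam_u$ for $u \in \Omega_m$ only satisfy $\lam_u \in [\lam_{\min}\, 2^{-m},\, 2^{-m}]$, where $\lam_{\min} = \min_i \lam_i$.

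First I would establish two basic inequalities. For each $I \in \DD_m$, let $T_I \colon I \to [0,1]$ be the affine rescaling, set $(\mu_m)_I = \mu_m|_I / \mu_m(I)$, and define $\rho_I = T_I(\mu_m)_I$, so that $\|(\mu_m)_I^{((R+1)m)}\|_q^q = \|\rho_I^{(Rm)}\|_q^q$. Since the restrictions $\mu_m|_I$ have disjoint supports,
\[
\|\mu_m^{((R+1)m)}\|_q^q = \sum_{I \in \DD_m} \mu_m(I)^q\, \|\rho_I^{(Rm)}\|_q^q.
\]
For the companion upper bound on $\|\mu^{((R+1)m)}\|_q^q$, group the self-similar decomposition $\mu = \sum_{u \in \Omega_m} p_u f_u\mu$ by the $\DD_m$-interval containing $t_u$, producing $\wt\mu_I = \sum_{u:\, t_u \in I} p_u f_u \mu$ supported in $2I$. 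Lemma \ref{lem:L-q-norm-almost-disj-supports} then reduces matters to bounding each $\|\wt\mu_I^{((R+1)m)}\|_q^q$. Replacing each $f_u\mu = \delta_{t_u} * S_{\lam_u}\mu$ by the reference factor $\delta_{t_u} * S_{2^{-m}}\mu$ costs only multiplicative constants, since Lemma \ref{lem:Holder} absorbs the bounded scale distortions $\lam_u/2^{-m} \in [\lam_{\min}, 1]$, and Lemma \ref{lem:discr-norm-conv-equivalence} separates the resulting discretized convolution. After rescaling $I$ back to $[0,1]$, this yields
\[
\|\mu^{((R+1)m)}\|_q^q \le O(1) \sum_{I \in \DD_m} \mu_m(I)^q\, \|\rho_I^{(Rm)} * \mu^{(Rm)}\|_q^q.
\]

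Given these two inequalities, the rest follows the homogeneous argument closely. Fix $\sigma > 0$ and set $\DD' = \{I \in \DD_m : \|\rho_I^{(Rm)}\|_q^q \le 2^{-\sigma m}\}$. The hypothesis on $I \in \DD'$ translates to an exponentially small bound on $\|\rho_I^{(Rm)}\|_q^{q'}$ at scale $Rm$, so the version of Theorem \ref{thm:conv-with-ssm-flattens} valid in this setting (with $\tau(q)$ in place of $T(q)$ and $\mu$ in place of $\mu_x$, as noted in the discussion preceding the proposition) yields $\|\rho_I^{(Rm)} * \mu^{(Rm)}\|_q^q \le 2^{-(\tau(q)+\e) Rm}$ for some $\e = \e(\sigma,q,R) > 0$ and all large $m$. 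Substituting into the displayed upper bound and using Young's inequality $\|\rho_I^{(Rm)} * \mu^{(Rm)}\|_q \le \|\mu^{(Rm)}\|_q$ for $I \notin \DD'$, the $\DD'$-contribution is at most $O(1)\, 2^{-(\tau(q)+\e) Rm} \|\mu^{(m)}\|_q^q$, which is exponentially smaller than $\|\mu^{((R+1)m)}\|_q^q = 2^{-\tau(q)(R+1)m + o(m)}$ (here I use that the $L^q$ spectrum of a self-similar measure exists as a genuine limit, by \cite{PeresSolomyak00}). Hence the $\DD_m \setminus \DD'$-contribution dominates, yielding $\sum_{I \notin \DD'} \mu_m(I)^q \ge 2^{-(\tau(q) + o(1)) m}$.

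Substituting back into the first identity,
\[
\|\mu_m^{((R+1)m)}\|_q^q \ge 2^{-\sigma m} \sum_{I \notin \DD'} \mu_m(I)^q \ge 2^{-(\tau(q) + \sigma + o(1)) m},
\]
while the matching upper bound $\|\mu_m^{((R+1)m)}\|_q^q \le \|\mu_m^{(m)}\|_q^q = \Theta(\|\mu^{(m)}\|_q^q) = 2^{-\tau(q) m + o(m)}$ is immediate from monotonicity in the discretization scale and a direct comparison $\mu_m(I) \le \mu(2I)$, $\mu(I) \le \mu_m(2I)$ (the non-homogeneous analog of Lemma \ref{lem:comparison-mu-m-mu}). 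Letting $\sigma \downarrow 0$ identifies the limit as $-\tau(q)$; the case $R=1$ is already contained in the upper-bound comparison just used, while every $R \ge 2$ is reached by applying the argument with $R-1$ in place of $R$. The main obstacle is the rigorous derivation of the second inequality displayed above: without a clean convolution factorization, the scale replacement $\lam_u \leftrightarrow 2^{-m}$ must be executed carefully, but this is achievable at the cost of only multiplicative constants thanks to the uniformly bounded ratios $\lam_u/2^{-m} \in [\lam_{\min},1]$.
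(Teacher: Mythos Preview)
The gap is in your second displayed inequality
\[
\|\mu^{((R+1)m)}\|_q^q \le O(1) \sum_{I \in \DD_m} \mu_m(I)^q\, \|\rho_I^{(Rm)} * \mu^{(Rm)}\|_q^q,
\]
and the sketched justification does not close it. You propose to replace each $f_u\mu=\delta_{t_u}*S_{\lam_u}\mu$ by $\delta_{t_u}*S_{2^{-m}}\mu$ and invoke Lemma~\ref{lem:Holder} using the bounded ratio $\lam_u/2^{-m}$. But Lemma~\ref{lem:Holder} compares two coverings of the \emph{same} measure; here you are comparing two \emph{different} measures $\wt\mu_I=\sum_{t_u\in I}p_u\,\delta_{t_u}*S_{\lam_u}\mu$ and $\wt\nu_I=(\mu_m|_I)*S_{2^{-m}}\mu$. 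For a fixed $u$ the preimages $f_u^{-1}J$ and $g_u^{-1}J$ (with $g_u(x)=2^{-m}x+t_u$) have comparable lengths, but their \emph{positions} differ by a factor $\lam_u 2^m$ about $t_u$, so the $O(1)$ intervals $J'$ needed to cover them depend on $u$. Over the exponentially many $u$ with $t_u\in I$ there is no uniform covering, and Lemma~\ref{lem:Holder} gives nothing. Worse, the direction you need can genuinely fail: the right-hand side features the honest convolution $\rho_I*\mu$, which may be substantially smoother (hence have smaller $L^q$ norm) than $\mu$ itself, whereas the left-hand side involves a sum of translates of \emph{different} rescalings $S_{\lam_u}\mu$ that need not share any cancellation. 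So the inequality is not a matter of absorbing bounded distortions.

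The paper resolves this by grouping $\Omega_m$ by the exact contraction ratio. Writing $\Lambda_m=\{\lam_u:u\in\Omega_m\}$ and $\mu_{m,\lam}=\sum_{\lam_u=\lam}p_u\delta(t_u)$, one has the genuine convolution decomposition $\mu=\sum_{\lam\in\Lambda_m}\mu_{m,\lam}*S_\lam\mu$, and the crucial observation $|\Lambda_m|=O(m^{|\mathcal{I}|})$ makes the H\"older loss from this sum subexponential. One then runs the argument of Proposition~\ref{prop:ssm-scale-Rm-norm} on each $\mu_{m,\lam}*S_\lam\mu$ (where your scheme works verbatim, since all scalings agree), pigeonholes to a dominant $\lam^*=\lam^*(m)$, and uses $\mu_m\ge\mu_{m,\lam^*}$ to pass to $\|\mu_m^{((R+1)m)}\|_q^q$ in the lower bound. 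Your first displayed identity, the choice of $\DD'$, the use of the flattening theorem, and the endgame via $\sigma\downarrow 0$ are all correct; what is missing is precisely this grouping-by-$\lam$ step in place of the unjustified scale replacement.
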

\begin{proof}
The key difference with the setting of Proposition \ref{prop:ssm-scale-Rm-norm} is that $\mu$ is no longer a convolution of a scaled down version of itself and a discrete approximation. However, $\mu$ is still a convex combination of a ``small'' number of measures which do have this structure.  Indeed, given $m\in\N$, let $\Lambda_m$ be the set of contraction ratios $\{ \lam_u: u\in\Omega_m\}$. For $\lam\in\Lambda_m$, define
\[
\mu_{m,\lam} = \sum\{ p_u \delta(t_u)  : u\in\Omega_m,\lam_u = \lam\}.
\]
Note that $\mu_{m,\lam}$ is positive and finite but does not have mass $1$ in general. The elements of $\Lambda_m$ are of the form $\prod_{i\in\mathcal{I}} \lam_i^{n_i}$, where $\lam_i^{n_i} \ge (\min_{i\in\mathcal{I}}\lam_i)2^{-m}$. It follows that
\begin{equation} \label{eq:Lambda-size-small}
|\Lambda_m| \le O(m^{|\mathcal{I}|}),
\end{equation}
with the implicit constant depending only on $|\mathcal{I}|$ and $(\lam_i)_{i\in\mathcal{I}}$. By self-similarity we have
\begin{equation} \label{eq:decomposition-ssm-mu}
\mu = \sum_{u\in\Omega_m} p_u \,f_u\mu = \sum_{\lam\in \Lambda_m} \mu_{m,\lam} * S_{\lam}\mu,
\end{equation}
The idea is to apply the argument of the proof of Proposition \ref{prop:ssm-scale-Rm-norm} to the convolutions $\mu_{m,\lam} * S_{\lam}\mu$. Since, thanks to \eqref{eq:Lambda-size-small} and \eqref{eq:decomposition-ssm-mu}, $\mu$ is the sum of a sub-exponential number of such measures, the proof will go through with minor changes.

Recall that $\mu_m = \sum_{u\in\Omega_m} p_u\delta(t_u)$, so that $\mu_m= \sum_{\lam\in\Lambda_m} \mu_{m,\lam}$. Note also that, by the self-similarity identity \eqref{eq:decomposition-ssm-mu}, we have $\mu=\sum_{u\in\Omega_m} p_u (\delta(t_u) * S_{\lam_u}\mu)$. Since $S_{\lam_u}\mu$ is supported on an interval $[-\Theta(2^{-m}),\Theta(2^{-m})]$, an application of Lemma \ref{lem:Holder} yields
\begin{equation} \label{eq:eq-norm-discr-general}
\|\mu_m^{(m)}\|_q^q =\Theta_q(1) \|\mu^{(m)}\|_q^q.
\end{equation}
Using \eqref{eq:decomposition-ssm-mu}, the H\"{o}lder bound $\|\sum_{j\in\Lambda}\nu_j\|_q^q \le |\Lambda|^{q-1}\sum _{j\in\Lambda}\|\nu_j\|_q^q$ and Lemma  \ref{lem:discr-norm-conv-equivalence}, we get
\[
\|\mu^{((R+1)m)}\|_q^q \le O(1) |\Lambda_m|^{q-1} \sum_{\lam\in\Lambda_m} \| \mu_{m,\lam}^{((R+1)m)} * S_{\lam}\mu^{((R+1)m)} \|_q^q.
\]
Let $\mu_{m,\lam,I}$ be the normalized restriction of $\mu_{m,\lam}$ to $I$. Note that, for fixed $\lam\in\Lambda_m$, the family of supports of $\mu_{m,\lam,I}^{((R+1)m)}* (S_{\lam}\mu)^{((R+1)m)}$ has covering number $O(1)$. Using this together with Lemma \ref{lem:L-q-norm-almost-disj-supports}, we deduce that
\[
\|\mu^{((R+1)m)}\|_q^q \le O(1) |\Lambda_m|^{q-1} \sum_{\lam\in\Lambda_m} \sum_{I\in\DD_m} \mu_{m,\lam}(I)^q \|\mu_{m,\lam,I}^{((R+1)m)} * (S_\lam\mu)^{((R+1)m)}\|_q^q.
\]
Let $\rho_{m,\lam,I}=S_{1/\lam}(\mu_{m,\lam,I})$. Using that $\lambda=\Theta(2^{-m})$ for $\lambda\in\Lambda_m$ together with Lemmas \ref{lem:Holder} and \ref{lem:discr-norm-conv-equivalence}, we see that for each $\lambda\in\Lambda_m$ and $I\in\DD_m$ we have
\[
\|\mu_{m,\lam,I}^{((R+1)m)} * (S_\lam\mu)^{((R+1)m)}\|_q^q = O_q(1) \| \rho_{m,\lam,I}^{(Rm)} * \mu^{(Rm)}\|_q^q.
\]
We deduce from the last two displayed equations that there is $\lambda^*=\lambda^*(m)\in\Lambda_m$ such that
\[
 \|\mu^{((R+1)m)}\|_q^q \le O(1) |\Lambda_m|^q  \sum_{I\in\DD_m} \mu_{m,\lam^*}(I)^q \|\rho_{m,\lam^*,I}^{(Rm)} * \mu^{(Rm)}\|_q^q.
\]
Fix $\sigma>0$, and let $\DD'=\{ I\in\DD_m: \|\rho_{m,\lam^*,I}^{(Rm)}\|_q^q\le 2^{-\sigma m}\}$. By the analog of Theorem \ref{thm:conv-with-ssm-flattens} in our context, there exists $\e=\e(\sigma,q) \in (0,\sigma)$ such that (for $m$ large enough)
\[
I\in\DD' \Longrightarrow \|\rho_{m,\lam^*,I}^{(Rm)}*\mu^{(Rm)}\|_q^q\le 2^{-\e  m} \|\mu^{(Rm)}\|_q^q.
\]
Combining the last two displayed equations with the bound $O(1)|\Lambda_m|^q\le 2^{-\e m/2}$ valid for $m\gg_{q,\e} 1$, we get
\begin{align*}
 \|\mu^{((R+1)m)}\|_q^q  &\le  2^{\e m/2} 2^{-\e m} \|\mu^{(Rm)}\|_q^q \sum_{I\in\DD'} \mu_{\mu,\lam^*}(I)^q  +  2^{\e m/2}  \|   \mu^{(Rm)} \|_q^q \sum_{I\notin \DD'}  \mu_{m,\lam^*}(I)^q  \\
&\le    \|\mu^{(R m)}\|_q^q \left( 2^{\e m/2} 2^{-\e m} \|\mu^{(m)}\|_q^q   +  2^{\e m/2} \sum_{I\notin\DD'} \mu_{m,\lam^*}(I)^q \right).
\end{align*}
Since $-\tfrac 1m \log\|\mu^{(m)}\|_q^q$ converges (to $\tau(q)$), we know that
\[
 2^{\e m/2} 2^{-\e m}  \|\mu^{(R m)}\|_q^q  \|\mu^{(m)}\|_q^q \le \tfrac12 \|\mu^{((R+1)m)}\|_q^q
\]
for large enough $m$, and therefore (using $\e<\sigma$)
\[
\sum_{I\notin\DD'} \mu_{m,\lam^*}(I)^q \ge \tfrac{1}{2} 2^{-\e m/2}  \|  \mu^{(Rm)} \|_q^{-q}  \|\mu^{((R+1)m)}\|_q^q  \ge 2^{-\sigma m} 2^{-\tau(q)m}.
\]
On the other hand, similarly to \eqref{eq:estimate-mu-m-Rp1}, we can apply the pointwise inequality $\mu_m\ge \mu_{m,\lam^*}$ and then Lemma \ref{lem:Holder} to conclude (always assuming $m$ is large enough)
\begin{align*}
 \|\mu_{m}^{((R+1)m)}\|_q^q &\ge \|\mu_{m,\lam^*}^{((R+1)m)}\|_q^q  \ge\Omega(1) \sum_{I\in\DD_m} \mu_{m,\lam^*}(I)^q \| \rho_{m,\lam^*,I}^{(Rm)} \|_q^q \\
 &\ge \sum_{I\notin\DD'} \mu_{m,\lam^*}(I)^q  2^{-\sigma m} \ge 2^{-(\tau(q)+2\sigma) m}.
\end{align*}
The opposite inequality
\[
 \|\mu_{m}^{((R+1)m)}\|_q^q \le \|\mu_m^{(m)}\|_q^q \le O(1) \|\mu_m\|_q^q \le 2^{-(\tau(q)-\sigma)m}
\]
holds for large enough $m$ by \eqref{eq:eq-norm-discr-general} so, since $\sigma>0$ was arbitrary, this concludes the proof.
\end{proof}

\begin{proof}[Proof of Proposition \ref{prop:dim-general-ssm}]
It is enough to prove the statement for $q$ such that $\tau$ is differentiable at $q$.  Iterating the definition of $\wt{\tau}(q)$, we see that
\[
\sum_{u\in\Omega_m} p_u^q \lam_u^{-\wt{\tau}(q)} =1.
\]
Since $\lam_u \in (c 2^{-m},2^{-m})$ for $u\in\Omega_m$ and a constant $c>0$ depending only on the IFS, we deduce that
\begin{equation} \label{eq:symbolic-Lq-spectrum-general-ssm}
\wt{\tau}(q) = \lim_{m\to\infty} \frac{-\log\left(\sum_{u\in\Omega_m} p_u^q\right) }{m}.
\end{equation}

By the exponential separation assumption, there exist $R\in\N$ and a sequence $m_j\to\infty$ such that, for fixed $\lam\in\Lambda_{m_j}$, the distance between any two distinct atoms of $\mu_{m,\lam}$ is at least $2^{-Rm_j}$. Hence, by \eqref{eq:decomposition-ssm-mu} and H\"{o}lder's inequality,
\begin{equation} \label{eq:norm-at-smaller-scale-equals-symbolic-norm}
\|\mu_{m_j}^{(Rm_j)}\|_q^q \le |\Lambda_{m_j}|^{q-1} \sum_{\lam\in\Lambda_{m_j}} \|\mu_{m_j,\lam}^{(Rm)}\|_q^q
= |\Lambda_{m_j}|^{q-1} \sum_{u\in\Omega_{m_j}} p_u^q.
\end{equation}
On the other hand, one always has
\begin{equation} \label{eq:norm-at-smaller-scale-equals-symbolic-norm-2}
 \|\mu_m^{(Rm)}\|_q^q \ge \|\mu_m\|_q^q \ge \sum_{u\in\Omega_m} p_u^q.
\end{equation}

Combining Proposition \ref{prop:general-ssm-smaller-scale} and Equations \eqref{eq:Lambda-size-small}, \eqref{eq:symbolic-Lq-spectrum-general-ssm}, \eqref{eq:norm-at-smaller-scale-equals-symbolic-norm} and \eqref{eq:norm-at-smaller-scale-equals-symbolic-norm-2} yields the claimed equality $\tau(q) = \wt{\tau}(q)$.
\end{proof}

This concludes the proof of Theorem \ref{thm:dim-general-ssm}.

\section{Convolutions of self-similar measures and the proof of Theorem \ref{thm:Furstenberg}}
\label{sec:convolutions-and-Furst-conj}

\subsection{Convolutions of two self-similar measures and Furstenberg's conjectures}

\label{subsec:proof-of-Furst-conjecture}

 We turn to convolutions of homogeneous self-similar measures, and deduce Theorem \ref{thm:Furstenberg} as a corollary. As we observed in \S\ref{subsec:DDSSM}, the convolutions of the natural measures on a $p$-Cantor set and a $q$-Cantor set fit naturally into the setting of dynamically driven self-similar measures. The same argument works in greater generality:

\begin{lemma} \label{lem:mu-x-conv-ssm}
Let $0<\lam_2<\lam_1<1$ and $\Delta_1,\Delta_2\in\mathcal{A}$, and consider the self-similar measures
\begin{equation} \label{eq:def-ssm-for-convolution}
\eta_i = \eta_i(\Delta_i,\lam_i)= *_{n=0}^\infty S_{\lam_i^n}\Delta_i.
\end{equation}

Write $a_i=|\log(\lam_i)|$. On $X=[0,a_2)$, define the map
\[
\mathbf{T}(x) = x+a_1 \bmod(a_2).
\]
Moreover, let $\Delta:X\to \mathcal{A}$ be given by
\[
\Delta(x) = \left\{
\begin{array}{ll}
  \Delta_1* S_{e^x}\Delta_2 & \text{if } x\in [0,a_1)\\
  \Delta_1 & \text{if } x\in [a_1,a_2)
\end{array}
\right..
\]
Then if $\mu_x$ is given by \eqref{eq:def-mu-x} with $\lam=\lam_1$, we have
\[
\mu_x = \left\{
\begin{array}{ll}
\eta_1 * S_{e^x}\eta_2 & \text{ if } x\in [0,a_1)\\
\eta_1 * S_{e^{x-a_2}}\eta_2 & \text{ if } x\in [a_1,a_2)
\end{array}
\right..
\]
for all $x\in X$.
\end{lemma}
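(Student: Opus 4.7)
The plan is to verify the claimed identity by directly computing $\mu_{x,n}$, splitting its convolution factors according to which half of $X$ the point $\mathbf{T}^i x$ lies in, and then passing to the limit.

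First, partition $\{0,1,\ldots,n-1\}$ into $A_n(x)=\{i:\mathbf{T}^i x\in[0,a_1)\}$ and $B_n(x)=\{i:\mathbf{T}^i x\in[a_1,a_2)\}$. By the definition of $\Delta$, the factor at index $i$ equals $\Delta_1$ when $i\in B_n$, and $\Delta_1\ast S_{e^{\mathbf{T}^i x}}\Delta_2$ when $i\in A_n$. Pulling the $S_{\lam_1^i}$ through and using that convolution is commutative and associative, this gives
\[
\mu_{x,n}=\Bigl(\ast_{i=0}^{n-1} S_{\lam_1^i}\Delta_1\Bigr)\ast\Bigl(\ast_{i\in A_n} S_{\lam_1^i e^{\mathbf{T}^i x}}\Delta_2\Bigr).
\]
The first factor converges weakly to $\eta_1$ as $n\to\infty$, so the main task is to identify the limit of the second factor.

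Next, I would compute the scaling $\lam_1^i e^{\mathbf{T}^i x}$. Writing $\mathbf{T}^i x=x+i a_1- k_i(x) a_2$ with $k_i(x)=\lfloor (x+i a_1)/a_2\rfloor$, and using $a_j=-\log\lam_j$, one obtains
\[
\lam_1^i e^{\mathbf{T}^i x}=e^x \lam_2^{k_i(x)}.
\]
The key combinatorial observation is that when the visits $i\in A_n(x)$ are enumerated in increasing order $i_0<i_1<\cdots$, the values $k_{i_0}(x), k_{i_1}(x),\ldots$ form a consecutive run of non-negative integers: this is because each time $\mathbf{T}^j x$ lies in $[0,a_1)$, the map adds $a_1$ and cannot cause a wrap-around, while a return to $[0,a_1)$ is exactly equivalent to an increment of $\lfloor (x+j a_1)/a_2\rfloor$. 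If $x\in[0,a_1)$, then $i_0=0$ and the sequence is $0,1,2,\ldots$; if $x\in[a_1,a_2)$, then the sequence is $1,2,3,\ldots$.

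With this in hand, the second factor becomes $S_{e^x}\bigl(\ast_{j=0}^{|A_n|-1} S_{\lam_2^j}\Delta_2\bigr)$ in the first case, and $S_{e^x\lam_2}\bigl(\ast_{j=0}^{|A_n|-1}S_{\lam_2^j}\Delta_2\bigr)$ in the second. Since $|A_n(x)|\to\infty$ for every $x$ (the orbit visits $[0,a_1)$ infinitely often, either trivially or by recurrence of rotations on an interval of positive length), both partial convolutions converge weakly to $\eta_2$. Using $e^x\lam_2=e^{x-a_2}$, taking $n\to\infty$ and invoking the continuity of convolution with respect to weak convergence of compactly supported measures yields exactly the two cases in the statement. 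The only delicate point is the careful bookkeeping of the indices $k_{i_j}(x)$ across the two starting regions, but this is immediate once one writes $\mathbf{T}$ as a rotation on $[0,a_2)$.
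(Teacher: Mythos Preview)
Your approach is essentially the same as the paper's: both compute $\lam_1^i e^{\mathbf{T}^i x}=e^x\lam_2^{k_i(x)}$ and identify the exponents $k_i$ at the successive visits to $[0,a_1)$ as a consecutive run of integers, then pass to the limit. The paper phrases this via the count $n'(x)=|\{j\in[1,n]:\mathbf{T}^j x\in[0,a_1)\}|$ and the identity $\mathbf{T}^n(x)=x+na_1-n'(x)a_2$, but the content is identical.

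One small correction: the clause ``each time $\mathbf{T}^j x$ lies in $[0,a_1)$, the map adds $a_1$ and cannot cause a wrap-around'' is false when $2a_1>a_2$ (then two consecutive visits can occur). Fortunately this clause is unnecessary. The correct and sufficient observation is only your second one: for $j\ge 1$, $\mathbf{T}^j x\in[0,a_1)$ if and only if step $j$ is a wrap-around, i.e.\ if and only if $k_j(x)-k_{j-1}(x)=1$. This alone yields that $k_{i_m}(x)$ increases by exactly one between successive visits, giving the consecutive runs $0,1,2,\ldots$ (when $x\in[0,a_1)$) or $1,2,3,\ldots$ (when $x\in[a_1,a_2)$) that you need.
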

\begin{proof}
Let $n'(x)=|\{i\in[1,n]: \mathbf{T}^i(x)\in [0,a_1)\}|$. Then $\mathbf{T}^n(x)=x+n a_1-n'(x)a_2$, so that $e^{\mathbf{T}^n(x)}\lam_1^n = e^x \lam_2^{n'(x)}$, and therefore
\begin{equation} \label{eq:model-for-conv-ssm}
*_{i=1}^n \Delta(\mathbf{T}^i x) =  \left(*_{i=1}^n S_{\lam_1^i}\Delta_1\right)  *  S_{e^x} \left( *_{i=1}^{n'(x)} S_{\lam_2}^i\Delta_2 \right).
\end{equation}
The claim follows by convolving with $\Delta(x)$ to get $\mu_{n+1,x}$, and then letting $n\to\infty$.
\end{proof}

\begin{thm} \label{thm:dim-conv-ssm}
Let $\eta_1,\eta_2$ be as in \eqref{eq:def-ssm-for-convolution}. Assume $\log\lam_2/\log\lam_1\notin\mathbb{Q}$. Moreover, suppose that there is $R>0$ such that for infinitely many $n$ and all $P_j\in\mathcal{P}_{\Delta_j,n}$ (recall  Definition \ref{def:exponential-separation-hom-ssm}), $j=1,2$  it holds that
\[
|P_1(\lam_1)|,|P_2(\lam_2)| \ge \lam_1^{Rn}.
\]
Then
\begin{equation} \label{eq:Lq-dim-conv-ssm}
D(\eta_1*\eta_2,q) = \min\left(D(\eta_1,q)+D(\eta_2,q),1\right)
\end{equation}
for all $q\in (1,\infty)$.
\end{thm}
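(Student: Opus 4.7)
The strategy is to apply Theorem \ref{thm:L-q-dim-dyn-ssm} to the pleasant model $(X,\mathbf{T},\Delta,\lambda_1)$ constructed in Lemma \ref{lem:mu-x-conv-ssm}, so that the target measure $\eta_1*\eta_2$ appears as $\mu_0$. First I will verify pleasantness: the hypothesis $\log\lambda_2/\log\lambda_1 \notin \Q$ means that $a_1/a_2 \notin \Q$, so $\mathbf{T}$ is an irrational rotation on the circle $X=[0,a_2)$, hence uniquely ergodic with $\PP$ the normalized Lebesgue measure. The map $\Delta$ is continuous on the cocountable set obtained by removing the jump point $x=a_1$ and the finite set of $x\in[0,a_1)$ where two atoms of $\Delta_1*S_{e^x}\Delta_2$ coincide, and has a uniformly bounded number of atoms; so the hypotheses on $\Delta$ in Theorem \ref{thm:L-q-dim-dyn-ssm} hold. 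That $\mu_x$ is nonatomic, supported in a fixed bounded interval, and weakly continuous in $x$ off a null set follows routinely from the corresponding properties of $\Delta$ along almost every orbit.

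The main step, and the principal obstacle, is verifying exponential separation. By Lemma \ref{lem:mu-x-conv-ssm} (see \eqref{eq:model-for-conv-ssm}), every atom of $\mu_{x,n}$ has the form $Q_1(\lambda_1)+e^x Q_2(\lambda_2)$ with $Q_j$ a polynomial of degree at most $n$ whose coefficients lie in $\supp(\Delta_j)$, so the difference of two distinct atoms can be written as $P_1(\lambda_1)+e^x P_2(\lambda_2)$ with $P_j\in\mathcal{P}_{\Delta_j,n}$ not both zero. Fix $n$ belonging to the infinite sequence provided by the hypothesis, along which $|P_j(\lambda_j)| \ge \lambda_1^{Rn}$ for all nonzero $P_j\in\mathcal{P}_{\Delta_j,n}$. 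If exactly one of $P_1,P_2$ is nonzero, the difference is bounded below by $\lambda_1^{Rn}$ (up to a harmless factor of $e^x\ge 1$). If both are nonzero, the set of $x\in X$ with $|P_1(\lambda_1)+e^x P_2(\lambda_2)|<\lambda_1^{R'n}$ maps, via $x\mapsto e^x$, into an interval of length at most $2\lambda_1^{R'n}/|P_2(\lambda_2)|\le 2\lambda_1^{(R'-R)n}$, and thus has $\PP$-measure $O(\lambda_1^{(R'-R)n})$. The number of pairs of distinct labels is at most $C^n$ where $C=(|\supp\Delta_1|\cdot|\supp\Delta_2|)^2$, so choosing $R'$ large enough that $C\lambda_1^{R'-R}<1/2$ makes the total bad measure summable along the subsequence, and Borel--Cantelli yields exponential separation of $\mu_{x,n}$ at scale $\lambda_1^{R'n}$ along infinitely many $n$ for $\PP$-almost every $x$.

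Theorem \ref{thm:L-q-dim-dyn-ssm} therefore applies and yields the $L^q$-dimension formula for $\mu_x$ uniformly in $x\in X$. To finish, I compute the integral. On $[a_1,a_2)$ we have $\|\Delta(x)\|_q^q=\|\Delta_1\|_q^q$; and on $[0,a_1)$, outside of finitely many points, the convolution $\Delta_1*S_{e^x}\Delta_2$ has $|\supp\Delta_1|\cdot|\supp\Delta_2|$ distinct atoms, so $\|\Delta(x)\|_q^q=\|\Delta_1\|_q^q\|\Delta_2\|_q^q$. Hence
\[
\frac{\int_X\log\|\Delta(x)\|_q^q\,d\PP}{(q-1)\log\lambda_1} = \frac{\log\|\Delta_1\|_q^q}{(q-1)\log\lambda_1}+\frac{a_1}{a_2}\cdot\frac{\log\|\Delta_2\|_q^q}{(q-1)\log\lambda_1} = \frac{\log\|\Delta_1\|_q^q}{(q-1)\log\lambda_1}+\frac{\log\|\Delta_2\|_q^q}{(q-1)\log\lambda_2},
\]
using $a_1/a_2=\log\lambda_1/\log\lambda_2$. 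The hypothesis of Theorem \ref{thm:dim-conv-ssm} implies the exponential separation of both $\eta_j$ individually, so by Theorem \ref{thm:dim-ssm} the two terms on the right equal the unclipped expressions for $D(\eta_1,q)$ and $D(\eta_2,q)$; in particular $\min$ of the above with $1$ equals $\min(D(\eta_1,q)+D(\eta_2,q),1)$ in all cases. Evaluating at $x=0$, where $\mu_0=\eta_1*\eta_2$, gives \eqref{eq:Lq-dim-conv-ssm}.
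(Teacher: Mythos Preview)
Your proof is correct and follows essentially the same approach as the paper's: set up the model from Lemma~\ref{lem:mu-x-conv-ssm}, verify pleasantness via irrational rotation, establish exponential separation by bounding the bad set for each pair $(P_1,P_2)$ and summing (your Borel--Cantelli phrasing is in fact slightly more explicit than the paper's), apply Theorem~\ref{thm:L-q-dim-dyn-ssm}, and finish by computing the integral and invoking Theorem~\ref{thm:dim-ssm}. Your handling of the clipping issue when identifying the integral with $D(\eta_1,q)+D(\eta_2,q)$ is a small point the paper leaves implicit.
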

\begin{proof}

Let $(X,\mathbf{T},\Delta,\lam_1)$ be the model given by Lemma \ref{lem:mu-x-conv-ssm}. We identify $X$ with the circle (i.e. we identify $0$ and $\log\lam_1$), so that the $X$ becomes compact, and $\mathbf{T}$ is rotation by $\log \lam_1/\log\lam_2$ (which is irrational by assumption) on the circle. Hence $\mathbf{T}$ is uniquely ergodic (with the unique invariant measure $\mathbb{P}$ being normalized Lebesgue measure on $X$). If $\Delta_1$ and $\Delta_2$ are supported on a single point, then $\mu_x$ is an atom for all $x$ and there is nothing to do; otherwise, $\mu_x$ is non-atomic for all $x$. Finally, the map $x\mapsto \mu_x$ has a single discontinuity at $a_1$, as is evident from Lemma \ref{lem:mu-x-conv-ssm}. We have then checked that the model is pleasant. The assumptions on $x\mapsto \Delta(x)$ in Theorem \ref{thm:L-q-dim-dyn-ssm} also hold trivially.

We claim that our assumption on the separation of $\eta_1,\eta_2$ implies that our model has exponential separation. Let
\[
\Delta_{n,j} = \sum_{i=0}^{n-1} S_{\lam_j^i}(\Delta_j) = \left\{ \sum_{i=0}^{n-1} y_i \lam_j^i : y_i\in\Delta_j \right\}.
\]
Recall from \eqref{eq:model-for-conv-ssm} that all atoms of $\mu_{x,n}$ have the form
\[
\{ u_1 + e^x u_2 : u_1\in\Delta_{n,1}, u_2\in\Delta_{n,2}\}.
\]
Thus, for given $x\in X$, the smallest distance between atoms of $\mu_{x,n}$ is bounded above by
\[
\Phi_n(x) = \min\{ |P_1(\lam_1)|, |e^x P_2(\lam_2)|, |P_1(\lam_1) - e^x P_2(\lam_2)| : P_1\in \mathcal{P}_{\Delta_1,n}, P_2\in\mathcal{P}_{\Delta_2,n} \}.
\]
Here $|P_1(\lam_1)|$ corresponds to differences between pairs of atoms for which $u_2$ coincide, $|e^x P_2(\lam_2)|$ to pairs of atoms for which $u_1$ coincide, and $|P_1(\lam_1) - e^x P_2(\lam_2)|$ to pairs of atoms for which neither $u_1$ nor $u_2$ coincide. By assumption, $|P_j(\lam_j)|\ge \lam_1^{Rn}$ for infinitely many $n$, so we only have to deal with the third type of differences. Fix, then,  $n$ such that $|P_j(\lam_j)|\ge \lam_1^{Rn}$ for all $P_j\in \mathcal{P}_{\Delta_j,n}$.

Let $R'\gg R$. For fixed $P_j\in \mathcal{P}_{\Delta_j,n}$,
\[
|\{x:  |P_1(\lam_1) - e^x P_2(\lam_2)| \le \lam_1^{R' n}\}| \le O_{\Delta_1,\Delta_2}(1) \lam_1^{(R'-R)n}.
\]
Since $| \mathcal{P}_{\Delta_j,n}|\le O_{|\Delta_j|}(1)^n$, we deduce that
\[
|\{x:  |P_1(\lam_1) - e^x P_2(\lam_2)| \le \lam_1^{R' n} \text{ for some } P_j\in \mathcal{P}_{\Delta_j,n} \} |  \le O_{\Delta_1,\Delta_2}(1)^n \lam_1^{(R'-R)n}.
\]
Hence, if $R'$ is taken large enough (in terms of $R, \Delta_1, \Delta_2$ only), then there are infinitely many $n\in\N$ such that for almost all $x\in X$ it holds that $|P_1(\lam_1) - e^x P_2(\lam_2)|\ge \lam_1^{R' n}$ for any choice of $P_j\in\mathcal{P}_{\Delta_j,n}$. This establishes exponential separation.

We have verified that the application of Theorem \ref{thm:L-q-dim-dyn-ssm} is justified. In light of this theorem, we only need to check that the right-hand side in \eqref{eq:Lq-dim-conv-ssm} equals the right-hand side in \eqref{eq:main-thm}. Note that
\[
\|\Delta_1 * S_{e^x}\Delta_2\|_q^q = \|\Delta_1\|_q^q \|\Delta_2\|_q^q
\]
outside of a finite set of $x$. Hence, keeping in mind the definition of the map $\Delta$ from Lemma \ref{lem:mu-x-conv-ssm},
\[
\int_X \log\|\Delta(x)\|_q^q \,d\mathbb{P}(x) =  \log\|\Delta_1\|_q^q + \frac{\log(\lam_1)}{\log(\lam_2)}\log\|\Delta_2\|_q^q.
\]
Dividing by $(q-1)\log(\lam_1)$ we get that
\[
D(\eta_1*\eta_2,q) = \min\left(\frac{\log\|\Delta_1\|_q^q}{(q-1)\log(\lam_1)}+\frac{\log\|\Delta_2\|_q^q}{(q-1)\log(\lam_2)}, 1\right).
\]
Theorem \ref{thm:dim-ssm} applied to $\eta_1$ and $\eta_2$ concludes the proof.
\end{proof}

We point out that in the range $q\in (1,2]$, the above result was proved in \cite{NPS12} in some special cases and then, extending the same ideas, in \cite[Corollary 6.2]{GSSY16}, in even greater generality. For example, in \cite{GSSY16} no separation assumptions are made on $\eta_1,\eta_2$. However, the methods of \cite{NPS12, GSSY16} ultimately rely on Marstrand's projection theorem, which is known to fail in general if $q>2$.

As a corollary, we obtain a Furstenberg-like bound on the intersections of self-similar sets, which also answers affirmatively a question of De-Jun Feng.
\begin{cor} \label{cor:dim-intersection-ssm}
Let $\lam_1,\lam_2\in (0,1)$ with $\log\lam_1/\log\lam_2\notin\mathbb{Q}$. Suppose $E_1, E_2$ are finite sets such that $\{ \lam_j x + t: t\in E_j\}$ satisfies the open set condition for $j=1,2$. Let $A_1, A_2$ denote the corresponding self-similar sets.

Then for all invertible affine maps $g:\R\to\R$,
\[
\ubdim(A_1 \cap g(A_2)) \le \max(\hdim(A_1)+\hdim(A_2)-1,0).
\]
\end{cor}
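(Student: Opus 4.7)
My plan is to follow the strategy sketched in the introduction for Theorem \ref{thm:Furstenberg}: realize $A_1\cap g(A_2)$ as a fiber of a linear projection of the product $A_1\times A_2$, and bound the size of this fiber by combining an $L^q$-dimension estimate for the projected measure with the Frostman and fiber lemmas from \S\ref{subsec:DDSSM}. Write $g(y)=ay+b$ with $a\neq 0$, and define $\pi:\R^2\to\R$ by $\pi(x,y)=x-ay$. The projection onto the first coordinate is a bi-Lipschitz bijection from $\pi^{-1}(b)\cap(A_1\times A_2)$ onto $A_1\cap g(A_2)$, so the two sets have the same upper box-counting dimension.

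Next, I will equip each $A_j$ with its natural Ahlfors $s_j$-regular self-similar measure $\eta_j=*_{n=0}^\infty S_{\lam_j^n}\Delta_j$, where $\Delta_j=|E_j|^{-1}\sum_{t\in E_j}\delta_t$ and $s_j=\log|E_j|/\log(1/\lam_j)=\hdim(A_j)$. Under the OSC, $\eta_j$ is Ahlfors $s_j$-regular on $A_j$, so $\mu:=\eta_1\times\eta_2$ satisfies $\mu(B(z,r))\asymp r^{s_1+s_2}$ for $z\in A_1\times A_2$, and a direct computation from Ahlfors regularity yields $D(\eta_j,q)=s_j$ for every $q\in(1,\infty)$. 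The push-forward satisfies $\pi\mu=\eta_1*S_{-a}\eta_2$, and the rescaled factor $S_{-a}\eta_2=*_{n=0}^\infty S_{\lam_2^n}(S_{-a}\Delta_2)$ is again a homogeneous self-similar measure with contraction $\lam_2$. The OSC for the IFS defining $A_j$ gives $|P(\lam_j)|\ge c\lam_j^n$ for every $P\in\mathcal{P}_{\Delta_j,n}$ and every $n$, which is stronger than the exponential separation required by Theorem \ref{thm:dim-conv-ssm}. Combining this with the hypothesis $\log\lam_1/\log\lam_2\notin\Q$, Theorem \ref{thm:dim-conv-ssm} applies and yields
\[
D(\pi\mu,q)=\min\bigl(D(\eta_1,q)+D(S_{-a}\eta_2,q),\,1\bigr)=\min(s_1+s_2,\,1)
\]
for all $q\in(1,\infty)$.

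The conclusion now follows by combining Lemmas \ref{lem:Lq-dim-to-Frostman-exp} and \ref{lem:Frostman-exp-to-small-fiber}. Fix $\e>0$, and choose $q$ large and $s'<\min(s_1+s_2,1)$ close enough to this value so that $(1-1/q)s'\ge\min(s_1+s_2,1)-\e$. Lemma \ref{lem:Lq-dim-to-Frostman-exp} then furnishes a Frostman exponent $\alpha\ge\min(s_1+s_2,1)-\e$ for $\pi\mu$. Applying Lemma \ref{lem:Frostman-exp-to-small-fiber} to $\mu$ and $\pi$ on the compact set $A_1\times A_2$ with $s=s_1+s_2+\e$ (permitted for sufficiently small $r$ by the Ahlfors lower regularity of $\mu$), I obtain
\[
\ubdim(A_1\cap g(A_2))=\ubdim\bigl(\pi^{-1}(b)\cap(A_1\times A_2)\bigr)\le s-\alpha\le\max(s_1+s_2-1,0)+2\e.
\]
Letting $\e\downarrow 0$ completes the argument.

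The only nontrivial verification beyond routine bookkeeping is checking that Theorem \ref{thm:dim-conv-ssm} applies after rescaling the second factor by $-a$. This reduces to confirming that $S_{-a}\eta_2$ retains the homogeneous self-similar form with contraction $\lam_2$ (immediate from $-a\sum_n\lam_2^n X_n=\sum_n\lam_2^n(-aX_n)$) and that exponential separation is invariant under this affine rescaling of the digit set (immediate, since it multiplies all polynomial values in $\mathcal{P}_{S_{-a}\Delta_2,n}$ by the nonzero constant $|a|$). Once these points are noted, the rest of the proof is a direct assembly of tools already developed in the paper.
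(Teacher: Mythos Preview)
Your proof is correct and follows essentially the same route as the paper's: both set up the product measure $\mu=\eta_1\times\eta_2$, invoke Theorem \ref{thm:dim-conv-ssm} to compute the $L^q$ dimension of the relevant projection, and then feed this into Lemmas \ref{lem:Lq-dim-to-Frostman-exp} and \ref{lem:Frostman-exp-to-small-fiber}. The only cosmetic difference is that the paper first absorbs the scaling $a$ into $A_2$ (``rescaling $A_2$ does not change the assumptions'') and then works with the map $(x,y)\mapsto x-y$, whereas you keep $a$ explicit and apply Theorem \ref{thm:dim-conv-ssm} to $\eta_1*S_{-a}\eta_2$ directly; your verification that $S_{-a}\eta_2$ remains a homogeneous self-similar measure with exponential separation is exactly what justifies the paper's one-line reduction.
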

\begin{proof}
Let $\eta_i$ be the uniform self-similar measure on $A_i$, and write $\mu=\eta_1\times\eta_2$ and $s=\hdim(A_1)+\hdim(A_2)$. Then $\mu(B(x,r)) = \Theta(r^s)$ for $x\in\supp(\mu)$, since the corresponding fact holds for $\eta_1,\eta_2$ thanks to the open set condition.

As rescaling $A_2$ does not change the assumptions, it is enough to prove the claim when $g$ is a translation. Let $\Delta_j$ be the uniform probability measure on $E_j$, and $\eta_j=\eta_j(\lam_j,\Delta_j)$ the associated self-similar measure. The hypotheses of Theorem \ref{thm:dim-conv-ssm} are met, so we know that
\[
D(\eta_1 * \eta_2,q) = \min\left(\frac{\log|E_1|}{\log(1/\lam_1)}+\frac{\log|E_2|}{\log(1/\lam_2)}, 1\right) = \min(s,1)
\]
for all $q>1$. The claim now follows from Lemmas \ref{lem:Lq-dim-to-Frostman-exp} and \ref{lem:Frostman-exp-to-small-fiber} applied to the function $(x,y)\mapsto x-y$ restricted to $A_1\times A_2$.
\end{proof}

We can now finish the proof of Theorem \ref{thm:Furstenberg}.
\begin{proof}[Proof of Theorem \ref{thm:Furstenberg}]
Let $A, B$ be $T_p$-invariant and $T_q$-invariant respectively, with $p$ and $q$ multiplicatively independent, and fix $\delta>0$. Given $N\in\N$, let
\[
E_{A,N} = \{ j p^{-N}: A\cap [j p^{-N},(j+1) p^{-N})\neq\varnothing  \},
\]
and define $E_{B,N}$ likewise. It is well known that Hausdorff and box-counting dimensions coincide for $T_p, T_q$-invariant sets, see e.g. \cite[Theorem 5.1]{Furstenberg08} for a more general fact. Hence by taking $N$ large enough we can ensure that
\[
|E_{A,N}| \le p^{N(\hdim(A)+\delta)}, \quad |E_{B,N}| \le q^{N(\hdim(B)+\delta)}.
\]
Let $A'$ be the homogeneous self-similar set with contraction $p^{-N}$ and translation set $E_{A,N}$, and define $B'$ analogously. The open set condition holds for $A',B'$ with open set $(0,1)$. Then
\[
 \hdim(A') = \frac{\log|E_{A,N}|}{\log p^N} < \hdim(A)+\delta,
\]
and likewise for $B'$. Also, by invariance of $A,B$ under $T_{p^N}, T_{q^N}$ respectively, $A\subset A', B\subset B'$. (Symbolically, $E_{A,N}$ corresponds to all initial words of length $N$ in $A$, and $A'$ to all concatenations of such words).

Since $\delta>0$ was arbitrary, the theorem follows from Corollary \ref{cor:dim-intersection-ssm} applied to $A',B'$.
\end{proof}

Corollary \ref{cor:dim-intersection-ssm} and Theorem \ref{thm:Furstenberg} remain valid for $C^1$ maps $g$. It is not hard to deduce this from the affine case and Furstenberg's theory of CP-processes \cite{Furstenberg08}, but since it would take us too far in a different direction, we defer a detailed proof of these and related results to a forthcoming article.

Recall from the introduction that another conjecture of Furstenberg, settled in \cite{HochmanShmerkin12}, concerns the dimension of the arithmetic sum of a $\times p$ and a $\times q$ invariant set. As a corollary, we are able to sharpen this when the sum of the dimensions is at most $1$:
\begin{cor} \label{cor:sums-of-invariant-sets}
Let $p,q$ be multiplicatively independent, and suppose that $A,B\subset [0,1)$ are closed and $T_p,T_q$-invariant, respectively. Assume $\hdim(A)+\hdim(B)\le 1$. Then for any subsets $A'\subset A, B'\subset B$,
\[
 \hdim(A'+B') = \hdim(A'\times B')
\]
\end{cor}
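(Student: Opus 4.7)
The inequality $\hdim(A'+B')\le\hdim(A'\times B')$ is immediate since the sum map $\pi(x,y)=x+y$ is $\sqrt{2}$-Lipschitz. The content lies in the reverse direction $\hdim(A'\times B')\le\hdim(A'+B')$, and the plan is to derive it from a uniform upper bound on the size of slices of $A\times B$ by level sets of $\pi$, obtained by combining Theorem \ref{thm:dim-conv-ssm} with Lemma \ref{lem:Frostman-exp-to-small-fiber}, and then translated into a Hausdorff-dimension statement by a covering argument.

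First I would mimic the $p$-Cantor approximation at the end of the proof of Theorem \ref{thm:Furstenberg}: given $\delta>0$, for large $N$ pick the homogeneous self-similar set $\widetilde A\supset A$ with IFS $(p^{-N}x+j p^{-N})_{j\in E_{A,N}}$, and analogously $\widetilde B\supset B$ with contraction ratio $q^{-N}$; both satisfy the open set condition, and $\hdim\widetilde A<\hdim A+\delta$, $\hdim\widetilde B<\hdim B+\delta$. Let $\widetilde\eta_1,\widetilde\eta_2$ be the uniform self-similar measures on $\widetilde A,\widetilde B$. These are Ahlfors regular of dimensions $\hdim\widetilde A,\hdim\widetilde B$, so the product $\widetilde\mu=\widetilde\eta_1\times\widetilde\eta_2$ satisfies $\widetilde\mu(B(x,r))\ge r^{s_\delta+\eta}$ for $r$ small and any $\eta>0$, where $s_\delta=\hdim\widetilde A+\hdim\widetilde B$.

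Next I would apply Theorem \ref{thm:dim-conv-ssm} to $\widetilde\eta_1,\widetilde\eta_2$: the ratio $\log(p^{-N})/\log(q^{-N})=\log p/\log q$ is irrational by multiplicative independence, and the OSC yields (in fact, implies) the exponential separation hypothesis. Therefore $D(\widetilde\eta_1*\widetilde\eta_2,q)=\min(s_\delta,1)$ for every $q\in(1,\infty)$, and Lemma \ref{lem:Lq-dim-to-Frostman-exp} applied with $q$ large produces a Frostman exponent $\alpha(\delta,\eta)$ for $\widetilde\eta_1*\widetilde\eta_2=\pi_*\widetilde\mu$ that can be chosen arbitrarily close to $\min(s_\delta,1)$. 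Feeding both pieces into Lemma \ref{lem:Frostman-exp-to-small-fiber} with the Lipschitz map $\pi$ gives a uniform fiber bound: for every $\beta>0$ there is $C>0$ such that, for every interval $B_\e\subset\R$ of length $\e$, any $\e$-separated subset of $\pi^{-1}(B_\e)\cap(\widetilde A\times\widetilde B)$ has at most $C\e^{-\beta}$ points. This is the key estimate; since $s_\delta\to\hdim A+\hdim B\le 1$ as $\delta\to 0$ and $A'\times B'\subset\widetilde A\times\widetilde B$, the same bound applies to $A'\times B'$. The mild boundary case $\hdim A+\hdim B=1$ is absorbed here: for $\delta>0$ we have $s_\delta>1$ but the gap $s_\delta-\min(s_\delta,1)$ still tends to $0$ with $\delta$.

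Finally I would convert this uniform fiber bound into a dimension estimate by a standard Hausdorff covering argument: given any $t>\hdim(A'+B')$, cover $A'+B'$ by balls $B(y_i,r_i)$ with $\sum_i r_i^t<\sigma$; for each such ball the preimage $\pi^{-1}(B(y_i,r_i))\cap(A'\times B')$ is covered by at most $Cr_i^{-\beta}$ balls of radius $r_i$, and the total $(t+\beta)$-Hausdorff pre-measure of the resulting cover of $A'\times B'$ is bounded by $C\sum_i r_i^{-\beta}r_i^{t+\beta}=C\sum_i r_i^t<C\sigma$. Letting $\sigma\to 0$ gives $\hdim(A'\times B')\le t+\beta$, and then letting $\beta\to 0$ and $t\downarrow\hdim(A'+B')$ yields the required inequality. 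The main (and essentially only) obstacle is the simultaneous tuning of the approximation parameter $\delta$, the $L^q$-exponent $q$, and the auxiliary $\eta$ so that the fiber-covering exponent $\beta$ in Lemma \ref{lem:Frostman-exp-to-small-fiber} can really be driven to zero; once this is arranged, the covering argument is routine.
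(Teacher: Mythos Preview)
Your proposal is correct and follows essentially the same route as the paper: approximate $A,B$ by $p^N$- and $q^N$-Cantor sets satisfying the open set condition, apply Theorem \ref{thm:dim-conv-ssm} to the uniform self-similar measures, and extract a uniform fiber bound via Lemmas \ref{lem:Lq-dim-to-Frostman-exp} and \ref{lem:Frostman-exp-to-small-fiber}. The only minor difference is the final step: the paper (following the template of Corollary \ref{cor:sierpinski-gasket}) uses Frostman's Lemma to put a good measure on $A'\times B'$ and then applies the mass distribution principle to its projection, whereas you lift a Hausdorff cover of $A'+B'$ directly through the fiber bound---these are two standard, interchangeable ways of converting a uniform covering-number estimate on fibers into a Hausdorff-dimension inequality.
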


We note that in general $\hdim(A'\times B')  \ge \hdim(A')+\hdim(B')$ and the inequality can be strict, but there is an equality if either $A'$ or $B'$ have equal Hausdorff and upper box-counting dimensions.

\begin{proof}[Proof of Corollary \ref{cor:sums-of-invariant-sets}]
Suppose first that $\hdim(A)+\hdim(B)<1$. By embedding $A,B$ in $p^N, q^N$-Cantor sets of almost the same dimension as in the proof of Theorem \ref{thm:Furstenberg}, we may assume that $A,B$ are already a $p, q$-Cantor set respectively. The proof is now nearly identical to that of Corollary \ref{cor:sierpinski-gasket}, using Theorem \ref{thm:dim-conv-ssm} in place of Theorem \ref{thm:dim-ssm}.

If $\hdim(A)+\hdim(B)=1$, then we proceed in the same way but now the sums of the dimensions of the $p,q$-Cantor sets containing $A,B$ is $1+\delta$, where $\delta$ is arbitrarily small. The argument of  Corollary \ref{cor:sierpinski-gasket} still goes through with very minor modifications; details are left to the interested reader.
\end{proof}

A minor variant of the same argument recovers the full conjecture of Furstenberg on sums of $T_p$ and $T_q$ invariant sets. However, apart from some special cases, the methods from this paper do not appear to yield a different proof of the corresponding statement for convolutions of invariant measures, recall \eqref{eq:dim-conv-invariant-measures}.

\subsection{Convolutions of several self-similar measures}

Theorem \ref{thm:dim-conv-ssm} generalizes easily to convolutions of an arbitrary number of self-similar measures. This provides an example of application of Theorem \ref{thm:L-q-dim-dyn-ssm} in which $X$ is a torus of arbitrary dimension.

\begin{thm} \label{thm:dim-conv-many-ssm}
Let $0<\lam_1<\ldots<\lam_k<1$, $k\ge 2$, be numbers such that $(1/\log\lam_j)_{j=1}^k$ is linearly independent over $\Q$.  Fix $\Delta_1,\ldots,\Delta_k\in\mathcal{A}$, and write
\[
\eta_j =\eta_j(\Delta_j,\lam_j) = *_{n=0}^\infty S_{\lam_j^n} \Delta_j
\]
for the corresponding self-similar measures. Moreover, suppose that there is $R>0$ such that for infinitely many $n$ it holds that
\begin{equation} \label{eq:exponential-sep-component-measures}
|P_j(\lam_j)| \ge \lam_k^{Rn}\quad \text{for all } P_j\in\mathcal{P}_{\Delta_j,n}, j=1,\ldots,k.
\end{equation}
Then
\begin{equation*} 
D(\eta_1*\cdots*\eta_k,q) = \min\left(\sum_{j=1}^k D(\eta_j,q),1\right).
\end{equation*}
for all $q\in (1,\infty)$.
\end{thm}
\begin{proof}
The proof is similar to that of Theorem \ref{thm:dim-conv-ssm}, so we will skip some details. We write $a_j=|\log(\lam_j)|$.  Let $X=[0,a_1)\times\cdots\times [0,a_{k-1})$, and let $\mathbf{T}:X\to X$ be given by
\[
\mathbf{T}(x_1,\ldots,x_{k-1}) = (x_1+a_k \bmod a_1,\ldots, x_{k-1}+a_k \bmod a_{k-1}).
\]
Up to re-parametrization, this is translation by $(a_k/a_1,\ldots,a_k/a_{k-1})$ on the $(k-1)$-torus, which is uniquely ergodic if (and only if) $(1,a_k/a_1,\ldots,a_k/a_{k-1})$ is linearly independent over $\Q$; see e.g. \cite[Corollary 4.15]{EinsiedlerWard11}. An easy calculation using the linear independence of $1/\log \lam_j$ shows that this is indeed the case.

Given $x\in X$, we let $J(x)=\{ j\in\{1,\ldots,k-1\}: x_j \in [0,a_k)\}$, and define $\Delta:X\to \mathcal{A}$ as
\[
\Delta(x) = \left( *_{j\in  J(x)} S_{\exp(x_j)} \Delta_j \right) * \Delta_k .
\]
We have already remarked that $(X,\mathbf{T})$ is uniquely ergodic. The same argument from Lemma \ref{lem:mu-x-conv-ssm} shows that the measures  generated by this model are
\[
\mu_x = S_{\exp(x_1-\mathbf{1}(x_1\in [0,a_k))a_1)}\eta_1 * \cdots  * S_{\exp(x_{k-1}-\mathbf{1}(x_1\in [0,a_k)) a_{k-1})}\eta_{k-1}* \eta_k.
\]
The model $(X,\mathbf{T},\Delta,\lam_k)$ is now readily checked to be pleasant, while the map $\Delta(\cdot)$ also meets the hypotheses in Theorem \ref{thm:L-q-dim-dyn-ssm}.

To establish exponential separation, we notice that the difference between two atoms of $\mu_{x,n}$ has the form
\[
\sum_{j=1}^k  s_j e^{x_j} P_j(\lam_j),
\]
where $s_j\in\{0,1\}$, not all $s_j$ are zero, $P_j\in \mathcal{P}_{\Delta_j,n}$, and we set $x_k=0$. For $n$ such that \eqref{eq:exponential-sep-component-measures} holds, the same argument in the proof of Theorem \ref{thm:dim-conv-ssm}, together with Fubini and an induction on the number of non-zero $s_j$, shows that the distance between atoms of $\mu_{n,x}$ is at least $\lam_k^{R' n}$ for a.e. $x$, where $R'$ depends on $R$, the $\Delta_i$ and $k$ only.

We have checked that Theorem \ref{thm:L-q-dim-dyn-ssm} can be applied. A calculation like the one in the proof of Theorem \ref{thm:dim-conv-ssm} yields
\[
\int_X \log\|\Delta(x)\|_q^q \,d\PP(x) = \log\|\Delta_k\|_q^q + \sum_{j=1}^{k-1} \frac{\log \lam_k}{\log\lam_j} \log\|\Delta_j\|_q^q,
\]
so that Theorems \ref{thm:L-q-dim-dyn-ssm} and  \ref{thm:dim-ssm} yield the desired conclusion.
\end{proof}

\subsection{Embeddings of self-similar sets}

Let us denote by $A_\lam$ any self-similar set arising from a homogeneous IFS with contraction ratio $\lam$, satisfying the open set condition and of dimension strictly smaller than $1$. A special case of a conjecture of D-J. Feng, W. Huang and H. Rao \cite[Conjecture 1.2]{FHR14} asserts that $A_\lam$ cannot be affinely embedded into $A_{\lam'}$ unless $\log\lam/\log\lam'\in\Q$. In \cite{FHR14} this is proved in some special cases, and some further new cases were recently established by A. Algom \cite{Algom18}. However the general case was not known even for central Cantor sets (i.e. self-similar sets generated by two maps). It follows immediately from Corollary \ref{cor:dim-intersection-ssm} that if $\log\lam/\log\lam'\notin\Q$, then for every affine map $h:\R\to\R$,
\[
\hdim(A_\lam \cap h(A_{\lam'})) \le \hdim(A_\lam)+\hdim(A_{\lam'})-1 <  \min(\hdim(A_\lam),\hdim(A_{\lam'})),
\]
so that no affine immersion is possible. We can easily extend this to the case in which the set we want to embed is an arbitrary non-trivial self-similar set:

\begin{cor}
Suppose $A=\bigcup_{i\in\mathcal{I}} \lambda_i A+t_i$, $B=\bigcup_{j\in\mathcal{J}} \lam' B+t'_j$ are self-similar sets, with $A$ not a singleton, and $B$ homogeneous, satisfying the open set condition, and of dimension strictly smaller than $1$. If there is a $C^1$ map $h:\R\to\R$ such that $h(A)\subset B$, then $\log \lam_i/\log\lam'$ is rational for all $i$.
\end{cor}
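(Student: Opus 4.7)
The plan is to argue by contradiction, extracting from $A$ a homogeneous self-similar subset with OSC whose contraction ratio is irrationally related to $\log\lam'$, and then invoking the $C^1$ extension of Corollary \ref{cor:dim-intersection-ssm} announced in the remark after the proof of Theorem \ref{thm:Furstenberg} to force $\hdim B\ge 1$, a contradiction.

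Suppose $\log\lam_{i_0}/\log\lam'\notin\Q$ for some $i_0\in\mathcal{I}$. Since $A$ is not a singleton, $|\mathcal{I}|\ge 2$, so I fix $j\in\mathcal{I}\setminus\{i_0\}$. Writing $\alpha:=\log\lam_{i_0}/\log\lam'$ and $\beta:=\log\lam_j/\log\lam'$, there is at most one $m\in\N$ for which $m\alpha+\beta\in\Q$ (two such identities subtracted would force $\alpha\in\Q$), so I can pick $m\in\{1,2\}$ with $m\alpha+\beta\notin\Q$. Consider then the homogeneous sub-IFS
\[
\mathcal{F}=\bigl\{f_w : w\in\{i_0,j\}^{m+1}\text{ contains exactly $m$ occurrences of }i_0\bigr\},
\]
consisting of $m+1\ge 2$ similarities, each with the common contraction $\mu:=\lam_{i_0}^m\lam_j$ and satisfying $\log\mu/\log\lam'=m\alpha+\beta\notin\Q$. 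Its attractor $A''\subset A$ is a homogeneous self-similar set; the open set condition is inherited from that of $(f_i)_{i\in\mathcal{I}}$ (the witnessing open set $U$ already gives $f_w(U)\cap f_{w'}(U)=\varnothing$ for distinct $w,w'\in\mathcal{I}^{m+1}$, and \emph{a fortiori} for distinct $w,w'\in\mathcal{F}$), and hence $\hdim A''=\log(m+1)/\log(1/\mu)>0$.

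Since $h(A'')\subset h(A)\subset B$, I apply the $C^1$ extension of Corollary \ref{cor:dim-intersection-ssm} to the pair $(B,A'')$ with the map $h$ to obtain
\[
\ubdim h(A'')=\ubdim(B\cap h(A''))\le\max(\hdim B+\hdim A''-1,\,0).
\]
Fixing $x_0\in A''$ with $h'(x_0)\neq 0$ (such a point must exist under any reasonable non-degeneracy assumption on the embedding $h$, since a constant $h$ trivially satisfies $h(A)\subset B$ without constraining the $\lam_i$), $h$ is bi-Lipschitz on an open neighborhood $U$ of $x_0$, and by OSC I can find a word $w$ in $\mathcal{F}$-letters of sufficient length with $f_w(A'')\subset A''\cap U$, yielding $\ubdim h(A'')\ge\hdim h(A''\cap U)\ge\hdim f_w(A'')=\hdim A''$. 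Substituting into the inequality above gives $\hdim A''\le\max(\hdim B+\hdim A''-1,0)$, forcing either $\hdim B\ge 1$ (contrary to hypothesis) or $\hdim A''=0$ (impossible since $m+1\ge 2$ and OSC holds). The principal technical obstacle is the deferred $C^1$ extension of Corollary \ref{cor:dim-intersection-ssm}, whose proof requires the Furstenberg CP-process reduction from the affine case; a secondary subtle point is the non-degeneracy of $h$ on $A''$ (ruling out $h'\equiv 0$ on $A''$), which has to be built into the hypothesis, since otherwise the $C^1$ upper bound alone cannot be combined with a matching lower bound on $\ubdim h(A'')$.
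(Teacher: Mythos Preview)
Your argument has a genuine gap: you assume the original IFS for $A$ satisfies the open set condition when you write ``the open set condition is inherited from that of $(f_i)_{i\in\mathcal{I}}$ (the witnessing open set $U$ already gives $f_w(U)\cap f_{w'}(U)=\varnothing$ \ldots)''. The statement of the corollary imposes OSC only on $B$; nothing is assumed about the separation of the maps generating $A$. Without OSC on $(f_i)$, there is no reason your sub-IFS $\mathcal{F}$ (built from words of length $2$ or $3$) satisfies OSC, and then you cannot conclude $\hdim A''>0$, which your final contradiction requires. The paper handles exactly this point by a different construction: choosing $f_1,f_2$ with distinct fixed points (possible since $A$ is not a singleton) and taking the pair $(f_2 f_1^N, f_1^N f_2)$ for $N$ large. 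Both maps have contraction $\lam_1^N\lam_2$, and as $N\to\infty$ their images of any bounded interval shrink to the distinct points $f_2(p_1)$ and $p_1$ (where $p_1$ is the fixed point of $f_1$), so for large $N$ the images are disjoint and OSC holds automatically, with no separation hypothesis on the original system. The irrationality of $\log(\lam_1^N\lam_2)/\log\lam'$ is then arranged by the same trick you use, replacing $N$ by $N+1$ if necessary.

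There is also a structural difference in how the $C^1$ map is handled. You invoke the announced-but-deferred $C^1$ extension of Corollary~\ref{cor:dim-intersection-ssm}, and then have to argue separately that $\ubdim h(A'')\ge \hdim A''$ via a local bi-Lipschitz patch, which forces you to add a non-degeneracy caveat on $h$. The paper sidesteps all of this: once $A$ is reduced to a homogeneous OSC self-similar set, \cite[Theorem 1.1]{FHR14} converts the $C^1$ embedding into an \emph{affine} one, after which the already-proved affine Corollary~\ref{cor:dim-intersection-ssm} applies directly. This route is cleaner and stays entirely within results established in the paper or the cited literature.
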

\begin{proof}
Suppose that, on the contrary, $\log \lam_i/\log\lam'$ is irrational for some $i$, and yet $h(A)\subset B$ for some $C^1$ map $h$. Without loss of generality, assume that $\log\lam_1/\log\lam'$ is irrational. We may also assume that, writing $f_j(x)=\lam_j x+t_j$, the fixed points of $f_1$ and $f_2$ are different (if all the $f_j$ had the same fixed point, then $A$ would equal this point). If $N$ is sufficiently large, then $(f_{2}f_{1}^N, f_{1}^N f_{2})$ is a homogeneous IFS satisfying the open set condition, and its attractor $A_N$ is contained in $A$, so that $h(A_N)\subset A$. On the other hand, if $\log(\lam_2\lam_1^N)/\log(\lam')$ is rational then, by our assumption, $\log(\lam_2\lam_1^{N+1})/\log(\lam')$ is irrational.

We have thus reduced the problem to the case of $A$ homogeneous and satisfying the open set condition. Under these assumptions, \cite[Theorem 1.1]{FHR14} implies that there is an \emph{affine} embedding of $A$ into $B$. But, as we have seen, this is ruled out by  Corollary \ref{cor:dim-intersection-ssm}.
\end{proof}

\section{Sections and projections of planar self-similar sets}
\label{sec:planar-sss}

Our next geometric application involves homogeneous self-similar sets and measures on the plane. It was observed in several previous works, going back at least to \cite{PeresShmerkin09}, that methods devised to study geometric properties of cartesian products of linear self-similar sets and measures often can also be applied to the study of self-similar sets and measures on the plane. The next lemma may help clarify the reason behind this; compare with Lemma \ref{lem:mu-x-conv-ssm}.

\begin{lemma} \label{lem:model-proj-hom-ssm}
Fix $\alpha\in [0,2\pi)$, $\lam\in (0,1)$ and a finitely supported probability measure $\wt{\Delta}=\sum_{i\in\mathcal{I}} p_i \delta(t_i)$ on $\R^2$. Denote rotation by $\alpha$ by $\mathbf{R}_\alpha$, and let
\[
 \mu = *_{n=0}^\infty S_{\lam^n}\mathbf{R}_\alpha^n(\wt{\Delta})
\]
be the associated homogeneous self-similar measure. Given $x\in S^1$, let $P_x(y) = \langle x,y\rangle$ be the orthogonal projection onto a line in direction $x$. Furthermore, let $\Delta(x)=P_x\wt{\Delta}$.

Then the measures $\mu_x$ generated by the model $(S^1,\mathbf{R}_{-\alpha}, \Delta,\lam)$ are the projections $P_x\mu$. Moreover, the model is pleasant if and only if $\alpha/\pi\notin\Q$.

\end{lemma}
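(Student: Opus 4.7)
The first assertion, $\mu_x = P_x\mu$, is an algebraic identity that I plan to obtain by pushing $P_x$ through the infinite convolution. The key pointwise identity is $P_{\mathbf{R}_{-\alpha}^n x}=P_x\circ \mathbf{R}_\alpha^n$, which follows from the orthogonality of rotations, $\langle\mathbf{R}_{-\alpha}^n x,y\rangle=\langle x,\mathbf{R}_\alpha^n y\rangle$, so that $\Delta(\mathbf{R}_{-\alpha}^n x)=P_x(\mathbf{R}_\alpha^n\wt\Delta)$. Since $P_x\colon\R^2\to\R$ is a linear surjection it commutes with the scaling $S_{\lam^n}$, and being a continuous group homomorphism of $(\R^2,+)$ into $(\R,+)$, it commutes with convolution. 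Interchanging $P_x$ with the infinite convolution (justified by the compact support of $\mu$) then gives
\[
\mu_x=*_{n=0}^\infty S_{\lam^n}P_{\mathbf{R}_{-\alpha}^n x}\wt\Delta = P_x\left(*_{n=0}^\infty S_{\lam^n}\mathbf{R}_\alpha^n\wt\Delta\right)=P_x\mu.
\]

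For the equivalence, I would check the five conditions in Definition~\ref{def:pleasant} one by one. Compactness of $S^1$ is immediate; each $\mu_x=P_x\mu$ is supported in a fixed bounded interval because $\mu$ has compact support; and the map $x\mapsto P_x\mu$ is continuous on all of $S^1$ (not just a.e.), since $x\mapsto P_x$ is continuous as a linear functional and $\mu$ has compact support, so weak continuity of pushforward applies. The decisive condition is unique ergodicity of the rotation $\mathbf{R}_{-\alpha}$ on $S^1$: this rotation is uniquely ergodic iff its orbits are dense, which happens iff $\alpha/(2\pi)\notin\Q$, equivalently $\alpha/\pi\notin\Q$. Conversely, if $\alpha/\pi\in\Q$ then $\mathbf{R}_{-\alpha}$ has finite order and admits many invariant probability measures (any atomic measure on one periodic orbit), so pleasantness fails.

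The step I expect to require the most care is verifying that $\mu_x$ is non-atomic for every $x\in S^1$ when $\alpha/\pi\notin\Q$. Assuming $\wt\Delta$ has at least two atoms (otherwise $\mu$ itself is a point mass and pleasantness fails trivially), I would realize $\mu_x$ as the distribution of the random series $\sum_{n\ge 0}\lam^n\langle\mathbf{R}_{-\alpha}^n x,Y_n\rangle$ with $(Y_n)$ i.i.d.\ $\sim\wt\Delta$. By a Jessen--Wintner type dichotomy, atomicity would force all but finitely many summands to be degenerate, i.e.\ $\supp\wt\Delta\subset(\mathbf{R}_{-\alpha}^n x)^\perp$ for all large $n$. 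If $\supp\wt\Delta$ is not collinear this fails immediately; if it lies on a single line $\ell$, then irrationality of $\alpha$ ensures that $\mathbf{R}_{-\alpha}^n x$ meets $\ell^\perp$ for at most one $n$, again giving non-atomicity. Combining this with the unique ergodicity dichotomy completes the equivalence.
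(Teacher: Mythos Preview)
Your argument is correct and follows the same route as the paper: the identity $\langle x, S_{\lam^n}\mathbf{R}_\alpha^n y\rangle = \langle \mathbf{R}_{-\alpha}^n x, S_{\lam^n} y\rangle$ is exactly the paper's one-line justification, and the unique ergodicity criterion for circle rotations is the only ingredient the paper cites for the equivalence. You supply considerably more detail than the paper does, in particular the verification of non-atomicity (which the paper leaves implicit) via the Jessen--Wintner dichotomy together with the observation that an irrational rotation orbit can meet a fixed one-dimensional subspace of $S^1$ at most once; this is a genuine point worth spelling out, and your treatment of it is sound.
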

\begin{proof}
 Immediate, since
 \[
 \left\langle x, S_{\lam^n} \mathbf{R}_\alpha^n(y) \right\rangle = \left\langle \mathbf{R}_{-\alpha}^n x,  S_{\lam^n} (y) \right\rangle,
 \]
 and rotation by $\beta\in [0,2\pi)$ is uniquely ergodic if and only if $\beta/\pi$ is irrational.
\end{proof}

\begin{thm} \label{thm:dim-proj-planar-ssm}
 Let $\mu$ and $P_x$ be as in Lemma \ref{lem:model-proj-hom-ssm}. Assume further that $\alpha/\pi\notin\Q$, and that the open set condition holds. Then for every $x\in S^1$ and every $q\in (1,\infty)$,
 \[
  D(P_x\mu,q) = \min\left( \frac{\log\|\wt{\Delta}\|_q^q}{(q-1)\log\lam},1 \right).
 \]
\end{thm}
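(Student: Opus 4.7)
The plan is to apply Theorem \ref{thm:L-q-dim-dyn-ssm} to the model $(S^1, \mathbf{R}_{-\alpha}, \Delta, \lam)$ introduced in Lemma \ref{lem:model-proj-hom-ssm}, which already gives $\mu_x = P_x\mu$. Pleasantness of this model is asserted by the lemma under the irrationality hypothesis, so the tasks that remain are: (i) to verify the regularity hypotheses on $\Delta(\cdot)$; (ii) to establish exponential separation; and (iii) to evaluate the integral on the right-hand side of \eqref{eq:main-thm}.

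For (i), the number of atoms of $\Delta(x) = P_x\wt{\Delta}$ is at most $|\mathcal{I}|$ for every $x$, and $x\mapsto P_x\wt{\Delta}$ is continuous at every $x$ such that $P_x$ is injective on $\supp(\wt{\Delta})$. There are at most $\binom{|\mathcal{I}|}{2}$ directions $x\in S^1$ where distinct points of $\supp(\wt{\Delta})$ have the same projection, so $\Delta$ is continuous outside a finite (hence null) set. For (iii), on the complement of this finite exceptional set the projection $P_x$ preserves the mass of each atom, so $\|\Delta(x)\|_q^q = \|\wt{\Delta}\|_q^q$. Integrating against Lebesgue measure on $S^1$ gives $\int_{S^1} \log\|\Delta(x)\|_q^q\,d\PP(x) = \log\|\wt{\Delta}\|_q^q$, which after dividing by $(q-1)\log\lam$ yields exactly the right-hand side of the statement.

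The main (though not deep) technical point is (ii). The atoms of $\mu_{x,n}$ are the orthogonal projections under $P_x$ of the atoms of $\nu_n := *_{i=0}^{n-1} S_{\lam^i}\mathbf{R}_\alpha^i\wt{\Delta}$. By the open set condition (cf.\ the discussion in \S\ref{subsec-sssm}), the $|\mathcal{I}|^n$ atoms of $\nu_n$ in $\R^2$ are all distinct and pairwise $c\lam^n$-separated for some constant $c>0$. Fix $R>0$. For any two distinct atoms $p\ne q$ of $\nu_n$, the set of $x\in S^1$ with $|\langle x, p-q\rangle|<\lam^{Rn}$ has arclength $O(\lam^{Rn}/|p-q|) = O(\lam^{(R-1)n})$, so summing over the at most $|\mathcal{I}|^{2n}$ pairs gives a set of bad directions of total measure $O((|\mathcal{I}|^2\lam^{R-1})^n)$. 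Choosing $R$ large enough that $|\mathcal{I}|^2\lam^{R-1}<1$ and invoking the Borel--Cantelli lemma, we conclude that for almost every $x\in S^1$ (with respect to the unique invariant measure $\PP$) the atoms of $\mu_{x,n}$ are distinct and $\lam^{Rn}$-separated for all but finitely many $n$, and in particular for infinitely many $n$; this is exactly the exponential separation required by Definition \ref{def:exponential-separation}.

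With all hypotheses of Theorem \ref{thm:L-q-dim-dyn-ssm} verified, the theorem applies and yields
\[
D(P_x\mu,q) = \min\!\left(\frac{\log\|\wt{\Delta}\|_q^q}{(q-1)\log\lam},\,1\right)
\]
uniformly in $x\in S^1$, which is the desired conclusion. I do not expect any serious obstacle: the only calculation of substance is the Borel--Cantelli step for exponential separation, and even that is essentially a one-parameter count made possible by the open set condition.
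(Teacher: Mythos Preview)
Your proposal is correct and follows essentially the same route as the paper: both apply Theorem \ref{thm:L-q-dim-dyn-ssm} to the model of Lemma \ref{lem:model-proj-hom-ssm}, verify exponential separation via the same Borel--Cantelli argument (using the $c\lam^n$-separation from the open set condition and the elementary bound on bad directions for each pair of atoms), and evaluate the integral by noting that $P_x$ is injective on $\supp(\wt{\Delta})$ for all but finitely many $x$. Your write-up is in fact slightly more explicit about the regularity hypotheses on $\Delta(\cdot)$, which the paper dismisses as trivial.
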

\begin{proof}
 Let $\mu_n = *_{i=0}^{n-1}  S_{\lam^n}\mathbf{R}_\alpha^n(\wt{\Delta})$. By the open set condition, $\mu_n$ has $|\mathcal{I}|^n$ atoms, which are $c \lam^n$-separated for some $c>0$. Note that $P_x(\mu_n)=\mu_{x,n}$ (the measures generated by the model from Lemma \ref{lem:model-proj-hom-ssm}). In particular, the atoms of $\mu_{x,n}$ are the projections of the atoms of $\mu_n$.

 Let $R$ be a large enough integer to be chosen later. By elementary geometry, for a given pair $a,b$ of distinct atoms of $\mu_n$, the set of $x\in S^1$ such that $|P_x a - P_x b|\le \lam^{Rn}$ has measure $O_c(\lam^{(R-1)n})$. Hence, the set of $x\in S^1$ such that the atoms of $\mu_{n,x}$ are all distinct and $\lam^{Rn}$-separated has measure $1-O_c(|\mathcal{I}|^{2n}) \lam^{(R-1)n}$. This implies that if $R$ is taken large enough in terms of $|\mathcal{I}|$, then for almost all $x\in S^1$ there is $n_0=n_0(x)$ such that the atoms of $\mu_{n,x}$ are distinct and $\lam^{Rn}$ separated for all $n\ge n_0$. Hence the model from Lemma \ref{lem:model-proj-hom-ssm} has exponential separation.

 Since the hypothesis on $\Delta$ is trivially satisfied, we can apply Theorem \ref{thm:L-q-dim-dyn-ssm} to conclude that
 \[
  D(P_x\mu,q) = \min\left( \frac{\int \log \|P_x\wt{\Delta}\|_q^q\,dx}{(q-1)\log \lam},1\right),
 \]
 which gives the claim since $P_x$ is injective on $\wt{\Delta}$ for all but a finite set of $x$.
\end{proof}

We obtain the following corollary on linear sections of planar self-similar sets; compare with Corollary \ref{cor:dim-intersection-ssm}.
\begin{cor} \label{cor:dim-intersection-sss-lines}
 Fix $\lam\in (0,1)$, $\alpha\in [0,2\pi)$ such that $\alpha/\pi$ is irrational, and a finite set $(t_i)_{i\in\mathcal{I}}$ of translations in $\R^2$. Assume that the IFS $\{ \lam \mathbf{R}_\alpha(x)+t_i\}_{i\in\mathcal{I}}$ satisfies the open set condition, and denote its invariant set by $E$.

 Then
 \[
  \ubdim(E\cap \ell) \le \max(\hdim(E)-1,0).
 \]
 for all lines $\ell\subset\mathbb{R}^2$.
\end{cor}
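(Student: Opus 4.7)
The plan is to follow the same template used to deduce Corollary 5.2 from Theorem 5.1, replacing the projection of a product measure by the projection of a homogeneous planar self-similar measure, and invoking Theorem \ref{thm:dim-proj-planar-ssm} in place of Theorem \ref{thm:dim-ssm}. Let $s=\hdim(E)$, and let $\mu$ be the uniform homogeneous self-similar measure on $E$, generated as in Lemma \ref{lem:model-proj-hom-ssm} by $\widetilde{\Delta}=|\mathcal{I}|^{-1}\sum_{i\in\mathcal{I}}\delta(t_i)$. Because the open set condition holds, $\mu$ is Ahlfors $s$-regular: $\mu(B(y,r))=\Theta(r^{s})$ for $y\in E$ and $r\in(0,1]$, which in particular gives the lower Frostman bound needed to apply Lemma \ref{lem:Frostman-exp-to-small-fiber} (with $s$ replaced by $s+\delta$ for any $\delta>0$ to absorb the constant).

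Next, given an arbitrary line $\ell\subset\R^2$, pick a unit vector $x\in S^1$ orthogonal to the direction of $\ell$ and a scalar $y\in\R$ so that $\ell=P_x^{-1}(y)$; then $E\cap\ell=P_x^{-1}(y)\cap E$ is precisely the kind of fiber that Lemma \ref{lem:Frostman-exp-to-small-fiber} controls. Theorem \ref{thm:dim-proj-planar-ssm} applies to $\mu$ because $\alpha/\pi$ is irrational and the open set condition is satisfied, and a direct computation gives
\[
\frac{\log\|\widetilde{\Delta}\|_q^q}{(q-1)\log\lam}=\frac{(1-q)\log|\mathcal{I}|}{(q-1)\log\lam}=\frac{\log|\mathcal{I}|}{\log(1/\lam)}=s,
\]
so $D(P_x\mu,q)=\min(s,1)$ for every $q\in(1,\infty)$ and every $x\in S^1$, with the crucial feature that the estimate is uniform in~$x$.

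Now fix $\e>0$ and choose $q=q(\e)$ large enough that $(1-1/q)\min(s,1)\ge \min(s,1)-\e$. Applying Lemma \ref{lem:Lq-dim-to-Frostman-exp} to $P_x\mu$ (whose $L^q$ dimension exceeds $\min(s,1)-\e$) yields a radius $r_0>0$, independent of $x$, such that $P_x\mu(B(z,r))\le r^{\min(s,1)-2\e}$ for all $z\in\R$ and $r\in(0,r_0]$. Feeding this Frostman exponent into Lemma \ref{lem:Frostman-exp-to-small-fiber} with the Lipschitz map $P_x\colon E\to\R$ produces the uniform covering estimate
\[
\ubdim\bigl(P_x^{-1}(y)\cap E\bigr)\le (s+\delta)-(\min(s,1)-2\e)
\]
for any $\delta>0$. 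Letting $\delta,\e\downarrow 0$ gives $\ubdim(E\cap\ell)\le s-\min(s,1)=\max(s-1,0)$, as desired.

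There is no genuine obstacle beyond checking these plug-ins, but the one place to be careful is the Ahlfors-regularity constant in Lemma \ref{lem:Frostman-exp-to-small-fiber}: the hypothesis there is stated without a multiplicative constant, so the argument is phrased using $s+\delta$ and then sending $\delta\to 0$. The uniformity in $x$ of the $L^q$ dimension statement in Theorem \ref{thm:dim-proj-planar-ssm} is essential, since the conclusion must hold for \emph{every} line $\ell$, not just almost every one; this is precisely what distinguishes the present setting from a Marstrand-type projection result and is why Theorem \ref{thm:L-q-dim-dyn-ssm} was needed in the first place.
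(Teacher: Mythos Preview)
Your proof is correct and follows essentially the same route as the paper's one-line argument, namely combining Theorem~\ref{thm:dim-proj-planar-ssm} (applied to the uniform self-similar measure on $E$) with Lemmas~\ref{lem:Lq-dim-to-Frostman-exp} and~\ref{lem:Frostman-exp-to-small-fiber}; you have simply spelled out the details, including the computation that the uniform measure has $L^q$ dimension $\min(s,1)$ and the $s+\delta$ trick to absorb the Ahlfors-regularity constant. One remark: you do not actually need $r_0$ to be independent of $x$, since the conclusion is proved for each fixed line $\ell$ (hence fixed $x$) separately, so the emphasis on uniformity in $x$ is unnecessary here, though harmless.
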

\begin{proof}
 Immediate from Lemmas \ref{lem:Lq-dim-to-Frostman-exp} and \ref{lem:Frostman-exp-to-small-fiber} applied to $P_x$, and Theorem \ref{thm:dim-proj-planar-ssm} applied to the uniform self-similar measure on $E$.
\end{proof}

We make some remarks about this corollary.
\begin{enumerate}[label={\upshape\alph*)}]
\item Let $E$ be any Borel set with $\hdim(E)\ge 1$. It follows from Marstrand's intersection theorem (see e.g. \cite[Theorem 10.10]{Mattila95}) that, given a direction $x\in S^1$, \emph{almost all} lines $\ell$ in direction $x$ satisfy $\hdim(E\cap\ell) \le \hdim(E)-1$. There has been great interest in improving almost all-type of results for classes of natural sets, but most of the progress achieved concerns projections rather than the more subtle problem of intersections. For some classes of \emph{random} stochastically self-similar sets, even stronger bounds on intersections were obtained in \cite[Section 11]{ShmerkinSuomala18}. D-J. Feng has some unpublished results for deterministic sets, using ad-hoc constructions.  To the best of our knowledge, Corollary \ref{cor:dim-intersection-sss-lines} is the first result of this kind for a natural class of deterministic sets.
\item It is also natural to consider the dual question of obtaining \emph{lower} bounds on the dimension of $E\cap\ell$ for lines $\ell\subset\R^2$ when $\hdim(E)>1$. Of course, many such intersections are empty, but one would like to know that the intersections are large (of dimension equal or close to $\hdim(E)-1$) for \emph{many} lines $\ell$ in a given direction (measured, for example, in terms of Hausdorff dimension). Progress on this problem was achieved recently by K. Falconer and X. Jin \cite{FalconerJin15}.
\item Using Furstenberg's theory of CP-processes and galleries \cite{Furstenberg08}, it is possible to obtain a version of Corollary \ref{cor:dim-intersection-sss-lines} where lines are replaced by $C^1$ or even differentiable curves; we hope to address this at detail in a forthcoming paper. On the other hand, no such result can hold for Lipschitz curves since any set of upper box-counting dimension less than $1$ can be embedded in a Lipschitz curve.
\item The hypothesis that $\alpha/\pi$ is irrational is necessary: if $C$ is the middle-thirds Cantor set, then the diagonal of $C\times C$ is an affine copy of $C$. However, the homogeneity assumption is likely an artifact of the proof.
\end{enumerate}

\section{Absolute continuity and \texorpdfstring{$L^q$}{Lq} densities}
\label{sec:abs-cont}

We turn to the problem of absolute continuity, and smoothness of the densities, of self-similar and related measures. Compared to Sections \ref{sec:dim-ssm-and-applications}--\ref{sec:planar-sss}, our results here will be less explicit: we show that in many parametrized families, the measures have a density in $L^q$ for all parameters outside of some very small set.  In particular, we will establish Theorem \ref{thm:abc-cont-BCs}. Unfortunately, however, either for Bernoulli convolutions or the other parametrized families we consider, we do not know how to find even one explicit parameter which is not exceptional.

The main ideas in this section are borrowed from \cite{Shmerkin14, ShmerkinSolomyak16}; the reason we improve upon existing results is that Theorem \ref{thm:L-q-dim-dyn-ssm} provides stronger information about $L^q$ dimensions to begin with.

Recall that the Fourier transform of a Borel probability measure $\mu$ on $\R$ is defined as
\[
\widehat{\mu}(\xi) = \int \exp(2\pi i x\xi)\,d\mu(x).
\]
Given a model $(X,\mathbf{T},\Delta,\lam)$ and $k\in\N$, let us consider the measures
\[
\mu_x^{(k)} = *_{i=0}^\infty S_{\lam^k} \Delta(\mathbf{T}^{ki}(x)).
\]
These are precisely the measures generated by the model $(X,\mathbf{T}^k,\Delta,\lam^k)$, which is pleasant whenever the original model is; however we will not need to use this.

The next theorem presents our general result on densities of $\mu_{x}$. We will deduce several applications afterwards.
\begin{thm} \label{thm:abs-continuity}
Let $(X,\mathbf{T},\Delta,\lam)$ be a model satisfying the assumptions of Theorem \ref{thm:L-q-dim-dyn-ssm}, and assume furthermore that $X$ is either a singleton or infinite. Fix $q\in (1,+\infty)$ and assume also that
\[
 \frac{\int_X \log\|\Delta(x)\|_q^q \,d\mathbb{P}(x)}{(q-1)\log\lam} >  1.
\]
Suppose $y\in X$ is such that for infinitely many $k\in\N$ there exist constants $C(k),\delta(k)>0$ such that the Fourier transform of $\mu_y^{(k)}$ satisfies
\[
\left|\widehat{\mu_y^{(k)}}(\xi)\right| \le C(k)\, |\xi|^{-\delta(k)} \quad\text{for all }\xi\neq 0.
\]

Then $\mu_y$ is absolutely continuous and has a density in $L^q$.
\end{thm}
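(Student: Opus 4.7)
The plan is to combine the $L^q$-dimension bound obtained from Theorem \ref{thm:L-q-dim-dyn-ssm} with the Fourier decay hypothesis via the convolution-smoothing mechanism pioneered in \cite{Shmerkin14} and refined in \cite{ShmerkinSolomyak16}. First, under the strict inequality in the hypothesis, Theorem \ref{thm:L-q-dim-dyn-ssm} gives $D(\mu_z,q) = 1$ uniformly in $z \in X$; equivalently, $\|\mu_z^{(m)}\|_q^q = 2^{-(q-1)m+o(m)}$.

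Next, I would exploit the self-similarity of $\mu_y$ to write it (or a uniform rescaling thereof) as a convolution of the form $\mu_y = \rho_y * \mu_y^{(k)}$ or a close variant, where $\rho_y$ is an auxiliary measure obtained by grouping the atoms of the defining infinite convolution of $\mu_y$ by residue class modulo $k$. Concretely, splitting $\mu_y = *_{i=0}^\infty S_{\lam^i}\Delta(\mathbf{T}^i y)$ according to $i \bmod k$ produces a factorization in which one factor is (up to a scaling) $\mu_y^{(k)}$ and the remaining factor $\rho_y$ is itself a DDSSM for an iterated model built over $(X,\mathbf{T}^k)$. Since that iterated model inherits pleasantness and exponential separation---here the non-periodicity of $y$ is precisely what ensures unique ergodicity of $\mathbf{T}^k$ and hence keeps the model pleasant---a second application of Theorem \ref{thm:L-q-dim-dyn-ssm} yields $D(\rho_y,q) = 1$ as well.

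The final step is the key analytic lemma, essentially that of \cite{ShmerkinSolomyak16}: if $\rho$ has $L^q$ dimension $1$ and $\nu$ has polynomial Fourier decay $|\widehat{\nu}(\xi)| \le C|\xi|^{-\delta}$, then $\rho * \nu$ has a density in $L^q(\mathbb{R})$. This is proved via a multiscale frequency analysis: one writes $\rho*\nu$ as the sum of the discretized convolution $\rho^{(m)}*\nu$ at dyadic scale $m$ plus a tail, estimates the discretized piece via Young's inequality and the near-optimal bound on $\|\rho^{(m)}\|_q$, and controls the tail via Plancherel together with the power Fourier decay of $\nu$. Summing in $m$ gives a finite uniform $L^q$ bound. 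Applying this with $\rho = \rho_y$ and $\nu = \mu_y^{(k)}$ produces the required $L^q$ density for $\mu_y$.

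The main obstacle lies in the borderline nature of $D(\mu_y,q)=1$: this is already the maximum possible value of the $L^q$ dimension, yet it is consistent with $\mu_y$ being singular, so dimension alone cannot yield an $L^q$ density. The Fourier decay of $\mu_y^{(k)}$ supplies the missing polynomial gain which, combined with the near-optimal $L^q$ norms afforded by the dimension bound, closes the logarithmic gap and produces honest $L^q$-integrability. A secondary technical point is verifying the hypotheses of Theorem \ref{thm:L-q-dim-dyn-ssm} for the iterated model governing $\rho_y$; this is where the non-periodicity assumption is essential, since without it $\mathbf{T}^k$ may fail to be uniquely ergodic and the iterated model need not be pleasant.
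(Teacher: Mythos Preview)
Your approach is essentially the same as the paper's: factor $\mu_y=\nu_y^{(k)}*\mu_y^{(k)}$ (your $\rho_y$ is the paper's $\nu_y^{(k)}$), verify $D(\nu_y^{(k)},q)=1$ via Theorem \ref{thm:L-q-dim-dyn-ssm} applied to an auxiliary model, and conclude with \cite[Theorem 4.4]{ShmerkinSolomyak16}. The only cosmetic difference is that the paper realizes $\nu_y^{(k)}$ as a DDSSM over $X\times[k]$ with $\mathbf{T}'(x,j)=(\mathbf{T}x,j+1\bmod k)$ and $\Delta'(x,0)=\delta_0$, $\Delta'(x,j)=\Delta(x)$ otherwise, rather than over $(X,\mathbf{T}^k)$; either way one must check unique ergodicity, and the computation shows the relevant integral is $\tfrac{k-1}{k}\int\log\|\Delta\|_q^q\,d\mathbb{P}$, so the strict inequality persists only for $k$ large---a point you should make explicit (and the reason the hypothesis asks for infinitely many $k$). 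Your opening observation that $D(\mu_z,q)=1$ for all $z$ is correct but not actually used; only $D(\nu_y^{(k)},q)=1$ matters.
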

\begin{proof}
Using the convolution structure of $\mu_y$, we decompose
\[
\mu_y = \left(*_{k\nmid i} \Delta(\mathbf{T}^{i}y)\right)*\left(*_{k\mid i} \Delta(\mathbf{T}^{i}y)\right)=:\nu_y^{(k)}*\mu_y^{(k)}.
\]
If we can show that
\begin{equation} \label{eq:dim-skip-meas-1}
D(\nu_y^{(k)},q)=1
\end{equation}
for all large enough $k$, then \cite[Theorem 4.4]{ShmerkinSolomyak16}, together with our assumption on the Fourier decay $\mu_y^{(k)}$, will allow us to conclude that $\mu_y$ has a density in $L^q$.

For fixed $k$, consider the model $(X',\mathbf{T}',\Delta',\lam)$, where $X'=X\times [k]$, $\mathbf{T}'(x,j)=(\mathbf{T}x,j+1 \bmod k)$ and
\[
\Delta'(x,j) = \left\{
\begin{array}{ll}
  \Delta(x) & \text{ if } j\neq 0\\
  \delta_0 & \text{ if } j=0
\end{array}
\right..
\]
The measures $\mu'_x$ generated by this model are precisely $\nu_x^{(k)}$, as is immediate from the definition of $\Delta'$. This model satisfies all the assumptions in Theorem \ref{thm:L-q-dim-dyn-ssm}. Indeed, exponential separation is inherited from the base model, since the atoms of $\mu'_{x,n}$ are a subset of the atoms of $\mu_{x,n}$. Unique ergodicity (with invariant measure $\mathbb{P}'=\mathbb{P}\times (\tfrac1k \sum_j \delta_j)$) follows from the unique ergodicity of $(X,\mathbf{T})$ (note that there may be no periodic points, for otherwise the uniform measure on the orbit would be $\mathbf{T}$-invariant, contradicting unique ergodicity). The rest of the assumptions in Theorem \ref{thm:L-q-dim-dyn-ssm} are immediate.

Applying Theorem \ref{thm:L-q-dim-dyn-ssm} and recalling the form of $\mathbb{P}'$, we conclude that for any $y\in X$,
\[
D(\nu_y^{(k)},q) =  \min\left( \frac{\frac{k-1}{k}\int_X \log\|\Delta(x)\|_q^q \,d\mathbb{P}(x)}{(q-1)\log\lam},1\right) =1,
\]
provided $k$ is large enough. This establishes \eqref{eq:dim-skip-meas-1} and concludes the proof.
\end{proof}

We remark that the theorem provides the correct range for the possibility of having an $L^q$ density (other than perhaps the endpoint), since measures $\mu$ with an $L^q$ density satisfy $D(\mu,q)=1$; this can be seen from the inequality $(\int_I f)^q \le |I|^{q-1} \int_I f^q$ for all intervals $I$, where $f$ is the $L^q$ density of $\mu$.

As a first application, we can now conclude the proof of Theorem \ref{thm:abc-cont-BCs}.
\begin{proof}[Proof of Theorem \ref{thm:abc-cont-BCs}]
Erd\H{o}s \cite{Erdos40} and Kahane \cite{Kahane71} proved that there is a set  $\mathcal{E}'\subset (0,1)$ of zero Hausdorff dimension, such that if $\lam\in (0,1)\setminus\mathcal{E}'$, then $|\widehat{\nu_\lam}(\xi)| \le C(\lam) |\xi|^{-\delta(\lam)}$ for some $C(\lam),\delta(\lam)>0$. See also \cite{PSS00} for an exposition of the argument.

Let $\mathcal{E}_1 = \{ \lam\in (0,1/2):\lam^k\in\mathcal{E}' \text{ for some } k\}$, which still has zero Hausdorff dimension. Consider the model $\mathcal{X}_\lam$ with trivial dynamics as in the proof of Theorem \ref{thm:dim-ssm}, and recall from Definition \ref{def:ssc-BCs} and the discussion afterwards that there is another zero-dimensional set $\mathcal{E}_2$ such that $\mathcal{X}_\lam$ has exponential separation for $\lam\in (1/2,1)\setminus\mathcal{E}_2$. The measure $\mu_x^{(k)}$ for the model $\mathcal{X}_\lam$ is just $\nu_{\lam^k}$. Part (i) of the theorem then follows from Theorem \ref{thm:abs-continuity} with exceptional set $\mathcal{E}=\mathcal{E}_1\cup \mathcal{E}_2$.

The second part follows from the first, the identity $\nu_\lam = \nu_{\lam^2} * S_\lam \nu_{\lam^2}$, and the fact that the convolution of two $L^2$ functions is continuous.
\end{proof}

The method of Erd\H{o}s-Kahane has been applied to many other parametrized families of fractal measures, see \cite[Section 3]{ShmerkinSolomyak16} for some examples. Using this, one can extend Theorem \ref{thm:abc-cont-BCs} to more general families of self-similar measures. We state one such result.
\begin{thm} \label{thm:abs-cont-parametrized-ssm}
Let $u\mapsto (\lambda(u),t_1(u),\ldots,t_m(u))$ be a real-analytic map from an open domain $U\subset\R^\ell$ to $\left((-1,0)\cup (0,1)\right) \times \R^m$. Assume that for all $\omega\neq \omega'\in \{1,\ldots,m\}^\N$ there is $u\in U$ such that
\begin{equation} \label{eq:non-degeneracy}
\sum_{i=0}^\infty t_{\omega_i}(u) \lambda(u)^i \neq \sum_{i=0}^\infty t_{\omega'_i}(u) \lambda(u)^i.
\end{equation}
Given a probability vector $p=(p_1,\ldots,p_m)$, write $\Delta_u^p = \sum_{j=1}^m p_j \delta(t_j(u))$, and denote the associated self-similar measure by $\nu_u^p = *_{i=0}^\infty S_{\lam(u)^i} \Delta_u^p$.  Then there exists a set $\mathcal{E}\subset U$ of Hausdorff dimension at most $\ell-1$ such that if $u\in U\setminus\mathcal{E}$ and $\|\Delta_u^p\|_q^q < |\lam_u|^{q-1}$,  then $\nu_u^p$ is absolutely continuous with a density in $L^q$.
\end{thm}
\begin{proof}
The proof is essentially identical to that of \cite[Theorem A]{ShmerkinSolomyak16}. Let $\mathcal{X}_u^p$ be the model with trivial dynamics associated to $\nu_u^p$. It follows from \cite[Theorem 1.7]{Hochman17} and the non-degeneracy assumption \eqref{eq:non-degeneracy} that there is a set $\mathcal{E}'\subset  U$ of Hausdorff (and even packing) dimension $\le \ell-1$ such that $\mathcal{X}_u^p$ has exponential separation for all $u\in U\setminus\mathcal{E}_1$ and all $p$.

On the other hand, for each $k$ there is a set $\mathcal{E}''_k$ of zero Hausdorff dimension such that the measure $*_{i=0}^\infty S_{\lam(u)^{ki}} \Delta_u^p$ has power Fourier decay for all $u\notin U\setminus \mathcal{E}''_k$ and all $p$. The proof of this fact is contained in the proof of \cite[Theorem A]{ShmerkinSolomyak16}; in short, one uses two variants of the Erd\H{o}s-Kahane argument depending on whether or not the function $u\mapsto\lam(u)$ is constant. In light of Theorem \ref{thm:abs-continuity}, the claim follows with exceptional set $\mathcal{E}'\cup (\cup_{k=1}^\infty \mathcal{E}''_k)$.
\end{proof}

Note, however, that just as in \cite{ShmerkinSolomyak16} here we are limited to homogeneous iterated function systems, as the argument to pass from full $L^q$ dimension to $L^q$ density depends strongly on the structure of the measures as infinite Bernoulli convolutions. In \cite{SSS17} absolute continuity was obtained for a.e. parameter for some families of non-homogeneous self-similar measures, but no information on the densities was obtained.

As another application of Theorem \ref{thm:abs-continuity}, we obtain the following result on projections of planar self-similar measures:
\begin{cor} \label{cor:abs-cont-proj-ssm}
Let $\mu$ be as in Lemma \ref{lem:model-proj-hom-ssm}. Assume that the open set condition holds. Then there is a set $\mathcal{E}\subset [0,2\pi)$ of zero Hausdorff dimension (depending only $\lam,\alpha,\supp(\wt{\Delta})$), such that $P_x\mu$ is absolutely continuous with an $L^q$ density for all $q$ such that $\|\wt{\Delta}\|_q^q<\lam^{(q-1)}$.
\end{cor}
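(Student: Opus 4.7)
The plan is to apply Theorem \ref{thm:abs-continuity} to the model $(S^1,\mathbf{R}_{-\alpha},\Delta,\lam)$ from Lemma \ref{lem:model-proj-hom-ssm}, where $\Delta(x)=P_x\wt\Delta$ and $\mu_x=P_x\mu$; an $L^q$ density for the $\mu_x$'s will then be exactly the conclusion of the corollary. I will handle the case $\alpha/\pi\notin\Q$, which is the only case in which pleasantness is nontrivial; the rational case reduces to an ordinary homogeneous self-similar measure and can be treated as in Theorem \ref{thm:abc-cont-BCs}.

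First I would verify the structural hypotheses of Theorems \ref{thm:L-q-dim-dyn-ssm} and \ref{thm:abs-continuity}: pleasantness and the absence of periodic points hold because $\mathbf{R}_{-\alpha}$ is an irrational rotation; the map $\Delta(\cdot)$ is continuous outside a finite set of directions (those where $P_x$ identifies two atoms of $\wt\Delta$) and $|\supp\Delta(x)|$ is uniformly bounded; exponential separation follows, for all but a zero-dimensional set of $x\in S^1$, from the open set condition exactly as in the proof of Theorem \ref{thm:dim-proj-planar-ssm}. Since $P_x$ is injective on $\supp\wt\Delta$ outside finitely many $x$, the integrated $L^q$ bound required by Theorem \ref{thm:abs-continuity} becomes
\[
\frac{\int_{S^1}\log\|P_x\wt\Delta\|_q^q\,dx}{(q-1)\log\lam}=\frac{\log\|\wt\Delta\|_q^q}{(q-1)\log\lam}>1,
\]
which is exactly the hypothesis $\|\wt\Delta\|_q^q<\lam^{q-1}$ (using $\log\lam<0$).

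The remaining, and main, ingredient is polynomial Fourier decay of $\mu_y^{(k)}$ for infinitely many $k$, outside a zero-dimensional set of $y\in S^1$ depending only on $\lam,\alpha,\supp\wt\Delta$. Writing
\[
\widehat{\mu_y^{(k)}}(\xi)=\prod_{j=0}^\infty \widehat{P_{\mathbf{R}_{-\alpha}^{kj}y}\wt\Delta}\bigl(\lam^{kj}\xi\bigr),
\]
each factor is a trigonometric polynomial whose coefficients depend on $y$ only through the inner products $\langle t-t',\mathbf{R}_{-\alpha}^{kj}y\rangle$ with $t,t'\in\supp\wt\Delta$. This is precisely the set-up where the Erd\H{o}s--Kahane method applies; I would adapt the abstract formulation from \cite[Section 3]{ShmerkinSolomyak16} to show that the set of $y$ for which this product fails to decay polynomially for every sufficiently large $k$ has zero Hausdorff dimension. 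The combinatorial bookkeeping involves only the differences of points in $\supp\wt\Delta$ and the action of $\mathbf{R}_{-\alpha}$, so the resulting exceptional set is independent of the weights $p_i$.

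The main obstacle is this Fourier decay step: the Erd\H{o}s--Kahane argument must be adapted both to the sparser geometric sequence of scales $\lam^{kj}$ and to the fact that the trigonometric polynomial at the $j$-th scale varies with $j$ through the direction $\mathbf{R}_{-\alpha}^{kj}y$. Once it is in place, Theorem \ref{thm:abs-continuity} immediately yields an $L^q$ density for $\mu_y=P_y\mu$ for every $y$ outside the union of the two zero-dimensional exceptional sets (one coming from exponential separation, one from Fourier decay), both of which depend only on $\lam,\alpha,\supp\wt\Delta$, as required.
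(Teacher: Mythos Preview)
Your approach is correct and essentially the same as the paper's. Two points where the paper is cleaner: First, exponential separation (Definition \ref{def:exponential-separation}) is a property of the \emph{model} that holds $\PP$-a.e., not a per-direction condition; once established as in Theorem \ref{thm:dim-proj-planar-ssm}, it contributes nothing to the exceptional set $\mathcal{E}$, which comes solely from the Fourier decay step. Second, and more substantively, the ``main obstacle'' you flag --- adapting Erd\H{o}s--Kahane to the varying polynomials $\widehat{P_{\mathbf{R}_{-\alpha}^{kj}y}\wt\Delta}$ at scales $\lam^{kj}$ --- evaporates once you notice that $\mu_y^{(k)}$ is itself the projection in direction $y$ of the planar homogeneous self-similar measure $*_{n=0}^\infty S_{\lam^{kn}}\mathbf{R}_{k\alpha}^n\wt\Delta$; then \cite[Proposition 3.3]{ShmerkinSolomyak16} applies directly for each $k$, yielding a zero-dimensional exceptional set $\mathcal{E}_k$, and one takes $\mathcal{E}=\bigcup_k\mathcal{E}_k$.
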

\begin{proof}
If $\alpha/\pi\in\Q$, then we can assume that $\alpha=0$ by iterating the original IFS. In this case $(P_x\mu)_{x\in S^1}$ is a family of self-similar measures satisfying the assumptions of \cite[Theorem A]{ShmerkinSolomyak16}, so the claim holds as explained in the above discussion.

If $\alpha/\pi\notin \Q$, consider the model $(S^1,\mathbf{R}_{-\alpha},\Delta,\lam)$ from Lemma \ref{lem:model-proj-hom-ssm}. The measures $\mu_x^{(k)}$ are projections of the  self-similar measure $*_{n=0}^\infty S_{\lam^n} \mathbf{R}_{k\alpha}^n \wt{\Delta}$. It follows from \cite[Proposition 3.3]{ShmerkinSolomyak16} that there exists a set $\mathcal{E}_k\subset [0,2\pi)$ of zero Hausdorff dimension, depending only on $\lam,\alpha,k$ and $\supp(\wt{\Delta})$, such that the projection $\mu_x^{(k)}$ has a power Fourier decay for all $x\in [0,2\pi)\setminus \mathcal{E}_k$. The claim then follows from Theorem \ref{thm:abs-continuity} with exceptional set $\mathcal{E}=\cup_{k\in\N} \mathcal{E}_k$.
\end{proof}

Recall that the Fourier transform of a measure $\mu$ on $\R^2$ is
 \[
\widehat{\mu}(\xi) = \int \exp(2\pi i \langle y,\xi \rangle)\,d\mu(y),
\]
and that if $v\in S^1$, then $\widehat{P_v\mu}(\xi)=\widehat{\mu}(\xi v)$. In particular, if $\mu$ has power Fourier  decay (in the sense that $|\widehat{\mu}(\xi)|=O(|\xi|^{-\delta})$ for $\xi\in\R^2\setminus\{0\}$ and some $\delta>0$), then so do all its projections $P_v\mu$.

If the planar self-similar measure $\mu$  has power Fourier decay and $\alpha/\pi$ is irrational, then the proof of the Corollary \ref{cor:abs-cont-proj-ssm} together with the above observations show that $P_x\mu$ has an $L^q$ density for \emph{all} $x\in S^1$, whenever   $\|\wt{\Delta}\|_q^q<\lam^{(q-1)}$. Although we know of no explicit example of such measure $\mu$, in parameter space power Fourier decay occurs outside of very small exceptional sets; see \cite[Theorem D]{ShmerkinSolomyak16b}.

We obtain a further corollary for convolutions of two self-similar measures, with the parameter coming in the scaling. A direct application of Theorem \ref{thm:abs-continuity} is somewhat awkward because the corresponding measures $\mu_k^{(k)}$ do not have a particularly nice structure. However, the proof of \cite[Theorem D]{ShmerkinSolomyak16}, using Theorem \ref{thm:dim-conv-ssm} to calculate the $L^q$ dimensions of self-similar measures and their convolutions, yields our final result; the verification of the details is left to the reader.
\begin{cor}
Let
\[
\eta_j=\eta_j(\Delta_j,\lam_j) = *_{n=0}^\infty S_{\lam_j^n} \Delta_j
\]
be two homogeneous self-similar measures satisfying the open set condition on the real line.

Then there is a set $\mathcal{E}\subset\R$ of zero Hausdorff dimension, such that if $t\in\R\setminus\mathcal{E}$ and $q>1$ is such that $D(\eta_1,q)+D(\eta_2,q)>1$, then the convolution $\eta_1 * S_t \eta_2$ is absolutely continuous with a density in $L^q$.
\end{cor}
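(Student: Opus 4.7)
The strategy is to apply Theorem~\ref{thm:abs-continuity} to the pleasant model of Lemma~\ref{lem:mu-x-conv-ssm}, with an Erd\H{o}s--Kahane-type argument supplying the Fourier-decay hypothesis.  I first dispose of the degenerate case $\log\lam_1/\log\lam_2\in\mathbb{Q}$ by iterating the two iterated function systems to a common contraction ratio $\lam$; then $\eta_1*S_t\eta_2$ is itself a homogeneous self-similar measure with driving distribution $\Delta_1*S_t\Delta_2$, and Theorem~\ref{thm:dim-ssm} (applicable for $t$ outside a zero-dimensional set by the polynomial counting of \cite{Hochman14}) combined with the argument proving Theorem~\ref{thm:abc-cont-BCs}(i) yields the conclusion.

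Assume henceforth $\log\lam_1/\log\lam_2\notin\mathbb{Q}$, with $\lam_2<\lam_1$. By Lemma~\ref{lem:mu-x-conv-ssm}, the family $\mu_x=\eta_1*S_{e^x}\eta_2$ is generated by a pleasant model $(X,\mathbf{T},\Delta,\lam_1)$ on the circle $X=\R/(|\log\lam_2|\mathbb{Z})$, with $\mathbf{T}$ an irrational rotation; in particular the system has no periodic points. Under the open set condition on $\eta_1,\eta_2$, the exponential-separation computation in the proof of Theorem~\ref{thm:dim-conv-ssm} supplies the model-level exponential-separation hypothesis of Theorem~\ref{thm:L-q-dim-dyn-ssm}, and the same proof yields
\[
\frac{\int_X\log\|\Delta(x)\|_q^q\,d\mathbb{P}(x)}{(q-1)\log\lam_1}=D(\eta_1,q)+D(\eta_2,q)>1.
\]
Thus all the hypotheses of Theorem~\ref{thm:abs-continuity} are in place except for the Fourier-decay condition on the skip-$k$ measures $\mu_x^{(k)}=*_{i=0}^\infty\Delta(\mathbf{T}^{ki}x)$, which I must verify for $x=\log t$ outside a zero-dimensional set and for infinitely many $k$.

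To produce the Fourier decay, I would adapt the Erd\H{o}s--Kahane argument, in the form used in \cite[Proposition 3.3]{ShmerkinSolomyak16}, to the family $x\mapsto\mu_x^{(k)}$. Each atom of the truncation $*_{i=0}^{n-1}\Delta(\mathbf{T}^{ki}x)$ has the form $P_1(\lam_1)+e^xP_2(\lam_2)$ with $P_j$ a polynomial of degree $O(n)$ whose coefficients lie in $\supp(\Delta_j)-\supp(\Delta_j)$, just as in the separation analysis of Theorem~\ref{thm:dim-conv-ssm}; the classical digit-counting bound of Erd\H{o}s and Kahane then produces a zero-Hausdorff-dimensional set $\mathcal{E}_k\subset X$ outside of which $\widehat{\mu_x^{(k)}}$ enjoys a polynomial decay rate depending on $x$ and $k$. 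Taking the union of $\exp(\bigcup_k\mathcal{E}_k)$ (still zero-dimensional, since exponentiation is bi-Lipschitz on compact intervals) with the exceptional set from the rational case produces the desired $\mathcal{E}$, and Theorem~\ref{thm:abs-continuity} completes the proof. The main obstacle is this Fourier-decay step: although the combinatorial heart of the Erd\H{o}s--Kahane argument is classical, one must carefully handle both the two-piece structure of $\Delta(\cdot)$ (whose support depends discontinuously on $x$) and the skipping by multiples of $k$.
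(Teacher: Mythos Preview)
Your outline diverges from the paper precisely at the point the paper warns about. Immediately before stating the corollary, the paper says that ``a direct application of Theorem~\ref{thm:abs-continuity} is somewhat awkward because the corresponding measures $\mu_x^{(k)}$ do not have a particularly nice structure,'' and instead instructs the reader to rerun the proof of \cite[Theorem~D]{ShmerkinSolomyak16}, plugging in Theorem~\ref{thm:dim-conv-ssm} for the $L^q$-dimension input. In that argument the decomposition is made on the self-similar factors separately: one writes $\eta_j=\eta_j'*\eta_j''$ with $\eta_j''=\eta_j(\Delta_j,\lam_j^k)$ an honest self-similar measure, so that
\[
\eta_1*S_t\eta_2=(\eta_1'*S_t\eta_2')*(\eta_1''*S_t\eta_2'').
\]
The Fourier-decay factor $\eta_1''*S_t\eta_2''$ is then a convolution of two genuine homogeneous self-similar measures, and the parametrized Erd\H{o}s--Kahane machinery of \cite[\S3]{ShmerkinSolomyak16} applies to it off the shelf, giving a zero-dimensional exceptional set of $t$. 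The remaining factor $\eta_1'*S_t\eta_2'$ is handled by (the model underlying) Theorem~\ref{thm:dim-conv-ssm}, exactly as in the proof of Theorem~\ref{thm:abs-continuity}; one then invokes \cite[Theorem~4.4]{ShmerkinSolomyak16} directly rather than through Theorem~\ref{thm:abs-continuity}.

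Your route through Theorem~\ref{thm:abs-continuity} is not obviously wrong, but the Fourier-decay step is a real gap as written. The measures $\mu_x^{(k)}$ in the model of Lemma~\ref{lem:mu-x-conv-ssm} are \emph{not} projections of a planar self-similar measure, so \cite[Proposition~3.3]{ShmerkinSolomyak16} does not apply as cited. Your own computation shows the atoms are of the form $P_1(\lam_1^k)+e^xP_2(\lam_2)$, but the exponents appearing in $P_2$ form an $x$-dependent \emph{sparse} subset of the integers (those $m$ with $\mathbf{T}^{ki}x\in[0,a_1)$), and the combinatorial type of this subset changes on a dense set of $x$. An Erd\H{o}s--Kahane argument for such families with irregularly placed digits and discontinuously varying structure is plausible but is not in the literature in a citable form, and you have not supplied it. The paper's decomposition sidesteps exactly this difficulty by arranging that the Fourier-decay factor is a standard object.
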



\end{document}